\numberwithin{equation}{section}
\newcommand{\N}{\mathbb N}
\newcommand{\Z}{\mathbb Z}
\newcommand{\R}{\mathbb R}
\def\E{\mathbb E}
\def\P{\mathbb P}
\DeclareMathOperator*{\esssup}{ess-sup}
\def\XXint#1#2#3{{\setbox0=\hbox{$#1{#2#3}{\int}$}
\vcenter{\hbox{$#2#3$}}\kern-.5\wd0}}
\newcommand{\T}{\mathbb{T}}
\numberwithin{equation}{section}
\newtheorem{thm}{Theorem}[section]
\newtheorem{lem}[thm]{Lemma}
\newtheorem{cor}[thm]{Corollary}
\newtheorem{prop}[thm]{Proposition}
\theoremstyle{definition}
\newtheorem{defn}[thm]{Definition}
\newtheorem{rmk}[thm]{Remark}
\def\smallnegint{\mathop{\int\mkern-13mu
        \raise.5ex\hbox{${\scriptscriptstyle\diagup}$}}\nolimits}
\def\ds{\displaystyle}
\def\ep{\varepsilon}
\def\div{\operatorname{div}}
\def\ssetminus{\,\raise.4ex\hbox{$\scriptstyle\setminus$}\,}
\def \lg{\langle}
\def \rg{\rangle}
\newcommand{\be}{\begin{equation}}
\newcommand{\ee}{\end{equation}}
\def\inte{\int_{\T^d}}
\def\dive{{\rm div}}
\newcommand{\normU}[2]{\left\Vert#1\right\Vert_{#2}}
\newcommand{\normUU}[2]{\Vert#1\Vert_{#2}}
\renewcommand{\bar}{\overline}
\renewcommand{\tilde}{\widetilde}
\renewcommand{\hat}{\widehat}
\setlist[enumerate]{leftmargin=0pt}
\def\shif{\mathbcal}         
        \def\uT{u_t^T}
        \def\uTW{\shif{u}_t^T}
        \def\vT{v_t^T}
        \def\MTW{\shif{M}_t^T}
        \def\mT{m_t^T}
        \def\mTW{\shif{m}_t^T}
        \def\uS{\bar u_t}
        \def\mS{\bar m_t}
        \def\vS{\bar v_t}
        \def\uD{u_t^\delta}
        \def\uDW{\shif{u}_t^\delta}
        \def\vD{v_t^\delta}
        \def\vDW{\shif{v}_t^\delta}
        \def\MDW{\shif{M}_t^\delta}
        \def\mD{m_t^\delta}
        \def\mDW{\shif{m}_t^\delta}
        \def\zTW{\shif{z}_t^T}
        \def\NTW{\shif{N}_t^T}
        \def\rTW{\boldsymbol{\varrho}_t^T}
        \def\zDW{\shif{z}_t^\delta}
        \def\zDWT{\shif{z}_t^{\delta,T}}
        \def\NDW{\shif{N}_t^\delta}
        \def\NDWT{\shif{N}_t^{\delta,T}}
        \def\rDW{\boldsymbol{\varrho}_t^\delta}
        \def\rDWT{\boldsymbol{\varrho}_t^{\delta,T}}
\def\cwn{\sigma} 
    \DeclarePairedDelimiter{\normC}{\lVert}{\rVert_{C^0}}
    \DeclarePairedDelimiter{\normH}{\lVert}{\rVert_{C^\beta}}
    \DeclarePairedDelimiter{\normD}{\lVert}{\rVert_{C^1}}
    \DeclarePairedDelimiter{\normDDH}{\lVert}{\rVert_{C^{2+\beta}}}
    \DeclarePairedDelimiter{\normDDHd}{\lVert}{\rVert_{\left(C^{2+\beta}\right)'}}
    \DeclarePairedDelimiter{\normtwo}{\lVert}{\rVert_{L^2_{\omega,x}}}
    \DeclarePairedDelimiter{\normtwox}{\lVert}{\rVert_{L^2_{x}}}
\newcommand{\EP}[1]{\mathbb{E}\left[ #1 \right]}
\newcommand{\EPt}[1]{\mathbb{E}\left[ #1 \middle| \mathcal{F}_t \right]}
\begin{document}
\title{On the long-time behavior of Mean Field Game systems with a common noise}

\author{Pierre Cardaliaguet}
\address{Universit\'e Paris-Dauphine \\ Place du Mar\'echal de Lattre de Tassigny \\ 75016 Paris, France}
\email{cardaliaguet@ceremade.dauphine.fr}

\author{Rapha\"{e}l Maillet}
\address{
Capital Fund Management (CFM) \\ 23-25 rue de l'Universit\'e\\ 75007 Paris \and 
Universit\'e Paris-Dauphine \\ Place du Mar\'echal de Lattre de Tassigny \\ 75016 Paris, France}
\email{maillet@ceremade.dauphine.fr}

\author{Wenbin Yan}
\address{Universit\'e Paris-Dauphine \\ Place du Mar\'echal de Lattre de Tassigny \\ 75016 Paris, France}
\email{wenbin.yan@dauphine.edu}

\dedicatory{Version: \today}


\maketitle      

\begin{abstract}
In this paper, we study the long-time behavior of mean field game (MFG) systems influenced by a common noise. While classical  results establish the convergence of deterministic MFG towards stationary solutions under suitable monotonicity conditions, the introduction of a common stochastic perturbation significantly complicates the analysis. We consider a standard MFG model with infinitely many players whose dynamics are subject to both idiosyncratic and common noise. The central goal is to characterize the asymptotic properties as the horizon goes to infinity. By employing quantitative methods that replace classical compactness arguments unavailable in the stochastic context, we prove that solutions exhibit exponential convergence toward a stationary regime. Specifically, we identify a deterministic ergodic constant and demonstrate the existence of stationary random processes capturing the limiting behavior. Further, we establish almost sure long-time results thanks to a detailed analysis of the ergodic master equation, which is the long-time limit of the master equation. Our results extend known deterministic convergence phenomena to the stochastic setting, relying on novel backward stochastic PDE estimates. 

\medskip
\noindent{{\bf Keywords}: Mean Field Games; Common Noise; Ergodic Behavior; Master Equation; Long-time Behavior.} 
\end{abstract}

\tableofcontents

%
%
%
%
%
%
%

\section{Introduction}

Mean field games model  Nash equilibria in differential games with infinitely many small controllers. In the simplest case, they can be reduced to  the coupled  equations: 
\begin{equation}\label{eq.MFGdeterministic}
\left\{
\begin{array}{cl}
     (i)  & -\partial_t u^T- \Delta u^T +\frac12 |D u^T|^2 -f(x,m^T) =0, \\ [1ex]
     (ii) & \partial_t m^T- \Delta m^T -{\rm div}(m^T D  u^T)=0, \\ [1ex]
     (iii)& m^T(0)= \hat m_0, \qquad u^T(T,x)= g(x, m^T(T)).
\end{array}
\right.
\end{equation}
Here $T$ is the horizon, the quantity $m^T(t)$ is a probability measure and represents the distribution at time $t\in[0,T]$ of the controllers  ($\hat m_0$ being the initial distribution); $u^T$ is the value function of a representative small controller/player. The coupling functions $f$ and $g$, which depend on the current state $x$ and the current distribution $m(t)$, describe how players interact.  The MFG models---introduced by Lasry and Lions \cite{LL07mf} (see also \cite{HMC})---are largely motivated by economic applications (see for instance \cite{ABLLM, AHLLM} and the references therein). It is classical in the economic literature to focus on stationary solutions, i.e., solutions independent of time. In fact, it has been observed numerically that, under conditions discussed below, the solution of \eqref{eq.MFGdeterministic} stabilizes  very quickly  to a stationary state (see, e.g., \cite{AcLa20}). This phenomenon, also called the turnpike behavior, was first pointed out by Lions in his courses at Coll\`ege de France \cite{LLperso} and then sharpened in several set-ups in \cite{CLLP2, CiPo21, GMS}. The main result of \cite{CLLP2} states that---under suitable monotonicity condition on $f$ and $g$ (see assumption \eqref{hyp:mono} below)---there exists a constant $\bar \lambda$, depending on $f$ only,  such that, for any $t\geq 0$,  
$$
\lim_{T\to \infty} \frac{u^T(t, \cdot)}{T} = \bar \lambda, \quad \|Du^T(t, \cdot)-D\bar u(\cdot)\|_{L^2} +{\bf d}_1(m^T(t) ,\bar m) \leq C(e^{-\omega t}+e^{-\omega(T-t)} ), 
$$
where $\omega, C>0$ are constants and $(\bar u,\bar m,\bar \lambda)$ solves the ergodic MFG system 
\be\label{eq.MFGergodeterministic}
\left\{
\begin{array}{cl}
     (i)  & \bar \lambda- \Delta \bar u +\frac12 |D  \bar u|^2 -f(x,\bar m) =0, \\ [1ex]
     (ii) & - \Delta \bar m -{\rm div}(\bar m D   \bar u)=0. 
\end{array}
\right.
\ee
Here ${\bf d}_1$ is the Wasserstein distance on the set of probability measures. This {\it long-time average} result was further refined in \cite{CP19}, see also \cite{CiPo21}, into a {\it long-time behavior}, where the limit of $u^T(t, \cdot)- \bar \lambda (T-t)$ was characterized in terms of a solution of the ergodic master equation. Note that this later result is the counterpart, for the MFG system, of the weak KAM theory developed for  Hamilton-Jacobi equations in \cite{Fathi1, Fathi2} or \cite{BaSo00}. In \cite{CLLP2, CP19}  the long-time behavior/average of the MFG system \eqref{eq.MFGdeterministic} are studied on the flat torus $\T^d=\R^d/\Z^d$ and under suitable monotonicity conditions on the coupling functions. These results have known several variants as in \cite{BCGL, CCMW, CaMe21, CCDE24, CiMez24,  EJP25, MiMu24,Mu24,Por18} and have been used for numerical approximation in \cite{CaZe24}. See also  \cite{BaKo, CM20, CeCi21, CeCi24, GMP23, Ma19} for more delicate problems in which the monotonicity assumption on the coupling functions is dropped and \cite{KMR24, KMR25, PoRo22} for mean field games with specific structure. Let us underline that all these results hold for  ``deterministic'' MFG systems, in which there is only individual noises for the small players, responsible for the diffusion terms in \eqref{eq.MFGdeterministic} and \eqref{eq.MFGergodeterministic}.   \\

The goal of the present paper is to understand what becomes of this picture when the dynamics of the small players are in addition perturbed by a noise which is common to all the players. In this case, the MFG system \eqref{eq.MFGdeterministic} becomes a system of stochastic partial differential equations and the associated master equation involves second order derivatives in the space of measures. The MFG with a common noise and the associated master equation have attracted a lot of attention in the recent years and existence and uniqueness of solutions can be found in different set-ups \cite{Ahu16,  Be23, CCP22, CDLL,  CaSo22, CDbook, CaDeLa16, Dia25,GMMZ22, KoTr, MoZh24}. The limit of $N-$player games have been treated in this framework in \cite{CDLL, CDbook,Dje23,LaLF23}, while the intriguing question of the regularization effect of the common noise is discussed in \cite{CeDe22, De19, DM25, DeTc20, DeVa25}. However, to the best of our knowledge, the natural question of the long-time behavior of the resulting system has never been addressed before. We focus here in the more standard case where the common noise $W$ is a $d-$dimensional Brownian motion. We are thus interested in the MFG system, set in $(0,T)\times \T^d$:  
\begin{equation} \label{eq.MFGCNintro}
\left\{
\begin{array}{cl}
    (i) & d\uT = \left(-(1+\cwn)\Delta \uT +\frac12 |D \uT|^2 -f(x, \mT) -2 \cwn {\rm div}(\vT) \right)dt + \sqrt{2 \cwn} v^T_t\cdot dW_t, \\ [1ex]
    (ii) & d\mT = \left((1+ \cwn)\Delta \mT +{\rm div} (\mT D \uT) \right)dt -\sqrt{2 \cwn} D \mT\cdot dW_t, \\ [1ex]
    (iii) & m^T_{0} = \hat m_0, \qquad u^T_T(x) = g(x, m^T_T).
\end{array}
\right.
\end{equation}
 As above, $T$ is the finite horizon of the problem and $\hat m_0$ is the initial distribution of the players. The map $\uT$, which solves the backward stochastic Hamilton-Jacobi equation \eqref{eq.MFGCNintro}-(i) with the terminal condition $u^T_T = g(\cdot, m^T_T)$, is the value function of a typical small player: 
$$
\uT(x)= \inf_{\alpha} \E\left[ \int_t^T ( \frac12 |\alpha_s|^2 +f(X^\alpha_s, \mT)) ds+g(X^\alpha_T, m^T_T)\; \bigg|\; \mathcal F_t\right], 
$$
where $X^\alpha_s = x+\int_t^s \alpha_rdr + \sqrt{2} B_s+ \sqrt{2 \cwn}W_s$ and where the infimum is taken over all controls $\alpha\in L^2$. Here $B$ and $W$ are two independent Brownian motions, $B$ being interpreted as the individual noise of a typical player while $W$ is the common noise which affects all the players. The positive constant $\cwn$ indicates the influence of the common white noise $W$. The filtration $(\mathcal F_t)$ is the filtration generated by $W$. The random process $v$ in \eqref{eq.MFGCNintro}-(i) ensures the solution $u$ to be adapted to the filtration $(\mathcal F_t)$, in the line of the theory developed by Peng \cite{Pe92}. 
The random flow of probability measures $(m^T_t)$, adapted to the common noise $W$ and thus to the filtration $(\mathcal F_t)$, can be interpreted as the conditional distribution of players when they play the optimal drift $\alpha_t(x)=-D\uT(x)$, given the common noise $W$. This conditional flow thus solves the forward random Fokker-Planck equation \eqref{eq.MFGCNintro}(ii), with initial condition $\hat m_0$. \\

Under the assumption that the coupling functions $f$ and $g$ are monotone in the Lasry-Lions sense \cite{LL07mf} (see assumption \eqref{hyp:mono} below), the existence and the uniqueness of the solution to \eqref{eq.MFGCNintro}  has been first discussed  in \cite{CDLL} (see also \cite{CDbook} for a probabilistic approach of the problem). The construction of a solution of \eqref{eq.MFGCNintro} in \cite{CDLL} relies  on the existence of a solution to the so-called master equation: 
\begin{align}\label{eq.masterintro}
&- \partial_t U^T(t,x,m)-(1+ \cwn) \Delta U^T(t,x,m)+\frac12|D  U^T(t,x,m)|^2 - f(x,m) \notag \\
& -(1+ \cwn)\inte \dive_y(D_m U^T(t,x,m,y))m(dy) + \inte D_m U^T(t,x,m,y) \cdot D  U^T(t,y,m) m(dy) \\ 
&  - 2 \cwn \inte \dive_x(D_m U^T(t,x,m,y))m(dy) - \cwn \inte\inte {\rm Tr}(D^2_{mm} U^T(t,x,m,y,z))m(dy)m(dz) = 0 \notag,
\end{align}
This equation is stated on $[0,T]\times \T^d\times \mathcal P(\T^d)$ and has the terminal condition $U^T(T,x,m) = g(x,m).$ The  derivatives $D_mU$, $D^2_{mm}U$ are derivatives in the measure variable, as discussed in \cite{CDLL, CDbook}. The solution $U^T$ of the master equation is related with the solution $(u^T, m^T, v^T)$ of the MFG system through the identity 
\be\label{okjzneds22}
U^T(t,x,m^T_t)= u^T_{t}(x),
\ee
and actually $m^T$ solves the random McKean-Vlasov equation
$$
dm^T_t = ((1+ \cwn)\Delta m^T_t + {\rm div}(m^T_t D_xU^T(t,x,m^T_t)))dt -\sqrt{2 \cwn}Dm^T_t \cdot dW_t
$$
with initial condition $m^T_0=\hat m_0$. \\ 

The goal of this paper is to understand the behavior of the solution to the stochastic MFG system \eqref{eq.MFGCNintro} and to the solution to the master equation \eqref{eq.masterintro} as $T\to \infty$. Let us note that, compared to the deterministic MFG system \eqref{eq.MFGdeterministic}, we cannot expect here the existence of deterministic objects as the solution $(\bar \lambda, \bar u, \bar m)$ of \eqref{eq.MFGergodeterministic}: some randomness related with the common noise must remain at the limit. This makes the problem of course  more challenging than the deterministic one.\\

We now present our main results. Let us first explain what becomes of the ergodic MFG system \eqref{eq.MFGergodeterministic}. We show that there is a deterministic constant $\bar \lambda$ for which the random MFG system, set on the whole time interval $\R$, 
$$
\left\{
\begin{array}{cl}
    (i) & d\bar u_t = \left(-(1+\cwn)\Delta \bar u_t +\bar \lambda + \frac12 |D \bar u_t|^2 -f(x, \bar \mu_t) -2 \cwn {\rm div}(\bar v_t) \right)dt + \sqrt{2 \cwn} \bar v_t\cdot dW_t, \\ [1ex]
    (ii) & d\bar \mu_t = \left((1+ \cwn)\Delta \bar \mu_t +{\rm div} (\bar \mu_t D \bar u_t) \right)dt -\sqrt{2 \cwn} D \bar \mu_t\cdot dW_t, \\ [1ex]
\end{array}
\right.
$$
has a {\it stationary} solution $(\bar u, \bar \mu, \bar v)$ which ``attracts'' all the solution to \eqref{eq.MFGCNintro} in long-time. By stationary, we roughly mean that the law of solution $(\bar u_{\cdot +h}, \bar \mu_{\cdot +h}, \bar v_{\cdot +h})$ does not depend on $h\in \R$.  This solution attracts the solution of the time dependent problem in the following sense (see Theorem \ref{thm.main}): let $(u^T,m^T,v^T)$ solves \eqref{eq.MFGCNintro} for some initial condition $\hat m_0$. Then 
\be\label{okjzneds1}
\lim_{T\to \infty} \frac{u^T_t(x)}{T}=\bar \lambda \qquad \text{a.s.}\qquad  \forall t\geq 0,  
\ee
and
\be\label{okjznedsBIS1}
\E\left[\|D\uT -D\bar u_t\|_{L^2(\T^d)}\right] +\E\left[ {\bf d}_1(\mT, \bar \mu_t)\right] \leq C(e^{-\omega t}+e^{-\omega(T-t)}) \qquad \forall t\in [0,T], 
\ee
where $C, \omega>0$ depend on the running cost $f$ and some on some bounds on the terminal cost $g$, but not on the initial condition $\hat m_0$. The constant $\bar \lambda$ is often called the ergodic constant, and can be interpreted as the average payoff of the game in long run. Note that \eqref{okjzneds1} and \eqref{okjznedsBIS1} imply that the constant $\bar \lambda$ and the stationary processes $D\bar u$ and $\bar \mu$ are unique; the relationship between the ergodic constant $\bar \lambda$ and the process $(\bar u, \bar \mu, \bar v)$ is the equality: 
$$
\bar \lambda = - \E\left[  \inte (\frac12 |D\bar u_0(x)|^2 -f(x,\bar \mu_0))dx\right],
$$
which can be formally obtained by integrating in space and expectation the equation satisfied by $\bar u$ and use the stationarity of $\bar u$ and $\bar m$. Note that, even in the uncoupled setting, (i.e., when $f$ and $g$ do not depend on $m$) the result given above seems to be new for the Hamilton-Jacobi equation: we are only aware of the work \cite{GuMa09} which studies  ergodic linear-quadratic control problems with random non Markovian coefficient, with an approach through Riccati equations.  On the other hand, the convergence of the solution to the Fokker-Planck equation with a random perturbation in the uncoupled setting has been recently addressed in \cite{DMT24, Ma23}. \\

Our results also describe the behavior of $u^T_t-\bar \lambda t$: it involves the long-time behavior of the solution $U^T$ of  the master equation \eqref{eq.masterintro}. We prove  in Theorem \ref{thm.CVU} that $U^T(0, \cdot, \cdot)-\bar \lambda T$ converges, as $T\to\infty$, to a solution $\chi$ of the ergodic master equation \begin{align*}
&\bar \lambda-(1+ \cwn) \Delta \chi(x,m)+\frac12|D   \chi(x,m)|^2 - f(x,m) \notag \\
& -(1+ \cwn)\inte \dive_y(D_m  \chi(x,m,y))m(dy) + \inte D_m  \chi(x,m,y) \cdot D_x  (y,m) m(dy) \\ 
&  - 2 \cwn \inte \dive_x(D_m  \chi(x,m,y))m(dy) - \cwn \inte\inte {\rm Tr}(D^2_{mm}  \chi(x,m,y,z))m(dy)m(dz) = 0 \notag. 
\end{align*}
This equation is stated on $\T^d\times \mathcal P(\T^d)$. The solution $\chi$ (often called a {\it corrector}) is unique up to an additive constant. It is built in Theorem \ref{lem.existence_weak_solution_cell_Master_Equation}. Let us underline the relationship between the stationary process $(\bar u,\bar \mu, \bar v)$ introduced above  and the map $\chi$: we show in Theorem \ref{prop.fullcorrector} that $\bar \mu$ is a stationary solution to the McKean-Vlasov equation:
$$
d\bar \mu_t= ((1+\cwn) \Delta \bar \mu_t + \dive(\bar \mu_t D  \chi(x, \bar \mu_t))dt - \sqrt{2\cwn} D  \bar \mu_t\cdot dW_t, \qquad t\in \R, 
$$
while $\bar u$ is given (up to an additive constant) by
$$
\bar u_t(x)= \chi(x, \bar \mu_t)\qquad \forall (t,x)\in \R\times \T^d.
$$
The convergence of $U^T(0)-\bar\lambda T$ entails the convergence of $u^T_t-\bar \lambda t$, where $(u^T,m^T,v^T)$ solves \eqref{eq.MFGCNintro}: we prove in Corollary \ref{cor.uT-lambda}  the existence of a real constant $c$, independent of the initial condition $\hat m_0$, but depending on the terminal cost $g$, such that 
$$
\Big[ \uT(x) -\bar \lambda (T-t) \Big] - \Big[ \chi( x, \bar m_t) + c \Big] \xrightarrow[T\to\infty]{} 0 \qquad \text{a.s.},
$$
locally uniformly in $(t,x)$, where $\bar m_t$ solves the same McKean-Vlasov as $\bar \mu$, but with the initial condition $\hat m_0$: 
$$
d\mS = \left((1+\cwn)\Delta \mS +{\rm div}(\mS D_x \chi( x, \mS))\right)dt -\sqrt{2\cwn} D \mS \cdot dW_t,\qquad \bar m_0= \hat m_0.
$$
It is also possible to describe the behavior of $u^T_t$ as $T$ and $t$ are both large: for instance we show that, for any $\theta\in(0,1)$,
\[
\Big[ u^T_{\theta T}(x) - (1-\theta) \bar \lambda T \Big] - \Big[ \chi( x, \bar \mu_{\theta T}) + c \Big] \xrightarrow[T\to\infty]{} 0 \qquad \text{in $L^2_\omega$}. 
\]

Let us finally note that our results also apply to the discounted MFG system with a common noise: 
\be\label{eq.MFGCNdiscintro}
\left\{
\begin{array}{l}
  du^\delta_t= \left(-(1+\cwn)\Delta u^\delta_t +\delta u^\delta+\frac12 |Du^\delta_t|^2 -f(x,m^\delta_t) -2\cwn{\rm div}(v^\delta_t)\right)dt +\sqrt{2\cwn} v^\delta_t\cdot dW_t, \\ [1ex]
   dm^\delta_t=\left((1+\cwn)\Delta m^\delta_t +{\rm div}(m^\delta_tDu^\delta_t)\right)dt -\sqrt{2\cwn}Dm^\delta_t\cdot dW_t \\ [1ex]
    m^\delta_0= m_0,  \; u^\delta\;{\rm bounded}.
\end{array}
\right.
\ee
We show, in a kind of Tauberian theorem, that the small discount behavior of the solution $(u^\delta, m^\delta, v^\delta)$ can be described by $\bar \lambda$ and $(\bar u, \bar \mu, \bar v)$  (see Theorem \ref{thm.Cudelta}).
\bigskip

Let us now say  a few words about proofs. As in \cite{CLLP2,CP19}, the starting point is a uniform Lipschitz estimate for $u^T$ and uniform bounds below and above for $m^T$, where $(u^T, m^T, v^T)$ solves \eqref{eq.MFGCNintro}:
$$
\sup_{t\in [0,T]} \|Du^T_t\|_\infty <\infty, \qquad C^{-1}\leq m^T_t \leq C \; {\rm for }\; t\in [1, T], \qquad a.s.
$$
These uniform estimates combined with Lasry-Lions monotonicity argument imply the existence of constants $\omega,C>0$ such that  any two solutions $(u^1, m^1, v^1)$ and $(u^2, m^2, v^2)$ to \eqref{eq.MFGCNintro}-(i)-(ii) on a time interval $[t_0,T]$ satisfy:
$$
\E\left[\|m^1_t-m^2_t\|_{L^2_x}+ \|Du^1_t-Du^2_t\|_{L^2_x}\right] \leq C(e^{-\omega (t-t_0)}+e^{-\omega(T-t)}) \qquad \forall t\in [t_0+2, T-2].
$$ 
Using this on larger and larger time intervals gives the existence of a stationary process $(D\bar u, \bar \mu)$ such that  \eqref{okjznedsBIS1} holds. Note that, at this stage, we only build  $D\bar u$; the construction of the stationary process $\bar u$ is much more involved and only comes at the very end of the paper. We then derive \eqref{okjzneds1} by a comparison principle between the Hamilton-Jacobi equation satisfied by $u^T$ and the one satisfied by a (non-stationary) proxi of $\bar u$ (see Theorem \ref{thm.main}). We then discuss the behavior of the discounted MFG system \eqref{eq.MFGCNdiscintro} as the discount rate tends to $0$ (Theorem \ref{thm.Cudelta}). 

The remaining part of the paper is devoted to the construction of the corrector and the limit of the time dependent master equation. The key part is the analysis on long-time of a linearized random MFG system (Theorems \ref{thm.good_bound_for_linearized_system_finite} and \ref{thm.good_bound_for_linearized_system_discounted}). These results provide estimates on the various measure derivatives of the solution to the time dependent master equation and of the infinite horizon master equation, and eventually allow to build the solution $\chi$ to the ergodic master equation (Theorem \ref{lem.existence_weak_solution_cell_Master_Equation}). If the outline of these proofs follows that of \cite{CP19}, the analysis turns out to be  much more intricate. Indeed the manipulation of the random MFG system \eqref{eq.MFGCNintro} is  much more challenging than the deterministic one studied in \cite{CP19}. For instance, many results of \cite{CP19} rely on compactness argument, which are not available in the random setting: they have to be replaced by quantitative ones, as in the proofs of Theorem \ref{thm.main} and Theorem \ref{thm.good_bound_for_linearized_system_discounted}. In addition, the regularizing aspects of the deterministic parabolic equations rely in \cite{CP19} on Schauder estimates, which are not available in the random setting for lack of regularity in time. Finally, the existence of deterministic constants at the limit are often not obvious in the random setting and have to rely on probabilistic arguments (ergodic theorem or Kolmogorov 0-1 law, for instance). As a consequence, the complete construction of the stationary solution $(\bar u, \bar \mu, \bar v)$ is  given only at the end on the paper, after the construction of the solution of the ergodic master equation (Theorem \ref{prop.fullcorrector}). 
\bigskip

Let us finally conclude with possible extensions: we have chosen in this paper to work in the flat torus $\T^d$ and with a quadratic Hamiltonian. We largely do so to simplify the notation and several  estimates. We believe however that the results still hold without much changes on a compact manifold and with a general  uniformly convex Hamiltonian. The problem on the whole space and with possibly slightly non-monotone  couplings is more challenging and will be addressed in  future works. \\

{\it Organization of the paper:} After Section \ref{Sec:NotHyp}, dedicated to the notation and the assumptions, we address in Section \ref{sec.timedep} the long-time average of the solution to the time dependent MFG system \eqref{eq.MFGCNintro} and in Section \ref{sec.discount} the limit  of the solution to the discounted MFG system \eqref{eq.MFGCNdiscintro} as the discount rate vanishes. In the intermediate and technical Section \ref{sec.linear}, we obtain   estimates for linearized MFG systems on arbitrary time intervals: these estimates are instrumental to obtain uniform in $m$ estimates for solution of the time dependent master equation and of the discounted master equation. We derive from them the existence of a corrector (Section \ref{sec.ergo}) and  the long-time behavior of the master equation (Section \ref{sec.longtimemaster}). This long-time behavior finally allows to describe the long-time behavior of the solution of the MFG system. We collect in the appendix  decay estimates for solutions of forward and backward linear random parabolic systems and conclude with new exponential decay of Gronwall type used in the main results. 

\section{Notation and assumptions} \label{Sec:NotHyp}

Throughout the paper, we fix a probability space $(\Omega,\P,\mathcal{F})$ supporting a canonical two-sided Brownian motion $(W_t)_{t\in \R}$, defined as follows:
\begin{itemize}
    \item[(i)] $\Omega = C_0(\mathbb{R}, \mathbb{R}^d)$, the space of all continuous functions $\omega : \mathbb{R} \to \mathbb{R}^d$ such that $\omega(0) = 0 \in \mathbb{R}^d$. We endow $\Omega$ with the compact-open topology, which is given by the complete metric
    \[
    d(\omega, \hat{\omega}) := \sum_{n=1}^{\infty} \frac{1}{2^n} \frac{\| \omega - \hat{\omega} \|_n}{1 + \| \omega - \hat{\omega} \|_n}, \quad \| \omega - \hat{\omega} \|_n := \sup_{|t| \leq n} \| \omega(t) - \hat{\omega}(t) \|.
    \]
    \item[(ii)] $\mathcal{F} = \mathcal{B}(\Omega)$, the Borel $\sigma$-algebra on $(\Omega, d)$.
    \item[(iii)] The Wiener measure $\mathbb{P}$ on $(\Omega, \mathcal{F})$ such that the $d$ processes $(W^1_t), \dots, (W^d_t)$ defined by 
    \[
    (W^1_t(\omega), \dots, W^d_t(\omega))^\top := \omega(t) \quad \text{for} \quad \omega \in \Omega
    \]
    are independent one-dimensional Brownian motions, i.e. for all $x \in \mathbb{R}^d$ and $t\neq 0$
    \[
    \mathbb{P} \left( \left\{ \omega \in \Omega : \omega_1(t) \leq x_1, \dots, \omega_d(t) \leq x_d \right\} \right) = \frac{1}{(2\pi t)^{d/2}} \int_{-\infty}^{x_1} \dots \int_{-\infty}^{x_d} e^{-\|y\|^2 / 2 |t|} dy_1 \dots dy_d.
    \] 
    \item[(iv)] The sub-$\sigma$-algebra $\mathcal{F}_{s,t}$ is the completion of the $\sigma$-algebra generated by $\omega(u) - \omega(v)$ for $s \leq v \leq u \leq t$. We write $\mathcal{F}_t$ for the smallest $\sigma-$algebra containing all $\mathcal{F}_{s,t}$ for any $s\leq t$. Note that $\mathcal{F}_{t,t}$ is different from $\mathcal{F}_t$, where the first one, by Blumenthal's zero-one law, is trivial, and the second one containing all the history before $t$.  
    
    \item[(v)] The family of shifts $(\theta_t)_{t\in \R}$, where for each $t\in \R$, $\theta_t:\Omega\to \Omega$ is   defined by 
    $$
    \theta_t(\omega)(\cdot)= \omega(t+\cdot)-\omega(t),\qquad \forall~\omega\in\Omega, 
    $$ 
    is measure preserving and ergodic \cite{Ki12}.
\end{itemize}

In order to solve some of the backward SPDE in the later sections, we need a  martingale representation theorem. This result is generally considered  for one-sided Brownian motion \cite{G16}. We include its extension to our framework for the completeness of the proof. 

\begin{lem}\label{lem.representation}
Given the probability space as defined above, let $T\in \R$ and  $X$ be a random variable which is $\mathcal{F}_T$-measurable and $L^2$ integrable. Then there is a  unique $(\mathcal{F}_s)_{s\leq T}$-progressively measurable process $(h_s)$, such that, for any $t < T$, $\E\left[ \int_t^T |h_s|^2ds\right]<\infty$ and  we have
\be\label{eq.martingale_representation}
X = \E [X|\mathcal{F}_t] + \int_t^T h_s d W_s.
\ee
\end{lem}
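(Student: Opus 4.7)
The plan is to reduce the two-sided statement to the classical one-sided martingale representation theorem by conditioning on the ``past'' encoded in $\mathcal{F}_t$ and representing $X$ against the independent ``future'' Brownian motion. For each fixed $t<T$, introduce the shifted process
\[
\widetilde{W}^{(t)}_r := W_{t+r}-W_t,\qquad r\in[0,T-t].
\]
By the independent-increments property of the two-sided Brownian motion, $(\widetilde W^{(t)}_r)_{r\in[0,T-t]}$ is a standard $d$-dimensional Brownian motion independent of $\mathcal{F}_t$, and (after the usual completion) one has $\mathcal{F}_T=\mathcal{F}_t\vee\sigma(\widetilde W^{(t)}_r:r\in[0,T-t])$. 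Hence $X$ is measurable with respect to this product $\sigma$-algebra and in $L^2$.

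\textbf{Conditional representation on $[t,T]$.} Apply the classical martingale representation theorem ``with parameter''. Concretely, on the product space carrying the independent pair $(\mathcal{F}_t,\widetilde W^{(t)})$, the $L^2$ random variable $X$ can be approximated by finite linear combinations of products of the form $\xi\cdot F(\widetilde W^{(t)})$ with $\xi\in L^2(\mathcal{F}_t)$ and $F$ a smooth cylinder functional; for each such product the one-sided theorem on the filtration generated by $\widetilde W^{(t)}$ produces an integrand, and multiplying by $\xi$ (which commutes with $\int \cdot\,d\widetilde W^{(t)}$ by independence) yields an $(\mathcal{F}_s)_{s\in[t,T]}$-progressive integrand. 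Passing to the $L^2$-limit via the It\^o isometry gives a unique progressively measurable $(h^{(t)}_s)_{s\in[t,T]}$ with $\E[\int_t^T|h^{(t)}_s|^2ds]<\infty$ and
\[
X=\E[X\mid\mathcal{F}_t]+\int_t^T h^{(t)}_s\,dW_s.
\]

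\textbf{Patching in $t$.} Take $t_1<t_2<T$ and apply the step above to each. Conditioning the identity at $t_1$ on $\mathcal{F}_{t_2}$ yields
\[
\E[X\mid\mathcal{F}_{t_2}]=\E[X\mid\mathcal{F}_{t_1}]+\int_{t_1}^{t_2}h^{(t_1)}_s\,dW_s,
\]
so subtracting from the identity at $t_1$ and comparing with the one at $t_2$ gives
\[
\int_{t_2}^T h^{(t_1)}_s\,dW_s=\int_{t_2}^T h^{(t_2)}_s\,dW_s.
\]
By the $L^2$-uniqueness of the It\^o integrand (It\^o isometry), $h^{(t_1)}=h^{(t_2)}$ almost everywhere on $[t_2,T]\times\Omega$. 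Choose any sequence $t_n\downarrow-\infty$ and define $h_s:=h^{(t_n)}_s$ for $s\in[t_n,T)$; consistency makes this unambiguous, progressively measurable for $(\mathcal{F}_s)_{s\leq T}$, and square-integrable on each $[t,T]$. Uniqueness of $h$ follows from the same isometry argument applied on each $[t,T]$.

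\textbf{Main obstacle.} The only non-routine point is the conditional martingale representation step: one must transfer the classical result, stated for the natural filtration of a one-sided Brownian motion, to a filtration enlarged by an \emph{independent} initial $\sigma$-algebra $\mathcal{F}_t$ of full measure-theoretic content (it encodes $W$ on the whole past $(-\infty,t]$, not a trivial completion). The independence of $\widetilde W^{(t)}$ from $\mathcal{F}_t$, which comes from the construction of the two-sided Wiener measure in the preamble, is exactly what makes the cylinder-function approximation and the Fubini-type argument work; this is the sole place where the two-sided nature of $(W_t)_{t\in\mathbb R}$ really enters the proof.
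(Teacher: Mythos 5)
Your argument is correct but takes a genuinely different route from the paper's. You fix the cut $t$ and invoke a \emph{conditional} martingale representation theorem on $[t,T]$ for the Brownian filtration initially enlarged by the independent $\sigma$-algebra $\mathcal F_t$, and then assemble a single integrand $h$ by matching the $h^{(t)}$ across different $t$ via uniqueness of It\^o integrands. The paper instead considers $X_\tau=\E[X\mid\mathcal F_{\tau,T}]$, which is adapted to the \emph{natural} filtration of the Brownian motion restarted at $\tau$, so the ordinary one-sided representation theorem applies with no enlargement; it then lets $\tau\to-\infty$, using $L^2$-bounded martingale convergence of $X_\tau$ together with a Cauchy argument on the integrands $h^\tau$ via the It\^o isometry. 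The trade-off is that your proof needs the stronger input that the predictable representation property survives an independent initial enlargement of the filtration --- you correctly flag this as the one non-routine step, and your cylinder-function sketch is a valid way to establish it --- whereas the paper works only with the textbook theorem, effectively deriving the enlarged-filtration representation as a byproduct, at the cost of an extra limiting argument in $\tau$. One small imprecision in your sketch: pulling the $\mathcal F_t$-measurable factor $\xi$ through $\int_t^T\cdot\,dW_s$ is a basic property of the It\^o integral on $[t,T]$ (the factor is known at the left endpoint), not a consequence of independence; independence is what gives the density of tensor products in $L^2(\mathcal F_T)$ and the identity $\E[\xi F(\widetilde W^{(t)})\mid\mathcal F_t]=\xi\,\E[F(\widetilde W^{(t)})]$.
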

\begin{proof}
Suppose without loss of generality that $\EP{X}=0$. Consider $X_\tau := \EP{ X|\mathcal{F}_{\tau,T}}$ where $ -\infty < \tau \leq T$. Using the fact that   $\mathcal{F}_{\tau,T}$ is independent from $\mathcal{F}_\tau$ and  \cite[Theorem 5.18]{G16}, $X_\tau$ can be expressed as
\[
X_\tau = \EP{X|\mathcal{F}_{\tau,t}} + \int_t^T h^\tau_s d W_s = \EP{X} + \int_\tau^T h^\tau_s d W_s = \int_\tau^T h^\tau_s d W_s,
\]
where $h^\tau_s$ is $\mathcal{F}_{\tau,s}$-progressively measurable and $L^2$ integrable such that
\[
\EP{X_\tau^2} = \EP{\int_\tau^T (h^\tau_s)^2 ds}. 
\]
Noting that $X_\tau$ is an $L^2$ closed backward martingale, we can hence apply Doob's inequality to derive that $X_\tau$ converges in $L^2$ to $X$ as $\tau \to -\infty$, which is equivalent to saying that $X_\tau$ is a Cauchy sequence for the $L^2$ norm. Now extend $h^\tau$ to be $0$ before $\tau$ such that it is defined on $(-\infty,T]$. Then, we find that for any $-\infty < \tau_1 < \tau_2 \leq T$,
\[
\EP{\big| X_{\tau_1} - X_{\tau_2} \big|^2} = \EP{\int_{-\infty}^T \big| h^{\tau_1}_s - h^{\tau_2}_s \big|^2 ds }.
\]
Since $X_\tau$ is a Cauchy sequence in $L^2$, the equality above shows that so is $h^\tau$ in $L^2$. By the completeness of $L^2$, we can hence find a progressively measurable process  $h$, which is the limit of $h^\tau$. Now, as $\EP{X|\mathcal{F}_{\tau,t}}$ is an $L^2$ closed martingale, it  converges to $\EPt{X}$. In conclusion, \eqref{eq.martingale_representation}  holds. The uniqueness follows from standard arguments.
\end{proof}

We recall that our state space is the $d-$dimensional torus ($d$ being a nonzero integer) $\T^d= \R^d/\Z^d$. We denote by $\mathcal P(\T^d)$ or $\mathcal P$ the set of Borel probability measures on $\T^d$, endowed with the Wasserstein distance ${\bf d}_1$ \cite{V08}. \\

For $n\in \N$ and $\beta\in [0,1)$, we denote by $C^{n+\beta}$ the space of functions $f:\T^d\to \R$ which have derivatives of order $n$ and if any derivative $D^\ell f$ with $\ell=(\ell_1, \dots, \ell_d)$, $|\ell|:=\ell_1+\dots \ell_d=n$, is $\beta-$H\"{o}lder continuous (just continuous if $\beta=0$). The standard norm on this space is denoted by $\|\cdot \|_{C^{n+\beta}}$ or $\|\cdot\|_{n+\beta}$ if there is no ambiguity. We denote by $(C^{n+\beta})'$ its dual space, with norm $\|\cdot\|_{(C^{n+\beta})'}$. If $f\in L^2(\Omega\times \T^d)$ (i.e. is measurable and square integrable), we denote by $\|f\|_{L^2_x}= (\int_{\T^d} |f(x)|^2)^{1/2}$ its (random) norm in space and by $\|f\|_{L^2_{\omega,x}}= (\E[\int_{\T^d} |f(x)|^2])^{1/2}$ its full norm. For  $f\in L^1(\T^d)$, we often write $\hat{f}$ for $\int_{\T^d} f(x) dx$ and $\tilde{f}$ for $\tilde f(x)= f(x) -\hat{f}$. 

For $n\in \N$, we denote by $H^n$ the Sobolev space $W^{n,2}(\T^d)$ and set, by convention, $H^0=L^2$. Given $t_0<T$, we let ${\bf H}^n_{t_0,T}$ (or ${\bf H}^n$ if there is no ambiguity) be the set  of $(\mathcal F_{t_0,t})$ predictable processes $u$ such that 
$$
\E\left[ \int_{t_0}^T \|u_t\|_{H^n}^2dt\right]<\infty. 
$$

\bigskip

{\bf Standing assumptions:} The coupling functions $f,g: \T^d\times \mathcal P\to \R$ are supposed to satisfy the following regularity   and monotonicity conditions: for some fixed $n\geq 3$ and $\alpha\in (0,1)$, 

\begin{itemize}
\item[{\bf (Hf)}] $f$ is of class $C^2$ with respect to the measure variable  with regularity:
$$
\begin{array}{l}
 \ds  \sup_{m\in \mathcal P} \left(\left\|f(\cdot, m)\right\|_{n+\alpha}+ \left\|\frac{\delta f(\cdot, m,\cdot)}{\delta m}\right\|_{(n+\alpha, n+\alpha)}\right)\\
 \ds \qquad  +\sup_{m\in \mathcal P}\left\|\frac{\delta^2 f(\cdot, m,\cdot,\cdot)}{\delta m^2}\right\|_{(n+\alpha,n+\alpha,n+\alpha)}
+ {\rm Lip}_n(\frac{\delta^2 f}{\delta m^2})\; <\; \infty.
\end{array}$$
(where we refer to \cite{CLLP2} for the norms and the notation $Lip_n$). 

\item[{\bf (Hg)}] in the same way, $g$  is of class $C^2$ with respect to the measure variable and with regularity: 
$$
\begin{array}{l}
\ds  \sup_{m\in \mathcal P} \left(\left\|g(\cdot, m)\right\|_{n+1+\alpha}+ \left\|\frac{\delta g(\cdot, m,\cdot)}{\delta m}\right\|_{(n+1+\alpha,n+1+\alpha)}\right)\\
 \ds \qquad  +\sup_{m\in \mathcal P}\left\|\frac{\delta^2 g(\cdot, m,\cdot,\cdot)}{\delta m^2}\right\|_{(n+1+\alpha,n+1+\alpha,n+1+\alpha)}
+ {\rm Lip}_{n+1}(\frac{\delta^2 g}{\delta m^2})\; <\; \infty.
\end{array}$$
\\ 

\item[{\bf (Hm)}] The maps $f$ and $g$ are assumed to be monotone:  for any $m\in\mathcal P$ and for any centered Radon measure $\mu$, 
\be\label{hyp:mono}
\inte\inte \frac{\delta f}{\delta m}(x,m,y)\mu(x)\mu(y)dxdy\geq 0, \qquad \inte\inte \frac{\delta g}{\delta m}(x,m,y)\mu(x)\mu(y)dxdy\geq 0.
\ee
\end{itemize}

Unless otherwise specified, these conditions {\bf (Hf), (Hg), (Hm)} are supposed to hold throughout the paper.   
Note that the requirement \( n \geq 3 \) is not  optimal. We impose it in the same spirit as the regularity condition in \cite[Theorem 2.4.5]{CDLL}, to ensure the existence of a smooth solution to the second master equation, allowing us to  discuss its long-time behavior. \\

By convention, we typically use $\beta$ or $\beta'$ to denote an arbitrary real number in the interval $(0,1)$, and $\alpha'$ to denote a number in $(0,\alpha)$, where $\alpha \in (0,1)$ is fixed as in assumptions \textbf{(Hf)} and \textbf{(Hg)}. 

\section{The time dependent problem} \label{sec.timedep}

We concentrate here on the long-time average of the time dependent MFG system with a common noise: 
\be \label{eq.MFGCN}
\left\{
\begin{array}{cl}
    (i) & d\uT = \left(-(1+\cwn)\Delta \uT +\frac12 |D \uT|^2 -f(x, \mT) -2\cwn {\rm div}(\vT) \right)dt + \sqrt{2 \cwn} v^T_t\cdot dW_t, \\ [1ex]
    (ii) & d\mT = \left((1+\cwn)\Delta \mT +{\rm div} (\mT D \uT) \right)dt -\sqrt{2 \cwn} D \mT\cdot dw_t, \\ [1ex]
\end{array}
\right.
\ee
often supplemented with the initial condition (for $m^T$) and the terminal condition (for $u^T$):
\be\label{eq.MFGCNbc}
\begin{array}{cl}
   (iii) & m^T_{t_0} = \hat{m_0}, \qquad u^T_T(x) = g(x, m^T_T).
\end{array}
\ee
This system is stated on $[t_0,T]\times \T^d$, where $t_0,T\in \R$, $t_0<T$. Throughout this part we assume that $\hat{m_0}\in \mathcal P$ is arbitrary. Our aim is to understand the limit of $\mT$ and of $\uT/T$ as $T\to\infty$. To do so, we first introduce the definition of solution for the MFG system and for the associated Hamilton-Jacobi equations.

\subsection{Solutions of Hamilton-Jacobi equations and of the MFG system} \label{subsec.HJMFG}

We first discuss  various notions of solutions for the Hamilton-Jacobi (HJ) equation
\be\label{eq.HJ}
\left\{
\begin{array}{cl}
    (i) & du_t = \left(-(1+\cwn)\Delta u_t +\frac\theta2 |D u_t|^2 -f_t(x) -2\cwn {\rm div}(v_t) \right)dt + \sqrt{2 \cwn} v_t\cdot dW_t, \\ [1ex]
    (ii) &   u_T(x) = g(x).
 \end{array}
\right.
   \ee
where $\theta\in \{0,1\}$, $f:\Omega\times [t_0,T]\times \T^d\to \R$ is measurable and adapted to the filtration $(\mathcal F_{t_0,t})$ and $g:\Omega\times\T^d\to \R$ is $\mathcal F_{t_0,T}\otimes \mathcal B(\T^d)-$ measurable. The analysis of backward stochastic HJ equation goes back to the pioneering work by Hu-Peng \cite{HP91} (for linear equations) and Peng \cite{Pe92} (for backward random HJ equations) and is further discussed for instance in \cite{DuTa12, DuMe13, DuChen14, HMY02, MaYo97}. We follow here  \cite{DuChen14}. Assuming that 
$$
{\rm ess-sup}_{(\omega,t,x)} |f_t(x)| + |g(x)| < \infty, 
$$
Theorem 2.1 and Theorem 2.2 of \cite{DuChen14} state (in a much more general framework) that there exists a unique {\it weak solution} $(u,v)\in {\bf H}^1\times [{\bf H}^0]^d$ to \eqref{eq.HJ}, in the sense that, for any test function $\phi\in C^\infty(\T^d)$ and a.s., for any $t\in [t_0,T]$, 
$$ 
\int_{\T^d} g \phi -\int_{\T^d} u_t \phi = \int_t^T \int_{\T^d} (-(1+\sigma) u_s \Delta \phi + (\frac{\theta}{2} |Du_s|^2-f_s)\phi +2\cwn v_s\cdot D\phi)ds
+ \sqrt{2\cwn}  \int_t^T \int_{\T^d} \phi v_s\cdot dW_s. 
$$
The regularity of the solution can be improved under stronger assumptions on the data: 

\begin{prop}\label{prop.HJ} Fix $n\geq 2$ and $\beta\in (0,1)$. Suppose that 
\be\label{hyp.gfHJH}
{\rm ess-sup}_{(\omega,t)}( \|f_t\|_{C^{n+\alpha}}+ \|g\|_{C^{n+1+\alpha}})< \infty. 
\ee
Then there exists a unique {\rm strong solution} to \eqref{eq.HJ}, in the sense that  $u$ has path in $C^0([t_0,T], C^{n+1})$, with 
\(
\sup_{t \in [t_0,T]} \| u_t\|_{C^{n+1+\alpha}} \in L^\infty(\Omega),
\)
$v\in [{\bf H}^{n+1}]^d$ and equality
$$
u_t(x) = g(x) - \int_t^T (-(1+\sigma) \Delta u_s(x) +\frac{\theta}{2} |Du_s(x)|^2-f_s +2\cwn {\rm div} (v_s(x)))ds
- \sqrt{2\cwn}  \int_t^Tv_s(x)\cdot dW_s 
$$
holds a.s. for any $t\in [t_0,T]$ and for a.e. $x\in \T^d$.

Moreover, the following comparison holds: if $(f^1,g^1)$ and $(f^2,g^2)$ satisfy \eqref{hyp.gfHJH} with $f^1\geq f^2$ and $g^1\leq g^2$ a.s., and if $(u^1,v^1)$ and $(u^2,v^2)$ are the strong solution to \eqref{eq.HJ} with $(f,g)$ replaced by $(f^1,g^1)$ and by $(f^2,g^2)$ respectively, then $u^1\leq u^2$ a.s.
\end{prop}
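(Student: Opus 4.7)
The plan is to upgrade the weak solution supplied by \cite{DuChen14} into a strong solution by iterating spatial a priori estimates, after first obtaining an $L^\infty$ Lipschitz bound on $u$. For $\theta=1$, the stochastic control representation
\[
u_t(x) = \operatorname*{ess\,inf}_{\alpha} \mathbb{E}\!\left[\int_t^T \Bigl(\tfrac12|\alpha_s|^2 + f_s(X^{t,x,\alpha}_s)\Bigr)\,ds + g(X^{t,x,\alpha}_T) \,\Big|\, \mathcal{F}_{t_0,t}\right],
\]
with $dX^{t,x,\alpha}_s = \alpha_s\,ds + \sqrt{2}\,dB_s + \sqrt{2\cwn}\,dW_s$ and $X^{t,x,\alpha}_t = x$, gives $\sup_{t,\omega}\|Du_t\|_\infty \le T\|f\|_{C^1} + \|g\|_{C^1}$ by propagating identical controls from two initial points $x$ and $y$; for $\theta=0$ the analogous Feynman-Kac representation for the linear equation plays the same role.

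Given this Lipschitz control, the Hamiltonian $\tfrac{\theta}{2}|Du|^2$ becomes a bounded first-order drift, and differentiating the equation in $x$ produces, for each multi-index $|\beta|\le n+1$, a linear backward SPDE for $\partial^\beta u$ whose source and coefficients involve only previously controlled derivatives of $u$ together with spatial derivatives of $f$ and $g$ of order at most $n+1+\alpha$. Iterating either a Hölder-type a priori estimate for linear backward SPDEs or a sufficiently high-order Sobolev estimate followed by embedding yields $\sup_t\|u_t\|_{C^{n+1+\alpha}}\in L^\infty(\Omega)$ and $v\in [\mathbf{H}^{n+1}]^d$. Existence at this regularity level is then obtained by mollifying $f,g$ into smooth adapted data, solving the regularized problem by Picard iteration in the quadratic nonlinearity, and passing to the limit using these uniform estimates.

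For uniqueness and comparison, set $w = u^1-u^2$. Because $Du^i$ is bounded, the Hamiltonian linearizes as $\tfrac{\theta}{2}(|Du^1|^2 - |Du^2|^2) = b\cdot Dw$ with $b := \tfrac{\theta}{2}(Du^1+Du^2)\in L^\infty$, so $w$ solves the linear backward SPDE
\[
dw_t = \Bigl[-(1+\cwn)\Delta w_t + b\cdot Dw_t - (f^1-f^2) - 2\cwn\,\dive(v^1-v^2)\Bigr]dt + \sqrt{2\cwn}\,(v^1-v^2)\cdot dW_t,
\]
with $w_T = g^1-g^2$. Uniqueness follows by applying Itô to $\|w_t\|_{L^2_x}^2$: integration by parts turns the Laplacian into $-2(1+\cwn)\|Dw_t\|_{L^2_x}^2$, the drift contribution $b\cdot Dw\cdot w$ is absorbed by Young's inequality, the martingale increment vanishes in expectation, and a backward Gronwall argument on $\mathbb{E}\|w_t\|_{L^2_x}^2$ forces $w\equiv 0$. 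The comparison principle is obtained by the same computation applied to $\|w_t^+\|_{L^2_x}^2$: since $w_T^+\equiv 0$ and the source $-(f^1-f^2) \le 0$, the identical Gronwall argument yields $w^+\equiv 0$, i.e.\ $u^1\le u^2$ a.s.

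The main obstacle is exactly the absence of Schauder theory in the random setting that the authors already flag in the introduction: one cannot read off $C^{n+1+\alpha}$ control from $C^{n+\alpha}$ data in a single parabolic step because backward SPDEs lack time regularity in the Brownian direction. The real technical ingredient is therefore the Hölder (or high-order Sobolev plus embedding) a priori estimate used in the bootstrap of step two, and it is precisely this unavoidable loss that makes the blanket hypothesis $n\ge 3$ comfortable rather than sharp.
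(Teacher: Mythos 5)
Your strategy is \emph{direct}: stay within the backward SPDE formulation and bootstrap spatial regularity. The paper takes a structurally different route: it transfers the equation to the shifted variables $\shif u_t(x)=u_t(x+\sqrt{2\cwn}(W_t-W_{t_0}))$, where \eqref{eq.HJshif} is a deterministic parabolic equation with random coefficients and terminal condition; existence and $C^{n+1+\alpha}$-regularity of $(\shif u,\shif M)$ then come \emph{pathwise} from the deterministic Schauder machinery of \cite[Theorem 4.3.1]{CDLL}, and the strong solution $(u,v)$ of \eqref{eq.HJ} is recovered from $(\shif u,\shif M)$ by Itô--Wentzell on mollified data combined with the $L^2$-identification of \cite{DuMen10}. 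The whole point of this detour is to \emph{avoid} the step your proof leans on.

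And that is precisely where the gap lies. In Step 2 you assert a Hölder-type (or high-order Sobolev plus embedding) a priori estimate for linear backward SPDEs that produces $\sup_t\|u_t\|_{C^{n+1+\alpha}}\in L^\infty(\Omega)$ from $C^{n+\alpha}$ data, but you never supply it, and in the last paragraph you explicitly acknowledge that such Schauder control is unavailable in the BSPDE setting because of the lack of time regularity in the Brownian direction. You cannot defer the central obstacle and also rely on it. The literature you implicitly invoke (Du--Meng, Du--Chen) gives Sobolev $\mathbf{H}^k$ control and an $L^\infty$ bound, not an essentially bounded $C^{n+1+\alpha}$ bound pathwise in time; Sobolev embedding will lose regularity relative to what the statement claims and will not by itself give the $L^\infty(\Omega)$ control on $\sup_t\|u_t\|_{C^{n+1+\alpha}}$. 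The paper circumvents exactly this by passing to $\shif u$, where $\omega$-by-$\omega$ deterministic parabolic regularity \emph{is} available.

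Two smaller remarks. First, the stochastic control representation used for the Lipschitz bound is plausible (the paper uses a semiconcavity/comparison argument on the shifted equation instead), but it needs the Peng-type verification theorem for backward stochastic HJB with random, $\mathcal F_{t_0,t}$-adapted running cost, which is itself a nontrivial ingredient worth citing. Second, in the uniqueness and comparison step, the Itô computation on $\|w_t\|_{L^2_x}^2$ and $\|w_t^+\|_{L^2_x}^2$ must account for the cross term between $\dive(v^1-v^2)$ in the drift and the quadratic variation $2\cwn\|v^1-v^2\|_{L^2_x}^2\,dt$: the sum
$2(1+\cwn)\|Dw\|^2 + 4\cwn\langle Dw,\,v^1-v^2\rangle + 2\cwn\|v^1-v^2\|^2 = 2\|Dw\|^2 + 2\cwn\|Dw+(v^1-v^2)\|^2 \ge 0$
is what closes the estimate, and this step is not visible in your sketch. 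For comparison, applying Itô to $\|w^+\|_{L^2_x}^2$ also requires justifying Itô--Tanaka for the BSPDE and then the same cancellation on the set $\{w>0\}$. These are fixable, but the missing Schauder bound in Step 2 is not a detail — it is the heart of the proposition, and your proof does not actually provide it.
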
 

The uniqueness of the strong solution is of course a direct consequence of the uniqueness of the weak solution. To prove the existence part of the result, let us introduce the shifted Hamilton-Jacobi equation: set 
\be\label{def.shiffg}
\shif f_t(x)= f(x+\sqrt{2\cwn}(W_t-W_{t_0})), \qquad {\rm and}\qquad 
\shif g(x)= g(x+\sqrt{2\cwn}(W_T-W_{t_0}))
\ee
and consider the shifted Hamilton-Jacobi equation 
\be\label{eq.HJshif}
\left\{
\begin{array}{cl}
(i)& d\shif u_t= \left(-\Delta \shif u_t +\frac\theta2 |D  \shif u_t|^2 -\shif f_t(x) \right)dt +d\shif M_t \\ [1ex]
(ii) & \shif u_T(x)=\shif g(x)
\end{array}\right. 
\ee
Here the unknown is $(\shif u, \shif M)$ where $(\shif M_t)$ is a martingale with $\shif M_{t_0}=0$. 
Under assumption \eqref{hyp.gfHJH}, \cite[Theorem 4.3.1]{CDLL} states that equation \eqref{eq.HJshif} has a unique classical solution, in the sense that $(\shif u,\shif M)$ is an $(\mathcal F_{t_0,t})_{t\in [t_0,T]}$-adapted process with paths in
\(
C^0([t_0,T], C^{n+1}(\T^d)  \times [C^{n-1}(\T^d)]^d),
\)
such that 
\(
\sup_{t \in [t_0,T]} \|\shif u_t\|_{C^{n+1+\alpha}}+\|\shif M_t\|_{[C^{n-1+\alpha}]^d}  \in L^\infty(\Omega),
\)
and $(\shif M_t(x))$ is an $(\mathcal F_{t_0,t})$ martingale for any $x\in \T^d$. In addition, equalities (i) and (ii) hold a.s. in a classical sense for any $(t,x)$. The  link between \eqref{eq.HJ} and \eqref{eq.HJshif} is the fact that, if $(u,v)$ solves \eqref{eq.HJ} and if we set 
$$
\shif u_t(x)=  u_t(x+\sqrt{2\cwn}(W_t-W_{t_0})), 
$$
$$
\shif v_t(x)= D \shif u_t(x)+ v_t(x+\sqrt{2\cwn}(W_t-W_{t_0}))\qquad {\rm and}\qquad \shif M_t = \sqrt{2\cwn} \shif v_t\cdot W_t,
$$
then, by formally applying It\^{o}-Wentzell formula, $(\shif u, \shif M)$ solves \eqref{eq.HJshif}. However, in general, the regularity of $(u,v)$ is not strong enough to justify this computation (unless $n$ is large, see for instance \cite{DM25}). Nevertheless we have:

\begin{lem}\label{lem.HJshif} Assume that \eqref{hyp.gfHJH} holds. Let $(\shif u, \shif M)$ be the classical solution to \eqref{eq.HJshif} and let 
\be\label{equalityushifu}
u_t(x)= \shif u_t(x-\sqrt{2\cwn}(W_t-W_{t_0})).
\ee
Then there exists $v\in {\bf H}^{n+1}$ such that $(u,v)$ is the unique strong solution to \eqref{eq.HJ}. 
\end{lem}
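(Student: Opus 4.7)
The plan is to invert the transformation described in the paragraph preceding the lemma: starting from the classical solution $(\shif u, \shif M)$ of \eqref{eq.HJshif} and setting $u$ by the space shift \eqref{equalityushifu}, construct $v$ from the martingale representation of $\shif M$, and verify via an It\^{o}--Wentzell formula that $(u,v)$ solves \eqref{eq.HJ}.

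First, I would apply Lemma \ref{lem.representation} (with the closed martingale $\shif M_T(x)$ for each fixed $x\in\T^d$) to obtain a predictable random field $\shif z$ such that
\[
\shif M_t(x) = \int_{t_0}^t \shif z_s(x)\cdot dW_s\qquad \text{for each }x, \text{ a.s.}
\]
Applying the representation successively to spatial derivatives $D^\alpha \shif M_t$ shows that $\shif z$ inherits (through the classical-solution regularity of $\shif M$) enough spatial regularity for $\mathrm{div}\,\shif z_t$ to be well-defined and controlled in the norms needed. Setting $Z_t := \sqrt{2\cwn}(W_t-W_{t_0})$, define
\[
v_t(x) := \tfrac{1}{\sqrt{2\cwn}}\,\shif z_t(x - Z_t) - Du_t(x).
\]
The spatial regularity of $u$ is inherited from that of $\shif u$ via the rigid translation, and the claimed integrability of $v$ follows from that of $\shif z$ together with $Du$.

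The core step is an It\^{o}--Wentzell computation applied to $u_t(x) = \shif u_t(X_t)$, where $X_t := x-Z_t$ satisfies $dX_t = -\sqrt{2\cwn}\,dW_t$. Using \eqref{eq.HJshif} and the martingale representation of $\shif M$, one obtains
\begin{align*}
du_t(x) &= \bigl[-\Delta \shif u_t(X_t) + \tfrac{\theta}{2}|D\shif u_t(X_t)|^2 - \shif f_t(X_t)\bigr]dt + \shif z_t(X_t)\cdot dW_t\\
&\quad - \sqrt{2\cwn}\, D\shif u_t(X_t)\cdot dW_t + \cwn\,\Delta \shif u_t(X_t)\,dt - \sqrt{2\cwn}\,\mathrm{div}(\shif z_t)(X_t)\,dt.
\end{align*}
Since $\shif f_t(X_t) = f_t(x)$, $D\shif u_t(X_t) = Du_t(x)$, $\Delta \shif u_t(X_t) = \Delta u_t(x)$, and $\mathrm{div}\,v_t(x) = \frac{1}{\sqrt{2\cwn}}(\mathrm{div}\,\shif z_t)(X_t) - \Delta u_t(x)$, the drift reorganizes into $-(1+\cwn)\Delta u_t + \tfrac{\theta}{2}|Du_t|^2 - f_t - 2\cwn\,\mathrm{div}(v_t)$ and the martingale part into $\sqrt{2\cwn}\,v_t\cdot dW_t$, matching \eqref{eq.HJ}-(i). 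The terminal condition $u_T(x) = \shif u_T(x-Z_T) = \shif g(x-Z_T) = g(x)$ follows directly from \eqref{def.shiffg}.

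The principal obstacle is the rigorous justification of It\^{o}--Wentzell: the classical solution $\shif u$ is smooth enough in $x$ (since $n+1\geq 3$) to handle the first- and second-order spatial terms, but the cross term involves $D\shif z_t$, which requires spatial regularity of the martingale integrand that is not automatic from the martingale representation alone. This can be addressed by a spatial mollification argument---convolve $\shif u$ and $\shif z$ with a smooth kernel in $x$, apply the substitution formula to the smooth objects where it is classical, and pass to the limit using the uniform regularity bounds of the classical solution stated before the lemma. Uniqueness of the strong solution is an immediate consequence of the uniqueness of the weak solution recalled before Proposition \ref{prop.HJ}.
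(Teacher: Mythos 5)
Your overall architecture (apply It\^o--Wentzell to $\shif u_t(x-Z_t)$, then justify by mollification) is the same as the paper's, but you build $v$ in the opposite direction and this introduces a real gap. The paper first invokes the linear BSPDE theory \cite[Theorem 2.3]{DuMen10} to obtain a strong solution $(u,v)$ with the stated Sobolev regularity, and only then uses mollification and It\^o--Wentzell to identify $u$ with the shifted field $\shif u_t(\cdot - Z_t)$. You instead construct $v$ from a pointwise-in-$x$ martingale representation of $\shif M$. The difficulty is that Lemma \ref{lem.representation} applied for each fixed $x$ only yields an $L^2$-in-$\omega$ integrand, and nothing guarantees the resulting random field $\shif z_t(x)$ is even jointly measurable, let alone spatially differentiable with $\shif z\in[\mathbf H^{n+1}]^d$. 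Your remark that ``applying the representation successively to $D^\alpha \shif M$ shows that $\shif z$ inherits enough spatial regularity'' presupposes that the integrand of $D^\alpha \shif M_t(x)$ is $D^\alpha$ of the integrand of $\shif M_t(x)$; that commutation is not automatic from the representation theorem and must be established. The paper avoids this entirely: it gets $v$ with the right regularity from \cite{DuMen10} up front, and the martingale representation (via \cite[Lemma 3.2]{DM25}) is invoked only for the \emph{mollified} object $\shif M^\ep$, where arbitrary smoothness is available.

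A second gap concerns the nonlinear case $\theta=1$. Mollifying $\shif u$ spatially gives $\shif u^\ep=\eta^\ep*\shif u$, but $\shif u^\ep$ does \emph{not} solve the quadratic HJ equation with mollified data, because convolution does not commute with the nonlinearity $|D\cdot|^2$. The paper handles this by treating $\tilde{\shif f}_t=\frac12|D\shif u_t|^2-\shif f_t$ as a fixed source and reducing to the linear case ($\theta=0$), which you never mention. Without this freezing step, your phrase ``apply the substitution formula to the smooth objects'' has no equation for $\shif u^\ep$ to substitute into. These two points -- where the regularity of $v$ comes from, and how to pass through the nonlinearity -- are the substance of the proof, and both are missing or implicit in your sketch.
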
 

\begin{proof} We first do the proof in the case $\theta=0$. In view of \cite[Theorem 2.3]{DuMen10}, there exists a unique strong solution $(u,v)$ to the linear equation \eqref{eq.HJ} with $\theta=0$, where ``strong solution'' in \cite{DuMen10} means  that $u\in {\bf H}^{2}$, $v\in {\bf H}^{1}$, $u\in C^0([t_0,T],L^2(\T^d))$ and equality 
$$
u_t(x) = g(x) - \int_t^T (-(1+\sigma) \Delta u_s -f_s +2\cwn {\rm div} (v_s))ds
- \sqrt{2\cwn}  \int_t^Tv_s\cdot dW_s 
$$
holds a.s. for any $t\in [t_0,T]$ and for a.e. $x\in \T^d$. In addition, \cite[Theorem 2.3]{DuMen10} actually proves that  $u\in {\bf H}^{n+2}$, $v\in [{\bf H}^{n+1}]^d$, $u\in C^0([t_0,T],H^n)$. Let us show that  \eqref{equalityushifu} holds. As explained above, the difficulty is that, unless $n$ is large, we cannot apply the Ito-Wentzell formula directly.  

To overcome this issue, let us introduce a smooth mollifier $\eta:\R^d\to \R_+$ with a compact support and such that $\int_{\R^d} \eta =1$, and set $\eta^\ep (x)= \ep^{-d}\eta (x/\ep)$. Define $\shif f^\ep= \eta^\ep\ast \shif f_t$, $\shif g^\ep= \eta^\ep \ast \shif g$. By the uniqueness claim of \cite[Theorem 4.3.1]{CDLL} the unique solution to \eqref{eq.HJshif} with $\theta=0$ and $\shif f^\ep$ and $\shif g^\ep$ in place of $\shif f$ and $\shif g$ is given by $\shif u^\ep=\eta^\ep \ast \shif u$ and $\shif M^\ep= \eta^\ep\ast \shif M$. Note now that, for any $k\in \N$,  $k\geq 2$, $(\shif u^\ep, \shif M^\ep)$ has paths in
\(
C^0([t_0,T], C^{k}(\T^d)  \times [C^{k-2}(\T^d)]^d),
\)
such that 
\(
\sup_{t \in [t_0,T]} \|\shif u^\ep_t\|_{C^{k+\alpha}}+\|\shif M^\ep_t\|_{[C^{k-2+\alpha}]^d}  \in L^\infty(\Omega).
\)
 Following \cite[Lemma 3.2]{DM25}, there exists a process $\shif k^\ep\in \bigcap_{k} {\bf H}^k$ such that, for any multiindex $a\in \N^d$,  
$$
D^a \shif M^\ep_{t}(x)= \int_{t_0}^t D^a\shif k^\ep_{ s}(x) dW_s
$$
and, for any $k\in \N$,  
$$
\E\left[\int_{t_0}^T \|\shif k^\ep_{ s}\|_{C^k}^2ds\right]<\infty. 
$$
Let us set 
$$
u^\ep_{t}(x)=  \shif u^\ep_{t}(x-\sqrt{2\cwn}(W_t-W_{t_0})), \qquad v^\ep_{t}(x)= ({\bf k}^\ep_{t}-D\shif u^\ep_{t})(x-\sqrt{2\cwn}(W_t-W_{t_0})). 
$$
By It\^{o}-Wentzell formula (see \cite[Theorem 3.3.1]{K90}) the pair $(u^\ep, v^\ep)$ is a strong solution to the linear equation \eqref{eq.HJshif} with $\theta=0$ and $f^\ep= \eta^\ep \ast f$, $g^\ep= \eta^\ep\ast g$ in place of $f$ and $g$. In view of \cite[Theorem  2.3]{DuMen10}, the difference $(u-u^\ep, v-v^\ep)$ (which is a strong solution to an equation of the form \eqref{eq.HJ} with $\theta=0$ and $f-f^\ep$ and $g-g^\ep$ in place of $f$ and $g$) satisfies, for any $k\in \N$, 
\begin{align*}
& \E\left[ \int_{t_0}^T (\|(u_t-u^\ep_{t})(\cdot)\|^2_{H^{k+2}(\T^d)} +\|(v_t-v^\ep_{t})(\cdot)\|^2_{[H^{k+1}(\T^d)}]^d)dt \right]  \\
& \qquad \qquad \qquad \leq C
\E\left[ \int_{t_0}^T \|(f_t-f^\ep_{t})(\cdot)\|^2_{H^{k}(\T^d)}dt +\|(g-g^\ep)(\cdot)\|^2_{H^{k+1}(\T^d)} \right] .
\end{align*}
In view of our assumption \eqref{hyp.gfHJH}, the right-hand side tends to $0$ for $k=n$ as $\ep$ tends to $0$. Hence $u^\ep$ tends to $u$ in ${\bf H}^{n+2}$. On the other hand, by construction, the pointwise limit of $u^\ep_{t}(x)$ is nothing but $\shif u_t(x-\sqrt{2\cwn}(W_t-W_{t_0}))$. Thus equality \eqref{equalityushifu} holds and, from the regularity of $\shif u$, we infer that 
$$
\sup_t\| u_t(\cdot)\|_{C^{n+1+\alpha}}= 
\sup_t \| \shif u_t(\cdot)\|_{C^{n+1+\alpha}} \in L^\infty(\Omega). 
$$

We now assume that $\theta=1$ and let $(\shif u, \shif M)$ be the unique classical solution to the HJ equation \eqref{eq.HJshif}. We note that $(\shif u, \shif M)$ can also be seen as  the unique classical solution to the linear equation \eqref{eq.HJshif} with $\theta=0$ and $\shif f$ replaced by $\tilde{\shif f}_t= \frac12 |D\shif u_t|^2 -\shif f_t$. As 
$$
{\rm ess-sup}_{(\omega, t)} \| \tilde{\shif f}_t\|_{C^{n+\alpha}} < \infty 
$$
by the regularity of $\shif u$, the first part of the proof implies that, if we define $u$ by \eqref{equalityushifu}, then there exists $v\in {\bf H}^{n+1}$ such that equality \eqref{equalityushifu} holds and $(u,v)$ is a strong solution (in the sense of \cite{DuMen10}) of  \eqref{eq.HJshif} with $\theta=0$ and where $ f$ is replaced by $\tilde f_t= \frac12 |Du_t|^2 - f_t$. But then $(u,v)$ solves \eqref{eq.HJ} in a strong sense. 
\end{proof}

\begin{proof}[Proof of Proposition \ref{prop.HJ}] The uniqueness of the solution to \eqref{eq.HJ} has been discussed above, while the existence is given by \cite[Theorem 4.3.1]{CDLL} combined with Lemma \ref{lem.HJshif}. The comparison is a  consequence of the corresponding comparison theorem in \cite{CSS22} for the classical solution to the shifted HJ equation \eqref{eq.HJshif}, again combined with Lemma \ref{lem.HJshif}.
\end{proof}

Now we discuss the notion of solution for the MFG system \eqref{eq.MFGCN}. In the paper, we  mostly work with {\it classical solutions} (or simply {\it solutions}):  

\begin{defn}[Classical solution to \eqref{eq.MFGCN}] The triple ($u,m,v)$ is a classical solution to \eqref{eq.MFGCN} if it is an $(\mathcal F_{t_0,t})_{t\in [t_0,T]}$-adapted process with paths in
\(
C^0([t_0,T], C^{n+1}(\T^d) \times \mathcal{P} \times [C^{n-1}(\T^d)]^d),
\)
such that
\(
\sup_{t \in [t_0,T]} \left( \|\uT(t)\|_{n+1+\alpha'} + \|\vT(t)\|_{n-1+\alpha'} \right) \in L^\infty(\Omega).
\)
In addition, equation \eqref{eq.MFGCN}-(i) holds in the classical sense with probability one, while \eqref{eq.MFGCN}-(ii) is understood in the sense of distributions, again with probability one. 
\end{defn}
Under our standing assumptions,  it is proved in \cite[Corollary 2.4.7]{CDLL} that the MFG system \eqref{eq.MFGCN}-\eqref{eq.MFGCNbc} admits a unique classical solution.
We note that in \cite{CDLL}, the Hamiltonian is  assumed to be globally Lipschitz continuous. It is known that this condition can actually be relaxed \cite{CSS22, CDbook, DM25} thanks to a uniform Lipschitz estimate for $u$ (see also Lemma \ref{lem.regularity}).\\

We will often study System \eqref{eq.MFGCN}-\eqref{eq.MFGCNbc} with a random initial measure $\hat{m_0}$. If $\hat{m_0}$ is $\mathcal F_{t_0}-$measurable, (respectively $\mathcal F_{t_1,t_0}$ measurable  for some $t_1<t_0$) then a simple adaptation of \cite{CDLL} shows that there exists a unique strong solution adapted to the filtration  $(\mathcal F_{t})_{t\leq T}$ (respectively to the filtration $(\mathcal F_{t_1,t})_{t\in [t_1,T]}$). Note also that, if $\hat{m_0}$ is deterministic, then a solution for  $(\mathcal F_{t_0,t})_{t\in [t_0,T]}$  is automatically a solution for the larger filtrations $(\mathcal F_{t})_{t\leq T}$ or $(\mathcal F_{t_1,t})_{t\in [t_1,T]}$.\\

The proof of the existence of a classical solution to the MFG system \eqref{eq.MFGCN}-\eqref{eq.MFGCNbc} in  \cite{CDLL} relies on two main ingredients. The first one is the analysis of a shifted system that we describe next as it plays a key role in our analysis. The second one is the construction of a classical solution to the master equation, that we will discuss after Section~\ref{sec.ergo}. \\

To describe the shifted MFG system, let us fix $t_0,T\in \R$ with $t_0<T$ and  introduce the random maps (for $x\in \T^d$ and $m\in \mathcal P(\T^d)$): 
$$
\shif f_t(x,m)= f(x+\sqrt{2\cwn}(W_t-W_{t_0}),  (id+\sqrt{2\cwn}(W_t-W_{t_0}))\sharp m)
$$
and
$$
\shif g_T(x,m)= g(x+\sqrt{2\cwn}(W_T-W_{t_0}),  (id+\sqrt{2\cwn}(W_T-W_{t_0}))\sharp m). 
$$
Note that $\shif f$ is adapted to $(\mathcal F_{t_0,t})_{t\geq t_0}$ and that $g_T$ is $\mathcal F_{t_0,T}-$measurable. The shifted MFG system is the backward-forward system 
\be\label{eq.MFGCNtilde}
\left\{
\begin{array}{cl}
(i)& d\shif u_t= \left(-\Delta \shif u_t +\frac12 |D  \shif u_t|^2 -\shif f_t(x,\shif m_t) \right)dt +d\shif M_t \\ [1ex]
(ii) & d\shif m_t=\left(\Delta \shif m_t +{\rm div}(\shif m_t D  \shif u_t)\right)dt \\ [1ex]
\end{array}\right. 
\ee
supplemented with the initial and terminal conditions
\be\label{eq.MFGCNtildebc}
\begin{array}{cl}
(iii) & \shif m_{t_0}= \hat{m_0}, \qquad \shif u_T(x)=\shif g_T(x,\shif m_T)
\end{array}
\ee
In the system above, the unknown is the triple $(\shif u, \shif m, \shif M)$, where for any $x\in \T^d$, $\shif M_t(x)$ is a martingale (with $\shif M_{t_0}=0$). Note that the second equation is a deterministic Fokker-Planck equation with random coefficient. This will play a central role in our analysis below. 

By a solution to \eqref{eq.MFGCNtilde}, we mean the following: 
\begin{defn}[Solution to the shifted system \eqref{eq.MFGCNtilde}] By a solution $(\shif u, \shif m, \shif M)$ to \eqref{eq.MFGCNtilde} we mean that $(\shif u, \shif m, \shif M)$ is an $(\mathcal F_{t_0,t})_{t\in [t_0,T]}-$adapted process with path in $C^0([t_0,T], C^{n+1}(\T^d)\times \mathcal P(\T^d)\times [C^{n-1}(\T^d)]^d)$, such that
\(
\sup_{t \in [t_0,T]} \left( \|\shif u_t\|_{C^{n+1+\alpha}} + \|\shif M_t\|_{[C^{n-1+\alpha}]^d} \right) \in L^\infty(\Omega)
\) 
and such that, for each $x\in \T^d$, $t\to \shif M_t(x)$ is a martingale vanishing at $t_0$. The first equation in \eqref{eq.MFGCNtilde} holds in a classical sense with probability 1 for any $(t,x)$, while, still with probability 1, the second one  is understood in the sense of distributions. 
\end{defn}
According to \cite[Section 5.5]{CDLL}, there exists a unique solution $(\shif u, \shif m, \shif M)$ to the shifted MFG system  \eqref{eq.MFGCNtilde}-\eqref{eq.MFGCNtildebc}.
As for system \eqref{eq.MFGCN}-\eqref{eq.MFGCNbc}, it will also be convenient to work with a random initial measure and for the larger filtration  $(\mathcal F_{t})_{t\leq T}$. \\

The relationship between the MFG system \eqref{eq.MFGCN} and the shifted MFG system \eqref{eq.MFGCNtilde} is the following: if $(u, m, v)$  is a classical solution of \eqref{eq.MFGCN} on the time interval $[t_0,T]$ and if we set 
\be\label{shifumVsum}
\shif u_t(x)= u_t(x+\sqrt{2\cwn}(W_t-W_{t_0})), \; \shif m_t = (\operatorname{Id}-\sqrt{2\cwn}(W_t-W_{t_0}))\sharp m_t,
\ee
and
\be\label{shifumVsumbis}
\shif v_t(x)= D \shif u_t(x)+ v_t(x+\sqrt{2\cwn}(W_t-W_{t_0}))\qquad {\rm and}\qquad \shif M_t = \sqrt{2\cwn} \shif v_t\cdot W_t,
\ee
then by It\^{o}-Wentzel formula, $(\shif u, \shif m, \shif M)$ solves the shifted MFG system \eqref{eq.MFGCNtilde} (see the discussion in \cite[Section 5.5]{CDLL}). 

In \cite{CDLL}, we build a solution to the MFG system from the shifted one. The difficult part for this is the construction of the $v$ component of the solution to the MFG system, which requires a representation for the martingale part $\shif M$ of the solution to the shifted MFG system. This is delicate question in general: on this topic, see \cite{DM25} for instance. In \cite{CDLL} we build the $v$ component from the  solution to the master equation, which requires the specific structure of the MFG system \eqref{eq.MFGCN}: namely, the maps $f$ and $g$ have to be deterministic and  smooth in measure. 

We will need below to build a solution to the  MFG system \eqref{eq.MFGCN} when the terminal condition $g$ is smooth in space but just Lipschitz in measure, or when the  terminal condition is random. To fix the ideas, we now discuss this latter case. More precisely, let us assume that $\bar u_T:\Omega\times \T^d$ is $\mathcal F_T-$measurable and such that
\be\label{Hyp.baruT}
{\rm ess-sup} \|\bar u_T\|_{C^{n+1+\alpha}} < \infty. 
\ee
Note that we can no longer build a classical solution to \eqref{eq.MFGCN} with a terminal condition $\bar u_T$ as in \cite{CDLL} from the shifted system, as $\bar u_T$ is random. For this reason we introduce a notion of weak solution to the MFG system as follows: 

\begin{defn}[Weak solution to   \eqref{eq.MFGCN}] \label{def.weaksol} The triple ($u,m,v)$ is a weak solution to \eqref{eq.MFGCN}  if  $v\in [{\bf H}^{n+1}]^d$ and  ($u,m)$ is an $(\mathcal F_{t_0,t})_{t\in [t_0,T]}$-adapted process with paths in
\(
C^0([t_0,T], C^{n+1}(\T^d) \times \mathcal{P} ),
\)
such that 
\(
\sup_{t \in [t_0,T]}  \|\uT(t)\|_{n+1+\alpha}  \in L^\infty(\Omega).
\)
In addition, we assume that for any $t\in [t_0,T]$ and with probability one and for a.e. $x\in \T^d$, one has 
\be\label{zkhebsrndfc}
u_t(x)= \bar u_T(x)+ \int_t^T  \left(-(1+\cwn)\Delta u_s(x) +\frac12 |D u_s(x)|^2 -f(x, m_s) -2\cwn {\rm div}(v_s(x)) \right)ds + \sqrt{2 \cwn}  \int_t^Tv_s\cdot dW_s,
\ee
while \eqref{eq.MFGCN}-(ii) is understood in the sense of distributions, again with probability one. 
\end{defn}

The main difference between classical and weak solution is, on the one hand, the regularity of $v$ and, on the other hand, the sense in which the HJ equation is satisfied: in the weak setting, $(u,v)$ is supposed to be a strong solution (in the sense of Proposition \ref{prop.HJ}) to the HJ equation \eqref{eq.MFGCN}-(i). 

\begin{prop}\label{prop.shifumVsum} Under our standing assumption and if $\bar u_T$ satisfies \eqref{Hyp.baruT},  there exists a unique weak solution $(u,m,v)$ to \eqref{eq.MFGCN} with initial condition $m_{t_0}=\hat{m_0}$ and terminal condition $u_T=\bar u_T$. In addition, $v\in {\bf H}^n$ and if we set 
$$
\shif{\bar u}_T (x)=  \bar u_T(x+\sqrt{2\cwn}(W_T-W_{t_0}))
$$
and let  $(\shif u, \shif m, \shif M)$ be the unique classical solution  to \eqref{eq.MFGCNtilde} with initial condition $\shif m_{t_0}=\hat{m_0}$ and terminal condition $\shif u_T= \shif{\bar u}_T$, then equalities \eqref{shifumVsum} hold. 
\end{prop}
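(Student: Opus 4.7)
The plan is to reduce the problem to the already-known well-posedness of the shifted MFG system \eqref{eq.MFGCNtilde}, which is the only place where the randomness of the terminal datum is actually delicate. Since $\bar u_T$ is $\mathcal F_T$-measurable and satisfies \eqref{Hyp.baruT}, the shifted terminal datum $\shif{\bar u}_T$ inherits the same uniform spatial $C^{n+1+\alpha}$ bound. A straightforward adaptation of \cite[Section 5.5]{CDLL} to the enlarged filtration $(\mathcal F_t)_{t\leq T}$ then yields a unique solution $(\shif u,\shif m,\shif M)$ of \eqref{eq.MFGCNtilde} with $\shif m_{t_0}=\hat m_0$ and $\shif u_T=\shif{\bar u}_T$, enjoying the expected regularity.

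First I would use $(\shif u,\shif m,\shif M)$ to \emph{define} the candidates
\[
u_t(x):=\shif u_t\bigl(x-\sqrt{2\cwn}(W_t-W_{t_0})\bigr),\qquad m_t:=\bigl(\operatorname{Id}+\sqrt{2\cwn}(W_t-W_{t_0})\bigr)\sharp \shif m_t,
\]
which, in view of the regularity of $\shif u$ and the continuity of $\shif m$, gives $u\in C^0([t_0,T],C^{n+1+\alpha})$ with ${\rm ess\text{-}sup}\,\|u_t\|_{n+1+\alpha}<\infty$ and $m\in C^0([t_0,T],\mathcal P)$, both $(\mathcal F_t)$-adapted. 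A direct computation yields $\shif f_t(x,\shif m_t)=f(x+\sqrt{2\cwn}(W_t-W_{t_0}),m_t)$, so the shifted HJ equation in \eqref{eq.MFGCNtilde} may be rewritten as a shifted HJ equation with forcing term $\tilde{\shif f}_t(x):=f(x+\sqrt{2\cwn}(W_t-W_{t_0}),m_t)$ and terminal condition $\shif{\bar u}_T$, both of which meet the hypothesis \eqref{hyp.gfHJH} uniformly in $\omega$ because of \textbf{(Hf)} and \eqref{Hyp.baruT}.

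The second step is to produce $v$ and prove \eqref{zkhebsrndfc}. For this I would freeze the coupling, i.e.\ apply Lemma \ref{lem.HJshif} (with $\theta=1$) to the pair $(\tilde{\shif f}_t,\shif{\bar u}_T)$. The lemma provides a process $v\in\mathbf H^{n+1}\subset\mathbf H^n$ such that $(u,v)$ is the unique strong solution of \eqref{eq.HJ} with $f_t(x)=f(x,m_t)$ and $g=\bar u_T$; in particular the identity \eqref{zkhebsrndfc} holds in the required sense and equality \eqref{equalityushifu} (which here reads $\shif u_t(x)=u_t(x+\sqrt{2\cwn}(W_t-W_{t_0}))$) is automatic. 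To recover the Fokker--Planck equation \eqref{eq.MFGCN}-(ii), I would test $m_t$ against $\varphi\in C^\infty(\T^d)$: writing $\langle\varphi,m_t\rangle=\langle \varphi(\cdot+\sqrt{2\cwn}(W_t-W_{t_0})),\shif m_t\rangle$, applying the deterministic Fokker--Planck equation satisfied by $\shif m_t$ to the time-dependent random test function $\varphi(\cdot+\sqrt{2\cwn}(W_t-W_{t_0}))$, and using It\^o--Wentzell to account for the stochastic translation gives, after collecting the quadratic variation terms, precisely
\[
d\langle\varphi,m_t\rangle=\langle (1+\cwn)\Delta\varphi+D\varphi\cdot Du_t,\,m_t\rangle\,dt+\sqrt{2\cwn}\langle D\varphi,m_t\rangle\cdot dW_t,
\]
which is \eqref{eq.MFGCN}-(ii) tested against $\varphi$.

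Uniqueness then follows by reversing the transformation: if $(u,m,v)$ is any weak solution, define $\shif u$, $\shif m$, $\shif M$ by \eqref{shifumVsum}--\eqref{shifumVsumbis}, where the It\^o--Wentzell step on $u$ is justified exactly as in the proof of Lemma \ref{lem.HJshif} by mollification (using $v\in\mathbf H^n$). The resulting triple solves \eqref{eq.MFGCNtilde} with the same data, and uniqueness of the shifted system forces $(u,m,v)$ to coincide with the process constructed above. The main obstacle will be the It\^o--Wentzell computation for the measure component $m$, since $\shif m_t$ is only a measure and the random space-shift must be handled against smooth test functions; a secondary but routine difficulty is the mollification argument needed to legitimate the forward It\^o--Wentzell step in the uniqueness part, but this is handled by the same scheme as in Lemma \ref{lem.HJshif}.
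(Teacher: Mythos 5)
Your overall plan is the same as the paper's: take the classical solution $(\shif u,\shif m,\shif M)$ of the shifted system, define $(u,m)$ by \eqref{shifumVsum}, invoke Lemma \ref{lem.HJshif} to produce $v\in\mathbf H^{n+1}$ and establish \eqref{zkhebsrndfc}, then verify \eqref{eq.MFGCN}-(ii), and finally run the argument backwards for uniqueness via the characterization through \eqref{eq.MFGCNtilde} and the $L^2$-uniqueness of \cite{DuChen14}. Where you diverge is precisely the step you yourself flag as ``the main obstacle'': establishing the Fokker--Planck equation \eqref{eq.MFGCN}-(ii) for the pushed-forward measure $m_t$. You propose an It\^o--Wentzell product rule for $\langle\varphi(\cdot+\sqrt{2\cwn}(W_t-W_{t_0})),\shif m_t\rangle$, but because $\shif m_t$ is only a measure-valued finite-variation process, this is not a pointwise It\^o--Wentzell of the usual kind and does require either Krylov's distribution-valued version (which the paper cites as \cite[Theorem 1.1]{K11} in Corollary \ref{cor.uT-lambda} for exactly the same kind of random shift) or a direct justification. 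The paper instead avoids any It\^o--Wentzell machinery here by a discretization: it partitions $[t_0,T]$, freezes the test function $\boldsymbol{\phi}_{t_k}$ on each subinterval so that the pathwise weak form of the deterministic Fokker--Planck equation applies, uses classical It\^o on the increment of the frozen test function, sums, and passes to the limit. Both routes are viable; the paper's is more elementary and self-contained, while yours is shorter but needs the distribution-valued It\^o--Wentzell citation to close the gap you have identified. Your remark that the mollification scheme of Lemma \ref{lem.HJshif} would cover this is not quite accurate: that scheme addresses the lack of smoothness of the martingale part $v$ of the \emph{backward} HJ equation, whereas here the problem is the lack of smoothness of the measure $\shif m_t$, so a different mechanism (frozen test functions, or the Krylov theorem) is what is actually needed.
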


\begin{rmk}\label{rem.prop.shifumVsumextended} A symmetric result holds when the terminal condition is of the form $u_T=g(\cdot, m_T)$, where $g$ is bounded in $C^3$ is space (uniformly in $x$), monotone and Lipschitz in measure. 
\end{rmk} 

Let us note that \cite[Section 5.5]{CDLL} implies the existence of a  unique classical solution  $(\shif u, \shif m, \shif M)$ to \eqref{eq.MFGCNtilde} with initial condition $\shif m_{t_0}=\hat{m_0}$ and terminal condition $\shif u_T= \shif{\bar u}_T$. Indeed, in contrast with the construction of the solution of the MFG system, the construction of the solution to the shifted MFG system in   \cite[Section 5.5]{CDLL}  does not use the special structure of the data.

\begin{proof} To build the solution, let  $(\shif u, \shif m, \shif M)$ be the unique classical solution  to \eqref{eq.MFGCNtilde} with initial condition $\shif m_{t_0}=\hat{m_0}$ and terminal condition $\shif u_T= \shif{\bar u}_T$ and let us define $(u,m)$ by \eqref{shifumVsum}. By Lemma \ref{lem.HJshif}, there exists  $v\in {\bf H}^{n+1}$ such that  equality \eqref{zkhebsrndfc} holds.  It remains to check that \eqref{eq.MFGCN}-(ii) holds in the sense of distribution. The proof of of this fact is standard and we only check it. Let $\phi\in C^\infty_c((t_0,T)\times \T^d)$  and let us set $\boldsymbol{\phi}_t(x)= \phi_t(x+\sqrt{2\cwn}(W_t-W_{t_0}))$. Fix $n\in \N\backslash \{0\}$ and, for  $k\in \{0, n-1\}$, let $t_k= t_0+ (T-t_0)k/n$. As $\shif m$ is a weak solution, we have a.s: 
$$
\int_{\T^d} \boldsymbol{\phi}_{t_k}(\shif m_{t_{k+1}}-\shif m_{t_k}) = \int_{t_k}^{t_{k+1}} \int_{\T^d} (\Delta \boldsymbol{\phi}_{t_k}- D\shif u_s\cdot D\boldsymbol{\phi}_{t_k})\shif m_sds. 
$$
On the other hand, by It\^{o}'s formula, 
$$
\int_{\T^d} (\boldsymbol{\phi}_{t_{k+1}}-\boldsymbol{\phi}_{t_k})\shif m_{t_{k+1}}  = \int_{t_k}^{t_{k+1}}\int_{\T^d} \sigma \Delta \boldsymbol{\phi}_s \shif m_{t_{k+1}}ds + 
\sqrt{2\cwn} \int_{t_k}^{t_{k+1}}\int_{\T^d} D\boldsymbol{\phi}_s \shif m_{t_{k+1}}\cdot dW_s. 
$$
Summing over $k$ and changing variable in space, we find 
$$
0= \sum_{k=1}^{n-1}\int_{t_k}^{t_{k+1}} \int_{\T^d} ((1+\sigma) \Delta \phi_{t_k}- D u_s\cdot D{\phi}_{t_k}) m_sds
+ \sqrt{2\cwn}  \sum_{k=1}^{n-1}\int_{t_k}^{t_{k+1}}\int_{\T^d} D{\phi}_s m_{t_{k+1}}\cdot dW_s. 
$$
We let then $n\to\infty$ to obtain the result by continuity of the processes. This shows that equation (ii) holds a.s. in the sense of distributions. \\

The uniqueness of the weak solution is a direct consequence of its characterization through the solution of \eqref{eq.MFGCNtilde} that we check now. Let us  fix a  weak solution $(u, m, v)$ to \eqref{eq.MFGCN} with initial condition $m_{t_0}=\hat{m_0}$ and terminal condition $\bar u_T$. We define $(\shif u, \shif m)$ by \eqref{shifumVsum}. Let us show the existence of a process $\shif M$ such that $(\shif u, \shif m, \shif M)$ is a (and in fact the unique) solution to \eqref{eq.MFGCNtilde} with initial condition $\shif m_{t_0}=\hat{m_0}$ and terminal condition $\shif u_T=\bar{\shif u}_T$.  We have seen above that there exists a unique classical solution  $(\shif w, \shif M)$ to the shifted HJ equation 
$$
d\shif w_t =  \left(-\Delta \shif w_t +\frac12 |D \shif w_t|^2 -f_t(x, \shif  m_t)  \right)dt + d\shif M_t, \qquad \shif w_T=\bar{\shif u}_T. 
$$
By Lemma \ref{lem.HJshif}, we know that there exists $z\in {\bf H}^{n+1}$ such that, if we set
$$
w_t(x)= \shif w_t(x- \sqrt{2\cwn}(W_T-W_{t_0}),
$$
then $(w,z)$  is a strong solution to the HJ equation \eqref{eq.MFGCN}-(i). But $(u,v)$ is another solution to this HJ equation and  \cite[Theorem 2.2]{DuChen14} says that this solution is unique: thus $(u,v)=(w,z)$. This proves that $\shif w=\shif u$ and thus the process $(\shif u, \shif m, \shif M)$ solves \eqref{eq.MFGCNtilde} initial condition $\shif m_{t_0}=\hat{m_0}$ and with terminal condition $\shif u_T=\bar{\shif u}_T$.
\end{proof}

\subsection{Long-time average of the time dependent problem} 

In this part we show the existence of a stationary (weak) solution to \eqref{eq.MFGCN}-(i)-(ii) defined on the whole time interval $\R$ and the convergence in a suitable sense of the solution to \eqref{eq.MFGCN} to this stationary solution as $T\to \infty$. \\

Let us start with the existence of a  stationary solution: 

\begin{lem}[The stationary weak solution] \label{lem.Wmu} 
There exists a stationary process $(\bar{\mathcal V}, \bar \mu)_{t\in \R}$, adapted to the filtration $(\mathcal F_t)$ and taking values in $\mathcal C^0(\R, [C^{2}(\T^d)]^d\times \mathcal P)$, with
$$
{\rm essup} \sup_t (\|\bar{\mathcal V}_t\|_{[C^{2+\beta}(\T^d)]^d}+ \|\bar \mu_t\|_{C^{\beta}(\T^d)})<\infty,
$$ 
where $\beta$ is arbitrary in $[0,1)$, such that $\bar{\mathcal V}_t$ is a gradient for any $t$ and a.s.. Moreover, for any $t_0<T$, the unique weak solution $(w, m, z)$ (in the sense of Definition \ref{def.weaksol}) to \eqref{eq.MFGCN}-(i)-(ii) on the time interval $[t_0,T]$ with initial condition $m_{t_0}= \bar \mu_{t_0}$ and terminal condition $w_T= \bar w_T$, satisfies $Dw_t= \bar{\mathcal V}_t$ and $m_t= \bar \mu_t$ for any $t\in [0,T]$. In addition, there is a constant $C>0$ such that 
$C^{-1}\leq \bar \mu\leq C$ a.s. 
\end{lem}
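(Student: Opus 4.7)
The plan is to build $(\bar{\mathcal V}, \bar \mu)$ as the limit of weak solutions of \eqref{eq.MFGCN}-(i)-(ii) on ever-longer two-sided time intervals, and then to upgrade that limit to a stationary process via the Wiener shift $\theta_h$. Fix a deterministic reference measure $\pi_0\in\mathcal P$. For each integer $N\geq 1$, let $(u^N,m^N,v^N)$ be the classical solution of \eqref{eq.MFGCN} on $[-N,N]$ with $m^N_{-N}=\pi_0$ and terminal condition $u^N_N=g(\cdot,m^N_N)$, whose existence and uniqueness are guaranteed by Section \ref{subsec.HJMFG}. For $N_1<N_2$, the restriction of $(u^{N_2},m^{N_2})$ to $[-N_1,N_1]$ is another solution of \eqref{eq.MFGCN}-(i)-(ii) on that sub-interval (with random endpoint data), so the Lasry--Lions monotonicity-based exponential stability recalled in the introduction yields, for every $t$ with $|t|\leq N_1-2$,
\[
\E\bigl[\|Du^{N_1}_t - Du^{N_2}_t\|_{L^2_x} + \|m^{N_1}_t - m^{N_2}_t\|_{L^2_x}\bigr] \leq C\bigl(e^{-\omega(t+N_1)}+e^{-\omega(N_1-t)}\bigr).
\]
Hence $(Du^N_t, m^N_t)$ is Cauchy in $L^2_{\omega,x}$ and I would define $(\bar{\mathcal V}_t, \bar \mu_t)$ as its limit.

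For the regularity, the uniform Lipschitz estimate on $u^N$ and the two-sided bound $C^{-1}\leq m^N_t\leq C$ on $[-N+1,N]$ (both uniform in $N$) pass to the limit; higher regularity I would extract by going through the shifted system \eqref{eq.MFGCNtilde}: after the random change of variables \eqref{shifumVsum}, the Fokker--Planck equation for $\shif{m}^N$ is a deterministic parabolic equation with smooth bounded drift $D\shif{u}^N$, so classical parabolic Schauder estimates give uniform $C^\beta$ control on $\shif{m}^N$ and uniform $C^{2+\beta}$ control on $D\shif{u}^N$. Un-shifting and passing to the limit yields the claimed regularity of $(\bar{\mathcal V}, \bar \mu)$. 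Because each $Du^N_t$ is a gradient and the set of gradient fields is closed in $L^2_x$, $\bar{\mathcal V}_t$ remains a gradient a.s. Finally, $(u^N_t, m^N_t)$ is $\mathcal F_{-N,t}$-measurable and $\mathcal F_{-N,t}\subset \mathcal F_t$, so the limit is $(\mathcal F_t)$-adapted.

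For stationarity, the key observation is that the MFG system is time-autonomous as a functional of the Brownian path: replacing $W$ by the shifted path $\theta_h W$ amounts to solving the same MFG with the original noise on the time-shifted interval $[-N+h,N+h]$ and re-indexing times by $t\mapsto t+h$. Passing to the limit in $N$ gives, almost surely,
\[
\bar{\mathcal V}[\theta_h W]_t = \bar{\mathcal V}[W]_{t+h}, \qquad \bar \mu[\theta_h W]_t = \bar \mu[W]_{t+h}.
\]
Since $\theta_h$ is measure-preserving on $(\Omega,\P)$, the law of $(\bar{\mathcal V}_{\cdot+h}, \bar \mu_{\cdot+h})$ is independent of $h$. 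For the characterization by endpoints: given $t_0<T$, I fix an antiderivative $\bar w$ of $\bar{\mathcal V}$ by imposing $\int_{\T^d}\bar w_t(x)\,dx = 0$ and use the martingale representation of Lemma \ref{lem.representation} on $\bar w_T$ to produce a process $\bar v\in [\mathbf{H}^{n+1}]^d$; one then checks by passing the equations \eqref{eq.MFGCN}-(i)-(ii) to the limit in $N$ that $(\bar w,\bar \mu,\bar v)$ is a weak solution on $[t_0,T]$ with the prescribed endpoints. Uniqueness of the weak solution (Proposition \ref{prop.shifumVsum}) then forces $Dw=\bar{\mathcal V}$ and $m=\bar \mu$ for any other weak solution sharing those endpoints.

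The main obstacle is the stability estimate used in the Cauchy step: although it is quoted earlier for two classical solutions, here it must be applied to pairs whose initial measures and terminal value functions are random (the restrictions at $\pm N_1$ of solutions defined on larger intervals). The Lasry--Lions duality computation is pathwise, hence survives after taking $\E$, but one must verify that the uniform-in-$N$ Lipschitz bound on the $u$-component and the bilateral Gaussian bound $C^{-1}\leq m^N_t\leq C$ remain valid with random endpoint data; this is the role of the shifted system \eqref{eq.MFGCNtilde}, where deterministic parabolic techniques apply pathwise. A secondary subtlety is justifying the shift identity at the level of the limiting (weak) objects in a way that is compatible with the non-Markovian filtration and with the equation \eqref{eq.MFGCN}-(ii) being satisfied only in the sense of distributions.
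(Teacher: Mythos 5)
Your construction, regularity, gradient-closedness, adaptedness, and stationarity-via-Wiener-shift steps all match the paper's argument in substance (the paper uses initial/terminal data $\delta_0$ and $0$ rather than your $\pi_0$ and $g(\cdot,m^N_N)$, but Lemma~\ref{lem.keyuniqueCT} makes that immaterial). The genuine problem is your final step, the characterization of $(\bar{\mathcal V},\bar\mu)$ as the gradient/measure of the unique weak solution with prescribed endpoints.

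You fix a normalized antiderivative $\bar w$ of $\bar{\mathcal V}$ with $\int_{\T^d}\bar w_t=0$ and claim one can ``pass the equations to the limit in $N$'' to see that $(\bar w,\bar\mu,\bar v)$ solves \eqref{eq.MFGCN}-(i)-(ii). This fails. Integrating \eqref{eq.MFGCN}-(i) over $\T^d$ shows that $\hat u^N_t:=\int_{\T^d}u^N_t$ satisfies
\[
d\hat u^N_t=\Bigl(\int_{\T^d}(\tfrac12|Du^N_t|^2-f(x,m^N_t))\,dx\Bigr)dt+\hat v^N_t\cdot dW_t,
\]
with a drift that does not vanish: indeed Theorem~\ref{thm.main} shows $\hat u^N_t$ grows linearly with $N$. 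Hence the normalized process $\tilde u^N_t=u^N_t-\hat u^N_t$ does \emph{not} solve \eqref{eq.MFGCN}-(i); it solves the modified equation with an extra time-dependent random drift $-\int_{\T^d}(\tfrac12|Du^N_t|^2-f(x,m^N_t))\,dx$. Passing to the limit therefore cannot produce a weak solution of \eqref{eq.MFGCN}-(i). A second issue: the process $\bar v$ you take from Lemma~\ref{lem.representation} applied to $\bar w_T$ represents only the martingale part of $\E[\bar w_T\mid\mathcal F_t]$; it is not the adjoint process of the BSPDE \eqref{eq.MFGCN}-(i), which is determined by the whole backward dynamics and not by the terminal datum alone.

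The paper resolves this differently. It does \emph{not} normalize. It takes the $\mathcal F_T$-measurable terminal datum $\bar w_T$ defined by $D\bar w_T=\bar{\mathcal V}_T$ and $\bar w_T(0)=0$, invokes Proposition~\ref{prop.HJ} to get the unique strong solution $(w,z)$ of the backward HJ equation with source $f(\cdot,\bar\mu_\cdot)$ and terminal condition $\bar w_T$ (existence and uniqueness are unconditional here since we solve backward from time $T$), and then proves $Dw_t=\bar{\mathcal V}_t$ by a nontrivial two-step argument: (a) a comparison-principle estimate showing $\E[\|w_t-w^{T_1}_t+\E[w^{T_1}_T(0)\mid\mathcal F_t]\|_\infty]\leq Ce^{-c\omega(T_1-T)}$, which controls the drifting spatial average; and (b) a Lasry--Lions duality computation with $\bar\mu$ and the equation for $a_t=w_t-w^{T_1}_t+\E[w^{T_1}_T(0)\mid\mathcal F_t]$, yielding $\E[\int_{t_0}^T\int_{\T^d}|Dw_t-Dw^{T_1}_t|^2\bar\mu_t]\to 0$ as $T_1\to\infty$. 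Only then does uniqueness of the weak solution (Proposition~\ref{prop.shifumVsum}) enter. You should replace your ``pass-to-the-limit plus uniqueness'' shortcut with this comparison/duality argument (or something that handles the divergent spatial average in a comparably careful way).
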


Recall that the weak solution $(w, m, z)$  to \eqref{eq.MFGCN}-(i)-(ii) on the time interval $[t_0,T]$ with initial condition $m_{t_0}= \bar \mu_{t_0}$ and terminal condition $w_T= \bar w_T$ exists and is unique by Proposition \ref{prop.shifumVsum}. 

We will see that the pair $(\bar{\mathcal V}, \bar \mu)$ is actually unique, as it attracts all the solutions of the MFG system. At this stage it is not clear that the process $w$ can be built in a stationary way: we will see in Theorem \ref{prop.fullcorrector}  below (with the help of the master equation), that this is actually possible. \\

The long-time average of the solution is described next: 
\begin{thm}\label{thm.main}  Fix $C>0$ and $t_0\in \R$. Let $(\bar{\mathcal V}, \bar \mu)$  be the stationary solution as defined by Lemma \ref{lem.Wmu} and, for any $T>t_0$, let $(u^T, m^T, v^T)$ be a classical solution to \eqref{eq.MFGCN} on the time interval $[t_0,T]$ with initial condition $m^T_{t_0}=\hat{m_0}$ and satisfying 
\be\label{cond.termcond}
{\rm ess-sup} \|u^T_T\|_{C^{2}} \leq C. 
\ee
Then, for any $\beta\in (0,1)$ there exists $C_\beta$, depending on  $C$, $f$, $g$, $d$ and $\beta$ only, such that
 \be\label{ineq.expo}
 \E\left[\|(m^T-\bar \mu)_t\|_{C^\beta}+ \|(Du^T-\bar{\mathcal V})_t\|_{L^2_x}\right] \leq C_\beta(e^{-\omega (t-t_0)}+e^{-\omega(T-t)}) \qquad \forall t\in [t_0+2, T-2]
 \ee
 and, for any fixed $t\geq t_0$, 
$$
\lim_{T\to\infty}  \frac{u^T_{t}(x)}{T} = \bar \lambda \qquad {\rm in}\; L^1(\Omega), 
$$
the limit being uniform in $x\in \T^d$, where 
\be\label{defbarlambda}
\bar \lambda = - \E\left[  \inte (\frac12 |\bar{\mathcal V}_0(x)|^2 -f(x,\bar \mu_0))dx\right].
\ee
\end{thm}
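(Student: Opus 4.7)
The plan is to couple $(u^T, m^T, v^T)$ with the stationary pair $(\bar{\mathcal V}, \bar\mu)$ of Lemma \ref{lem.Wmu} via the weak solution $(\bar w, \bar \mu, \bar z)$ provided by Proposition \ref{prop.shifumVsum} on $[t_0,T]$ (with initial condition $\bar\mu_{t_0}$ and terminal condition $\bar w_T$), then to exploit the Lasry--Lions monotonicity to obtain \eqref{ineq.expo}, and finally to integrate the Hamilton--Jacobi equation in $x$ and invoke the ergodicity of the shifts $(\theta_t)$ to identify $\bar\lambda$. Writing $\psi := u^T-\bar w$ and $\mu := m^T-\bar\mu$, I would first derive, via an It\^{o}--Wentzell computation on $\int_{\T^d} \psi_t\mu_t \, dx$, the Lasry--Lions energy identity
\begin{equation*}
\E\!\Big[\!\int_{\T^d}\psi_{t_0}\mu_{t_0}\Big] - \E\!\Big[\!\int_{\T^d}\psi_T\mu_T\Big] = \tfrac12 \int_{t_0}^T\E\!\int_{\T^d}(m^T+\bar\mu)|D\psi|^2 + \int_{t_0}^T\E\!\int_{\T^d}(f(x,m^T)-f(x,\bar\mu))(m^T-\bar\mu).
\end{equation*}
The two stochastic integrals driven by $W$ in the HJ and Fokker--Planck equations combine with the It\^{o} cross-variation so that all common-noise contributions vanish in expectation, while the quadratic Hamiltonian produces the $|D\psi|^2$ dissipation; by \eqref{hyp:mono} the right-hand side is non-negative.

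The uniform Lipschitz bound on $u^T$ (Lemma \ref{lem.regularity}), the condition \eqref{cond.termcond}, and the regularity of $\bar w$ from Lemma \ref{lem.Wmu}, together with the translation-invariance $\int \mu = 0$, bound the two boundary terms by a constant independent of $T$ and $\hat m_0$. Combined with the uniform lower bound $\bar\mu \geq C^{-1}$ from Lemma \ref{lem.Wmu}, the identity yields the integral estimate $\int_{t_0}^T \E\|D\psi_t\|_{L^2_x}^2 \, dt \leq C$. To convert this into the two-sided exponential \eqref{ineq.expo}, one uses that $E(t):=\E\int_{\T^d}\psi_t\mu_t\,dx$ is non-increasing in $t$ and that, for every sub-interval $[s_1,s_2]\subset[t_0,T]$, the same identity gives $\int_{s_1}^{s_2}\E\|D\psi\|^2_{L^2_x}\,ds \leq C(E(s_1)-E(s_2))$. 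The integral bound guarantees the existence of a "good" time $t^\ast$ in the middle of $[t_0,T]$ where $\E\|D\psi_{t^\ast}\|^2_{L^2_x}$ is small; restarting the analysis on the two sub-intervals $[t_0,t^\ast]$ and $[t^\ast,T]$, each of which carries weak solutions of the MFG system sharing uniform Lipschitz and lower-bound constants, and iterating on successive dyadic scales, upgrades the integral bound to the sought exponential decay. This quantitative iteration is the main obstacle: in the deterministic setting of \cite{CLLP2} the analogous step uses compactness of bounded-Lipschitz families of MFG solutions, which is not available for random processes, and must be replaced by the explicit dyadic estimates just described, crucially using the monotonicity of $E$ and the uniform positivity of $\bar\mu$. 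The $C^\beta$ bound on $m^T-\bar\mu$ then follows by feeding the $L^2$ estimate on $D(u^T-\bar w)$ into the linear random Fokker--Planck equation satisfied by $m^T-\bar\mu$ and invoking the forward decay estimates collected in the appendix.

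For the long-time average, integrate \eqref{eq.MFGCN}-(i) in $x$: periodicity kills the Laplacian and the divergence of $v^T$, and taking expectation kills the stochastic integral, leaving
\[
\tfrac1T \E\!\int_{\T^d} u^T_t \, dx = \tfrac1T \E\!\int_{\T^d} u^T_T\,dx + \tfrac1T\int_t^T\E\!\int_{\T^d}\Big(f(x,m^T_s)-\tfrac12|Du^T_s|^2\Big)dx\,ds.
\]
The terminal term is $O(1/T)$ by \eqref{cond.termcond}, and the exponential bound \eqref{ineq.expo} together with the uniform Lipschitz bound $\|Du^T\|_\infty \leq C$ allows one to replace $(m^T_s, Du^T_s)$ by $(\bar\mu_s, \bar{\mathcal V}_s)$ under the integrand with total error $O(1/T)$. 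Since $(\bar{\mathcal V}, \bar\mu)$ is stationary and the shift semigroup $(\theta_t)$ is measure-preserving and ergodic on $(\Omega,\mathcal F,\P)$, Birkhoff's theorem applied to the bounded stationary process $s \mapsto \int_{\T^d}(f(x,\bar\mu_s)-\tfrac12|\bar{\mathcal V}_s|^2)\,dx$ gives the a.s. and $L^1$ convergence of its time average to its expectation, which equals $\bar\lambda$ by \eqref{defbarlambda}. Finally, the uniform Lipschitz bound on $u^T$ gives $|u^T_t(x) - \int_{\T^d} u^T_t(y)\,dy| \leq C$ uniformly in $x$, so dividing by $T$ and passing to the limit yields the stated $L^1$ convergence $u^T_t(x)/T \to \bar\lambda$ uniformly in $x\in \T^d$.
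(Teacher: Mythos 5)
Your plan has the right ingredients and the right spirit, but two steps have genuine gaps.

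\textbf{Exponential decay.} Your Lasry--Lions identity and the consequent bound $\int_{t_0}^T\E\|D\psi\|_{L^2_x}^2 \,dt \leq C$ are correct, as is the monotonicity of $E(t)=\E\int\psi_t\mu_t$. But monotonicity plus a finite integral is \emph{not} enough to force exponential decay of $\E\|D\psi_t\|^2$ or of $\E\|\mu_t\|^2$: a function like $E(t)=1/(1+t)$ is monotone with bounded variation but decays only polynomially. The "good time $t^\ast$" you produce controls only $\E\|D\psi_{t^\ast}\|^2$, and restarting on $[t^\ast, T]$ requires smallness of $\E\|\mu_{t^\ast}\|^2$ and of $\E\|\tilde\psi_{t^\ast}\|^2$ as well, which the integral bound on $\|D\psi\|^2$ does not deliver by itself. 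The paper's argument (Lemma \ref{lem.keyuniqueCT} together with Lemma \ref{lem.expodecay}) closes this gap by coupling three quantities: $\beta(t)=\E\|D\psi_t\|^2$ from the duality/monotonicity, $\gamma(t)=\E\|\mu_t\|^2$ which decays \emph{forward} in time by the random Fokker--Planck estimate (Lemma \ref{lemCLLP2.1}) and is driven by $\beta$, and $\alpha(t)=\E\|\tilde\psi_t\|^2$ which decays \emph{backward} in time by the BSPDE estimate (Lemma \ref{lemCLLP2.2}) and is driven by $\beta$ and $\gamma$; one also needs Poincar\'e's inequality $\alpha\le C\beta$. The four hypotheses \eqref{hypexpo1}--\eqref{hypexpo4} of the appendix encode exactly this triangular structure, and the subdivision argument in Lemma \ref{lem.expodecay} uses all four in an essential way to prove the contraction $e(2\tau)\le\frac12 e(\tau)$. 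Your dyadic iteration, as described, uses only \eqref{hypexpo1} and does not state the forward/backward parabolic inputs, so it does not close.

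\textbf{Mode of convergence.} You integrate the HJ equation and take \emph{unconditional} expectation, which kills the stochastic integral but then leaves you with the convergence of the scalar quantity $\frac1T\E\int_{\T^d}u^T_t\,dx$ to $\bar\lambda$. The theorem asserts convergence of the random variable $u^T_t(x)/T$ to $\bar\lambda$ in $L^1(\Omega)$, i.e.\ $\E\big|u^T_t(x)/T-\bar\lambda\big|\to 0$. Birkhoff's theorem applied to the stationary integrand does deliver an a.s.\ and $L^1$ convergence, but to transfer it to $u^T_t$ one must express $\int_{\T^d}\bar w_{t_0}$ as a conditional expectation given $\mathcal F_{t_0}$ of the Birkhoff time average (as the paper does), and then pass from $\bar w$ to $u^T$ by the stochastic comparison principle and the exponential bound on $\E\,{\bf d}_1(\bar\mu_t,m^T_t)$. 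As written, your argument proves convergence of $\E[u^T_t(x)]/T$, which is strictly weaker than the claim.

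Also a minor point: the inequality for the sub-intervals you use is $\int_{s_1}^{s_2}\E\|D\psi\|^2\le C(E(s_1)-E(s_2))$, where the constant absorbs $\min \bar\mu$; that part is fine, but note it only bounds $\beta$, reinforcing that you still need separate inputs for $\alpha$ and $\gamma$.
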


Of course, the result applies to the classical solution $(u^T, m^T, v^T)$ of \eqref{eq.MFGCN}-\eqref{eq.MFGCNbc}.

\begin{rmk}
    A consequence of Corollary \ref{cor.uT-lambda} below on the long-time behavior of the master equation is the sharper convergence: 
    \[
        \lim_{T \to \infty} \frac{u^T_t(x)}{T} = \bar{\lambda} \qquad \text{a.s.,}
    \]
which  holds for any fixed \( t \geq t_0 \) and \( x \in \T^d \).
\end{rmk}

The rest of the section is devoted to the proof of Lemma \ref{lem.Wmu} and Theorem \ref{thm.main}. We start with a regularity result. 

\begin{lem}\label{lem.regularity} Fix $C>0$. There exists $C_0>0$, depending on  $f$ and on $C$ only, such that, for any $t_0,T\in \R$ with $t_0<T$ and for any classical solution $(u,m,v)$ to \eqref{eq.MFGCN} such that $\|(D^2u_T)_+\|_\infty\leq C$, a.s., 
 \be\label{lipestiCT}
\|(D^2u_t)_+\|_\infty+ \|Du_t\|_\infty  \leq C_0, \; \qquad \forall t\in [t_0,T], 
\ee
and
\be \label{lipestiCT2}
C^{-1}_0\leq m_t\leq C_0, \qquad  \forall  t\in [t_0+1,T]. 
\ee
\end{lem}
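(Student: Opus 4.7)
The strategy is to pass to the shifted MFG system $(\shif u_t, \shif m_t, \shif M_t)$ via the change of variables \eqref{shifumVsum}. Since this change of variables is a pathwise spatial translation, the quantities $\|Du_t\|_\infty$, $\|(D^2u_t)_+\|_\infty$, and the pointwise bounds on $m_t$ are all preserved, so it suffices to establish the analogous bounds for $(\shif u_t, \shif m_t)$. The main advantage is that the shifted Fokker--Planck equation \eqref{eq.MFGCNtilde}-(ii) is, pathwise, a deterministic parabolic equation with smooth coefficients, while the shifted HJ equation \eqref{eq.MFGCNtilde}-(i) has a convex Hamiltonian plus a pure martingale forcing.

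For the semi-concavity bound I would apply a Bernstein argument pathwise to the shifted HJ equation. Differentiating twice in a unit direction $\xi$ and setting $w_t(x)=\partial_{\xi\xi}\shif u_t(x)$, one finds
\[
dw_t \;=\; \bigl(-\Delta w_t + D\shif u_t\cdot Dw_t + |D\partial_\xi\shif u_t|^2 - \partial_{\xi\xi}\shif f_t(x,\shif m_t)\bigr)\,dt + d\partial_{\xi\xi}\shif M_t.
\]
Since the Hamiltonian $\frac12|p|^2$ is convex, one has $|D\partial_\xi\shif u_t|^2\geq w_t^2$, so the deterministic drift contains a Riccati-type stabilization. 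Setting $Y_t:=\sup_{x,|\xi|=1}w_t(x)$ and combining a pathwise envelope analysis with the fact that for each $(x,\xi)$ the process $t\mapsto\partial_{\xi\xi}\shif M_t(x)$ is a martingale absorbed by the Riccati term, one obtains the a priori bound $Y_t\leq \max(C,\sqrt{\|D^2f\|_\infty})$ on $[t_0,T]$, uniformly in $T$. Going back through the translation this gives the semi-concavity half of \eqref{lipestiCT}.

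For the Lipschitz estimate I would combine a Bernstein argument for $|D\shif u|^2$ with the MFG structure: testing \eqref{eq.MFGCNtilde}-(i) against $\shif m$ and \eqref{eq.MFGCNtilde}-(ii) against $\shif u$ and invoking the Lasry--Lions monotonicity \eqref{hyp:mono} produces an integral bound of the form $\int_{t_0}^{T}\int_{\T^d}|D\shif u_t|^2\shif m_t\,dx\,dt\leq C$; coupled with the two-sided bounds on $\shif m$ below and the semi-concavity already established, this yields an $L^\infty$ bound on $D\shif u$ uniform in $T$. Once the Lipschitz bound on $\shif u$ is in hand, the drift in the shifted Fokker--Planck equation is uniformly bounded pathwise, and classical Gaussian two-sided bounds on the fundamental solution of parabolic equations with bounded drift on $\T^d$ yield $C_0^{-1}\leq\shif m_t\leq C_0$ for $t\in[t_0+1,T]$; the initial lag of one time unit lets the diffusion regularize the possibly irregular data $\hat m_0$.

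The main technical obstacle is the pathwise justification of the Bernstein argument for the backward SPDE: the classical parabolic maximum principle must be coupled with the martingale contribution $d\partial_{\xi\xi}\shif M_t$, which requires applying It\^o's formula to $Y_t$ (after a suitable mollification in space, to avoid differentiability issues of the envelope) and exploiting the Riccati-type stabilization from the convex Hamiltonian to absorb the martingale fluctuations, producing bounds independent of $T$. Once semi-concavity and Lipschitz bounds on $\shif u$ are available, the control on $\shif m$ is comparatively routine parabolic regularity.
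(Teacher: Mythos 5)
Your strategy of passing to the shifted system and establishing semi-concavity via a Bernstein/Riccati argument on $\partial_{\xi\xi}\shif u$ is exactly the paper's first step. However, there are two concrete problems with your argument.

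First, your route to the Lipschitz bound is circular. You propose to get $\|D\shif u\|_\infty \leq C$ by combining Lasry--Lions duality (yielding $\int\int|D\shif u|^2\shif m\leq C$) with ``the two-sided bounds on $\shif m$ below and the semi-concavity already established''; but in the final paragraph you derive the two-sided bounds on $\shif m$ \emph{from} the Lipschitz bound on $\shif u$. Moreover, the lower bound on $\shif m$ only kicks in for $t\geq t_0+1$, so this cannot produce the Lipschitz estimate on all of $[t_0,T]$, and even where it holds, converting an $L^2_{\shif m}$ bound on $D\shif u$ into an $L^\infty$ bound requires further work. The point you are missing is elementary: on the compact torus, uniform semi-concavity alone already implies a uniform Lipschitz bound --- if $\langle D^2\shif u\,h,h\rangle\leq K$ for all unit $h$, then integrating over a period bounds $D\shif u$ by a constant depending only on $K$ and $d$. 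So the paper deduces the Lipschitz bound directly from the semi-concavity estimate, with no reference to $\shif m$ or to duality, breaking the circularity.

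Second, your handling of the martingale term in the semi-concavity step is where the actual difficulty lies, and the sketch does not resolve it. Applying It\^{o}'s formula to a supremum-envelope $Y_t=\sup_{x,\xi}w_t(x)$ (even after mollification) is precisely the delicate point: the martingale $d\partial_{\xi\xi}\shif M_t$ is \emph{not} ``absorbed'' by the Riccati term in any pathwise sense, since it has zero mean but unbounded variation. The paper sidesteps this by writing the differential inequality $d\shif z_t \geq (-\Delta\shif z_t + D\shif u_t\cdot D\shif z_t + \shif z_t^2 - C)dt + d\shif N_t$ and then invoking a comparison principle for backward stochastic HJ equations established in \cite{CSS22}; that comparison result (which builds the martingale structure into the statement) is the right tool, and it is what you would need to cite or reprove to make your envelope argument rigorous. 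For the bounds on $\shif m$, once $D\shif u$ is bounded, the paper appeals to the quantitative smoothing estimate $\|\shif m_{t+h}\|_\infty\leq Ch^{-d/2}\|\shif m_t\|_{L^1}$ from \cite[Lemma 7.3]{CLLP2} for the upper bound and to \cite[Theorem 2.5.1]{BKR} for the Harnack-type lower bound, rather than the fundamental-solution Gaussian estimates you allude to, but this part of your sketch is essentially sound.
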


\begin{proof}[Proof of Lemma \ref{lem.regularity}] The proof relies on the shifted system \eqref{eq.MFGCNtilde} introduced above. Let $(\shif u, \shif m, \shif M)$ be defined by \eqref{shifumVsum}-\eqref{shifumVsumbis}. Proposition \ref{prop.shifumVsum} states  that $(\shif u, \shif m, \shif M)$ solves \eqref{eq.MFGCNtilde}. Let us first prove a semi-concavity estimate for $\shif u$: the argument is the same as in \cite{CLLP2}.  Fix $h\in \R^d$ with $|h|=1$ and note that $\shif z_t(x)= \lg D^2\shif u_t(x) h, h\rg$ solves 
\begin{align*}
d\shif z_t(x) & =  \Bigl(-\Delta \shif z_t(x) +D\shif u_t(x)\cdot D\shif z_t(x) + |D^2\shif u_t(x)h |^2  -\lg D^2\shif f_t(x,\shif m_t) h, h\rg  \Bigr)dt +d\shif N_t (x)
\end{align*}
where $\shif N_t(x)= \lg D^2\shif M_t(x)h,h\rg$, with $\shif z_T(x)  \leq \|(D^2\shif u_T)_+\|_\infty= \|(D^2 u_T)_+\|_\infty$. Note that $|D^2\shif u_t(x)h |^2\geq (\shif z_t(x))^2$. Thus 
\begin{align*}
d\shif z_t(x) & \geq   \Bigl(-\Delta \shif z_t(x) +D\shif u_t(x)\cdot D\shif z_t(x) + \shif z_t(x)  -C \Bigr)dt +d\shif N_t (x)
\end{align*} 
where $C$ depends on $\|D^2_{xx}\shif f\|_\infty= \|D^2_{xx}f\|_\infty$. By comparison (see \cite{CSS22}), this implies that $\shif z\leq C\vee  \|(D^2 u_T)_+\|_\infty$ a.s. So we have proved that  $x\to  \shif u_t(x)$ is uniformly semiconcave in $\T^d$, which shows that $D \shif u_t(x)$ is uniformly bounded. Recalling equality  \eqref{shifumVsum}, this in turn implies that $x\to   u_t(x)$ is uniformly semiconcave in $\T^d$ and that $D  u_t(x)$ is uniformly bounded. \\

Following the argument of \cite[Lemma 7.3]{CLLP2} we can check that, for any $t\geq t_0$ and $ h\in (0,1]$, 
$$
\|\shif m_{t+h}\|_\infty \leq C h^{-d/2}\|\shif m_{t}\|_{L^1}, 
$$
which implies that $\shif m$ is bounded in $L^\infty$ for $t\geq t_0+1$. The bound below for $\shif m$ is then ensured for instance by \cite[Theorem 2.5.1]{BKR}: for any $h\in (0, 1]$, there exists $C_h>0$ such that
$$
\|(\shif m_{t+h})^{-1}\|_\infty\leq C_h\|\shif m_{t}\|_\infty .
$$
Using  \eqref{shifumVsum} to translate these estimates to $u$ and $m$  gives \eqref{lipestiCT} and  \eqref{lipestiCT2}.
\end{proof}

Next we state the central result, which shows that the solution of system \eqref{eq.MFGCN} enjoys a turnpike property.

\begin{lem} \label{lem.keyuniqueCT} Fix $C_0>0$. There exists $\omega>0$ depending in $C_0$ only and, for any $\beta\in (0,1)$, $C_\beta>0$, depending on $C_0$ and on $\beta$ only, 
 such that, if $(u_1,m_1,v_1)$ and $(u_2,m_2,v_2)$  are classical solutions to \eqref{eq.MFGCN} on a time interval $(t_0,T)$ (with $t_0+4<T$), satisfying the bounds \eqref{lipestiCT} and \eqref{lipestiCT2} on that time interval and for  the constant $C_0$, then
\be\label{zjq;eksdjnfCT}
\E\left[\|(m_1-m_2)_t\|_{C^\beta}+ \|D(u_1-u_2)_t\|_{L^2_x}\right] \leq C_\beta(e^{-\omega (t-t_0)}+e^{-\omega(T-t)}) \qquad \forall t\in [t_0+2, T-2].
\ee
\end{lem}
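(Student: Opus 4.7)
The strategy mirrors the deterministic turnpike argument of \cite{CLLP2}, routed through the shifted MFG system of Proposition \ref{prop.shifumVsum} so as to work with pathwise random parabolic PDEs rather than with SPDEs. Via the correspondence \eqref{shifumVsum}--\eqref{shifumVsumbis} I would first transfer each classical solution $(u_i, m_i, v_i)$ to a solution $(\shif{u}_i, \shif{m}_i, \shif{M}_i)$ of the shifted system \eqref{eq.MFGCNtilde}; since \eqref{shifumVsum} is a random spatial translation, the bounds \eqref{lipestiCT}--\eqref{lipestiCT2} lift to $\|D\shif{u}_i\|_\infty \leq C_0$ and $C_0^{-1}\leq \shif{m}_i \leq C_0$ on $[t_0+1,T]$, and every spatial norm in \eqref{zjq;eksdjnfCT} equals the corresponding norm on the shifted differences. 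Setting $\phi_t = \shif{u}_1 - \shif{u}_2$, $\psi_t = \shif{m}_1 - \shif{m}_2$, $\mathcal N_t = \shif{M}_1 - \shif{M}_2$, and applying Itô to $t \mapsto \int_{\T^d} \phi_t \psi_t \, dx$ (the forward Fokker--Planck has no martingale component), the same integration-by-parts cancellations as in the classical Lasry--Lions identity yield
\[
d\!\int_{\T^d}\phi_t \psi_t \, dx = -\tfrac12 \!\int_{\T^d}(\shif{m}_1+\shif{m}_2)|D\phi_t|^2\, dx\, dt - \!\int_{\T^d}\bigl(\shif{f}_t(\cdot,\shif{m}_1)-\shif{f}_t(\cdot,\shif{m}_2)\bigr)\psi_t \, dx\, dt + \!\int_{\T^d}\psi_t\, d\mathcal N_t.
\]

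Integrating on $[t_0,T]$ and taking expectation kills the martingale. The shifted coupling $\shif{f}_t$ inherits Lasry--Lions monotonicity from $f$ via translation, so the second RHS term has nonnegative expectation and can be dropped. Using $\int \psi_s = 0$, $\|\psi_s\|_{L^1}\leq 2$ and $\|D\phi_s\|_\infty \leq 2C_0$, the boundary terms obey $|\E \int \phi_s \psi_s| \leq C(C_0)$, which together with $\shif{m}_i \geq C_0^{-1}$ on $[t_0+1,T]$ yields a $T$-independent space-time bound $\E\int_{t_0+1}^T \!\int_{\T^d} |D\phi_t|^2 \, dx\,dt \leq C_1$. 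Next I would view $(\psi,\phi)$ as a linear forward-backward random parabolic system with bounded coefficients: $\psi$ has zero mean by mass conservation and solves $\partial_t \psi = \Delta \psi + \dive(\psi D\shif{u}_2) + \dive(\shif{m}_1 D\phi)$, while $\phi$ solves a backward linear HJ-type equation $-d\phi = (-\Delta\phi + \tfrac12(D\shif{u}_1+D\shif{u}_2)\cdot D\phi - \bar F_t \psi)dt - d\mathcal N$ with the bounded kernel $\bar F_t = \int_0^1 \tfrac{\delta f}{\delta m}(\cdot, s\shif{m}_1+(1-s)\shif{m}_2) \, ds$. Invoking the appendix's decay estimates for forward and backward random linear parabolic equations, combined with the Gronwall-type exponential decay lemma announced there, the $L^2$ space-time bound above seeds (via pigeonhole at times $t_\ast \in [t_0, t_0+2]$ and $t^\ast \in [T-2, T]$) the joint exponential estimate
\[
\E\bigl[\|D\phi_t\|_{L^2_x} + \|\psi_t\|_{L^2_x}\bigr] \leq C\bigl(e^{-\omega(t-t_0)} + e^{-\omega(T-t)}\bigr), \qquad t \in [t_0+2, T-2].
\]
A parabolic-smoothing bootstrap on the forward equation for $\psi$ then boosts the $L^2_x$ control on $\psi$ to $C^\beta$ for any $\beta \in (0,1)$; translating back through \eqref{shifumVsum} gives \eqref{zjq;eksdjnfCT}.

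The main obstacle is this last step: the Lasry--Lions identity is pure algebra, but promoting the resulting space-time $L^2$ bound to exponential decay requires closing the forward-backward coupling uniformly in $T$, with decay rate $\omega$ depending only on the ellipticity and sup-norm constants encoded in $C_0$. Because Schauder estimates in time are unavailable in the random setting, the argument must stay within the $L^2$-based framework of the appendix, and the Hölder regularity of $\psi$ is recovered only a posteriori via the mollifying effect of the forward Fokker--Planck equation.
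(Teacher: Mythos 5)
Your overall architecture — transfer to the shifted system, Lasry--Lions duality via It\^{o}, appendix estimates on the forward Fokker--Planck and backward HJ pieces, a coupled Gronwall-type decay lemma, then bootstrap $\psi$ to $C^\beta$ — matches the paper's plan, and several of the ingredients are correctly identified. The gap is concentrated in exactly the step you flag as the main obstacle, and the way you propose to bridge it does not close.

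Concretely: you bound the Lasry--Lions boundary terms by a constant, via $\|\psi_s\|_{L^1}\le 2$ and $\|D\phi_s\|_\infty\le 2C_0$, so that integrating over $[t_0,T]$ only yields $\E\int_{t_0+1}^T \int_{\T^d}|D\phi|^2 \le C_1$. That is a uniform, $T$-independent bound, but it does not decay, and a pigeonhole at $t_\ast\in[t_0,t_0+2]$, $t^\ast\in[T-2,T]$ only produces $\E\|D\phi_{t_\ast}\|_{L^2}^2\le C_1/(\mathrm{length})$ — still an $O(1)$ quantity. Feeding such $O(1)$ seeds into the forward estimate $\gamma(t_2)\le Ce^{-\lambda(t_2-t_1)}\gamma(t_1)+C\int_{t_1}^{t_2}\beta$ and the backward estimate for $\alpha$ never improves: you still end up with $\alpha,\gamma\le C$ on the whole interval, not $e^{-\omega t}+e^{-\omega(T-t)}$. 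The pigeonhole/compactness route is precisely what the paper's introduction warns is unavailable stochastically and must be replaced by a quantitative self-improving argument.

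What the paper does instead is bound the boundary terms sharply by Cauchy--Schwarz in $L^2_{\omega,x}$ rather than by $L^1\times L^\infty$: using $\int_{\T^d}\psi_t=0$, one has $\bigl|\E\int_{\T^d}\phi_t\psi_t\bigr|=\bigl|\E\int_{\T^d}\tilde\phi_t\psi_t\bigr|\le(\alpha(t)\gamma(t))^{1/2}$, where $\alpha(t)=\|\tilde\phi_t\|^2_{L^2_{\omega,x}}$ and $\gamma(t)=\|\psi_t\|^2_{L^2_{\omega,x}}$. Applying Lasry--Lions between \emph{arbitrary} $t_1\le t_2$ (not just the full interval) then gives $\int_{t_1}^{t_2}\beta\,dt\le C\bigl(\sqrt{\alpha(t_1)\gamma(t_1)}+\sqrt{\alpha(t_2)\gamma(t_2)}\bigr)$, which together with the forward estimate, the backward estimate, and Poincar\'e ($\alpha\le C\beta$) constitutes exactly the system \eqref{hypexpo1}--\eqref{hypexpo4}. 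It is the product form $\sqrt{\alpha\gamma}$, not a constant, that makes Lemma~\ref{lem.expodecay} self-improving: when $\alpha$ and $\gamma$ are small, the space--time mass of $\beta$ is forced to be small, which in turn forces $\alpha,\gamma$ smaller still. Without that product structure, the decay simply cannot be bootstrapped. To fix your argument, replace the $L^1\times L^\infty$ boundary bound by the $L^2$ Cauchy--Schwarz one, state the Lasry--Lions inequality on all subintervals of $[t_0+1,T]$, and then invoke Lemma~\ref{lem.expodecay} directly rather than a pigeonhole.
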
  

Similar results have been obtained in the deterministic setting in \cite{CLLP2} and \cite{CP19}. In these papers, the proof relies on estimates on linear PDEs and on compactness arguments to ensure the exponential decay.  We  generalize the PDE estimates to the stochastic setting in the Appendix \ref{sec:AppendEsti}.  As for the compactness arguments, they are not available in the stochastic framework and we have to replace them by new quantitative estimates given in the Appendix \ref{sec:AppendExpo}. 

\begin{proof}  Let  $(u_1,m_1,v_1)$ and $(u_2,m_2,v_2)$  be as above. We define by time shift $(\shif u_1,\shif m_1, \shif M_1)$ and $(\shif u_2,\shif M_2, \shif v_2)$ by \eqref{shifumVsum}-\eqref{shifumVsumbis}. Recall that Proposition \ref{prop.shifumVsum} states that $(\shif u_1,\shif m_1, \shif M_1)$ and $(\shif u_2,\shif M_2, \shif v_2)$ solve \eqref{eq.MFGCNtilde}. Let us set $\tilde {\shif u}_{i,t}= {\shif u}_{i,t}-\inte {\shif u}_{i,t}$ (for $i=1,2$) and 
$$
\alpha(t) = \|\tilde{\shif u}_{1,t}-\tilde {\shif u}_{2,t}\|^2_{L^2_{\omega, x}}, \qquad
\beta(t) = \| D(\shif u_{1,t}-\shif u_{2,t})\|^2_{L^2_{\omega, x}}
$$
and
$$
\gamma(t) = \|\shif m_{1,t}-\shif m_{2,t}\|^2_{L^2_{\omega, x}}.
$$
Our aim is to show that the quantities $\alpha$, $\beta$, $\gamma$ satisfy inequalities \eqref{hypexpo1},  \eqref{hypexpo2},  \eqref{hypexpo3},  \eqref{hypexpo4} in  the appendix, and then apply Lemma  \ref{lem.expodecay} which states $\alpha$ and $\gamma$ have an exponential decay. \\

We start with the standard Lasry-Lions monotonicity argument  (see also \cite[Lemma 4.3.11]{CDLL}): we have, for any $t_0+1\leq t_1\leq t_2\leq T$,  
\begin{align}\label{LLmonotonicity}
& \E\left[ \int_{\T^d}(\shif u_{1,t_2}-\shif u_{2,t_2})(\shif m_{1,t_2}-\shif m_{2,t_2})-
\int_{\T^d}(\shif u_{1,t_1}-\shif u_{2,t_1})(\shif m_{1,t_1}-\shif m_{2,t_1}) \right]  \notag\\
& \qquad \leq - \frac12 \E\left[ \int_{t_1}^{t_2} \int_{\T^d} |D\shif u_1-D\shif u_2|^2 (\shif m_1+\shif m_2)\right] . 
\end{align} 
We  claim that 
 \begin{align*}
& \left| \E\left[ \int_{\T^d}(\shif u_{1,t}-\shif u_{2,t})(\shif m_{1,t}-\shif m_{2,t}) \right] \right|  \leq (\alpha(t)\gamma(t))^{1/2}.
 \end{align*}
Indeed, as $\shif m_{1,t}$ and $\shif m_{2,t}$ are both probability measures, 
\begin{align*}
&\left| \E\left[ \int_{\T^d}(\shif u_{1,t}-\shif u_{2,t})(\shif m_{1,t}-\shif m_{2,t}) \right]\right| \\
& = \left| \E\left[ \int_{\T^d}(\tilde{\shif u}_{1,t}-\tilde{\shif u}_{2,t})(\shif m_{1,t}-\shif m_{2,t}) \right]\right| \leq  (\alpha(t)\gamma(t))^{1/2}. 
\end{align*} 
Using the bound below of $m_1$ and $m_2$ in \eqref{lipestiCT2}, we obtain therefore from \eqref{LLmonotonicity} that, for any $t_0+1\leq t_1\leq t_2\leq T$, 
\be\label{kqejhsrndf}
\int_{t_1}^{t_2}\beta(t)dt \leq C_0  \left( (\alpha(t_1)\gamma(t_1))^{1/2}+  (\alpha(t_2)\gamma(t_2))^{1/2}\right).
\ee
Note also that $\mu:= \shif m_2-\shif m_1$ satisfies an equation of the form \eqref{eq.kolmo} with $V= - D\shif u_1$ and $B= \shif m_2D(\shif u_2-\shif u_1)$. Recalling  the bounds for $Du_1$ and $m_2$ in \eqref{lipestiCT}, Lemma \ref{lemCLLP2.1} says that there exists $\omega>0$ and $C>0$ such that, for any $t_0+1\leq t_1\leq t_2\leq T$,  
\begin{align*}
\|\mu(t_2)\|_{L^2}& \leq Ce^{-\omega (t_2-t_1)} \|\mu(t_1)\|_{L^2} +C\left[\int_{t_1}^{t_2} \|B(s)\|_{L^2}^2ds\right]^{1/2}.
\end{align*}
Thus 
\begin{align*}
\gamma(t_2)
& \leq Ce^{-\omega (t_2-t_1)}\gamma(t_1) + C\int_{t_1}^{t_2} \beta(s)ds.
\end{align*}
Next we note that $w= \shif u_{2}-\shif u_{1}$ satisfies a backward equation of the form \eqref{eq.backward_stochastic}, with $\delta =0$, $V= D\shif u_1$ and 
\be\label{lkazjenzrseBIS}
A_t(x)=  \frac12|D(\shif u_{2,t} - \shif u_{1,t})(x)|^2 + \shif f_t(x,\shif m_{1,t}) - \shif f_t(x,\shif m_{2,t}).
\ee
 By Lemma \ref{lemCLLP2.2}, we have therefore 
\begin{align*}
\alpha(t_1) \leq C e^{-\omega (t_2-t_1)} \alpha(t_1) +C\left(\int_{t_1}^{t_2} e^{-\omega(s-t_1)}\|A(s)\|_{L^2(\T^d\times \Omega)}ds\right)^2, 
\end{align*}
where, in view of the estimates in  \eqref{lipestiCT}, and using Cauchy-Schwarz inequality, 
$$
\left(\int_{t_1}^{t_2} e^{-\omega(s-t_1)}\|A(s)\|_{L^2(\T^d\times \Omega)}ds\right)^2 \leq C \int_{t_1}^{t_2} e^{-\omega(s-t_1)}\beta(s)ds + C \sup_{s\in [t_1,t_2]} \gamma(t). 
$$
Thus 
\begin{align*}
\alpha(t_1) \leq C e^{-\omega (t_2-t_1)} \alpha(t_1) +C \int_{t_1}^{t_2} e^{-\omega(s-t_1)}\beta(s)ds + C \sup_{s\in [t_1,t_2]} \gamma(t).
\end{align*}
Finally, by Poincar\'e's inequality, we have, for any $t\in [t_0+1,T]$, 
$$
\alpha(t) \leq C \beta(t).
$$
Thus $\alpha, \beta, \gamma$ satisfy conditions \eqref{hypexpo1},  \eqref{hypexpo2},  \eqref{hypexpo3},  \eqref{hypexpo4} in  the appendix. Lemma  \ref{lem.expodecay} then says that there exist some constants $\omega'>0$ and $C>0$ such that, for any $t\in [t_0+1,T]$, 
\begin{align}\label{auohzeinrjdfg}
\alpha(t)+\gamma(t)& \leq C(e^{-\omega' (t-t_0)}+e^{-\omega'(T-t)}) (\gamma(t_0+1)+\alpha(T))\notag\\ 
& \leq C(e^{-\omega' (t-t_0)}+e^{-\omega'(T-t)}) .
\end{align}
In order to estimate $D(\shif u_2-\shif u_1)$,  we come back to the equation satisfied by $w:= \shif u_{2}-\shif u_{1}$. Recalling inequality \eqref{zerldjkjnpimj2} in Lemma \ref{lemCLLP2.2}, we obtain therefore, for any $t\in [t_0+1, T-3/2]$,
\begin{align*}
  \|Dw_t\|^2_{L^2}  &  \leq C \left( \| \tilde w_{t+1/2} \|^2_{L^2} + \int_{t}^{t+1/2} (\| \tilde w_s\|^2_{L^2}+ \|A(s)\|^2_{L^2})ds \right) \\
& \leq  C\alpha(t+1/2)+ C\int_{t}^{t+1/2} (\alpha+ \beta+\gamma) ds\\
& \leq C (e^{-\omega t}+e^{-\omega(T-t)}),
  \end{align*}
  where we used  in the last line the Lipschitz estimate \eqref{lipestiCT} with \eqref{kqejhsrndf} and \eqref{auohzeinrjdfg}. So we have proved that, for any $t\in [t_0+1, T-3/2]$ (and changing $\omega$),  
  $$
 \E\left[\|(m_1-m_2)(t)\|_{L^2(\T^d)}+ \|D(u_1-u_2)(t)\|_{L^2(\T^d)}\right] \leq C(e^{-\omega (t-t_0)}+e^{-\omega(T-t)}).
 $$

We finally refine this inequality into a $C^\beta$ one for $m_2-m_1$. Recalling the equation satisfied by  $\mu:=\shif m_1-\shif m_2$, we can combine \cite[Theorem III.8.1]{LSU} and \cite[Theorem III.10.1]{LSU} to infer that, for $p\geq 2$ large enough, a.s.,  and for any $t\in [t_0+2, T-2]$, 
$$
\|\mu_t\|_{C^\beta} \leq C_\beta\left( \| \mu_s\|_{L^2((t-1,t)\times \T^d)}+ \|B\|_{L^p((t-1, t)\times \T^d)} \right) 
$$
where $B= \shif m_2(D\shif u_1-D\shif u_2)$. In view of \eqref{lipestiCT} and \eqref{auohzeinrjdfg}, we infer that 
$$
\E\left[ \|\mu_t\|_{C^\beta}\right] \leq C_\beta (e^{-\omega (t-t_0)}+e^{-\omega(T-t)}).
$$
This completes the proof of the lemma. 
\end{proof}

\begin{rmk} \label{rem.fulltime} We note for later use that the constraint $t\geq t_0+1$ in \eqref{auohzeinrjdfg} only comes from the necessity to have a bound below for $m_1+m_2$. If we know that $m_1+m_2$ is bounded below on the whole time interval $[t_0,T]$, then the same arguments show that \eqref{auohzeinrjdfg} holds for $t\in [t_0,T]$. 
\end{rmk}

We are now ready to prove the existence of a stationary solution. 

\begin{proof}[Proof of Lemma \ref{lem.Wmu}] Let us start with the construction of $(\bar{\mathcal V}, \bar \mu)$. Let $(w^T, \mu^T, z^T)$ be the solution to \eqref{eq.MFGCN} on the time interval $(-T,T)$, with initial condition $\mu_{-T}^T=\delta_0$  and terminal condition $w^T_T=0$. According to Lemma \ref{lem.regularity}, the bounds \eqref{lipestiCT} and \eqref{lipestiCT2} hold on $[-T+1, T]$. Thus, for any $t>0$, Lemma \ref{lem.keyuniqueCT} says that $Dw^T$ and $\mu^T$ are Cauchy sequences in $L^1(\Omega\times (-t,t), L^2(\T^d))$ and $L^1(\Omega\times (-t,t), C^\alpha)$ respectively. They converge therefore to processes $\bar{\mathcal V}$ and $\bar \mu$ as $T\to\infty$. Note that $\bar{\mathcal V}$ and $\bar \mu$ are progressively measurable with respect to the filtration $(\mathcal F_t)_{t\in \R}$ and that, as $\bar{\mathcal V}_t$ is the limit of the bounded gradients $Dw^T_t$,  $\bar{\mathcal V}_t$ is itself a.s. a gradient of a $W^{1,\infty}$ function. Moreover, passing to the limit in the equation for $\mu^T_t$ shows that  $\bar \mu$ satisfies, in the sense of distributions and with probability $1$,  
$$
d\bar \mu_t= \left((1+\cwn)\Delta \bar \mu_t +{\rm div}(\bar \mu_t\bar{\mathcal V}_t)\right)dt -\sqrt{2\cwn}D\bar \mu_t\cdot dW_t. 
$$
As $\bar{\mathcal V}$ is bounded, this proves that $\bar \mu$ has continuous path in $\mathcal P$ a.s.. Note also that, by \eqref{zjq;eksdjnfCT}, we have 
\be\label{zjq;eksdjnfCTbis}
\E\left[\|\bar \mu_t-\mu^T_t\|_{C^\beta}+ \|\bar{\mathcal V}_t-Dw^T_t\|_{L^2_x}\right] \leq C_\beta(e^{-\omega (t-t_0)}+e^{-\omega(T-t)}) \qquad \forall t\in [-T+2, T-2].
\ee

Let us now improve the regularity in space of $\bar{\mathcal V}$. We will discuss later its continuity in time. Let $(w^T, \mu^T, z^T)$ be as above and let us define  the shifted solution $(\shif w^T, \boldsymbol{\mu}^T, \shif M^T)$ from $(w^T, \mu^T, z^T)$. We recall that $(\shif w^T,  \boldsymbol{\mu}^T, \shif M^T)$ solves 
\eqref{eq.MFGCNtilde} on the time interval $[-T,T]$ with initial condition $ \boldsymbol{\mu}^T_{-T}=\delta_0$ and terminal condition $\shif w^T_T=0$. Set $\tilde{\shif w}^T_t= {\shif w}^T_t-\inte {\shif w}^T_t$. Then, according to \cite[Lemma 4.3.2]{CDLL}, $\tilde {\shif w}^T_t$ is given by 
$$
\tilde {\shif w}^T_t(x)= \E \left[a_t(x)\ |\ \mathcal F_{-T,t}\right], 
$$
where, for $s\in [t,t+1]$,  
$$
a_s(x)= \Gamma_{t+1-s}\ast \tilde{\shif w}^T_{t+1}(x) + \int_s^{t+1} \Gamma_{u-t}\ast \tilde h_u(x)du.
$$
Here $\Gamma$ is the heat kernel and 
$$
h_s(x) = \frac12 |D\shif w^T_s(x)|^2-\shif f_s(x, \boldsymbol{\mu}^T_s(x)), \qquad \tilde h_s(x)= h_s(x)-\inte h_s(x).
$$
Note that $a$ solves the backward equation (with random coefficient, but solved $\omega$ by $\omega$ and not adapted) 
$$
- \partial_t a_s(x) -\Delta a_s(x) +h_s(x)=0\; \text{in}\; (t,t+1)\times \T^d, \qquad a_{t+1}=\tilde {\shif w}^T_{t+1}. 
$$
According
to \cite[Theorem 5.1.2]{Lun} $a_t$ can be bounded a.s. by 
$$
\| a_t \|_{C^{1+\beta}(\T^d)} \leq 
C_\beta (\|\tilde {\shif w}^T_{t+1} \|_{C^0(\T^d)}+ \|\tilde h\|_{C^0([t,t+1]\times \T^d)}),
$$
where $\beta\in(0,1)$ is arbitrary. By the Lipschitz estimate of Lemma \ref{lem.regularity} and the assumption on $f$, $\tilde{\shif w}^T_{t+1}$ and $\tilde h$ are uniformly  bounded. This proves that $\tilde {\shif w}^T_t$ is bounded in $C^{1+\beta}$ for $t\in [-T+2,T-2]$ (see Lemma \cite[Lemma 4.3.4]{CDLL}). This in turn implies that $h$---which is a continuous process---is bounded in  $C^{0, \beta'}([-T+1,T-1])$ for any $0<\beta'<\beta$. Then \cite[Theorem 5.1.4-(iv)]{Lun} implies that 
$$
\| a_t\|_{C^{2+\beta'}(\T^d)} \leq 
C_\beta (\|\tilde  {\shif w}^T_{t+1} \|_{C^0(\T^d)}+ \|\tilde h\|_{C^{0,\beta'}([t,t+1]\times \T^d)}),
$$
which in turn implies that $\tilde {\shif w}^T_t$ is uniformly bounded in $C^{2+\beta'}(\T^d)$ for $t\in [-T+2,T-2]$.  Applying the same technique once again, but to the equation satisfied by the components of $D \tilde{\shif w}^T$, we conclude that $\tilde {\shif w}^T_t$ is uniformly bounded in $C^{3+\beta}(\T^d)$ for $t\in [-T+3,T-3]$, for any $\beta \in (0,1)$. Coming back to $Dw^T_t$, we find that, almost surely, for any $t\in\R$, its limit $\bar{\mathcal V}_t$ as $T\to\infty$ is uniformly bounded in $[C^{2+\beta}(\T^d)]^d$ for any $\beta \in (0,1)$.\\

Let us now check that $\bar{\mathcal V}$ and $\bar \mu$ are stationary processes. Recall that the family of shifts $(\tau_h)_{h\in \R}$, where $\tau_h:\Omega\to \Omega$ (where $\Omega=C^0(\R, \R^d)$) is defined by $\tau_h\omega(\cdot)= \omega(\cdot+h)-\omega(h)$. Fix $h>0$ and $t\in \R$. For $T>0$ large enough, we consider the solution $(w^{T,h},\mu^{T,h},z^{T,h})$ to \eqref{eq.MFGCN} on the time interval $(-T-h,T-h)$, with initial condition $\delta_0$ at time $-T-h$ and terminal condition $w^{T,h}_{T-h}=0$. In view of the uniqueness of the solution, we have 
$(w^{T,h},\mu^{T,h},z^{T,h})(t,x,W)= (w^T,\mu^T,z^T)(t+h, x, \tau_hW)$, where $(w^T, \mu^T, z^T)$ is defined above. On the other hand, by Lemma \ref{lem.keyuniqueCT}, we have 
\begin{align*}
&
\E\left[\|(\mu^{T}_{t+h}(\cdot, \tau_h\cdot)-\mu^{T}_t(\cdot, \cdot))\|_{C^\alpha}+ \|D(w^{T}_{t+h}(\cdot, \tau_h\cdot)-w^{T}_t(\cdot, \cdot))\|_{L^2(\T^d)}\right]\\
& = \E\left[\|(\mu^{T,h}_t-\mu^{T}_t)\|_{C^\alpha}+ \|D(w^{T,h}_t-w^{T}_t)\|_{L^2(\T^d)}\right] \leq C(e^{-\omega (t+T-|h|)}+e^{-\omega(T-t+|h|)}). 
\end{align*}
Letting $T\to \infty$ we obtain therefore that $\bar \mu (t+h, x, \tau_hW)= \bar \mu (t, x, W)$ and $\bar{\mathcal V} (t+h, x, \tau_hW)= \bar{\mathcal V} (t, x, W)$. Thus   $\bar{\mathcal V}$ and $\bar \mu$ are stationary processes. \\

Given an horizon $T>0$, let $\bar w_T$ be the $\mathcal F_T-$measurable random variable such that $D\bar w_T=\bar{\mathcal V}_T$ and $\bar w_T(0)=0$. By the regularity of $\bar{\mathcal V}_T$, $\bar w_T$ is bounded in $C^{3+\beta}$ for any $\beta \in (0,1)$.  
By Proposition \ref{prop.HJ}, there exists a unique strong solution $(w,z)$ of the backward HJ equation,
\be\label{def.wt}
dw_t= \left(-(1+\cwn)\Delta w_t +\frac12 |Dw_t|^2 -f(x,\bar \mu_t) -2\cwn{\rm div}(z_t)\right)dt +\sqrt{2\cwn}z_t\cdot dW_t 
\ee
with terminal condition $w_T=\bar w_T$ such that 
\[
\esssup \sup_{t \in [t_0, T]} \normU{w_t}{C^{3+\beta}} < \infty.
\]
We are going to show that $Dw_t= \bar{\mathcal V}_t$, which will show that $(w, \bar \mu, z)$ is the unique weak solution to \eqref{eq.MFGCN} on $[t_0,T]$ with initial condition $\bar\mu_{t_0}$ and terminal condition $w_T= \bar w_T$ and will therefore complete the proof of the lemma.\\ 

To show that $Dw_t= \bar{\mathcal V}_t$, fix $T_1>|T|+|t_0|$ large. We  consider $(w^{T_1}, \mu^{T_1}, z^{T_1})$ the solution to \eqref{eq.MFGCN} on the time interval $(-T_1,T_1)$, with initial condition $\mu^{T_1}_{-T_1}=\delta_0$ and terminal condition $w^{T_1}_{T_1}=0$ at time $T_1$. Recalling Lemma \ref{lem.representation}, let  $\xi$ be the unique $(\mathcal F_t)-$adapted process such that 
$$
\E\left[w^{T_1}_T(0)\;|\; \mathcal F_t\right] = w^{T_1}_T(0)- \int_t^T \xi_s\cdot dW_s\qquad \forall t\in [-T,T].
$$
Note that $(w^{T_1}_t-\E[w^{T_1}_T(0)\;|\; \mathcal F_t], \mu^{T_1}, z^{T_1}-\xi)$ is a classical solution to \eqref{eq.MFGCN} on the time interval $(-T_1,T)$, with initial condition $\mu^{T_1}_{-T_1}=\delta_0$ at time $-T_1$ and terminal condition $w^{T_1}_{T}-w^{T_1}_{T}(0)$ at time $T$. 
Let us set  
$$
a_t(x)= w_t(x)-w^{T_1}_t(x)+\E\left[w^{T_1}_T(0)\;|\; \mathcal F_t\right], \qquad b_t(x)= z_t(x)-z^{T_1}_t(x) -\xi_t,
$$
By the comparison principle in Proposition \ref{prop.HJ} applied to the two strong solutions of the HJ equation, $(w,z)$ and $(w^{T_1}_t-\E[w^{T_1}_T(0)\;|\; \mathcal F_t],z^{T_1}-\xi)$, we have 
\begin{align*}
\E\left[\|a_t\|_\infty \right] & \leq \E\left[\|a_T\|_\infty\right]+  \int_t^T \E\left[\|f(\cdot,\bar \mu^T_s)-f(\cdot, \mu^{T_1}_s)\|_\infty\right] ds \\ 
& \leq \E\left[\|a_T\|_\infty\right]+ C \int_t^T \E\left[{\bf d}_1(\bar \mu_s, \mu^{T_1}_s)\right] ds.
\end{align*}
Thus by \eqref{zjq;eksdjnfCTbis}, the fact that $a_T(0)=0$, and Sobolev inequality, we get, for $t\geq t_0$ and $d<p<\infty$,  
\begin{align}\label{esti.|ut|}
\E\left[\|a_t\|_\infty \right] & \leq C \E\left[\|\bar{\mathcal V}_T- Dw^{T_1}_T\|_{L^p_x}\right]+ C \int_t^T \E\left[{\bf d}_1(\bar \mu_s, m^{T_1}_s)\right] ds \notag\\
& \leq C \E\left[\|\bar{\mathcal V}_T- Dw^{T_1}_T\|_{L^2_x}^{\frac 2p} \|\bar{\mathcal V}_T- Dw^{T_1}_T\|_{L^\infty_x}^{1-\frac 2p} \right]+ C \int_t^T \E\left[{\bf d}_1(\bar \mu_s, \mu^{T_1}_s)\right] ds \notag\\
& \leq C(e^{-\frac 2p \omega (T_1-T)}+e^{-\frac 2p \omega (T+T_1)}) +C \int_t^T (e^{-\omega (T_1-s)}+e^{-\omega (s+T_1)})ds\notag \\
& \leq C e^{-\frac 2p \omega(T_1-T)}. 
\end{align}
Note that $(a,b)$ solves  
$$
da_t=  \left(-(1+\cwn)\Delta a_t +\frac12 (|Dw_t|^2 -|Dw^{T_1}_t|^2)-f(x,\bar \mu_t)+f(x,\mu^{T_1}_t) -2\cwn{\rm div}(b_t)\right)dt +\sqrt{2\cwn}b_t\cdot dW_t
$$
with terminal condition $a_T(x)= \bar w_T(x)-w^{T_1}_T(x)+w^{T_1}_T(0)$. 
By shifting the system, and then using It\^{o}'s product formula for functionals (see \cite[Lemma 4.3.11]{CDLL}), in view of the equation satisfied by $\bar \mu$ and $(a,b)$, we have
$$
d\inte  a_t \bar \mu_t = \left( \inte \frac12 (|Dw_t|^2 -|Dw^{T_1}_t|^2)\bar \mu_t -(f(x,\bar \mu_t)-f(x,\mu^{T_1}_t))\bar \mu_t- Da_t\cdot \bar{\mathcal V}_t\bar \mu_t \right) dt + d M_t,
$$
where $M$ is a martingale. Hence 
\begin{align*}
&\E\left[ \int_{t_0}^T  \inte  \left(\frac12(|Dw_t|^2 -|Dw^{T_1}_t|^2)\bar \mu_t -D(w_t-w^{T_1}_t)\cdot \bar{\mathcal V}_t\bar \mu_t \right) dt\right]\\ 
 & \leq \E\left[ \int_{t_0}^T  \inte (f(x,\bar \mu_t)-f(x,\mu^{T_1}_t))\bar \mu_t dt\right] + \E\left[\inte a_T \bar \mu_T- a_0 \bar \mu_0\right] 
 \end{align*}
where, on the one hand,  
\begin{align*}
&\E\left[ \int_{t_0}^T  \inte \left(\frac12 (|Dw_t|^2 -|Dw^{T_1}_t|^2)\bar \mu_t -D(w_t-w^{T_1}_t)\cdot \bar{\mathcal V}_t\bar \mu_t \right) dt\right]\\ 
&= \E\left[ \int_{t_0}^T  \inte  (Dw_t -Dw^{T_1}_t)\cdot ( \frac12 (Dw_t+Dw^{T_1}_t) -\bar{\mathcal V}_t)\bar \mu_t   dt\right]\\ 
& \geq \E\left[ \int_{t_0}^T  \inte  \frac12 |Dw_t -Dw^{T_1}_t|^2\bar \mu_t   dt\right]
- \E\left[ \int_{t_0}^T  \inte  |Dw_t -Dw^{T_1}_t| |Dw^{T_1}_t -\bar{\mathcal V}_t| \bar \mu_t   dt\right] \\
& \geq \frac14 \E\left[ \int_{t_0}^T  \inte   |Dw_t -Dw^{T_1}_t|^2\bar \mu_t   dt\right]
- C \E\left[ \int_{t_0}^T  \inte  |Dw^{T_1}_t -\bar{\mathcal V}_t|^2 \bar \mu_t   dt\right].
 \end{align*}
On the other hand, by \eqref{esti.|ut|},  
\begin{align*}
\left| \E\left[\inte a_T \bar \mu_T- a_0 \bar \mu_0\right] \right| \leq C e^{-\frac 2p \omega(T_1-T)}. 
\end{align*}
Hence 
\begin{align*}
 & \E\left[ \int_{t_0}^T  \inte  |Dw_t -Dw^{T_1}_t|^2\bar \mu_t   dt\right]  \\ 
 & \leq C \E\left[ \int_{t_0}^T  \inte (|Dw^{T_1}_t -\bar{\mathcal V}_t|^2 + {\bf d}_1(\bar \mu_t, \mu^{T_1}_t))  \bar \mu_t   dt\right] +C e^{-\frac 2p \omega(T_1-T)}.
\end{align*}
Using \eqref{zjq;eksdjnfCTbis},  we obtain $Dw_t=\bar{\mathcal V}_t$ by letting  $T_1\to \infty$. Note finally that, by construction, $w$ is a continuous process, which shows that $\bar{\mathcal V}$ has a continuous representative and---in view of our previous estimate---has path in $[C^0(\R, C^{2})]^d$. 
\end{proof}

We now turn to the proof of the main result of the section: 

\begin{proof}[Proof of Theorem \ref{thm.main}] Let $(u^T,m^T,z^T)$ be a solution of the MFG system \eqref{eq.MFGCN} on $[t_0,T]$ with initial condition $\hat m_0$ and a terminal condition satisfying \eqref{cond.termcond}. Note that the bounds in Lemma \ref{lem.regularity} hold for this solution. Let also $(w, m, z)= (w, \bar \mu, z)$ be solution of the MFG system \eqref{eq.MFGCN}  defined in Lemma \ref{lem.Wmu}. Let us first check that \eqref{ineq.expo} holds. For this let  $(w^T, \mu^T, z^T)$ be the solution of the MFG system on the time interval $[-T,T]$ with initial condition $\mu^T_{-T}=\delta_0$ and $w^T_T=0$ as introduced in the proof of Lemma \ref{lem.Wmu}. Then, for any $|T'|>T+|t_0|$, we have
by inequality \eqref{zjq;eksdjnfCT} in Lemma \ref{lem.keyuniqueCT}, 
%
$$
\E\left[\|(m^T-\mu^{T'})_t\|_{C^\beta}+ \|D(u^T-w^{T'})_t\|_{L^2_x}\right] \leq C_\beta(e^{-\omega (t-t_0)}+e^{-\omega(T-t)}) \qquad \forall t\in [t_0+2, T-2].
$$
Letting $T'\to \infty$ gives   \eqref{ineq.expo} as $\mu^{T'}$ converges to $\bar \mu$ and $D w^{T'}$ to $\bar{\mathcal V}$ (see the proof of Lemma \ref{lem.Wmu}). 

For $t\in [t_0,T]$, we set  $\hat w_{t} := \inte w_{t}$ and  $\hat  z_{t} :=\inte z_{t}$. By \eqref{eq.MFGCN}-(i) we have  
$$
d\hat  w_t = \left( \inte(\frac12 |Dw_t|^2 -f(x,\bar \mu_t)) \right)dt + \hat  z_t \cdot dW_t
$$
Therefore, as $Dw_{t}= \bar{\mathcal V}_{t}$,  
\begin{align*}
\frac1T \hat  w_{t_0}  & = \E\left[ \frac{1}{T} \inte \bar w_T- \frac{1}{T} \int_{t_0}^T \inte (\frac12 |Dw_s|^2 -f(x,\bar \mu_s)) ds \ |\ \mathcal F_{t_0}\right]\\
&  = \E\left[ \frac{1}{T} \inte \bar w_T- \frac{1}{T} \int_{t_0}^T \inte (\frac12 |\bar{\mathcal V}_s|^2 -f(x,\bar \mu_s)) ds \ |\ \mathcal F_{t_0}\right] 
\end{align*}
As $(\bar{\mathcal V}, \bar \mu)$ is stationary we obtain by the ergodic theorem: 
\be\label{kjhlaenrsdfgcv}
\lim_{T\to\infty}  \frac{1}{T} \int_{t_0}^T \inte (\frac12 |\bar{\mathcal V}_s|^2 -f(x,\bar \mu_s)) ds 
=
\E\left[  \inte (\frac12 |\bar{\mathcal V}_0|^2 -f(x,\bar \mu_0))\right] \;  \text{a.s. and in $L^1$.}
\ee
Thus, as $\bar w_T$ uniformly bounded, we get
$$
\lim_{T\to\infty} \frac1T \hat  w_{t_0}= - \E\left[  \inte (\frac12 |\bar{\mathcal V}_0|^2 -f(x,\bar \mu_0))\right]=:\bar \lambda \qquad  \text{a.s. and in $L^1$.}
$$
As $w_0$ is uniformly Lipschitz, we infer that, for any $x\in \T^d$, 
$$
\lim_{T\to\infty} \frac1T w_0(x)= - \E\left[  \inte (\frac12 |\bar{\mathcal V}_0|^2 -f(x,\bar \mu_0))\right], \qquad  \text{a.s. and in $L^1$,}
$$
the limit being uniform in $x$. \\

Let us now compare $w$ and $u^T$, both being a strong solution of the same HJ equation with different right-hand side and different terminal condition. By comparison principle (Proposition \ref{prop.HJ}), we have 
\begin{align*}
\E\left[ \|w_{t_0}-u^T_{t_0}\|_\infty\right] & \leq \E\left[ \|w_T-u^T_T\|_\infty+ \int_{t_0}^T \|f(\cdot, \bar \mu^T_t)-f(\cdot, m^T_t)\|_\infty dt\right]  \\ 
& \leq C+  C\int_{t_0}^T \E\left[ {\bf d}_1(\bar \mu^T_t, m^T_t)\right]  dt  \leq C+ C\int_1^{T-1} (e^{-\omega(T-t)}+e^{-\omega t})dt.
\end{align*}
This implies the convergence of $\frac{1}{T} u^T_{t_0}(x)$ to $\bar \lambda$ in $L^1$, uniformly in $x$. 
\end{proof} 

We finally discuss the long-time behavior of suitable solutions to the MFG system. This result will play a central role for the long-time behavior of the master equation. 

\begin{lem}\label{lem.keyCV} For $T<0$, let $(u^T, m^T, v^T)$ be the solution of the MFG system \eqref{eq.MFGCN} defined on $[T,0]$, with initial condition $m^T_T=\bar \mu_T$ (where $\bar \mu$ is the stationary process defined in Lemma \ref{lem.Wmu}) and terminal condition $g(\cdot, m^T_0)$. There exists a real constant $\bar c$ such that 
$$
\lim_{T\to -\infty} u^T_T(\cdot) - \bar u_T(\cdot) = \bar c\qquad \text{in}\; L^1(\Omega), 
$$
uniformly in $x$, where $(\bar u, \bar v)$ is the strong solution to the HJ equation
$$
d\bar u_t = \left(-(1+\cwn)\Delta \bar u_t +\frac12 |D \bar u_t|^2 -f(x, \bar \mu_t) -2\cwn {\rm div}(\bar v_t) \right)dt + \sqrt{2 \cwn} \bar v_t\cdot dW_t
$$
on the time interval $(-\infty, 0)$ with terminal condition at time $t=0$ given by $\bar u_0$, where $\bar u_0$ is the $\mathcal F_0-$random variable such that $D\bar u_0=\bar{\mathcal V}_0$, $\bar u_0(0)=0$. 
\end{lem}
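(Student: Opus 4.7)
The strategy is to decompose $a_T(x) := u^T_T(x) - \bar u_T(x)$ into its spatial mean $\hat a_T := \int_{\T^d} a_T(x)\,dx$ and its zero-mean part $\tilde a_T := a_T - \hat a_T$, establish convergence of each piece separately, and identify the limit as a deterministic constant via ergodicity. As a preliminary, I would apply Theorem \ref{thm.main} to $(u^T, m^T, v^T)$ on $[T, 0]$ with initial measure $m^T_T = \bar\mu_T$. Since $\bar\mu$ is uniformly bounded below (Lemma \ref{lem.Wmu}) so the bound $C^{-1}\le m^T \le C$ holds on all of $[T,0]$, the turnpike estimate extends (in the spirit of Remark \ref{rem.fulltime}) to
\[
\E\!\left[\|m^T_t - \bar\mu_t\|_{C^\beta} + \|Du^T_t - D\bar u_t\|_{L^2_x}\right] \le C\bigl(e^{-\omega(t-T)} + e^{-\omega|t|}\bigr),\quad t \in [T+2,-2].
\]

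For the spatial mean, I would It\^o-differentiate $\hat a_t$; after integration by parts the Laplacian and divergence contributions vanish, and integrating over $[T,0]$ yields
\[
\hat a_T = \int_{\T^d}\bigl(g(\cdot, m^T_0) - \bar u_0\bigr)dx - \int_T^0\!\!\int_{\T^d}\!\Bigl[\tfrac12\bigl(|Du^T_s|^2 - |D\bar u_s|^2\bigr) - \bigl(f(\cdot,m^T_s) - f(\cdot,\bar\mu_s)\bigr)\Bigr]dx\,ds - \sqrt{2\sigma}\,N_T,
\]
with $N_T := \int_T^0\!\bigl[\int_{\T^d}(v^T - \bar v)dx\bigr]\cdot dW_s$ a martingale. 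The identity $|Du^T|^2 - |D\bar u|^2 = (Du^T + D\bar u)\cdot(Du^T - D\bar u)$, the uniform Lipschitz bound of Lemma \ref{lem.regularity}, and the turnpike estimate make the time integrand exponentially small in $(s-T)\wedge(-s)$, so the deterministic integral converges absolutely as $T\to -\infty$. The martingale $N_T$ is handled by an analogous closeness of $v^T$ to $\bar v$ extracted from the martingale representation on the shifted system (where the martingale increments are tied to gradient quantities already controlled by turnpike), making the quadratic variation converge and $N_T$ Cauchy in $L^2$. This gives $\hat a_T \to A$ in $L^2(\Omega)$ for some $\mathcal F_0$-measurable $A$.

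For the zero-mean part, I would pass to the space-shifted HJ equations for $u^T$ and $\bar u$ (shifted by $\sqrt{2\sigma}(W_t - W_T)$, so trivially shifted at $t = T$). Following \cite[Lemma 4.3.2]{CDLL} as in the proof of Lemma \ref{lem.Wmu}, the zero-mean part of each shifted solution admits a representation as a conditional expectation of heat-kernel convolutions of the centered nonlinearity $|Du|^2/2 - f(\cdot,m)$ on unit intervals $[t,t+1]$, plus the zero-mean terminal value one unit later. Subtracting the analogous expressions, the turnpike bound controls the difference of nonlinearities by $e^{-\omega(t-T)} + e^{-\omega|t|}$, and iterating on successive unit intervals from $t = -2$ back to $t = T$ while invoking the smoothing Schauder estimates \cite[Theorem 5.1.4]{Lun} yields $\E[\|\tilde u^T_T - \tilde{\bar u}_T\|_{C^{1+\beta}}] \to 0$, where $\tilde u^T_t := u^T_t - \int_{\T^d} u^T_t$ and similarly for $\bar u$. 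Combined with the previous paragraph, $a_T \to A$ in $L^1(\Omega, C^0(\T^d))$, hence in $L^1(\Omega)$ uniformly in $x$.

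Finally, to promote $A$ to a deterministic constant $\bar c \in \R$, I would invoke the ergodicity of the Wiener shift $(\theta_h)_{h\in\R}$ from Section \ref{Sec:NotHyp}. Using the stationarity of $(\bar\mu, \bar{\mathcal V})$ and the intrinsic definition of $\bar u_0$ from $\bar{\mathcal V}_0$, the quantity $A\circ\theta_h - A$ is controlled by terminal-time-shift corrections that vanish as $T\to -\infty$ (another Cauchy argument of the type above, now comparing the MFG problem with terminal time $0$ to its shift with terminal time $h$); hence $A$ is almost surely shift-invariant, and ergodicity forces $A = \bar c$ a.s.\ for some $\bar c\in\R$. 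The main technical obstacle is the propagation of zero-mean closeness from the middle of $[T,0]$ back to the initial time $t = T$: the turnpike estimate degenerates there, so one must rely on the smoothing of the heat-kernel representation on the shifted equation (which regularizes in the backward direction that is natural here) rather than on a direct comparison-principle argument.
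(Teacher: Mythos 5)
Your decomposition into $\hat a_T$ and $\tilde a_T$ and the It\^o/heat-kernel treatment of the two pieces is a genuinely different route from the paper's, which instead compares $(u^T,m^T,v^T)$ with $(\bar u+\kappa^{T,\tau},\bar\mu,\bar v+\xi)$ on $[T,\tau]$ (where $\kappa^{T,\tau}_t=\E[c^T_\tau\,|\,\mathcal F_t]$ and $c^T_\tau=\int_{\T^d}(u^T_\tau-\bar u_\tau)$) and relies on the comparison principle of Proposition~\ref{prop.HJ} to transfer closeness from the interior time $\tau$ to the initial time $T$. There is, however, a real gap in your handling of the spatial mean.

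You write the martingale term as $N_T=\int_T^0\bigl(\int_{\T^d}(v^T_s-\bar v_s)\,dx\bigr)\cdot dW_s$ and assert that its quadratic variation is controlled because ``the martingale increments are tied to gradient quantities already controlled by turnpike.'' This is not backed by anything in the paper: the turnpike bounds \eqref{ineq.expo}, \eqref{zjq;eksdjnfCT} control $Du^T-D\bar u$ and $m^T-\bar\mu$, but say nothing about $v^T-\bar v$. The $v$-component is determined by the martingale representation of $u$, and the natural a priori $L^2$ estimate for the difference BSPDE (e.g.\ from Du--Meng $W^{2,n}$ theory) gives at best $\E\int_T^0\|v^T_s-\bar v_s\|^2\,ds\leq C\bigl(\E[\|u^T_0-\bar u_0\|^2]+\E\int_T^0\|\text{source}_s\|^2\,ds\bigr)$; since $u^T_0=g(\cdot,m^T_0)$ is not close to $\bar u_0$, and the source is $O(1)$ near $t=0$, this yields a uniform bound but no decay, so $N_T$ being Cauchy in $L^2$ does not follow. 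Making this rigorous would require new, $T$-uniform BSPDE estimates on $v$-differences that the argument never establishes. The paper avoids the issue entirely: the comparison principle for the HJ equation (Proposition~\ref{prop.HJ}) compares the $u$'s pointwise without ever touching $v$, and the role your $N_T$ plays is taken by the explicit martingale $\kappa^{T,\tau}$, whose convergence as $T\to-\infty$ is instead handled via the $\sigma$-algebra $\bigcap_t\mathcal F_t$ being trivial.

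Two smaller points. First, your concluding claim that ``one must rely on the smoothing of the heat-kernel representation \dots\ rather than on a direct comparison-principle argument'' is contradicted by the paper, whose proof \emph{is} a comparison-principle argument with a well-chosen reference: the trick is to compare on $[T,\tau]$ where the terminal conditions at $\tau$ are already close (by the turnpike at the interior time), and where the sharp form of Remark~\ref{rem.fulltime} applies since $m^T_T=\bar\mu_T$ exactly. Second, the ergodicity step is unnecessary overkill: $\hat a_T$ is $\mathcal F_T$-measurable, so any $L^1$-limit $A$ is measurable with respect to $\bigcap_{T<0}\mathcal F_T$, which is trivial (backward Kolmogorov/Blumenthal 0--1 law), giving the deterministic constant directly without any shift-invariance argument. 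This is exactly the mechanism the paper uses, applied to $\kappa^{T_0,\tau}_s$ as $s\to-\infty$.
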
 

{\bf Remarks.} \\
1) Note carefully that here, in contrast with the convention we used so far, $T$ is the negative initial time while $0$ is the terminal time. 

2) The existence and the uniqueness of $(\bar u, \bar v)$ is part of the lemma. We will see in the proof that actually $(\bar u, \bar \mu, \bar v)$ is a weak solution to the MFG system \eqref{eq.MFGCN} on the time interval $(-\infty, 0)$ with terminal condition $\bar u_0$. As a consequence, we have $D\bar u= \bar{\mathcal V}$ a.s..  We also want to mention that, although we use the notation $(\bar u,\bar v)$, here $(\bar u,\bar v)$ is {\it not} a stationary process.

3) As we will see in the proof and in contrast with the previous results of this section, here the fact that the terminal condition of the MFG system is precisely $g(\cdot, m^T_0)$ is crucial.

\begin{proof}[Proof of Lemma \ref{lem.keyCV}] Let us first discuss the existence of $(\bar u, \bar v)$. Let $T<0$ and $(\bar u^T, \bar \mu^T, \bar v^T)$ be the unique weak solution to \eqref{eq.MFGCN} with initial condition $\bar \mu_T$ at time $T$ and terminal condition $\bar u_0$. We know by Lemma \ref{lem.Wmu} that $\bar \mu^T= \bar \mu$ and $D\bar u^T= \bar{\mathcal V}$ on $[T,0]$. By the uniqueness of the strong solution of the backward HJ equation (Proposition \ref{prop.HJ}), this implies that $(\bar u^T, \bar v^T)= (\bar u^{T'}, \bar v^{T'})$ on $[T,0]$ for any $T'\leq T$. This implies the existence and the uniqueness of $(\bar u, \bar v)$ defined on $(-\infty,0]$ such that $(\bar u, \bar v)= (\bar u^T, \bar v^T)$ on $[T,0]$ for any $T<0$. Note that $D\bar u= \bar{\mathcal V}$. \\

For $T<0$, let now $(u^T, m^T, v^T)$ be the solution to \eqref{eq.MFGCN} on $[T,0]$ with initial condition $\bar \mu_T$ and terminal condition $g(\cdot, m^T_0)$. Let us first compare $u^T$ and $u^{T'}$ for $T'<T<0$. For this we argue as in the proof of Lemma \ref{lem.keyuniqueCT}. Fix $\tau \in  (T\vee T'+1, -1)$. By Lasry-Lions monotonicity argument on the time interval $[\tau, 0]$  (see \cite[Lemma 4.3.11]{CDLL}), we have:  
\begin{align*}
\E\left[ \int_{\tau}^0 \inte (m^T+m^{T'})|Du^T-Du^{T'}|^2\right] & \leq C\E\left[ \inte (u^T_{\tau}-u^{T'}_{\tau})(m^T_\tau-m^{T'}_\tau)\right]  \\ 
& \leq C\E\left[ \|Du^T_\tau-Du^{T'}_\tau\|_{L^2_x}\|m^T_\tau-m^{T'}_\tau\|_{L^2_x}\right], 
\end{align*}
where the last line holds by Poincar\'e's inequality. On the other hand, we know from Lemma \ref{lem.keyuniqueCT} that, for any $t\in [T\vee T'+2, -2]$,   
\be\label{kuzqejsh;bdnfgvcb}
 \E\left[\|(m^T-m^{T'})_t\|_{C^\alpha}+ \|D(u^T-u^{T'})_t\|_{L^2(\T^d)}\right] \leq C(e^{-\omega (t-(T\vee T'))}+e^{\omega t}) .
\ee
Thus, using Lemma \ref{lem.regularity} to bound below $m^T$ and $m^{T'}$, 
$$
\E\left[ \int_{\tau}^0 \inte |Du^T-Du^{T'}|^2\right] \leq C  (e^{-2\omega (\tau-(T\vee T'))}+e^{2\omega \tau}) .
$$
Applying Lemma \ref{lemCLLP2.1} to the equation satisfied by the shifted version of $m^T-m^{T'}$, we have, for some $\omega>0$ depending on $\|Du^T\|_\infty$ and $\|Du^{T'}\|_\infty$ only (which are bounded independently of $T,T'$), and for any $t\in [\tau, 0]$,
\begin{align}
\E\left[\|m^T_t-m^{T'}_t\|^2_{L^2_x}\right] 
& \leq Ce^{-2\lambda t} \E\left[\|m^T_\tau-m^{T'}_\tau\|^2_{L^2_x}\right] +C\E\left[\int_\tau^t \|m^{T'}_s |Du^T_s-Du^{T'}_s|\|_{L^2_x}^2  ds\right] \notag\\ 
& \leq C  (e^{-2\omega (\tau-(T\vee T'))}+e^{2\omega \tau}), \label{kqjzesjdfhcg}
\end{align}
changing $\omega$ and $C$ if necessary. As $u^T$ and $u^{T'}$ solve the same HJ equation with different right-hand side, we infer by comparison principle (Proposition \ref{prop.HJ}) and \eqref{kqjzesjdfhcg} that (restricting $\omega$ if necessary)
\begin{align}\label{qskdlifnm}
\E\left[\|u^T_\tau-u^{T'}_\tau\|_{L^\infty_x}\right] &  \leq \E\left[ \|g(\cdot, m^T_0)-g(\cdot, m^{T'}_0)\|_\infty\right] + \E\left[ \int_\tau^0 \|f(\cdot, m^T_s)-f(\cdot, m^{T'}_s)\|_\infty ds \right]\notag  \\ 
& \leq   C  (e^{-\omega (\tau-(T\vee T'))}+e^{\omega \tau}).  
\end{align}

We now compare the $u^T$ to $\bar u$. Define the $\mathcal F_\tau-$measurable random variable $c^T_\tau$ by
$$
c^T_\tau = \inte (u^T_\tau- \bar u_\tau). 
$$
By  Poincar\'e's inequality and Theorem  \ref{thm.main}, we have 
\be\label{kzelsjkdfng}
\E\left[ \|u^T_\tau- \bar u_\tau- c^T_\tau \|^2_{L^2_x} \right] \leq C \E\left[ \|Du^T_\tau- D\bar u_\tau\|^2_{L^2_x} \right] 
= C \E\left[ \|Du^T_\tau- \bar{\mathcal V}_\tau\|^2_{L^2_x} \right] 
\leq C(e^{-2\omega (\tau-T)}+e^{2\omega \tau}).
\ee
This implies by  \eqref{qskdlifnm}  that
\be\label{kzelsjkdfng4}
\E\left[ |c^T_\tau-c^{T'}_\tau |\right] \leq  C(e^{-\omega (\tau-(T\vee T'))}+e^{\omega \tau}).
\ee
We now compare  $(u^T, m^T, v^T)$  and $(\bar u + \kappa^{T,\tau}, \bar \mu, \bar v+\xi)$, where 
$$
\kappa^{T,\tau}_t= \E\left[ c^T_\tau\ |\ \mathcal F_t\right]
$$ 
and (recalling Lemma \ref{lem.representation}) $\xi$ is the adapted process such that 
$$
 \E\left[ c^T_\tau\ |\ \mathcal F_t\right] =  c^T_\tau-\int_t^\tau \xi_sdW_s.
 $$
 Both are weak solutions to the MFG system \eqref{eq.MFGCN} on $[T, \tau]$, with the same initial condition  $\bar \mu_T$ at time $T$ and with close  terminal conditions at time $\tau$ thanks to \eqref{kzelsjkdfng}. As $\bar \mu$ is bounded below, Remark \ref{rem.fulltime} after the proof of Lemma \ref{lem.keyuniqueCT} shows that in this setting, for each $t\in [T, \tau]$,
 \begin{align*}
 \E\left[\|(m^T-\bar \mu)_t\|^2_{L^2_x}+ \|\widetilde{(u^T-\bar u)}_t\|^2_{L^2_x}\right] & \leq C  (e^{-\omega (t-T)}+e^{-\omega(\tau-t)}) 
 \E\left[\|(m^T-\bar \mu)_T\|^2_{L^2_x}+ \|\tilde u^T-\tilde{\bar u})_\tau\|^2_{L^2_x}\right] \\
 &  \leq C (e^{-2\omega (\tau-T)}+e^{2\omega \tau})  ,
 \end{align*}
 where, for the last inequality, we used the fact that $m^T_T=\bar \mu_T$ and \eqref{kzelsjkdfng}. 
Again, by Proposition  \ref{prop.HJ}, a comparison argument on the HJ equations satisfied by $(u^T, v^T)$ and $(\bar u^T+ \kappa^{T,\tau},\bar v+\xi)$  implies that 
\begin{align*}
\E\left[\| u^T_T - \bar u_T- \kappa^{T,\tau}_T\|_\infty\right]& \leq  \E\left[\| u^T_\tau - \bar u_\tau- \kappa^{T,\tau}_\tau\|_\infty\right] + \E\left[ \int_T^\tau \|f(\cdot, m^T_s)-f(\cdot, \bar \mu_s)\|_\infty ds\right] \\
& \leq C(e^{-\omega (\tau-T)}+e^{\omega \tau})  .
\end{align*}
Let us set $\bar c_T=c^T_T= \inte (u^T_T-\bar u_T)$. Then 
\be\label{zkjehqsdnfgc1}
\E \left[ |\bar c_T-\kappa^{T, \tau}_T|\right]  \leq \E\left[\| u^T_T - \bar u_T- \kappa^{T,\tau}_T\|_\infty\right]  \leq C(e^{-\omega (\tau-T)}+e^{\omega \tau})  . 
\ee
Our next goal is to prove that $(\bar c_T)$ is Cauchy in $L^1$ and that it converges to a constant $\bar c$ as $T\to-\infty$. For this we fix $\ep>0$ small. Recalling \eqref{kzelsjkdfng4}, we also fix $T_0$ and $\tau$ such that 
\be\label{lieqbdsf}
(e^{-\omega (\tau-T)}+e^{\omega \tau}) + \E\left[ |c^T_\tau-c^{T_0}_\tau|\right]\leq \ep \qquad \forall T\leq T_0.
\ee
Note that, as $s\to-\infty$, $(\kappa^{T_0, \tau}_s)$  converges in $L^2_\omega$ to $\E\left[c^{T_0}_\tau \ |\ \cap_{t} \mathcal F_t \right]$. This limit is a.s. constant by Kolmogorov 0-1 law and thus equal to $\E\left[c^{T_0}_\tau \right]$. Let us choose $s_0<0$ such that 
$$
\E\left[ | \kappa^{T_0,\tau}_{s}- \E[c^{T_0}_\tau]|\right]  \leq \ep \qquad \forall s\leq s_0.
$$
Combining this inequality with \eqref{lieqbdsf},  we have
\be\label{zkjehqsdnfgc2}
\E\left[ | \kappa^{T,\tau}_{s}- \E[c^T_\tau]|\right] \leq C\ep \qquad \forall T\leq T_0, \; s\leq s_0. 
\ee
Therefore, for $T' \leq T\leq T_0\wedge s_0$,  
\begin{align*}
\E\left[ |\bar c_T- \bar c_{T'}|\right] 
& \leq \E\left[ |\kappa^{T, \tau}_T-\kappa^{T',\tau}_{T'}|\right] + \E\left[| \kappa^{T,\tau}_T-\bar c_T|\right]+ 
\E\left[ | \kappa^{T',\tau}_{T'}-\bar c_{T'}|\right] \\ 
& \leq \E\left[ | \kappa^{T, \tau}_{T}-\kappa^{T,\tau}_{T'}|\right] + \E\left[ |\kappa^{T,\tau}_{T'}- \kappa^{T',\tau}_{T'}|\right]  + C(e^{-\omega (\tau-T)}+e^{\omega \tau}) \\ 
& \leq C\ep  + \E\left[ |c^T_\tau-c^{T'}_\tau|\right]  +C(e^{-\omega (\tau-T)}+e^{\omega \tau}) \leq C\ep, 
\end{align*}
for $T$ and $\tau$ small enough, where we used  \eqref{zkjehqsdnfgc1}  in the second inequality and, for the third one, the definition of  $\kappa^{T,\tau}_{T'}=\E\left[ c^{T}_\tau\; |\; {\mathcal F}_{T'}\right]$  and the fact that \eqref{lieqbdsf} and \eqref{zkjehqsdnfgc2} hold. So $(\bar c_T)$ is a Cauchy sequence in $L^1$. Note also that, by \eqref{zkjehqsdnfgc1}  and \eqref{zkjehqsdnfgc2} and for $-T$ large,
\begin{align*}
\E\left[ |\bar c_T- \E\left[ \bar c_{T}\right] |\right] \leq  \E\left[| \kappa^{T,\tau}_T-\bar c_T|\right] +
 \E\left[| \E\left[\kappa^{T,\tau}_T\right]-\E\left[\bar c_T\right]|\right]  \leq C\ep. 
\end{align*}
Therefore $(\bar c_T)$ converges to a constant. 
\end{proof}

\section{The discounted problem} \label{sec.discount}

This part is dedicated to the analysis of the discounted MFG systems with a small discount factor. The understanding of this behavior  is instrumental to build a solution to the ergodic master equation. The discounted MFG system reads: 
\be\label{eq.MFGCNdisc}
\left\{
\begin{array}{cl}
    (i) & du^\delta_t= \left(-(1+\cwn)\Delta u^\delta_t +\delta u^\delta+\frac12 |Du^\delta_t|^2 -f(x,m^\delta_t) -2\cwn{\rm div}(v^\delta_t)\right)dt +\sqrt{2\cwn} v^\delta_t\cdot dW_t \\ [1ex]
    (ii) & dm^\delta_t=\left((1+\cwn)\Delta m^\delta_t +{\rm div}(m^\delta_tDu^\delta_t)\right)dt -\sqrt{2\cwn}Dm^\delta_t\cdot dW_t \\ [1ex]
    (iii) & m^\delta_{t_0}= \hat m_0,  \qquad u^\delta\;\text{bounded in $\Omega\times [t_0,\infty)\times \T^d$.}
\end{array}
\right.
\ee
Here the unknown is $(u^\delta, m^\delta, v^\delta)$ and the system is stated in $[t_0, \infty)\times \T^d$. As for the time dependent problem, the coupling function $f$ satisfies our standing assumptions {\bf (Hf), (Hm)}. The discount rate $\delta$ is a positive constant. Finally, $\hat m_0\in \mathcal P(\T^d)$ is arbitrary. Under these standing assumptions, an adaptation of the construction of \cite{CDLL} shows that the MFG system \eqref{eq.MFGCNdisc} has a unique solution which is an $(\mathcal F_{t_0,t})_{t\in [t_0,\infty)}-$adapted process with paths in $C^0([t_0,\infty], C^{4}(\T^d)\times \mathcal P\times [C^{3}(\T^d)]^d)$, such that $\sup_t (\|\uD\|_{4+\alpha}+\|\vD\|_{3+\alpha})$ is in $L^\infty(\Omega)$. Equation  \eqref{eq.MFGCNdisc}-(i) holds in a classical sense with probability 1, while, still with probability 1, \eqref{eq.MFGCNdisc}-(ii) is understood in the sense of distributions. We will often study the above system for an $\mathcal F_{t_0}-$random measure $\hat m_0$, in which case the solution will be an $(\mathcal F_{t})_{t\in [t_0,\infty)}-$adapted process. \\

Our first result is the existence of a (unique) weak stationary solution for the discounted problem. 

\begin{lem} \label{lem.WmuDISC} There exists $\delta_0>0$ and, for any $\delta \in (0,\delta_0)$, there exists a pair $(\bar{\mathcal V}^\delta, \bar \mu^\delta)$ of stationary processes adapted to the filtration $(\mathcal F_t)$, with path in $\mathcal C^0(\R, [C^{2+\beta}(\T^d)]^d\times \mathcal P)$, with
\[
\esssup \sup_t (\normUU{\bar{\mathcal{V}}^\delta_t}{[C^{2+\beta}(\T^d)]^d} + \normUU{\bar{\mu}^\delta_t}{C^{\beta}(\T^d)})<\infty,
\]
where $\beta$ is arbitrary in $[0,1)$, such that, $\bar{\mathcal V}^\delta_t$ is a gradient for any $t$ a.s.. and such that, for any $T\in \R$, the unique  solution $(w^\delta, m^\delta, z^\delta)$ to \eqref{eq.MFGCNdisc} on the time interval $(T, \infty)$ with initial condition $m_{T}= \bar \mu^\delta_T$ satisfies $Dw^\delta_t= \bar{\mathcal V}^\delta_t$ and $m^\delta_t= \bar \mu^\delta_t$ for any $t\in [T,\infty)$. 
\end{lem}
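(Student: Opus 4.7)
My plan is to mirror the construction of Lemma~\ref{lem.Wmu}, with the terminal condition at a finite horizon replaced by the boundedness requirement at infinity. For each $T > 0$, let $(w^{\delta,T}, \mu^{\delta,T}, z^{\delta,T})$ be the unique bounded solution to \eqref{eq.MFGCNdisc} on $[-T, +\infty)$ with $\mu^{\delta,T}_{-T} = \delta_0$. The stationary pair $(\bar{\mathcal V}^\delta, \bar \mu^\delta)$ will arise as the local-in-time limit of $(Dw^{\delta,T}, \mu^{\delta,T})$ as $T \to \infty$, and all its claimed properties will be read off from quantitative estimates uniform in $T$.

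The core analytic input is a turnpike/contraction estimate for the discounted system: for two bounded solutions $(u_1, m_1, v_1)$, $(u_2, m_2, v_2)$ of \eqref{eq.MFGCNdisc} on $[t_0, +\infty)$,
\[
\E\left[\|(m_1 - m_2)_t\|_{C^\beta} + \|D(u_1 - u_2)_t\|_{L^2_x}\right] \leq C_\beta\, e^{-\omega(t - t_0)}, \qquad t \geq t_0 + 2,
\]
provided $\delta < \delta_0$. This requires first a preliminary a priori estimate analogous to Lemma~\ref{lem.regularity}, obtained through the shifted discounted HJ equation: the extra $\delta u$ term in the semiconcavity argument only strengthens the comparison, since it contributes $-\delta z$ to the evolution of $z = \langle D^2 \shif u\, h, h\rangle$, while the $L^\infty$ upper and lower bounds on $m^\delta$ are obtained exactly as in the proof of Lemma~\ref{lem.regularity}. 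The Lasry--Lions monotonicity identity applied to the shifted difference picks up an additional $\delta\int(\shif u_1 - \shif u_2)(\shif m_1 - \shif m_2)$ term in the drift of $\int \phi\mu$; in contrast with Lemma~\ref{lem.keyuniqueCT}, where a true terminal condition is exploited, here one multiplies by the integrating factor $e^{-\delta t}$ and integrates from $t_0$ to $T \to \infty$, using uniform boundedness of the solutions to kill the boundary term at infinity, to produce a finite exponentially-weighted dissipation estimate. Combined with the backward/forward linear decay bounds of Appendix~\ref{sec:AppendEsti} and the Gronwall bound of Lemma~\ref{lem.expodecay}, this yields the required exponential decay; the restriction $\delta < \delta_0$ is used only to ensure that the discount contribution does not degrade the intrinsic spatial/parabolic rate~$\omega$.

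Once the turnpike estimate is in hand, applied to $(w^{\delta,T}, \mu^{\delta,T})$ and $(w^{\delta,T'}, \mu^{\delta,T'})$ for $T' > T$, it shows that $\{(Dw^{\delta,T}, \mu^{\delta,T})\}_T$ is Cauchy locally in time in $L^1(\Omega; L^2(\T^d)) \times L^1(\Omega; C^\beta(\T^d))$; one then defines $(\bar{\mathcal V}^\delta, \bar \mu^\delta)$ as the limit and passes to the limit in the distributional equation for $\bar\mu^\delta$. Higher spatial regularity $[C^{2+\beta}(\T^d)]^d \times C^{\beta}(\T^d)$ is obtained exactly as in the proof of Lemma~\ref{lem.Wmu}: the shifted HJ representation $\tilde{\shif w}^{\delta,T}_t(x) = \E[a_t(x)\mid \mathcal F_{-T,t}]$ with $a$ solving a pathwise parabolic equation with bounded right-hand side gives uniform $C^{3+\beta}$ bounds on $\tilde{\shif w}^{\delta,T}$ via \cite{Lun}, the extra bounded $\delta u$ term not disturbing these Schauder bounds. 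Stationarity of $(\bar{\mathcal V}^\delta, \bar\mu^\delta)$ under the measure-preserving shifts $\tau_h$ follows from uniqueness of the bounded solution on each infinite interval and the contraction estimate, as in Lemma~\ref{lem.Wmu}. Finally, the identification $Dw^\delta_t = \bar{\mathcal V}^\delta_t$, $m^\delta_t = \bar\mu^\delta_t$ for a solution $(w^\delta, m^\delta, z^\delta)$ started from $\bar\mu^\delta_T$ at time $T$ follows by applying the turnpike estimate between $(w^\delta, m^\delta, z^\delta)$ and $(w^{\delta,T'}, \mu^{\delta,T'}, z^{\delta,T'})$ and sending $T' \to -\infty$. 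The principal obstacle I anticipate is the derivation of the turnpike estimate itself: controlling the sign and size of the $\delta$-contribution in the Lasry--Lions identity when no finite-horizon terminal datum is available, and showing quantitatively that the integrating-factor/boundedness-at-infinity substitute yields the same exponential decay rate uniformly in $\delta$ small.
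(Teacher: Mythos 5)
Your overall plan (construct $(\bar{\mathcal V}^\delta,\bar\mu^\delta)$ as the local-in-time limit of solutions $(Dw^{\delta,T},\mu^{\delta,T})$, rely on a discounted turnpike estimate, promote regularity via the pathwise Schauder representation, and deduce stationarity from uniqueness under the shift) mirrors the paper exactly, which simply appeals to the proof of Lemma~\ref{lem.Wmu} once Lemmata \ref{lem.boundsDisc} and \ref{lem.uniqueDISC} are in place. Two issues, one a misattribution and one a real gap, should be addressed.

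First, the Gronwall-type result you need is Lemma~\ref{lem.expodecayDISC}, not Lemma~\ref{lem.expodecay}. The discounted estimates produce the exponentially weighted dissipation \eqref{hypexpo1DISC} and the backward bound \eqref{hypexpo3DISC}, which are one-sided (decay from $t_0$ only); they are not of the finite-horizon form \eqref{hypexpo1} and \eqref{hypexpo3} that Lemma~\ref{lem.expodecay} requires, and the conclusion of that lemma contains an $e^{-\lambda'(T-t)}$ term with no meaning here. The whole point of introducing Lemma~\ref{lem.expodecayDISC} is precisely to handle this.

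Second, and more seriously, your identification step does not actually deliver the claimed identities. You propose to apply the turnpike estimate
\[
\E\left[\|(m_1-m_2)_t\|_{C^\beta}+\|D(u_1-u_2)_t\|_{L^2_x}\right]\le C_\beta\,e^{-\omega(t-t_0)}
\]
to $(w^\delta,m^\delta,z^\delta)$ and $(w^{\delta,T'},\mu^{\delta,T'},z^{\delta,T'})$ on $[T,\infty)$ and send $T'\to-\infty$. But this bound has a fixed constant $C_\beta$ that does not degenerate as $\mu^{\delta,T'}_T\to\bar\mu^\delta_T$, so in the limit you only obtain
\[
\E\left[\|m^\delta_t-\bar\mu^\delta_t\|_{C^\beta}+\|Dw^\delta_t-\bar{\mathcal V}^\delta_t\|_{L^2_x}\right]\le C_\beta\,e^{-\omega(t-T)},
\]
which is the content of \eqref{ineqcontracdelta} in Theorem~\ref{thm.Cudelta}, not the exact identities $m^\delta_t=\bar\mu^\delta_t$, $Dw^\delta_t=\bar{\mathcal V}^\delta_t$ asserted in the lemma. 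To obtain equality you must do one of two things. Option (a): note that both $\bar\mu^\delta_T$ and $\mu^{\delta,T'}_T$ (for $T'<T-1$) are bounded away from zero, so, as in Remark~\ref{rem.fulltime}, the Lasry--Lions dissipation and hence the exponential decay hold from time $T$ itself and are proportional to $\E[\|\bar\mu^\delta_T-\mu^{\delta,T'}_T\|^2_{L^2_x}]$; this factor vanishes as $T'\to-\infty$, yielding the identity. Option (b), which is what the paper actually does by pointing back to the proof of Lemma~\ref{lem.Wmu}: solve the discounted backward HJ equation on $[T,\infty)$ with source $f(\cdot,\bar\mu^\delta_t)$ and the boundedness requirement, show $Dw^\delta_t=\bar{\mathcal V}^\delta_t$ by the duality/comparison argument of Lemma~\ref{lem.Wmu}, observe that $\bar\mu^\delta$ solves the Fokker--Planck equation with drift $\bar{\mathcal V}^\delta$ by passage to the limit, and then invoke uniqueness of the discounted MFG solution. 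Either route works; as written, your argument does not close.
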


We explain now how this stationary solution attracts all the solutions to \eqref{eq.MFGCNdisc} and allows to understand its long-time average: 

\begin{thm}\label{thm.Cudelta} 
Let $(\bar{\mathcal{V}}, \bar \mu)$ be as defined by Lemma \ref{lem.Wmu} and $(\bar{\mathcal{V}}^\delta, \bar{\mu}^\delta)$ be as defined by Lemma \ref{lem.WmuDISC}. There are constants $\delta_0, \omega>0$ and, for any $\beta\in(0,1)$, $C_\beta > 0$, such that, if $\delta \in (0, \delta_0)$ and if $(u^\delta, m^\delta, v^\delta)$ is the solution to \eqref{eq.MFGCNdisc}, then 
\be\label{ineqcontracdelta}
 \E\left[\normH{ m^\delta_{t}- \bar \mu^\delta_{t}}+ \|Du^\delta_{t}-\bar{\mathcal V}^\delta_{t}\|_{L^2(\T^d)}\right] \leq C_\beta e^{-\omega (t-t_0)} \qquad \forall t\in [t_0+1, \infty)
\ee
where 
\be\label{ineqcontracdelta4}
\E\left[ \inte |\bar{\mathcal V}^\delta_0-\bar{\mathcal V}_0|^2dx\right]+  \E\left[ \inte |\bar \mu^\delta_0-\bar \mu_0|^2 dx \right] \leq C\delta. 
\ee
In addition, for any $t\geq t_0$, 
\be\label{ineqcontracdelta3}
\E\left[ \sup_{x\in \T^d} |\delta u^\delta_{t}(x)-\bar \lambda |  \right] \leq C\delta
 \ee
 where $\bar \lambda$ is the constant defined in Theorem \ref{thm.main}. 
\end{thm}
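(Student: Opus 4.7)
The plan is to prove the three estimates in sequence, each building on the previous one. First, to establish the contraction estimate \eqref{ineqcontracdelta}, I would adapt the proof of Lemma \ref{lem.keyuniqueCT} to the discounted setting, comparing the time-dependent solution $(u^\delta, m^\delta, v^\delta)$ with the stationary solution $(w^\delta, \bar \mu^\delta, z^\delta)$ produced by Lemma \ref{lem.WmuDISC} on the half-line $[t_0, \infty)$. Both triples solve the same discounted system \eqref{eq.MFGCNdisc}, so the discount term cancels in the difference. Passing to the shifted system \eqref{eq.MFGCNtilde} removes the common noise; the Lasry--Lions monotonicity identity, combined with the uniform Lipschitz and density bounds from Lemma \ref{lem.regularity}, then yields a coupled system of inequalities for the analogues of the quantities $\alpha(t), \beta(t), \gamma(t)$ appearing in Lemma \ref{lem.keyuniqueCT}. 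Because both solutions are defined and uniformly bounded on the infinite half-line, only the initial-layer boundary term at $t = t_0$ survives, which, via the Gronwall-type decay lemma of Appendix \ref{sec:AppendExpo}, gives the single exponential rate $e^{-\omega(t - t_0)}$. The $C^\beta$ refinement on $m^\delta - \bar \mu^\delta$ follows as at the end of the proof of Lemma \ref{lem.keyuniqueCT}.

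For the closeness estimate \eqref{ineqcontracdelta4}, I would play the discounted and undiscounted stationary systems against each other. Let $\bar u$ be the undiscounted stationary value function from Lemma \ref{lem.Wmu} (whose equation involves the ergodic constant $\bar \lambda$) and $w^\delta$ the stationary discounted value function provided by Lemma \ref{lem.WmuDISC} (whose equation involves the linear term $\delta w^\delta$). Applying It\^o's product formula to $\inte (w^\delta_t - \bar u_t)(\bar \mu^\delta_t - \bar \mu_t)$ on a long interval and taking expectations, stationarity bounds the boundary contributions uniformly while the Lasry--Lions inequality extracts a quadratic control on $\bar \mu^\delta_0 |\bar{\mathcal V}^\delta_0 - \bar{\mathcal V}_0|^2$ perturbed by a source of the form $(\delta w^\delta - \bar\lambda)(\bar \mu^\delta - \bar \mu)$. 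The uniform Lipschitz bound on $w^\delta$ gives $\|\delta w^\delta\|_\infty = O(1)$, and step 3 below will show that $\delta w^\delta$ is in fact close to $\bar\lambda$; feeding this back by Cauchy--Schwarz, together with the density lower bound from Lemma \ref{lem.regularity} and the Poincar\'e inequality, closes the estimate at the linear rate $\delta$. The corresponding bound for $\|\bar \mu^\delta - \bar \mu\|_{L^2}^2$ then follows by inserting the gradient bound into the Fokker--Planck decay estimate of Appendix \ref{sec:AppendEsti}.

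The Tauberian estimate \eqref{ineqcontracdelta3} rests on two observations. Integrating the stationary discounted HJ equation in space, taking expectations, and using stationarity to kill the time derivative yields the identity
\[
\delta\, \E\!\left[\inte w^\delta_0\right] = -\E\!\left[\inte \left(\tfrac12 |\bar{\mathcal V}^\delta_0|^2 - f(x, \bar \mu^\delta_0)\right) dx\right],
\]
which, combined with the analogous identity \eqref{defbarlambda} for $\bar \lambda$ and with step 2, yields $\bigl|\delta\, \E[\inte w^\delta_0] - \bar\lambda\bigr| \le C\delta$. The uniform Lipschitz bound on $w^\delta$ upgrades this to the pointwise estimate $|\delta w^\delta_0(x) - \bar\lambda| \le C\delta$ (the deviation of $w^\delta$ from its space average is $O(1)$ in $\delta$, so multiplication by $\delta$ is harmless). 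To finish, I would transfer this bound from $w^\delta$ to $u^\delta$ by subtracting the HJ equations they satisfy and integrating in space: this gives a scalar backward equation for $\hat u^\delta_t - \hat w^\delta_t$ with discount rate $\delta$ and source of order $e^{-\omega(t-t_0)}$ (by step 1), whose bounded solution is uniformly $O(1)$; multiplying by $\delta$ produces the desired $C\delta$ bound, and the pointwise conclusion follows via the uniform Lipschitz bound on $u^\delta - w^\delta$.

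The main obstacle is step 2: the two stationary systems one wishes to compare differ \emph{both} by an additive constant ($\bar\lambda$) \emph{and} by a first-order term ($\delta w^\delta$), and the naive substitution $w^\delta \leftrightarrow \bar u + \bar\lambda/\delta$ blows up as $\delta \to 0$. The comparison must therefore be carried out at the level of gradients and densities rather than values; and extracting the optimal linear rate $O(\delta)$---rather than the $O(\sqrt\delta)$ that a plain Cauchy--Schwarz would give---requires a self-referential bootstrap in which the mismatch $\delta w^\delta - \bar\lambda$ is itself controlled by the very estimate being proved. Concretely, steps 2 and 3 have to be closed simultaneously, perhaps by introducing an auxiliary quantity $A(\delta) := \E[\|\bar{\mathcal V}^\delta - \bar{\mathcal V}\|_{L^2}^2] + \E[\|\bar \mu^\delta - \bar \mu\|_{L^2}^2] + \delta^{-1}|\delta\E[\hat w^\delta] - \bar\lambda|^2$ and showing that $A(\delta) \le C\delta + \tfrac12 A(\delta)$.
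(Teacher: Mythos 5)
Your steps 1 and 3 track the paper's argument; the real issue is in step 2, where you perceive a circular dependency that does not actually exist. When you compare the discounted stationary triple with an undiscounted one and apply duality, the source term you write, $\int (\delta w^\delta_t - \bar\lambda)(\bar\mu^\delta_t - \bar\mu_t)\,dx$, simplifies in two ways that you miss. First, $\bar\lambda$ is a constant and $\bar\mu^\delta_t - \bar\mu_t$ has zero total mass, so $\int \bar\lambda\, d(\bar\mu^\delta_t-\bar\mu_t)=0$ and the $\bar\lambda$ drops out entirely (in fact, the paper compares against the weak solution $(w^T,\bar\mu,z^T)$ of Lemma \ref{lem.Wmu}, whose HJ equation carries no $\bar\lambda$ term at all, so the issue does not even arise; note also that $\bar u$ itself is only constructed in Theorem \ref{prop.fullcorrector}, later than Theorem \ref{thm.Cudelta}, so it is not available here). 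Second, and this is the crucial point, for the remaining term one writes $\int \delta w^\delta_t\, d(\bar\mu^\delta_t-\bar\mu_t)=\int \delta\, \tilde w^\delta_t\, d(\bar\mu^\delta_t-\bar\mu_t)$ with $\tilde w^\delta_t = w^\delta_t - \int_{\T^d} w^\delta_t$, again because $\bar\mu^\delta_t-\bar\mu_t$ has zero mass. Since $\|Dw^\delta_t\|_\infty=\|\bar{\mathcal V}^\delta_t\|_\infty\le C$ uniformly in $\delta$ (Lemma \ref{lem.boundsDisc}), the \emph{centered} function $\tilde w^\delta$ is $O(1)$, not $O(1/\delta)$, so the source is bounded by $C\delta$ directly. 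This closes step 2 on its own, with no recourse to step 3 and hence no bootstrap.

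Your proposed self-referential functional $A(\delta)$ would not in fact close with the $\delta^{-1}$ weight you assign to $|\delta\E[\hat w^\delta]-\bar\lambda|^2$: the only available control on $\bar\lambda-\bar\lambda^\delta$ is of the form $|\bar\lambda-\bar\lambda^\delta|\le C\big(\E[\|\bar{\mathcal V}^\delta_0-\bar{\mathcal V}_0\|_{L^2}^2]+\E[\|\bar\mu^\delta_0-\bar\mu_0\|_{L^2}^2]\big)^{1/2}\le C A(\delta)^{1/2}$, so that $\delta^{-1}|\bar\lambda-\bar\lambda^\delta|^2\le C\,\delta^{-1}A(\delta)$, which is the wrong direction. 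In short: there is no genuine obstacle, and recognizing that the $\delta w^\delta$ source can be centered against the zero-mass measure is the one idea your plan is missing. For step 3, you should also be a bit more careful about the jump from the deterministic identity $\delta\,\E[\int_{\T^d} w^\delta_t]=\bar\lambda^\delta$ to the $L^1$ estimate $\E[|\delta\int_{\T^d} w^\delta_t-\bar\lambda|]\le C\delta$: this requires observing that $u^\delta_t$ started from a \emph{deterministic} measure at time $t$ is itself deterministic (being $\mathcal F_{t,t}$-measurable), and using the comparison from step 1 to transfer the bound. Otherwise your outline of step 3 is consistent with the paper.
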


The rest of the section is devoted to the proofs of Lemma \ref{lem.WmuDISC} and Theorem \ref{thm.Cudelta}. As in Section~\ref{sec.timedep}, to manipulate system \eqref{eq.MFGCNdisc}  it is often convenient to change variables and work with the shifted system 
\begin{align}\label{eq.MFGCNtildedelta}
& (i)\; d\uDW= \left(\delta \uDW -\Delta \uDW +\frac12 | D  \uDW|^2 -\shif f_t(x,\mDW) \right)dt +d\MDW \notag\\
& (ii)\; d\mDW=\left(\Delta \mDW +{\rm div}(\mDW D  \uDW)\right) dt \\
& (iii)\; \shif m^\delta_{t_0}= \hat m_0, \qquad \shif u^\delta\; {\rm bounded} \notag
\end{align}
where 
$$
\shif f_t(x,m)= f(x+\sqrt{2\cwn}(W_t-W_{t_0}),  (id+\sqrt{2\cwn}(W_t-W_{t_0}))\sharp m).
$$
System \eqref{eq.MFGCNtildedelta} is stated in $(t_0, \infty)\times \T^d$ and the notion of solution is the same as the notion of solution for the shifted  MFG system \eqref{eq.MFGCNtilde}. As explained in \cite[Section 5.5]{CDLL}, the relationship between this solution and the solution of \eqref{eq.MFGCNdisc} is given by:
\be\label{defshifdelta}
\begin{array}{c}
\uDW(x)= \uD(x+\sqrt{2\cwn}(W_t-W_{t_0})), \; \mDW = (id-\sqrt{2\cwn}(W_t-W_{t_0}))\sharp \mD, \\
\vDW(x)= \sqrt{2\cwn}D  \uDW(x)+ \vD(x+\sqrt{2\cwn}(W_t-W_{t_0})),\; d \shif M_t= \shif v_t\cdot dW_t. 
\end{array}
\ee

Let us start with  basic estimates on the solution to \eqref{eq.MFGCNdisc}. 

\begin{lem} \label{lem.boundsDisc} There exists a constant $C>0$, depending on $f$ only, such that if $(u^\delta, m^\delta, v^\delta)$ is the solution to \eqref{eq.MFGCNdisc}, then, a.s.,  
\be\label{esti1111}
\|\delta u^\delta_t\|_\infty+ \|D  u^\delta_t\|_\infty \leq C, \qquad  \forall t\geq t_0, 
\ee
and 
\be\label{esti1112}
C^{-1} \leq m^\delta_t \leq C,\qquad \forall  t\geq t_0+1. 
\ee
\end{lem}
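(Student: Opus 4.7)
The argument parallels that of Lemma~\ref{lem.regularity}, with the discount $\delta u^\delta$ replacing the effect of a terminal condition. I work with the shifted system \eqref{eq.MFGCNtildedelta}, since $(u^\delta, m^\delta, v^\delta)$ and $(\shif u^\delta, \shif m^\delta, \shif M^\delta)$ are linked by \eqref{defshifdelta}, so it suffices to obtain the analogous bounds for the shifted unknowns.

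The bound $\|\delta u^\delta_t\|_\infty \leq C$ is the easiest part: it follows from the probabilistic representation
\[
u^\delta_t(x) = \inf_\alpha \mathbb{E}\bigg[\int_t^\infty e^{-\delta(s-t)}\Bigl(\tfrac12|\alpha_s|^2 + f(X^\alpha_s, m^\delta_s)\Bigr)ds \,\Big|\, \mathcal{F}_t\bigg],
\]
combined with $\|f\|_\infty < \infty$. Taking $\alpha \equiv 0$ yields $u^\delta_t \leq \|f\|_\infty/\delta$, and nonnegativity of $\tfrac12|\alpha|^2$ gives $u^\delta_t \geq -\|f\|_\infty/\delta$. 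Alternatively one may directly compare the shifted HJ equation with the constants $\pm\|f\|_\infty/\delta$, via a variant of the comparison in Proposition~\ref{prop.HJ}.

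For the Lipschitz estimate, I reproduce the semiconcavity argument of Lemma~\ref{lem.regularity}. Differentiating \eqref{eq.MFGCNtildedelta}-(i) twice in a direction $h$ with $|h|=1$, the function $\shif z_t := \langle D^2 \shif u^\delta_t h, h\rangle$ satisfies, with $C_0 = \|D^2_{xx} f\|_\infty$,
\[
d\shif z_t \geq \Bigl(\delta \shif z_t - \Delta \shif z_t + D\shif u^\delta_t \cdot D\shif z_t + \shif z_t^2 - C_0\Bigr)dt + d\shif N_t,
\]
using $|D^2 \shif u^\delta_t h|^2 \geq \shif z_t^2$. The extra $\delta \shif z_t$ term points in the favorable direction, so any constant $K_0 \geq \sqrt{C_0}$ is a stochastic supersolution. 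In the absence of a terminal condition, I approximate by the finite-horizon shifted discounted system $(\shif u^{\delta, T}, \shif m^{\delta, T}, \shif M^{\delta, T})$ on $[t_0, T]$ with $\shif u^{\delta, T}_T = 0$; the BSPDE comparison of \cite{CSS22} then yields $\shif z^{\delta, T} \leq K_0$ on $[t_0, T]$, uniformly in $T$ and $\delta$. Passing $T \to \infty$, using uniqueness and stability of the bounded infinite-horizon solution (cf.~\cite{CDLL}), preserves this semiconcavity for $\shif u^\delta$. Combined with the $L^\infty$ bound from the previous step and the standard fact that semiconcave functions on $\T^d$ with bounded $L^\infty$ norm are Lipschitz, one obtains $\|D\shif u^\delta\|_\infty \leq C$ uniformly in $\delta$; translating via \eqref{defshifdelta} yields \eqref{esti1111}.

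Finally, the bounds \eqref{esti1112} on $m^\delta$ follow from the shifted Fokker--Planck equation \eqref{eq.MFGCNtildedelta}-(ii), a deterministic parabolic PDE whose drift $D\shif u^\delta$ is now known to be uniformly bounded. Exactly as in Lemma~\ref{lem.regularity}, the short-time heat-kernel estimate of \cite[Lemma 7.3]{CLLP2} gives $\|\shif m^\delta_t\|_\infty \leq C$ for $t \geq t_0 + 1$, and \cite[Theorem 2.5.1]{BKR} yields the matching lower bound. The main obstacle I anticipate lies in the Lipschitz step: making rigorous the passage from the finite-horizon semiconcavity estimate to the infinite-horizon one, which requires either a direct BSPDE comparison on $[t_0, \infty)$ with the $L^\infty$ bound playing the role of a terminal condition, or a uniform-in-$T$ estimate together with a stability argument guaranteeing the convergence $\shif u^{\delta, T} \to \shif u^\delta$ as $T \to \infty$.
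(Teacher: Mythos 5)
Your proof follows the paper's argument exactly: the $\|\delta u^\delta\|_\infty$ bound by comparison against $\pm\|f\|_\infty/\delta$, then a finite-horizon approximation $(\shif u^{\delta,T},\shif m^{\delta,T},\shif M^{\delta,T})$ with $\shif u^{\delta,T}_T=0$, the semiconcavity argument of Lemma~\ref{lem.regularity} (whose $\delta\shif z$ term is favorable), and the Fokker--Planck bounds from \cite[Lemma 7.3]{CLLP2} and \cite[Theorem 2.5.1]{BKR}; you simply spell out what the paper dismisses as ``a small variant of Lemma~\ref{lem.regularity}.'' The only small point worth noting is that in Lemma~\ref{lem.regularity} the paper linearizes $\shif z_t^2\geq\shif z_t-\tfrac14$ before invoking the comparison of \cite{CSS22} (which is stated for equations linear in the unknown), whereas you compare directly against a constant supersolution of the quadratic inequality; the linearization route carries over verbatim to the discounted case and avoids any subtlety about a quadratic BSPDE comparison.
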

\begin{proof} By comparison on the HJ equation (using $\pm \delta^{-1} \|f\|_\infty$ as super and sub-solutions respectively), one has $\| u^\delta\|_\infty \leq \|f\|_\infty/\delta$. Thus the first inequality in \eqref{esti1111} holds. For the proof of the other estimates, we can use the classical approximation of the solution  $(u^{\delta}, m^{\delta},v^{\delta})$ by the solution  $(u^{\delta,T}, m^{\delta,T},v^{\delta,T})$ of \eqref{eq.MFGCNdisc} defined on a time interval $[t_0,T]$ with $T>t_0$ large,  with initial condition $m^{\delta,T}_{t_0}=\hat m_0$ and terminal condition $u^{\delta,T}_T=0$. Then a small variant of  Lemma \ref{lem.regularity} shows that the estimates \eqref{lipestiCT} and \eqref{lipestiCT2} hold, independently of the horizon $T$ and of the discount factor $\delta$. We can then let $T\to \infty$ to obtain the lemma. 
\end{proof}

\begin{lem}\label{lem.uniqueDISC} There exists $\omega,\delta_0>0$ and, for any $\beta\in(0,1)$, $C_\beta >0$, depending on $f$ only, such that, if $\delta \in  (0, \delta_0)$ and if $(u^\delta_1, m^\delta_1, v^\delta_1)$ and $(u^\delta_2, m^\delta_2, v^\delta_2)$ solve \eqref{eq.MFGCNdisc}-(i)-(ii) on an interval $(t_0,\infty)$ and satisfy the estimates \eqref{esti1111} and \eqref{esti1112} on that interval, then 
$$
 \E\left[\normH{m^\delta_{1,t}- m^\delta_{2,t}}+ \|D(u^\delta_{1,t}-u^\delta_{2,t})\|_{L^2(\T^d)}\right] \leq C_\beta e^{-\omega (t-t_0)} \qquad \forall t\in [t_0+1, \infty).
$$
\end{lem}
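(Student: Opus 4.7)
The plan is to mimic the proof of Lemma \ref{lem.keyuniqueCT} in the discounted, infinite-horizon setting, treating the extra $\delta\uD$ term in the Hamilton--Jacobi equation as a perturbation that the smallness of $\delta$ absorbs. First I pass to the shifted system \eqref{eq.MFGCNtildedelta} via \eqref{defshifdelta}, so that $(\shif u^\delta_i,\shif m^\delta_i,\shif M^\delta_i)$, $i=1,2$, satisfy backward and forward equations to which the appendix's pathwise estimates apply. The uniform bounds \eqref{esti1111}-\eqref{esti1112} translate into analogous bounds for the shifted triples. Following \cite{CP19}, set
\[
\alpha(t) := \normtwo{\tilde{\shif u}^\delta_{1,t}-\tilde{\shif u}^\delta_{2,t}}^2,\qquad \beta(t) := \normtwo{D(\shif u^\delta_{1,t}-\shif u^\delta_{2,t})}^2,\qquad \gamma(t) := \normtwo{\shif m^\delta_{1,t}-\shif m^\delta_{2,t}}^2,
\]
where $\tilde{\shif u}^\delta_{i,t} := \shif u^\delta_{i,t}-\inte\shif u^\delta_{i,t}$. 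All three are uniformly bounded on $[t_0,\infty)$.

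Next I run the Lasry--Lions monotonicity argument of \cite[Lemma 4.3.11]{CDLL} on the product $\inte UM$ with $U=\shif u_1^\delta-\shif u_2^\delta$ and $M=\shif m_1^\delta-\shif m_2^\delta$. The new feature is that the discount term contributes an extra $\delta\inte UM$ on the right-hand side of the energy identity; since $\inte M=0$, this pairing still satisfies $|\E\inte UM|\leq(\alpha\gamma)^{1/2}$. Using Poincar\'e's inequality to bound $\alpha\leq C\beta$ and Young's inequality on the $\delta$-term, I obtain, for any $t_1<t_2$ in $[t_0+1,\infty)$,
\[
\int_{t_1}^{t_2}\beta(s)\,ds \leq C\bigl[(\alpha\gamma)^{1/2}(t_1)+(\alpha\gamma)^{1/2}(t_2)\bigr] + C\delta\int_{t_1}^{t_2}\gamma(s)\,ds,
\]
provided $\delta<\delta_0$ is small enough to absorb a $\delta\int\beta$ term.

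For $\gamma$, the difference $\mu:=\shif m_1^\delta-\shif m_2^\delta$ solves a Fokker--Planck equation of the form \eqref{eq.kolmo} with bounded drift and source $B=\shif m_2^\delta D(\shif u_1^\delta-\shif u_2^\delta)$, so Lemma \ref{lemCLLP2.1} gives $\gamma(t_2)\leq Ce^{-\omega(t_2-t_1)}\gamma(t_1)+C\int_{t_1}^{t_2}\beta$. For $\alpha$, the function $w:=\shif u_1^\delta-\shif u_2^\delta$ satisfies a backward stochastic equation of type \eqref{eq.backward_stochastic} with discount $\delta$, drift $V=\tfrac12(D\shif u_1^\delta+D\shif u_2^\delta)$, and source term exactly as in \eqref{lkazjenzrseBIS}; the corresponding backward appendix estimate (Lemma \ref{lemCLLP2.2}) yields
\[
\alpha(t_1)\leq Ce^{-\omega(t_2-t_1)}\alpha(t_2)+C\int_{t_1}^{t_2}e^{-\omega(s-t_1)}\beta(s)\,ds+C\sup_{s\in[t_1,t_2]}\gamma(s),
\]
with the exponential rate not deteriorating as $\delta\to0^+$. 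Together with Poincar\'e's inequality $\alpha\leq C\beta$, these three estimates match hypotheses \eqref{hypexpo1}--\eqref{hypexpo4} of Lemma \ref{lem.expodecay}, up to the harmless $C\delta\int\gamma$ remainder which the $\delta_0$-smallness allows to absorb into the decay mechanism. Applying the exponential decay lemma on $[t_0,T]$ and letting $T\to\infty$, using the uniform boundedness of $\alpha(T),\gamma(T)$ to kill the right boundary term, gives $\alpha(t)+\gamma(t)\leq Ce^{-\omega(t-t_0)}$ on $[t_0+1,\infty)$. The pointwise $L^2$-bound on $Dw$ then follows from inequality \eqref{zerldjkjnpimj2} as in the last step of Lemma \ref{lem.keyuniqueCT}, and the $C^\beta$-upgrade for $m$ uses the parabolic H\"older estimates \cite[Theorems III.8.1 and III.10.1]{LSU} applied to the Fokker--Planck equation for $\mu$. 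Unwinding the shift \eqref{defshifdelta} transfers these bounds to the original variables.

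The main obstacle is the interplay between the discount term and the lack of a terminal time: the $\delta\uD$ contribution breaks the exact Lasry--Lions identity, and in the backward estimate for $\alpha$ there is no natural terminal datum to pin $w$ down. Requiring $\delta<\delta_0$ is what makes the quantitative exponential decay lemma of the appendix still applicable, since the $\delta$-perturbations it introduces are then strictly dominated by the contraction rates of the linear estimates. The uniform $L^\infty$-bound $\|\delta\uD\|_\infty\leq C$ of Lemma \ref{lem.boundsDisc} is the other crucial ingredient, as it guarantees that $\alpha$ and $\gamma$ remain bounded on $[t_0,\infty)$ and allows the passage $T\to\infty$ to eliminate the right boundary term produced by Lemma \ref{lem.expodecay}.
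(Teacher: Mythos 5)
There is a genuine gap in the handling of the discount term, and as a result your invocation of Lemma~\ref{lem.expodecay} cannot be justified as written. You run the Lasry--Lions duality on $\inte UM$ without any weight and accept the extra $\delta\E\inte UM$ contribution as a perturbation, leading to
\[
\int_{t_1}^{t_2}\beta(s)\,ds \leq C\bigl[(\alpha\gamma)^{1/2}(t_1)+(\alpha\gamma)^{1/2}(t_2)\bigr] + C\delta\int_{t_1}^{t_2}\gamma(s)\,ds.
\]
You then claim this remainder is ``harmless'' and that hypothesis \eqref{hypexpo1} of Lemma~\ref{lem.expodecay} holds after absorption. But $\gamma$ is only \emph{bounded}, not integrable, so $\delta\int_{t_1}^{t_2}\gamma$ is of order $\delta(t_2-t_1)$, and the substitution $\int_{t_1}^{t_2}\gamma \lesssim \gamma(t_1)+(t_2-t_1)\int_{t_1}^{t_2}\beta$ coming from \eqref{hypexpo2} only allows absorption when $\delta(t_2-t_1)$ is small. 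Since you then apply Lemma~\ref{lem.expodecay} on $[t_0,T]$ and let $T\to\infty$, this absorption fails for $t_2-t_1\gg 1/\delta$, precisely the regime the lemma needs. No choice of $\delta_0$ independent of $T$ fixes this, and the rate $\omega$ you extract would in any case degenerate as $\delta\to 0$.

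The paper sidesteps this entirely by inserting the weight $e^{-\delta t}$ inside the Lasry--Lions product (see \eqref{LLmonotonicityDISC}): differentiating $e^{-\delta t}\inte UM$ makes the $\delta u$ contribution cancel exactly, so the monotonicity identity holds with no $\delta$-remainder. Letting $t_2\to\infty$ then yields the one-sided discounted inequality \eqref{hypexpo1DISC}, and similarly Lemma~\ref{lemCLLP2.2} applied on $[t_1,\infty)$ gives \eqref{hypexpo3DISC}. These are the hypotheses of the \emph{separate} Lemma~\ref{lem.expodecayDISC} (not Lemma~\ref{lem.expodecay}), which the paper proves precisely to handle the discounted, infinite-horizon setting and which produces a rate $\omega$ uniform in $\delta\in(0,\delta_0)$. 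Your proposal neither produces those weighted hypotheses nor invokes that lemma. The rest of your argument (Lemma~\ref{lemCLLP2.1} for $\gamma$, passing $t_2\to\infty$ in the $\alpha$ estimate using boundedness, and the final $C^\beta$ upgrade by parabolic regularity) matches the paper, but the central step does not close.
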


\begin{proof} Throughout the proof, we work with the shifted variables $(\shif u^\delta_i, \shif m^\delta_i, \shif M^\delta_i)$ associated to  $(u^\delta_i, m^\delta_i, v^\delta_i)$ as in \eqref{defshifdelta}.  We set $\tilde{\shif u}^\delta_{i,t}(x)= \shif u^\delta_{i,t}(x)- \inte \shif u^\delta_{i,t}$ and
$$
\alpha(t) = \|\tilde{\shif u}^\delta_{1,t}-\tilde{\shif u}^\delta_{2,t}\|^2_{L^2(\T^d\times \Omega)}, \qquad
\beta(t) = \| D(\shif u^\delta_{1,t}-\shif u^\delta_{2,t})\|^2_{L^2(\T^d\times \Omega)}
$$
and
$$
\gamma(t) = \|\shif m^\delta_{1,t}-\shif m^\delta_{2,t}\|^2_{L^2(\T^d\times \Omega)}.
$$
By the standard Lasry-Lions monotonicity argument, we have, for any $t_0+1\leq t_1\leq t_2\leq T$,  
\begin{align}\label{LLmonotonicityDISC}
& \E\left[ e^{-\delta t_2}\int_{\T^d}(\shif u^\delta_{1,t_2}-\shif u^\delta_{2,t_2})(\shif m^\delta_{1,t_2}-\shif m^\delta_{2,t_2})-
e^{-\delta t_1}\int_{\T^d}(\shif u^\delta_{1,t_1}-\shif u^\delta_{2,t_1})(\shif m^\delta_{1,t_1}-\shif m^\delta_{2,t_1}) \right]  \notag\\
& \qquad \leq - \frac12 \E\left[ \int_{t_1}^{t_2} \int_{\T^d} e^{-\delta s} |D\shif u^\delta_1-D\shif u^\delta_2|^2 (\shif m^\delta_1+\shif m^\delta_2)\right] . 
\end{align} 
As in the proof of Lemma \ref{lem.keyuniqueCT}, we have
 \begin{align*}
& \E\left[\left| \int_{\T^d}(\shif u^\delta_{1,t}-\shif u^\delta_{2,t})(\shif m^\delta_{1,t}-\shif m^\delta_{2,t}) \right|\right]  \leq C_0(\alpha(t)\gamma(t))^{1/2}.
 \end{align*}
 Hence, letting $t_2\to\infty$ in \eqref{LLmonotonicityDISC} and using  the bound below for the $\shif m^\delta_ i$  given in Lemma \ref{lem.boundsDisc} as well as the fact that $D\shif u^\delta_1$ and $D\shif u^\delta_2$  are bounded we infer that, 
\be\label{jhbelvbhbfDISC}
\int_{t_1}^{\infty} e^{-\delta t} \beta(t)dt \leq C_0e^{-\delta t_1} (\alpha(t_1)\gamma(t_1))^{1/2}. 
\ee
Using Lemma \ref{lemCLLP2.1} applied to $\shif m^\delta_1-\shif m^\delta_2$ which satisfies an equation of the form \eqref{eq.kolmo} with $V= D\shif u^\delta_1$ and   $B= \shif m^\delta_2(D\shif u^\delta_1-D\shif u^\delta_2)$, we obtain, on any time interval $[t_1,t_2]$,  
$$
\|\shif m^\delta_{1,t_2}-\shif m^\delta_{2,t_2}\|_{L^2(\T^d)}\leq Ce^{-\omega (t_2-t_1)} \|\shif m^\delta_{1,t_1}-\shif m^\delta_{2,t_1}\|_{L^2(\T^d)} +C\left[\int_{t_1}^{t_2} \|\shif m^\delta_{2,s} D(\shif u^\delta_{1,s}-\shif u^\delta_{2,s})\|_{L^2(\T^d)}^2ds\right]^{1/2}.
$$
Using the bound above for $\shif m^\delta_2$ in Lemma \ref{lem.boundsDisc}, taking the square and the expectation, this implies, changing $\omega$ if necessary,
$$
\gamma(t_2) \leq C_0e^{-\omega (t_2-t_1)} \gamma(t_1) + C_0 \int_{t_1}^{t_2} \beta(t)dt .
$$

We now look at the equation satisfied by  $\shif u^\delta_1-\shif u^\delta_2$, which is of the form \eqref{eq.backward_stochastic} with $V= D\shif u^\delta_1$ and 
\be\label{lkazjenzrse}
A=  \frac12|D(\shif u^\delta_1-\shif u^\delta_2)|^2 -f(x,\shif m^\delta_1)+f(x,\shif m^\delta_2).
\ee
Using the $L^\infty$ bound for $D\shif u^\delta_1$ and $D\shif u^\delta_2$ in Lemma \ref{lem.boundsDisc} and the regularity of $f$ we infer by \eqref{zerldjkjnpimj} in Lemma \ref{lemCLLP2.2} that 
\begin{align*}
& \|\tilde{\shif u}^\delta_{1,t_1}-\tilde{\shif u}^\delta_{2,t_1} \|_{L^2(\T^d\times \Omega)} \\
& \qquad \leq C e^{-\omega(t_2-t_1)} \|\tilde{\shif u}^\delta_{1,t_2}-\tilde{\shif u}^\delta_{2,t_2}\|_{L^2(\T^d\times \Omega)} \\
& \qquad + C \int_{t_1}^{t_2} e^{-\omega (s-t_1)} (\|D(\shif u^\delta_{1,s}-\shif u^\delta_{2,s})\|_{L^2(\T^d\times \Omega)}+ \|\shif m^\delta_{1,s}-\shif m^\delta_{2,s}\|_{L^2(\T^d\times \Omega)}).
\end{align*}
Taking the square and using Cauchy-Schwarz inequality we infer that  (changing $\omega$ when necessary)
$$
\alpha(t_1)\leq C_0 e^{-\omega(t_2-t_1)} \alpha(t_2) +C_0 \int_{t_1}^{t_2} e^{-\omega (t-t_1)}\beta(t) dt + C_0 \sup_{t_1\leq t\leq t_2} e^{-\omega(t-t_1)}\gamma(t), 
$$
for some constant $C_0>0$ independent of the initial conditions. We can then let $t_2\to\infty$ to infer that
$$
\alpha(t_1)\leq C_0 \int_{t_1}^{\infty} e^{-\omega (t-t_1)}\beta(t) dt + C_0 \sup_{t\geq t_1} e^{-\omega(t-t_1)}\gamma(t). 
$$
Therefore $\alpha,\beta, \gamma$ satisfy \eqref{hypexpo2}, \eqref{hypexpo4}, \eqref{hypexpo1DISC} and \eqref{hypexpo3DISC} in the appendix. We conclude by using Lemma \ref{lem.expodecayDISC} in the appendix that there exists  constants $C,\omega>0$ such that, for any $t\geq t_0+1$,  
\begin{align*}
& \|\tilde{\shif u}^\delta_{1,t}-\tilde{\shif u}^\delta_{2,t}\|^2_{L^2(\T^d\times \Omega)}+\| \shif m^\delta_{1,t}-\shif m^\delta_{2,t}\|^2_{L^2(\T^d\times \Omega)}\\
&\qquad \leq Ce^{-\omega' t} \|\shif  m^\delta_{1,t_0+1}-\shif m^\delta_{2,t_0+1}\|^2_{L^2(\T^d\times \Omega)}.
\end{align*}
We can then complete the proof of the estimate of $D(u^\delta_{1,t}-u^\delta_{2,t})$ and of $ m^\delta_{1,t_0}-m^\delta_{2,t_0}$ in  $C^\alpha$ regularity as in the proof of Lemma \ref{lem.keyuniqueCT}. 
\end{proof}

\begin{proof}[Proof of Lemma \ref{lem.WmuDISC}]  The proof is the same as for Lemma \ref{lem.Wmu}.
\end{proof}

\begin{proof}[Proof of Theorem \ref{thm.Cudelta}] The proof of \eqref{ineqcontracdelta} is the same as for  \eqref{ineq.expo} in Theorem \ref{thm.main}. \\

Let us now show \eqref{ineqcontracdelta4}. Fix $T>0$ and let $(w^\delta, m^\delta, z^\delta)$ be the solution defined on $(-T,\infty)$ given by Lemma \ref{lem.WmuDISC} with initial condition $m^\delta=\bar \mu^\delta$ and let  $(w^T, m^T, z^T)$ be the weak solution given by lemma \ref{lem.Wmu}, defined on the time interval $(-T,T)$, with initial condition $m(-T)= \bar \mu$ and terminal condition $w$ where $w$ is  such that $Dw= \bar{\mathcal V}_T$ and $w(0)=0$. Recall that $m^\delta= \bar \mu^\delta$, $Dw^\delta = \bar{\mathcal V}^\delta$, $m^T=\bar \mu$ and $Dw^T =\bar{\mathcal V}$. By duality we have, for any $t_1,t_2$ with $-T\leq t_1\leq t_2\leq T$,   
\begin{align*}
&\E\left[ \inte (w^\delta_{t_2}-w_{t_2})(\bar \mu^\delta_{t_2}-\bar \mu_{t_2})\right]-\E\left[ \inte (w^\delta_{t_1}-w_{t_1})(\bar \mu^\delta_{t_1}-\bar \mu_{t_1})\right] +c\int_{t_1}^{t_2}\E\left[ \inte |\bar{\mathcal V}^\delta_t-\bar{\mathcal V}_t|^2\right]dt \\
& \leq  \int_{t_1}^{t_2} \E\left[ \inte \delta w^\delta_t(\bar \mu^\delta_t-\bar \mu_t)\right] dt .
\end{align*}
By the stationarity of $(\bar{\mathcal V}, \bar \mu)$ and $(\bar{\mathcal V}^\delta, \bar \mu^\delta)$ and the usual bounds we obtain:  
\begin{align*}
c \E\left[ \inte |\bar{\mathcal V}^\delta_0-\bar{\mathcal V}_0|^2dx\right] & = c \frac{1}{t_2-t_1}\E\left[ \int_{t_1}^{t_2} \inte |\bar{\mathcal V}^\delta_t-\bar{\mathcal V}_t|^2 dxdt\right]\\
& \leq \frac{C}{t_2-t_1}  +  \frac{1}{t_2-t_1} \int_{t_1}^{t_2} \E\left[ \inte \delta (w^\delta_t- \inte w^\delta_t)(\bar \mu^\delta_t-\bar \mu_t)dx\right] dt \\
&  \leq \frac{C}{t_2-t_1}+C\delta.
\end{align*}
We set $t_1=-T$, $t_2=T$ and let $T\to \infty$ to infer that 
\be\label{aulhiezjnrsdfg}
\E\left[ \inte |\bar{\mathcal V}^\delta_0-\bar{\mathcal V}_0|^2dx\right]\leq C\delta. 
\ee
Writing the equation for $\bar \mu^\delta-\bar \mu$ in the shifted variables and using Lemma \ref{lemCLLP2.1}, we find that there exists $\omega, C>0$ such that, for any $t\in [-T,T]$, 
\begin{align*}
& \E\left[ \inte |\bar \mu^\delta_0-\bar \mu_0|^2 \right]= \E\left[ \inte |\bar \mu^\delta_t-\bar \mu_t|^2 \right] \\
&\qquad   \leq Ce^{-\omega (t+T)} \E\left[ \inte |\bar \mu^\delta_{-T}-\bar \mu_{-T}|^2\right] +C\E\left[\int_{-T}^t \inte |\bar{\mathcal V}^\delta_s-\bar{\mathcal V}_s|^2 ds\right]\\
& \qquad = Ce^{-\omega (t+T)} \E\left[ \inte |\bar \mu^\delta_{0}-\bar \mu_{0}|^2\right] +C(t+T) \E\left[\inte |\bar{\mathcal V}^\delta_0-\bar{\mathcal V}_0|^2 \right].
\end{align*}
We choose $t=0$ and fix $T>0$ large enough so that  $Ce^{-\omega T}<1$ to conclude from \eqref{aulhiezjnrsdfg} that 
\be\label{aulhiezjnrsdfg2}
 \E\left[ \inte |\bar \mu^\delta_0-\bar \mu_0|^2\right] \leq C\delta. 
 \ee
 
Let now $(u^\delta, m^\delta, v^\delta)$ be the solution to \eqref{eq.MFGCNdisc} and $(w^\delta, n^\delta, z^\delta)$ be the solution to \eqref{eq.MFGCNdisc}-(i)-(ii), with initial condition $n^\delta_{t_0}=\bar \mu^\delta_{t_0}$, as defined on $(t_0,\infty)$ given by Lemma \eqref{lem.WmuDISC}. Recall that $Dw^\delta= \bar{\mathcal V}^\delta$ and $n^\delta= \bar \mu^\delta$. By Lemma \ref{lem.uniqueDISC} and comparison between the two solutions $(u^\delta, v^\delta)$ and $(w^\delta, z^\delta)$ of the same backward HJ equation with different source terms, we have, for any $t_0\leq t \leq T$ and any $x\in \T^d$, 
\be\label{kjehsrdbfkgnc}
\E\left[ \|\delta w^\delta_{t}- \delta  u^{\delta}_{t}\|_\infty\right]  \leq  \E\left[ \delta \int_{t}^Te^{-\delta (s-t)} \| f(\cdot, m^\delta_s)-f(\cdot, \bar \mu_s)\|_\infty ds+ \delta e^{-\delta (T-t)} \|w^\delta_{T}-  u^{\delta}_{T}\|_\infty \right].  
\ee 
Note that, by the regularity of $f$ and \eqref{ineqcontracdelta}, 
$$
\E\left[ \| f(\cdot, m^\delta_s)-f(\cdot, \bar \mu_s)\|_\infty \right] \leq C \|m^\delta_s-\bar \mu_s\|_{L^2_{\omega,x}} \leq C e^{-\omega (s-t_0)}. 
$$ 
So we can let $T\to \infty$ in \eqref{kjehsrdbfkgnc} to obtain, as  $w^\delta$ and $u^\delta$ are bounded, 
\be\label{kjehsrdbfkgnc4}
\E\left[ \|\delta w^\delta_{t}- \delta  u^{\delta}_{t}\|_\infty\right]   \leq  \E\left[  C \delta \int_{t}^\infty e^{-\omega (s-t_0)} e^{-\delta (s-t)}ds \right]\leq C\delta.  
\ee
On the other hand, by the stationarity of $\bar{\mathcal V}^\delta$ and $\bar \mu^\delta$, 
\begin{align*}
\E\left[\inte \delta w^{\delta}_{t}(x)dx\right] & =- \delta \E\left[ \int_t^\infty e^{-\delta (s-t)} \inte(\frac12 |\bar{\mathcal V}^\delta_s(x)|^2 - f(x, \bar \mu^\delta_s(t)))dxds \right] \\
& = - \E\left[ \inte (\frac12 |\bar{\mathcal V}^\delta_0(x)|^2 - f(x, \bar \mu^\delta_0(t)))dx \right] =: \bar \lambda^\delta. 
\end{align*}
By  \eqref{aulhiezjnrsdfg}, \eqref{aulhiezjnrsdfg2} and the definition of $\bar \lambda$ in Theorem \ref{thm.main},  we have 
$$
|\bar \lambda -\bar \lambda^\delta|\leq C\delta. 
$$
Combining this inequality with \eqref{kjehsrdbfkgnc4} and the Lipschitz bound in Lemma \ref{lem.boundsDisc}, we obtain 
\begin{align*}
\E\left[\|\delta u_t^\delta-\bar \lambda\|_\infty\right] & \leq  \delta \ {\rm ess-sup} \|Du^\delta\|_\infty + \E\left[ |\delta \int_{\T^d}u^\delta_{t}-\bar \lambda|\right] \\
& \leq C\delta+ \E\left[ \|\delta u^\delta_{t}- \delta  w^{\delta}_{t}\|_\infty\right] + \E\left[ |\delta \int_{\T^d}w^\delta_{t}-\bar \lambda|\right] 
 \leq 
 C\delta,
 \end{align*}
 from which we infer that \eqref{ineqcontracdelta3} holds. 
 \end{proof}

\section{Estimates for linearized MFG systems}\label{sec.linear}

In this section we introduce the linearized systems associated with the MFG systems and study the regularity of the solution. These technical estimates will be instrumental for the construction of the ergodic master equation (Section \ref{sec.ergo}) and for the long-time behavior of the solution to the time dependent master equation (Section \ref{sec.longtimemaster}). \\

Let $\delta \geq 0$ and let us fix $(\shif u, \shif m, \shif M)$ a solution to the shifted MFG system \eqref{eq.MFGCNtilde} or to the shifted discounted MFG system \eqref{eq.MFGCNtildedelta} with initial condition $\hat m_0\in \mathcal P(\T^d)$ at time $t_0$. We consider the  solution $(\shif z, \boldsymbol{\varrho}, \shif N )$ of the following linearized system:
\be\label{eq.linearized_MFG_shifted}
\left\{
\begin{array}{ll}
    \ds d \shif z_t = \Bigl(\delta \shif z_t -\Delta \shif z_t + D  \shif u_t \cdot D  \shif z_t - \left<\frac{\delta \shif f_t}{\delta m}(\cdot, \shif m_t),\boldsymbol{\varrho}_t \right> \Bigr)dt + d \shif N_t \\[2ex]
    d\boldsymbol{\varrho}_t = \left(\Delta \boldsymbol{\varrho}_t +{\rm div}(\boldsymbol{\varrho}_t D  \shif u_t + \shif m_t D  \shif z_t)\right)dt \\[2ex]
    \boldsymbol \varrho_{t_0} = \rho_0, \qquad  {\rm in}\;  \T^d.
\end{array}
\right.
\ee
This system is considered in two settings: when $\delta=0$ and $(\shif u, \shif m, \shif M)$ is the solution to the shifted finite horizon MFG system \eqref{eq.MFGCNtilde} on the finite time interval $[t_0,T]$, that is $(\shif u^T, \shif m^T, \shif M^T)$, then we write $(\zTW,\rTW,\NTW)$ as the solution of \eqref{eq.linearized_MFG_shifted}, which is stated on $\Omega \times [t_0,T] \times \T^d$ with terminal condition,
$$
\shif z^T_T(x)= \left<\frac{\delta \shif g_T}{\delta m}(\cdot, \shif m_T^T),\boldsymbol{\varrho}^T_T \right>\qquad {\rm in}\;  \T^d.
$$
By a solution, we mean an $(\mathcal F_{t_0,t})_{t\in [t_0,T]}$-adapted process $(\zTW,\rTW,\NTW)$ with paths in \\
\(
C^0([t_0,T], C^{3}(\T^d) \times (C^{2}(\T^d))' \times [C^{1}(\T^d)]^d),
\)
such that
$$
\esssup \sup_{t \in [t_0,T]} \left( \|\zTW\|_{C^{3+\beta}} + \|\rTW\|_{(C^{2+\beta}(\T^d))'} + \|\NTW\|_{C^{1+\beta}} \right) < \infty
$$
(for any $0<\beta<\alpha$). 
The first equation in \eqref{eq.linearized_MFG_shifted} holds in the classical sense with probability one, while the second one is understood in the sense of distributions, again with probability one. The existence and the uniqueness of the solution is given in \cite[Theorem 4.4.2]{CDLL}. \\

When $\delta>0$ and $(\shif u, \shif m, \shif M)$ is the solution $(\shif u^\delta, \shif m^\delta, \shif M^\delta)$ to the shifted discounted MFG system \eqref{eq.MFGCNtildedelta}, then $(\zDW,\rDW,\NDW)$ stands for the solution of \eqref{eq.linearized_MFG_shifted} on $\Omega \times [t_0,\infty)\times \T^d$, which satisfies the bound 
\be\label{cond.unicite.zDW}
\sup_{t \geq 0} e^{-\delta t} \E\left[\|D{\shif z}^{\delta}_t\|_{L^2_x}^2\right]+\sup_{t\geq 1}e^{-\delta t} \E\left[  \normUU{\rDW}{L^2_x}^2\right]  <\infty. 
\ee
By a solution, we mean an $(\mathcal F_{t_0,t})_{t\in [t_0,\infty)}$-adapted process $(\zDW,\rDW,\NDW)$ with paths in \\ 
\(
C^0([t_0,T], C^{3}(\T^d) \times (C^{2}(\T^d))' \times [C^{1}(\T^d)]^d)
\) (for any $T>0$) 
such that, for any $T>0$, 
$$
\E\left[ \sup_{t \in [t_0,T]} \left( \|\zDW\|_{C^{3+\beta}} + \|\rDW\|_{(C^{2+\beta}(\T^d))'} + \|\NDW\|_{C^{1+\beta}} \right) \right] <\infty
$$
(for any $0<\beta<\alpha$). The first equation in \eqref{eq.linearized_MFG_shifted} holds in the classical sense with probability one, while the second one is understood in the sense of distributions, again with probability one.
We prove the existence of  such a solution and its uniqueness in Proposition \ref{prop.sol_shif.lin} below. We now state the central estimates of this section, in the two frameworks. 

\begin{thm}\label{thm.good_bound_for_linearized_system_finite}
There exists $C > 0$ such that, if  $(\shif u^T, \shif m^T, \shif M^T)$ is the solution of the shifted MFG system \eqref{eq.MFGCNtilde}, and if $(\zTW, \rTW, \NTW )$ is a solution of the shifted linearized system \eqref{eq.linearized_MFG_shifted} with $\delta=0$ and $\rho_{0} \in \left(C^{2+\beta}\right)'$ such that $\left< \rho_{0}, \mathbf{1}_{\T^d} \right> =0$, then we have
\be
\E\left[\normU{\shif z^T_{t_0} }{C^{3+\beta}}\right] \leq C \normDDHd{\rho_{0}}.
\ee
\end{thm}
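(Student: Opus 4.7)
The strategy I would follow transposes the deterministic argument of \cite{CP19} to the shifted random framework introduced in Section~\ref{sec.timedep}: in this frame the forward equation for $\boldsymbol\varrho$ is deterministic pathwise in $\omega$, while the backward equation for $\shif z$ carries a martingale correction $d\shif N$, exactly like the HJ equation for $\shif u$ itself.  The proof would rest on three ingredients: a Lasry-Lions duality at the linearized level, the turnpike Gronwall Lemma~\ref{lem.expodecay} from the appendix, and a pathwise parabolic upgrade of regularity in the spirit of the end of the proof of Lemma~\ref{lem.Wmu}.

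The first step is to apply It\^o's product formula to $\int_{\T^d}\shif z_t\,\boldsymbol\varrho_t\,dx$.  After cancellations of the second-order and transport contributions (by integration by parts), the identity reduces to
\[
d\int_{\T^d}\shif z_t\,\boldsymbol\varrho_t\,dx\ =\ -\Bigl(\int_{\T^d}\shif m_t |D\shif z_t|^2\,dx + \Bigl\langle \tfrac{\delta \shif f_t}{\delta m}(\cdot,\shif m_t,\cdot)\,\boldsymbol\varrho_t,\,\boldsymbol\varrho_t\Bigr\rangle\Bigr)\,dt + dM_t
\]
for a martingale $M$.  Integrating over $[t_0,T]$, taking expectation, and using the monotonicity \eqref{hyp:mono} of $g$ (so that the terminal pairing $\E[\langle \shif z_T,\boldsymbol\varrho_T\rangle] = \E[\langle \tfrac{\delta \shif g_T}{\delta m}\boldsymbol\varrho_T,\boldsymbol\varrho_T\rangle]$ is nonnegative), the monotonicity of $f$, and the lower bound $\shif m\geq C^{-1}$ from Lemma~\ref{lem.regularity}, I would obtain
\[
\E\Bigl[\int_{t_0}^T \|D\shif z_t\|_{L^2_x}^2\,dt\Bigr]\ \leq\ C\,\E[\langle \shif z_{t_0},\rho_0\rangle]\ \leq\ C\,\E[\|\shif z_{t_0}\|_{C^{2+\beta}}]\,\|\rho_0\|_{(C^{2+\beta})'}.
\]
The use of monotonicity on $\frac{\delta f}{\delta m}$ is legitimate here because the condition $\langle \rho_0,\mathbf 1\rangle=0$ is preserved by the mass-conserving equation for $\boldsymbol\varrho$, so that $\langle\boldsymbol\varrho_t,\mathbf 1\rangle=0$ for all $t\in[t_0,T]$.

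Next I would mimic the proof of Lemma~\ref{lem.keyuniqueCT} with $(\shif z,\boldsymbol\varrho)$ in the role previously played by $(\shif u_1-\shif u_2,\shif m_1-\shif m_2)$.  Setting $\alpha(t)=\E[\|\widetilde{\shif z}_t\|_{L^2_x}^2]$, $\beta(t)=\E[\|D\shif z_t\|_{L^2_x}^2]$, $\gamma(t)=\E[\|\boldsymbol\varrho_t\|_{L^2_x}^2]$, the energy estimate above furnishes the $L^1_t$ control of $\beta$; Lemma~\ref{lemCLLP2.1} applied to the forward equation for $\boldsymbol\varrho$ (drift $D\shif u$, source $\shif m D\shif z$) controls the Fokker-Planck decay of $\gamma$; and Lemma~\ref{lemCLLP2.2} applied to the backward equation for $\shif z$ (source $-\langle \tfrac{\delta \shif f_t}{\delta m},\boldsymbol\varrho_t\rangle$) yields the decay of $\alpha$.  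These combine into the four integral inequalities required by Lemma~\ref{lem.expodecay}, giving $\omega,C>0$ independent of $T$ with
\[
\E[\|\boldsymbol\varrho_t\|_{L^2_x}^2 + \|D\shif z_t\|_{L^2_x}^2]\ \leq\ C\,(e^{-\omega(t-t_0)} + e^{-\omega(T-t)})\,\E[\|\shif z_{t_0}\|_{C^{2+\beta}}]\,\|\rho_0\|_{(C^{2+\beta})'}.
\]
To upgrade this $L^2_x$ decay into a $C^{3+\beta}_x$-bound at time $t_0$, I would exploit that the smoothness of $\tfrac{\delta \shif f_t}{\delta m}$ and $\tfrac{\delta \shif g_T}{\delta m}$ in their last variable implies
\[
\bigl\|\langle \tfrac{\delta \shif f_t}{\delta m}(\cdot,\shif m_t,\cdot),\boldsymbol\varrho_t\rangle\bigr\|_{C^{2+\beta}_x}\leq C\|\boldsymbol\varrho_t\|_{(C^{2+\beta})'},\qquad \|\shif z_T\|_{C^{3+\beta}}\leq C\|\boldsymbol\varrho_T\|_{(C^{2+\beta})'}.
\]
Representing $\shif z$ pathwise via the family of short-time deterministic Cauchy problems indexed by conditional expectations (as in the last part of the proof of Lemma~\ref{lem.Wmu}), I would then iterate \cite[Theorem~5.1.4]{Lun} on successive short intervals to bootstrap the weak estimate of the second step into a $C^{3+\beta}_x$-bound.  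Closing the estimate amounts to choosing the interior time window large enough so that the $e^{-\omega(T-t)}$ factor multiplying $\E[\|\shif z_{t_0}\|_{C^{3+\beta}}]$ becomes smaller than $1/2$ and can be absorbed into the left-hand side.

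The main obstacle, as emphasized in the introduction, is that classical Schauder estimates in $(t,x)$ are unavailable because of the non-time-regular martingale $\shif N$.  The workaround is precisely the pathwise conditional-expectation scheme of Lemma~\ref{lem.Wmu}, which reduces each local regularity step to a deterministic parabolic problem, combined with Lemma~\ref{lem.expodecay} to keep all constants independent of the horizon $T$.
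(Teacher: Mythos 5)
Your proposal correctly identifies the two structural ingredients that the paper also uses: the Lasry--Lions duality pairing $\int \shif z_t\,\boldsymbol\varrho_t$ (with the monotonicity of $g$ controlling the sign of the terminal term, and the monotonicity of $f$ controlling the bulk term), and a pathwise conditional-expectation representation to replace Schauder estimates in the stochastic setting. Where the proposal departs from the paper, and where the gap lies, is in how the estimate is \emph{closed}. The paper's Lemma~\ref{lem.esti.zDWT1} (whose finite-horizon analogue is what proves Theorem~\ref{thm.good_bound_for_linearized_system_finite}) does not invoke the turnpike Gronwall Lemma~\ref{lem.expodecay} at all: it tracks a quantity $\kappa$ (defined in \eqref{def.kappa}) in which the unknown $\E\bigl[\|\tilde{\shif z}_{t_0}\|_{C^{2+\beta}}^2\bigr]$ appears to the power $1/4$, establishes a \emph{uniform-in-time} bound $\E\bigl[\sup_s\|\boldsymbol\varrho_s\|_{(C^{2+\beta})'}^2\bigr]\lesssim\kappa^2$ by a duality argument against a deterministic backward flow $\psi$, bootstraps pointwise estimates on $D\shif z,\dots,D^3\shif z$ via heat-kernel/Gronwall/Doob arguments, and closes via Young's inequality exploiting precisely that $1/4$ power.

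Your closing step is flawed for two reasons. First, the source term that drives the $C^{3+\beta}$ bootstrap at $t_0$ is $\langle\frac{\delta\shif f_s}{\delta m},\boldsymbol\varrho_s\rangle$ on $[t_0,t_0+1]$, and the relevant quantity $\E\bigl[\sup_{s\in[t_0,t_0+1]}\|\boldsymbol\varrho_s\|_{(C^{2+\beta})'}^2\bigr]$ is controlled by the Step-1 duality only up to a constant times $\kappa^2$, i.e.\ it is \emph{not} made small by the turnpike decay (that decay lives in the interior $[t_0+2,T-2]$, not near $t_0$). So there is no small factor $e^{-\omega(T-t)}$ multiplying $\E\bigl[\|\shif z_{t_0}\|_{C^{3+\beta}}\bigr]$ for you to absorb, and the absorption has to come from the fact that the unknown appears at power $1/4$, which is exactly Young's inequality. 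Second, even where a turnpike factor does appear, it is only small when $T-t_0$ is large; closing by requiring $e^{-\omega(T-t_0)}<1/2$ would produce a constant $C$ depending on $T-t_0$, whereas Theorem~\ref{thm.good_bound_for_linearized_system_finite} requires $C$ uniform in $T$. A minor but real technical issue compounds this: in your Step 2 you set $\gamma(t_0)=\E\bigl[\|\boldsymbol\varrho_{t_0}\|_{L^2_x}^2\bigr]$, but $\rho_0$ is only assumed to lie in $(C^{2+\beta})'$, so $\gamma(t_0)$ may be infinite and Lemma~\ref{lem.expodecay} cannot be started at $t_0$ (the paper's Lemma~\ref{lem.esti.zDWT2}, in the discounted case, starts the Gronwall at time~$2$ for precisely this reason). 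In short: the duality and bootstrap steps are sound, but the turnpike-based absorption has to be replaced by the Young-inequality closing built on the fractional power of the unknown inside $\kappa$.
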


\begin{thm}\label{thm.good_bound_for_linearized_system_discounted}
There exists $\delta_0,~C > 0$ such that, for any $\delta \in (0, \delta_0)$, if $(\shif u^\delta, \shif m^\delta, \shif M^\delta)$ is the solution of the shifted discounted MFG system \eqref{eq.MFGCNtildedelta}, and if $(\zDW, \rDW, \NDW )$ is a solution of the shifted linearized discounted system \eqref{eq.linearized_MFG_shifted} with $\rho_{0} \in \left(C^{2+\beta}\right)'$ such that $\left< \rho_{0}, \mathbf{1}_{\T^d} \right> =0$, then we have
\be\label{kjezhrbdfnl}
\E\left[\normU{\shif z^\delta_{t_0} }{C^{3+\beta}}\right]\leq C \normDDHd{\rho_{0}}.
\ee
\end{thm}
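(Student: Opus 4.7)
The plan is to mirror the proof of the finite-horizon Theorem \ref{thm.good_bound_for_linearized_system_finite} while replacing its terminal-time arguments by infinite-horizon arguments tailored to the discounted setting. The first task is to construct the infinite-horizon linearised solution $(\shif z^\delta, \rDW, \shif N^\delta)$ satisfying \eqref{cond.unicite.zDW} by approximation: for $T>t_0$, let $(\shif z^{\delta,T}, \boldsymbol{\varrho}^{\delta,T}, \shif N^{\delta,T})$ solve \eqref{eq.linearized_MFG_shifted} on $[t_0,T]$ with vanishing terminal condition at time $T$, whose existence is given by \cite[Theorem 4.4.2]{CDLL}. Uniform-in-$T$ bounds produced in the next steps, combined with the growth requirement \eqref{cond.unicite.zDW}, let us pass to the limit $T\to\infty$ and deduce uniqueness by running the same estimates on the difference of two solutions.

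The second step is the Lasry--Lions monotonicity argument applied at the linearised level. Using It\^o--Wentzell on the pairing $e^{-\delta(t-t_0)}\langle \shif z^\delta_t,\rDW\rangle$ together with the two equations of \eqref{eq.linearized_MFG_shifted}, the martingale part vanishes in expectation and one obtains
\[
\frac{d}{dt}\Bigl(e^{-\delta(t-t_0)}\E\bigl[\langle \shif z^\delta_t, \rDW\rangle\bigr]\Bigr)
= - e^{-\delta(t-t_0)}\E\Bigl[\int_{\T^d}\shif m^\delta_t|D\shif z^\delta_t|^2\,dx
+\int_{\T^d}\!\!\int_{\T^d} \tfrac{\delta \shif f_t}{\delta m}(x,\shif m^\delta_t,y)\,\rDW(x)\rDW(y)\,dxdy\Bigr].
\]
Under \eqref{hyp:mono} and the lower bound $\shif m^\delta\geq c>0$ from Lemma \ref{lem.boundsDisc}, the right-hand side is non-positive. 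For $\delta<\delta_0$ small enough, \eqref{cond.unicite.zDW} annihilates the boundary term at $+\infty$, and integrating from $t_0$ to $\infty$ yields
\[
\int_{t_0}^\infty e^{-\delta(t-t_0)}\E\bigl[\|D\shif z^\delta_t\|_{L^2_x}^2\bigr]\,dt
\;\leq\; C\,\E\bigl[|\langle \shif z^\delta_{t_0},\rho_0\rangle|\bigr].
\]

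Step three converts this time-integrated estimate into pointwise exponential decay. Applying Lemma \ref{lemCLLP2.1} (appendix) to the Fokker--Planck equation satisfied by $\rDW$, whose drift $D\shif u^\delta$ is bounded by Lemma \ref{lem.boundsDisc} and whose source is $\dive(\shif m^\delta D\shif z^\delta)$, and Lemma \ref{lemCLLP2.2} to the backward HJ-type equation satisfied by $\shif z^\delta$ with discount $\delta$, one reruns the bootstrap from the proof of Lemma \ref{lem.uniqueDISC} to get, for some $\omega>0$ independent of $\delta<\delta_0$,
\[
\E\bigl[\|\rDW\|_{L^2_x}^2+\|D\shif z^\delta_t\|_{L^2_x}^2\bigr]
\;\leq\; C\,e^{-\omega(t-t_0)}\|\rho_0\|_{(C^{2+\beta})'}^2.
\]
The dependence on $\|\rho_0\|_{(C^{2+\beta})'}$ rather than on $\|\rho_0\|_{L^2}$ is obtained by running the argument with a mollified $\rho_0^\varepsilon$ and passing to the limit exactly as in \cite{CP19}, using the duality of the linearised system.

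The main obstacle is the final bootstrap from these $L^2$-bounds to the $C^{3+\beta}$ estimate required by \eqref{kjezhrbdfnl}. In the deterministic setting of \cite{CP19}, this relies on parabolic Schauder estimates in time-H\"older classes, which are not available here because the coefficients are only progressively measurable in time. Following the strategy already used in the proof of Lemma \ref{lem.Wmu}, we instead rely on the conditional-expectation representation
\[
\tilde{\shif z}^\delta_t(x) \;=\; \E\Bigl[\,a_t(x)\,\big|\,\mathcal F_{t_0,t}\,\Bigr],
\]
where $a$ solves, pathwise on a window $[t,t+1]$, the deterministic backward parabolic equation with drift $D\shif u^\delta$, discount $\delta$ and forcing $\bigl\langle \tfrac{\delta \shif f}{\delta m}(\cdot,\shif m^\delta),\rDW\bigr\rangle$. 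The assumption \textbf{(Hf)} ensures that this forcing lies in $C^{n+\alpha}_x$ with a spatial norm controlled by $\|\rDW\|_{(C^{n+\alpha})'}$, hence by $\|\rho_0\|_{(C^{2+\beta})'}$ through step three. Applying the interior Schauder theory of \cite{Lun} pathwise on $[t,t+1]$ and iterating three times produces successive estimates in $C^{1+\beta}$, $C^{2+\beta}$ and $C^{3+\beta}$, each iteration paying only a constant that depends on $f$, on $\|D\shif u^\delta\|_\infty$ and on $\omega$. Taking expectations and specialising to $t=t_0$ delivers \eqref{kjezhrbdfnl}. The delicate point, and the place where the smallness of $\delta<\delta_0$ is used, is to keep all constants uniform in $\delta$: this is guaranteed by the correct sign of the discount term in the backward equation and by the uniform-in-$\delta$ exponential decay already obtained in step three.
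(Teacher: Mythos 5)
Your high-level strategy (finite-horizon approximation with vanishing terminal condition, Lasry--Lions duality, appendix decay lemmas, then a bootstrap to $C^{3+\beta}$) is the right one and mirrors the paper, but there are two points where the proof as written does not close.

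\emph{The circularity in the estimate is never resolved.} Your step two gives
\[
\int_{t_0}^\infty e^{-\delta (t-t_0)}\,\E\bigl[\|D\shif z^\delta_t\|_{L^2_x}^2\bigr]\,dt\;\le\;C\,\E\bigl[|\langle \shif z^\delta_{t_0},\rho_0\rangle|\bigr]
\;\le\;C\,\E\bigl[\|\tilde{\shif z}^\delta_{t_0}\|_{C^{2+\beta}}^2\bigr]^{1/2}\,\|\rho_0\|_{(C^{2+\beta})'},
\]
so every downstream bound (step three's decay of $\rDW$, and hence the forcing $\langle\tfrac{\delta\shif f}{\delta m},\rDW\rangle$ feeding your Schauder argument) is controlled by the quantity $\E[\|\tilde{\shif z}^\delta_{t_0}\|^2_{C^{2+\beta}}]^{1/2}$ that you are trying to bound. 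Your step three then silently replaces this factor by $\|\rho_0\|_{(C^{2+\beta})'}$, which is exactly the conclusion. The paper handles this by carrying the unknown quantity through the whole argument inside the aggregate $\kappa$ of \eqref{def.kappa}, proving the locally-uniform $C^{3+\beta}$ estimate in terms of $\kappa$, and only at the very end of Lemma~\ref{lem.esti.zDWT1} specialising to $t=\tau$ and using Young's inequality to absorb the $\E[\|\tilde{\shif z}^{\delta,T}_0\|^2_{C^{2+\beta}}]^{1/4}\,\E[\|\rho_0\|^2]^{1/4}$ term into the left-hand side. Without that step the proof is vacuous; you need to make it explicit.

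\emph{The pathwise Schauder bootstrap is more fragile than the paper's heat-kernel representation.} You propose to write $\tilde{\shif z}^\delta_t=\E[a_t\,|\,\mathcal F_{t_0,t}]$ with $a$ solving a pathwise backward parabolic equation on $[t,t+1]$, and to iterate interior Schauder estimates from \cite{Lun}. The first iteration (Lunardi~5.1.2, giving $C^{1+\beta}$ in $x$) only needs $C^0$-in-time data and works. But the second and third iterations (Lunardi~5.1.4-(iv)) require the forcing $\langle\tfrac{\delta\shif f}{\delta m}(\cdot,\shif m^\delta_s),\boldsymbol{\varrho}^\delta_s\rangle$ to be $\beta'$-H\"older in time. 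In Lemma~\ref{lem.Wmu} this is available because the forcing involves $D\shif u$, whose time-H\"older regularity is bootstrapped from the first iteration; here it requires time-H\"older continuity of $\boldsymbol{\varrho}^\delta$ in a suitable dual norm, which is not established and degrades the duality space (the Fokker--Planck equation only makes $t\mapsto\boldsymbol{\varrho}^\delta_t$ Lipschitz into $(C^{4+\beta})'$, so one must interpolate). The paper's Steps~2--4 of Lemma~\ref{lem.esti.zDWT1} avoid this entirely by working with the explicit heat-kernel conditional-expectation representation
\[
D\shif z^{\delta,T}_t(x)=\E\Bigl[\Gamma_{T-t}\ast D\shif z^{\delta,T}_T(x)+\int_t^T D\Gamma_{s-t}\ast(D\shif u^\delta_s\cdot D\shif z^{\delta,T}_s)(x)\,ds+\cdots\;\Big|\;\mathcal F_t\Bigr],
\]
which only needs $\sup_s$-type $L^\infty$ bounds of the forcing, so one can iterate (differentiating the equation in a direction $e$, then once more) without ever invoking time-H\"older regularity. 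I would recommend you adopt this representation rather than Schauder.

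Finally, note that your step three's $L^2_x$ bound for $\boldsymbol{\varrho}^\delta_t$ cannot hold uniformly down to $t=t_0$, since $\rho_0\in(C^{2+\beta})'$ is in general not in $L^2$; and the lower bound $\shif m^\delta\ge C^{-1}$ from Lemma~\ref{lem.boundsDisc} only holds for $t\ge t_0+1$, so the Lasry--Lions integrand can only be bounded below for $t$ away from $t_0$. The paper treats $t\le t_0+2$ and $t\ge t_0+2$ separately (Step~1 of Lemma~\ref{lem.esti.zDWT1}), using the $(C^{2+\beta})'$ norm and the smooth dual test function $\psi$ on the short initial window; you will need an analogous split.
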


We only prove  Theorem \ref{thm.good_bound_for_linearized_system_discounted}, as the proof of Theorem \ref{thm.good_bound_for_linearized_system_finite} is analogous and simpler. In addition, we have to explain in the infinite horizon framework the existence and the uniqueness of a solution to the linearized system. \\

We fix $t_0\in \R$ and a solution $(\shif u^\delta, \shif m^\delta, \shif M^\delta)$ to the shifted discounted MFG system \eqref{eq.MFGCNtildedelta}. In order to build a solution for the shifted linearized system \eqref{eq.linearized_MFG_shifted}, we consider the following classical approximation: given $T>t_0$, we consider the solution $(\zDWT, \rDWT, \NDWT )$ to the finite horizon system:  
\be\label{eq.linearized_MFG_shiftedT}
\left\{
\begin{array}{ll}
 (i) &   \ds d \shif z^{\delta, T}_t = \Bigl(\delta \shif z^{\delta, T}_t -\Delta \shif z^{\delta, T}_t + D  \shif u^\delta_t \cdot D  \shif z^{\delta, T}_t - \left<\frac{\delta \shif f_t}{\delta m}(\cdot, \shif m^\delta_t),\boldsymbol{\varrho}^{\delta, T}_t \right> \Bigr)dt + d \shif N^{\delta, T}_t \\[2ex]
  (ii) &   d\boldsymbol{\varrho}^{\delta, T}_t = \left(\Delta \boldsymbol{\varrho}^{\delta, T}_t +{\rm div}(\boldsymbol{\varrho}^{\delta, T}_t D  \shif u^\delta_t + \shif m^\delta_t D  \shif z^{\delta, T}_t)\right)dt \\[2ex]
  (iii) &  \boldsymbol \varrho^{\delta,T}_{t_0} = \rho_0, \qquad   \shif z^{\delta, T}_T=0 \qquad {\rm in}\;  \T^d.
\end{array}
\right.
\ee
This system is stated on $\Omega\times [t_0,T]\times \T^d$. 
The existence of a solution  to \eqref{eq.linearized_MFG_shiftedT} and its uniqueness follow exactly the same argument as in  \cite[Theorem 4.4.2]{CDLL}. Our aim is to show that the limit as $T\to\infty$ of $(\zDWT, \rDWT, \NDWT )$ is a solution to \eqref{eq.linearized_MFG_shifted}.  For this we need  two preliminary estimates.  The first one is rather rough in its dependence on $\delta$, but holds for general terminal conditions. The second one gives a decay in  time of the solution with terminal condition \eqref{eq.linearized_MFG_shiftedT}-(iii). 

\begin{lem} \label{lem.esti.zDWT1} There exists $\delta_0,\tau>0$ and, for any $\beta\in (0,1)$, a constant $C_\beta$ such that, for any $\delta \in (0,\delta_0)$, any $t_0<T$ and any classical solution  $(\zDWT, \rDWT, \NDWT )$ to \eqref{eq.linearized_MFG_shifted}-(i)-(ii) on the time interval $[t_0,T]$ (with arbitrary initial and  terminal conditions), we have
\begin{align}\label{kzjhejhezrbnldkj}
&\sup_{t_0+\tau \leq t \leq T} \E\left[ e^{-\delta (t-t_0)} \sup_{t - \tau \leq s \leq t} \| \tilde{\shif z}^{\delta,T}_s\|_{C^{3+\beta}}^2\right]+ \E\left[ \sup_{t_0 \leq t \leq T} e^{- \delta  (t-t_0)} \normUU{\rDWT}{(C^{2+\beta})'}^2\right]\notag \\
&\qquad + \E\left[\sup_{t_0+2 \leq t \leq T} e^{-\delta  (t-t_0)}  \normtwox{\boldsymbol{\varrho}_t^{\delta,T}}^2\right] \  \leq \ C_\beta  \left(  \E\left[ \normU{\boldsymbol{\varrho}_{0}^{\delta,T}}{(C^{2+\beta})'}^2\right] +\E\left[\|\tilde{\shif z}^{\delta,T}_{T}\|_{C^{3+\beta}}^2\right] \right) \ ,
\end{align}
where we have set, as usual, $\tilde{\shif z}^{\delta,T}_t:= \zDWT-\int_{\T^d} \zDWT$. 
\end{lem}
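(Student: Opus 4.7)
The backbone is the Lasry--Lions duality for the linearized system, which produces a weighted $L^2$ energy bound on $D\zDWT$ through the monotonicity of $\delta f/\delta m$. This is then combined with forward Kolmogorov estimates (in the negative Sobolev scale, then in $L^2_x$ after parabolic smoothing) and backward parabolic Schauder estimates on $\zDWT$. The multiplicative weight $e^{-\delta(t-t_0)}$ enters precisely to absorb the discount term in equation (i) and makes the estimate uniform in $\delta$ up to the stated weight.

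\textbf{Step 1 (duality identity).} Apply It\^o's formula to $e^{-\delta(t-t_0)}\int_{\T^d}\zDWT_t\,\rDWT_t\,dx$. Using \eqref{eq.linearized_MFG_shiftedT}-(i)-(ii) and integrating by parts, the two Laplacians cancel and the drift cross terms $\int\rDWT D\shif u^\delta\cdot D\zDWT$ cancel, so that
\[
d\!\left(e^{-\delta(t-t_0)}\!\int \zDWT_t\rDWT_t\right) = -e^{-\delta(t-t_0)}\!\left(\int \shif m^\delta_t|D\zDWT_t|^2 + \!\iint \tfrac{\delta \shif f_t}{\delta m}(x,\shif m^\delta_t,y)\rDWT_t(x)\rDWT_t(y)\,dxdy\right)\!dt + dM_t,
\]
with $M$ a martingale (the $\delta$-terms on both sides cancel). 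Integrating on $[t_0,T]$, taking expectation and using the monotonicity \eqref{hyp:mono}, I get
\[
\E\!\left[\int_{t_0}^T e^{-\delta(t-t_0)}\!\int \shif m^\delta_t|D\zDWT_t|^2 dt\right] \leq \E\!\left[\int \zDWT_{t_0}\rho_0\right] - e^{-\delta(T-t_0)}\E\!\left[\int \zDWT_T\rDWT_T\right].
\]
Decomposing $\zDWT = \tilde\zDWT + \int \zDWT$ and using that total mass $\int \rDWT \equiv \int\rho_0$ is conserved by equation (ii), then invoking the lower bound on $\shif m^\delta$ from Lemma \ref{lem.boundsDisc} (valid for $t \geq t_0+1$) together with Poincar\'e and Young, this produces a weighted $L^2_{x,t}$ bound on $D\zDWT$ in terms of the boundary data.

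\textbf{Steps 2--3 (forward estimates on $\rDWT$).} Equation (ii) is a deterministic forward Fokker--Planck with (random but bounded) drift $D\shif u^\delta$ and source $\mathrm{div}(\shif m^\delta D\zDWT)$; it is solved $\omega$-by-$\omega$. Dualising against test functions $\phi\in C^{2+\beta}$ and using the backward parabolic estimate from the appendix (Lemma~\ref{lemCLLP2.1} in the dual scale), I get, pathwise,
\[
\|\rDWT_t\|_{(C^{2+\beta})'} \leq C\Bigl(\|\rho_0\|_{(C^{2+\beta})'} + \bigl(\smallint_{t_0}^t \|\shif m^\delta_s D\zDWT_s\|_{L^2}^2 ds\bigr)^{1/2}\Bigr),
\]
which combined with Step 1 and the weight $e^{-\delta(t-t_0)}$ controls the $(C^{2+\beta})'$-term on the LHS. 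The $L^2_x$-bound for $t \geq t_0+2$ follows by the standard $L^1\!\to\! L^\infty$ parabolic smoothing $\|\rDWT_{s+h}\|_{L^\infty}\leq C h^{-d/2}\|\rDWT_s\|_{L^1}$ (as used in the proof of Lemma \ref{lem.regularity}), interpolated against the $(C^{2+\beta})'$ bound just obtained one time unit earlier.

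\textbf{Step 4 (backward Schauder for $\tilde\zDWT$) and main obstacle.} The source term in (i) satisfies, by hypothesis \textbf{(Hf)} on $\tfrac{\delta f}{\delta m}(\cdot,m,\cdot)$ and duality in $y$,
\[
\Bigl\|\bigl\langle \tfrac{\delta \shif f_t}{\delta m}(\cdot,\shif m^\delta_t,\cdot),\rDWT_t\bigr\rangle\Bigr\|_{C^{1+\beta}} \leq C\|\rDWT_t\|_{(C^{2+\beta})'},
\]
which is controlled by Steps 2--3. To conclude, I apply a backward parabolic Schauder estimate on the window $[t-\tau,t]$ to $\tilde\zDWT$, using that $\tilde\zDWT$ admits the conditional-expectation representation $\tilde\zDWT_s = \E[\,a_s\,|\,\mathcal F_{t_0,s}]$ of \cite[Lemma 4.3.2]{CDLL}, where $a_s$ solves a pathwise deterministic backward parabolic equation; applying \cite[Theorem 5.1.4]{Lun} (or equivalently Lemma \ref{lemCLLP2.2}) then iterating the regularity argument as in the proof of Lemma~\ref{lem.Wmu} (two bootstraps: $C^{1+\beta}\!\to\!C^{2+\beta'}\!\to\!C^{3+\beta}$) yields
\[
\sup_{s\in[t-\tau,t]}\|\tilde\zDWT_s\|_{C^{3+\beta}} \leq C\Bigl(\|\tilde\zDWT_T\|_{C^{3+\beta}}\mathbf{1}_{t\geq T-\tau} + \sup_{s\in[t-\tau,t]}\|\rDWT_s\|_{(C^{2+\beta})'} + \|\tilde\zDWT_{t-\tau}\|_{L^\infty}\Bigr).
\]
Squaring, multiplying by $e^{-\delta(t-t_0)}$, taking expectation, and feeding back Steps 2--3 closes the estimate. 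The main obstacle lies here: classical parabolic Schauder theory is deterministic, whereas (i) carries the martingale remainder $d\shif N$. The remedy is precisely to go through the shifted/conditional-expectation representation, which reduces the H\"older estimate to a pathwise one on a frozen source and lets one invoke \cite[Thm 5.1.4]{Lun} without losing the uniformity in $\delta$ and $T$ (all constants depending only on the uniform bounds of Lemma~\ref{lem.boundsDisc}). The bookkeeping of the weight and the interplay ``$\rDWT$-source for $\tilde\zDWT$, $D\tilde\zDWT$-source for $\rDWT$'' is handled by first establishing the $L^2_{x,t}$ estimate on $D\zDWT$ in Step~1 (which is self-contained), then feeding it into the linear (non-coupled) forward/backward estimates of Steps~2--4.
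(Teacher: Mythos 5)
The overall roadmap you outline — Lasry–Lions duality for a weighted $L^2_{x,t}$ bound on $D\zDWT$, then forward Kolmogorov duality for $\rDWT$, then backward parabolic bootstrapping for $\tilde{\shif z}^{\delta,T}$ in $C^{3+\beta}$ — does match the structure of the paper's proof. However, there are two genuine gaps, and the most serious one is the claim that your Step~1 is ``self-contained''.

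\textbf{Gap 1: the duality identity does not close in terms of the given data.} The boundary terms produced by the duality pairing are $\E\bigl[\int_{\T^d} \tilde{\shif z}^{\delta,T}_{t_0}\,\rho_0\bigr]$ and $e^{-\delta(T-t_0)}\E\bigl[\int_{\T^d} \tilde{\shif z}^{\delta,T}_{T}\,\boldsymbol{\varrho}^{\delta,T}_T\bigr]$. In each, one factor is given ($\rho_0$, resp.\ $\tilde{\shif z}^{\delta,T}_T$) while the other ($\tilde{\shif z}^{\delta,T}_{t_0}$, resp.\ $\boldsymbol{\varrho}^{\delta,T}_T$) is an \emph{unknown} of the problem. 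Applying Young's inequality here does not help directly because the weighted $L^2_{x,t}$ quantity on the left is $\E[\int e^{-\delta(t-t_0)} \shif m^\delta|D\zDWT|^2]$ and does not absorb $\E[\|\tilde{\shif z}^{\delta,T}_{t_0}\|^2]$ or $e^{-\delta(T-t_0)}\E[\|\boldsymbol{\varrho}^{\delta,T}_T\|^2]$ without further input. The paper's proof resolves this through a bootstrap quantity $\kappa$ (defined in \eqref{def.kappa}) in which the dependence on $\tilde{\shif z}^{\delta,T}_{t_0}$ appears with the \emph{fractional} power $1/4$, and only after completing the $C^{3+\beta}$ bootstrap does one obtain a bound on $\E[\|\tilde{\shif z}^{\delta,T}_{t_0}\|_{C^{2+\beta}}]$ in terms of $\kappa$; the $1/4$ power is then just small enough for Young's inequality to close the loop and express $\kappa$ solely in terms of $\rho_0$ and $\tilde{\shif z}^{\delta,T}_T$. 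This feedback mechanism is the heart of the lemma, and your Step~1 as written cannot deliver an estimate ``in terms of the boundary data'' without it.

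\textbf{Gap 2: the $L^2_\omega$ supremum in time is not a byproduct of Schauder theory.} In Step~4 you propose to repeat the Lunardi-type frozen-coefficient Schauder bootstrap as in the proof of Lemma~\ref{lem.Wmu}. But that argument relies crucially on the source being \emph{almost-surely} bounded, so that the conditional-expectation representation $\tilde{\shif z}_s=\E[a_s\,|\,\mathcal F_s]$ inherits a deterministic $C^{3+\beta}$ bound. Here the source involves $\bigl\langle\frac{\delta \shif f}{\delta m},\boldsymbol{\varrho}^{\delta,T}\bigr\rangle$, controlled only in $L^2_\omega$ with a running supremum in time; the pathwise Schauder inequality you write (with a deterministic constant) cannot hold almost surely. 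The actual mechanism, which the paper carries out, is: first obtain a \emph{pointwise}-in-time estimate $\E[\|D\shif z^{\delta,T}_t\|_\infty^2]\le Ce^{\delta t}\kappa^2$ by a Gronwall argument on the heat-kernel representation (Step~2 of the paper), and then upgrade to $\E[\sup_{t-\tau\le s\le t}\|D\shif z^{\delta,T}_s\|^2_\infty]$ using Doob's $L^2$ maximal inequality applied to the conditional-expectation representation, with a small parameter $\tau$ to absorb the implicit circularity (Step~3 of the paper). Without this Doob argument, the supremum on the left-hand side of \eqref{kzjhejhezrbnldkj} is not obtained, and the subsequent $C^{3+\beta}$ bootstrap cannot be fed back into the definition of $\kappa$. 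Your reference to ``bookkeeping of the weight'' acknowledges there is something to manage, but the two specific tools that make it work — the fractional power in $\kappa$ and Doob's maximal inequality — are both absent from your sketch.
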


\begin{proof}[Proof of Lemma \ref{lem.esti.zDWT1}] As the proof is long, we divided it into 4 steps. In a first step we estimate $\rDWT$ in function of a quantity $\kappa$ depending on $\boldsymbol{\varrho}_{0}^{\delta,T}$, $D\shif z^{\delta,T}_T$ and the $C^{3+\beta}-$norm of $\tilde{\shif z}^{\delta,T}_0$. The main part of the proof is devoted to get rid of the dependence of $\kappa$ with respect to the last quantity, which is not given. For this we obtain  in the second step a bound of $D\shif z^{\delta,T}_t$ in $L^2_tL^\infty_x$ still in function of $\kappa$. In the next step we improve this pointwise in time estimate into a locally uniform one. Finally we show that we can bootstrap this last estimate on $\tilde{\shif z}^{\delta,T}_t$ into a $C^{3+\beta}$ one in space, locally uniform in time: this eventually allows to close all the previous estimates. To simplify the notation, we assume that $t_0=0$, as the general case involves exactly the same argument.\\ 

\textbf{Step 1:}
We first estimate $\rDWT$. By Lasry-Lions duality, the following stochastic process is a martingale,
\begin{align*}
\left( e^{-\delta t} \int_{\T^d} \zDWT \rDWT - \int_{\T^d} \shif z^{\delta,T}_0 \boldsymbol{\varrho}_{0}^{\delta,T} \right) + \int_0^t e^{-\delta s} \left( \int_{\T^d} \boldsymbol{\varrho}_s^{\delta,T} \left< \frac{\delta \shif f_s}{\delta m}, \boldsymbol{\varrho}_s^{\delta,T} \right> +  \int_{\T^d} \shif m^\delta_s |D  \shif z^{\delta,T}|^2 \right) ds.
\end{align*}
Hence by taking the expectation and using the monotonicity of $\shif f$, we have, for any $0\leq t_1\leq t_2\leq T$, 
\be\label{lakjzerndlf}
\EP{\int_{t_1}^{t_2} e^{-\delta t} \int_{\T^d} \mDW |D  \zDWT|^2 dx dt } \leq e^{-\delta t_1}\EP{\int_{\T^d} \shif z^{\delta,T}_{t_1} \boldsymbol{\varrho}^{\delta,T}_{t_1} } - e^{-\delta t_2}\EP{\int_{\T^d} \shif z^{\delta,T}_{t_2} \boldsymbol{\varrho}^{\delta,T}_{t_2} }.
\ee
Taking $t_1=0$ and $t_2=T$ and using the fact that $\langle \boldsymbol{\varrho}^{\delta,T}_{t} , 1\rangle =0$ for any $t$, we infer that 
\be\label{lakjzerndlf2}
\EP{\int_{0}^{t_2} e^{-\delta t} \int_{\T^d} \mDW |D  \zDWT|^2 dx dt } \leq \EP{\int_{\T^d} \tilde{\shif z}^{\delta,T}_{0} \boldsymbol{\varrho}_{0}^{\delta,T} } 
- e^{-\delta T}\EP{\int_{\T^d} \tilde{\shif z}^{\delta,T}_{T} \boldsymbol{\varrho}^{\delta,T}_{T} } 
\ee

Note that the  equation for $\boldsymbol{\rho}^{\delta,T}$ in \eqref{eq.linearized_MFG_shifted} can be solved $\omega$ by $\omega$. Fix $t>0$ and $\xi\in L^2_{\omega, x}$ and consider the following dual equation, that we also solve $\omega$ by $\omega$ (it is not adapted): 
\be\label{eq:bakward_many_omegas}
\left\{
\begin{array}{ll}
    - \partial_t \psi_s  -\Delta \psi_s + D  \shif u^\delta_s \cdot D  \psi_s = 0 & \text{in~} (0,t)\times \T^d \times \Omega, \\
    \psi_t(x,\omega) = \xi(x,\omega) & \text{in~} \T^d \times \Omega.
\end{array}
\right.
\ee
Thanks to the uniform bound of $D\uDW$, Lemma 7.4 of  \cite{CLLP2} states that there exists $\lambda>0$ (depending only on $\|D  \shif u^\delta\|_\infty$, and thus independent of $\delta$ in view of Lemma \ref{lem.boundsDisc}) such that
\be\label{lejkhrzndfg}
\normtwox{\psi_s -\hat \psi_s} \leq C e^{-\lambda (t-s)} \normtwox{\xi} \qquad \forall s\in [0,t],
\ee
where $\hat \psi_s=\int_{\T^d} \psi_s$. 
The equation satisfied by $\tilde \psi_s = \psi_s - \hat \psi_s$ is
\[
\left\{
\begin{array}{ll}
    - \partial_t \tilde \psi_s -\Delta \tilde \psi_s + \left( D  \shif u^\delta_s \cdot D  \tilde \psi_s - \int_{\T^d} (D  \shif u^\delta_s \cdot D  \tilde \psi_s) \right) = 0 & \text{in~} (0,t)\times \T^d \times \Omega, \\
    \tilde \psi_t(x,\omega) = \tilde \xi(x,\omega) & \text{in~} \T^d \times \Omega.
\end{array}
\right.
\]
Hence, multiplying the above equation by $ \tilde \psi_s$,  integrating on  $[0,t]\times \T^d $ and using \eqref{lejkhrzndfg}, we obtain the inequality,
\be\label{lkjazensrmmjj}
\int_{0}^t  \int_{\T^d} |D  \psi_s|^2 \leq  \normtwox{\xi}^2 + C \int_{0}^t  \int_{\T^d} |\psi_s-\hat \psi_s|^2 \leq C \normtwox{\xi}^2.
\ee
Next we note that, if $t\geq 2$, then 
\be\label{ljhqdbslfdjkb}
    \normDDH{\psi_s - \hat \psi_s} \leq C \normtwox{\xi} 
\qquad     \forall s \in [0, t-1],
\ee
where $C$ is independent of $t\geq 2$ and $\delta$. 
Indeed,  by \eqref{lkjazensrmmjj} and Poincar\'e inequality, $\tilde \psi$ is bounded in $L^2((0,t)\times \T^d)$ by $C \normtwox{\xi}$. Then \cite[Theorem III.8.1]{LSU} implies that $\tilde \psi$ is bounded in $L^\infty([0,t-1/4]\times \T^d)$ by $C \normtwox{\xi}$.  Therefore, by \cite[Theorem III.11.1]{LSU},  $\sup_{s\in [0,t-1/2]} \|D\tilde \psi (s)\|_{C^\beta}$ is bounded by $C_\beta \normtwox{\xi}$ for any $\beta\in (0,1)$. This implies that the map $(s,x)\to D  \shif u^\delta_s \cdot D  \tilde \psi_s - \int_{\T^d} (D  \shif u^\delta_s \cdot D  \tilde \psi_s) $ is continuous and uniformly bounded in $C^{\beta}$ by $C_{\beta} \normtwox{\xi}$. We then infer by \cite[Theorem 5.1.4]{Lun} that, for  $s \in [0, t-1]$ and $\beta'\in (0, \beta)$, 
\be\label{ljhqdbslfdjkb4}
    \|\tilde \psi_s\|_{C^{2+\beta'}} \leq C \|\tilde \psi_{s+1}\|_{C^0} +C \Bigl\|D  \shif u^\delta_s \cdot D  \tilde \psi_s - \int_{\T^d} (D  \shif u^\delta_s \cdot D  \tilde \psi_s) \Bigr\|_{C^{0,\beta'}([s,s+1]\times \T^d)} \leq C \normtwox{\xi} . 
\ee
Thus  \eqref{ljhqdbslfdjkb} holds. Note also that, if $t\in [0,2]$, then, by the regularity in space of the coefficient of Equation \eqref{eq:bakward_many_omegas}, we have, for any $\beta \in (0,1)$, 
\be\label{ljhqdbslfdjkb2}
   \|\psi_s\|_{C^{2+\beta}}\leq C_\beta \|\xi\|_{C^{2+\beta}} 
\qquad     \forall s \in [0, t].
\ee
Using the equation satisfied by $\psi$ and by $\rDWT$, we obtain by duality:
\begin{align*}
& \int_{\T^d} \rDWT \xi  = - \int_{0}^t  \int_{\T^d} \shif m_s^\delta D  \shif z_s^{\delta,T} \cdot D  \psi_s + \int_{\T^d} \psi_{0} \boldsymbol{\varrho}_{0}^{\delta,T} \\
& \leq C \left( \int_{0}^t  \int_{\T^d} \shif m_s^\delta | D  \shif z_s^{\delta,T} |^2 \right)^{\frac 12} \left( \int_{0}^t  \int_{\T^d} \shif m_s^\delta | D  \psi_s |^2 \right)^{\frac 12}  + \normDDH{\psi_{0}-\hat \psi_{0}} \normU{\boldsymbol{\varrho}_{0}^{\delta,T}}{(C^{2+\beta})'}.
\end{align*}
Let us first use this inequality to bound $\rDWT$ for $t\geq 1$. Recalling that $ C^{-1} \leq \shif m_s^\delta \leq C$ for any $s \geq 1$ as shown in Lemma 
\ref{lem.boundsDisc}, we have thanks to the estimates on $\psi$ in \eqref{lkjazensrmmjj} and in \eqref{ljhqdbslfdjkb} that, for $t\geq 2$, 
\begin{align*}
\int_{0}^t  \int_{\T^d} \shif m_s^\delta | D  \psi_s |^2
& \leq \left( \int_{0}^{ 1}  \int_{\T^d} \shif m_s^\delta  \right) \sup_{ 0 \leq s \leq  1} \normC{ D  \psi_s}^2 + C \int_{ 1}^t \int_{\T^d} |D  \psi_s|^2 \\[0.5em]
& \leq C \sup_{0 \leq s \leq  1} \normD{ \psi_s -\hat \psi_s }^2 + C \normtwox{\xi}^2 \leq C\|\xi\|_{L^2}^2 . 
\end{align*}
Hence, for $t\geq 1$, we obtain that 
\[
 \int_{\T^d} \rDWT \xi \leq C  \|\xi\|_{L^2}\left( \left( \int_{0}^t  \int_{\T^d} \shif m_s^\delta | D  \shif z_s^{\delta,T} |^2 \right)^{\frac 12} + \normU{\boldsymbol{\varrho}_{0}^{\delta,T}}{(C^{2+\beta})'} \right) . 
\]
This implies that 
$$
e^{-\delta t} \| \rDWT\|_{L^2_x}^2 \leq C \left( \int_{0}^t e^{-\delta s} \int_{\T^d} \shif m_s^\delta | D  \shif z_s^{\delta,T} |^2  + \normU{\boldsymbol{\varrho}_{0}^{\delta,T}}{(C^{2+\beta})'}^2 \right) . 
$$
We  take the sup over $t\geq 1$, then the expectation and use \eqref{lakjzerndlf2}  to derive that 
\begin{align*}
& \E\left[ \sup_{t\geq 1} e^{-\delta t} \normUU{\rDWT}{L^2}^2\right]  \leq C\E\left[ \int_{0}^T  \int_{\T^d} e^{-\delta s}\shif m_s^\delta | D  \shif z_s^{\delta,T} |^2 ds \right] + C \E\left[ \normU{\boldsymbol{\varrho}_{0}^{\delta,T}}{(C^{2+\beta})'}^2\right] \notag\\ 
&  \leq C( \EP{\int_{\T^d} \tilde{\shif z}^{\delta,T}_{0} \boldsymbol{\varrho}_{0}^{\delta,T} } 
- e^{-\delta T}\EP{\int_{\T^d} \tilde{\shif z}^{\delta,T}_{T} \boldsymbol{\varrho}^{\delta,T}_{T} } ) +  C \E\left[ \normU{\boldsymbol{\varrho}_{0}^{\delta,T}}{(C^{2+\beta})'}^2\right]\notag \\
&  \leq C \E\left[\|\tilde{\shif z}^{\delta,T}_{0}\|_{C^{2+\beta}}^2\right]^{\frac 12} \E\left[ \normU{\boldsymbol{\varrho}_{0}^{\delta,T}}{(C^{2+\beta})'}\right]^{\frac 12}
+ C e^{-\delta T} \E\left[\|\tilde{\shif z}^{\delta,T}_{T}\|_{L^2}^2\right]^{\frac 12} \E\left[ \normU{\boldsymbol{\varrho}^{\delta,T}_{T}}{L^2}^2\right]^{\frac 12}+  C \E\left[ \normU{\boldsymbol{\varrho}_{0}^{\delta,T}}{(C^{2+\beta})'}^2\right] \ .
\end{align*}
 Applying the above estimate with $t=T$, we infer that
$$
e^{-\delta T} \E\left[ \normU{\boldsymbol{\varrho}^{\delta,T}_{T}}{L^2}\right] \leq C \E\left[\|\tilde{\shif z}^{\delta,T}_{0}\|_{C^{2+\beta}}^2\right]^{\frac 12}  \E\left[ \normU{\boldsymbol{\varrho}_{0}^{\delta,T}}{(C^{2+\beta})'}\right]^{\frac 12}+ C e^{-\delta T} \E\left[\|\tilde{\shif z}^{\delta,T}_{T}\|_{L^2}^2\right]+  C \E\left[ \normU{\boldsymbol{\varrho}_{0}^{\delta,T}}{(C^{2+\beta})'}^2\right]. 
$$
Thus
\begin{align}\label{azelkjrsdnfg3}
\E\left[ \sup_{t\geq 2} e^{-\delta t} \normUU{\rDWT}{L^2}^2\right] \leq  \kappa \ , 
\end{align}
where we have set
\be\label{def.kappa}
\kappa= \E\left[\|\tilde{\shif z}^{\delta,T}_{0}\|_{C^{2+\beta}}^2\right]^{\frac 14}  \E\left[ \normU{\boldsymbol{\varrho}_{0}^{\delta,T}}{(C^{2+\beta})'}^2\right]^{\frac 14}+ C e^{-\delta T/2} \E\left[\|\tilde{\shif z}^{\delta,T}_{T}\|_{C^{3+\beta}}^2\right]^{\frac 12}+\E\left[ \normU{\boldsymbol{\varrho}_{0}^{\delta,T}}{(C^{2+\beta})'}^2\right]^{\frac 12}.
\ee
We now address the case $t\in [0,2]$. By \eqref{ljhqdbslfdjkb2} we have 
\begin{align*}
\int_{0}^t  \int_{\T^d} \shif m_s^\delta | D  \psi_s |^2
& \leq \left( \int_{0}^{ t}  \int_{\T^d} \shif m_s^\delta  \right) \sup_{ 0 \leq s \leq  t} \normC{ D  \psi_s}^2 \leq C \|\xi\|_{C^{2+\beta}}^2.
\end{align*}
Thus 
\[
 \int_{\T^d} \rDWT \xi \leq C  \|\xi\|_{C^{2+\beta}}\left[ \left( \int_{0}^t  \int_{\T^d} \shif m_s^\delta | D  \shif z_s^{\delta,T} |^2 \right)^{\frac 12} + \normU{{\rho}_{0}}{(C^{2+\beta})'} \right] , 
\]
which leads as above to
\begin{align*}
\E\left[ \sup_{t\in [0,2]} \normUU{\rDWT}{(C^{2+\beta})'}^2\right] &  \leq C \kappa^2. 
\end{align*}
Combining this inequality with \eqref{azelkjrsdnfg3} we conclude that 
\be\label{azelkjrsdnfg}
\E\left[ \sup_{t\in [0,T]} e^{-\delta t} \normUU{\rDWT}{(C^{2+\beta})'}^2\right]\leq C\kappa^2. 
\ee\\

\textbf{Step 2:} We now apply Lemma \ref{lemCLLP2.2} between $t$ and $T$ (for $t\in [0,T]$) to estimate  $D \shif z^{\delta,T}$. Assuming that $\delta <\lambda$, where $\lambda$ is the constant given in Lemma \ref{lemCLLP2.2}, we have by \eqref{azelkjrsdnfg}: 
\begin{align*}
\|\tilde{\shif z}^{\delta,T}_t\|_{L^2_{\omega,x}} & \leq Ce^{-\lambda (T-t)}\|\tilde{\shif z}^{\delta,T}_T\|_{L^2_{\omega,x}}+ C\int_t^T e^{-\lambda(s-t)}\| \lg \frac{ \delta \shif f_s}{\delta m}, \boldsymbol{\rho}^{\delta,T}_s\rg \|_{L^2_{\omega,x}}ds \\ 
& \leq C e^{\delta t/2} \kappa+ C\int_t^T e^{-\lambda(s-t)} \E\left[  \normUU{\boldsymbol{\rho}^{\delta,T}_s}{(C^{2+\beta})'}^2\right]^{1/2}  ds \\
& \leq  C e^{\delta t/2} \kappa+ C\kappa \int_t^T e^{-\lambda(s-t)} e^{\delta s/2} ds \; \leq C e^{\delta t/2} \kappa. 
\end{align*}
Our aim is to improve this estimate into a bound on the $C^3$ regularity of $\tilde{\shif z}^{\delta,T}$. For this, recall that, by Lemma \ref{lemCLLP2.2}, we have, for any $0\leq t_0\leq t\leq T$,  
$$
(t-t_0) \normtwo{D  \shif z^{\delta,T}_{t_0}}   \leq C(t-t_0+1) \left( \normtwo{\tilde{\shif z}^{\delta,T}_t} + \int_{t_0}^t (\normtwo{\tilde{\shif z}^{\delta,T}_s}+ \normtwo{ \lg \frac{ \delta \shif f_s}{\delta m}, \boldsymbol{\rho}^{\delta,T}_s\rg})ds \right) .
$$
Choosing $t=t_0+1$, this implies that, for $t_0\leq T-1$,  
\be\label{azelkjrsdnfg2}
 \normtwo{D  \shif z^{\delta,T}_{t_0}}  \leq C e^{\delta t_0/2} \kappa . 
\ee

Next we  prove an $L^\infty$ estimate for $D\shif z^{\delta,T}$ on the time interval $[T-1,T]$. Recalling formula (4.2) in \cite{CDLL}, we have 
\begin{align*}
 D\shif z^{\delta, T}_t(x) & = 
\E\Bigl[\Gamma_{T-t}\ast D\shif z^{\delta, T}_T(x) +  \int_t^{T} D\Gamma_{s-t}\ast \left(D\shif u^\delta_s \cdot D\shif z^{\delta, T}_s \right)(x)ds \\
& \qquad +
\int_t^{T}  \Gamma_{s-t} \ast D \left( \delta \shif z^{\delta,T}_s +\lg \frac{ \delta  \shif f_s}{\delta m}, \boldsymbol{\rho}^{\delta,T}_s\rg \right) (x) ds  \bigg| \mathcal F_t \Bigr],
\end{align*}
where $\Gamma$ is the heat kernel on $\T^d$. 
The Lipschitz bound on $\shif u^{\delta}$ then imply:
\begin{align*}
\E\left[ \|D\shif z^{\delta, T}_t\|_\infty^2\right] & \leq C \E\left[ \|D\shif z^{\delta, T}_T\|_\infty^2\right]+  
C \E \left[ \left( \int_t^{T} \| D\Gamma_{s-t}\ast D\shif z^{\delta, T}_s\|_\infty ds\right)^2\right]  \\
& \qquad + C \E \left[  \left( \int_t^{T} \left\|  \Gamma_{s-t} \ast D \left( \delta \shif z^{\delta,T}_s +\lg \frac{ \delta  \shif f_s}{\delta m}, \boldsymbol{\rho}^{\delta,T}_s\rg \right) \right\|_\infty ds \right)^2 \right]. 
\end{align*}
We estimate each term separately. We note first that 
$$
\E\left[ \|D\shif z^{\delta, T}_T\|_\infty^2\right]\leq Ce^{\delta T} \kappa^2.
 $$
 Second, 
\begin{align*}
\E & \left[ \left( \int_t^{T}  \| D\Gamma_{s-t}\ast D\shif z^{\delta, T}_s\|_\infty ds \right)^2 \right] \leq \E \left[ \left( \int_t^{T} C(s-t)^{-1/2} \| D\shif z^{\delta, T}_s\|_\infty ds \right)^2 \right] \\
&\leq C \E \left[ \int_t^T (s-t)^{-1/2} ds \cdot \int_t^T (s-t)^{-1/2} \| D\shif z^{\delta, T}_s\|_\infty^2 ds \right] \leq C \int_t^T (s-t)^{-1/2} \E \left[ \| D\shif z^{\delta, T}_s\|_\infty^2 \right] ds.
\end{align*}
Finally, by the regularity of $f$ and \eqref{azelkjrsdnfg},  
\begin{align*}
\E \left[ \left( \int_t^{T} \left\|  \Gamma_{s-t} \ast D \left( \delta \shif z^{\delta,T}_s +\lg \frac{ \delta  \shif f_s}{\delta m}, \boldsymbol{\rho}^{\delta,T}_s\rg \right) \right\|_\infty ds \right)^2 \right]
 \leq \int_t^{T} \delta^2 \E\left[\| D \shif z^{\delta,T}_s \|_\infty^2 \right] + \E\left[\left\| D\lg \frac{ \delta  \shif f_s}{\delta m}, \boldsymbol{\rho}^{\delta,T}_s\rg \right\|_\infty^2 \right] ds \\
\leq \int_t^{T} \delta^2 \E \left[ \| D\shif z^{\delta, T}_s\|_\infty^2\right] ds + C \int_t^{T}\E\left[\|\boldsymbol{\rho}^{\delta,T}_s \|^2_{(C^{2+\beta})'}\right] ds \leq \int_t^{T} \delta^2 \E \left[ \| D\shif z^{\delta, T}_s\|_\infty^2\right] ds + C e^{\delta T} \kappa^2. 
\end{align*}
We deduce that $\E\left[ \|D\shif z^{\delta, T}_t\|_\infty\right]$ satisfies the differential inequality: 
$$
\E\left[ \|D\shif z^{\delta, T}_t\|_\infty^2 \right] \leq \int_t^{T} \left( C(s-t)^{-1/2} +\delta^2 \right) \E \left[ \| D\shif z^{\delta, T}_s\|_\infty^2 \right] ds+ C e^{\delta T} \kappa^2.
$$
Following \cite[Lemma 7.1.1]{Hen}, this implies the existence of a constant $C>0$ such that 
\be\label{qousildjfmkcvBIS}
\E\left[ \|D\shif z^{\delta, T}_t\|_\infty^2 \right] \leq C e^{\delta T} \kappa^2 \qquad \forall t\in [T-1,T].
\ee
We now extend this estimate to $[0,T-1]$. Let us write the representation formula for $D\shif z^{\delta, T}$: for $t\in [0,T-1]$ and $h\in (0,1]$, 
\begin{align*}
D\shif z^{\delta, T}_t(x) = \E\bigg[ \Gamma_h\ast D\shif z^{\delta, T}_{t+h} (x) +\int_t^{t+h} & D\Gamma_{s-t}\ast \left(D\shif u^\delta_s \cdot D\shif z^{\delta, T}_s \right)(x) ds \\
& + \int_t^{t+h} \Gamma_{s-t} \ast D \left( \delta \shif z^{\delta,T}_s +\lg \frac{ \delta  \shif f_s}{\delta m}, \boldsymbol{\rho}^{\delta,T}_s\rg \right) (x) ds \ \bigg|\ \mathcal F_t \bigg].
\end{align*}
Thus, again, by the regularity of $\shif u^\delta$, we have
\begin{align*}
\E\left[ \|D\shif z^{\delta, T}_t\|_\infty^2 \right] \leq  
\E\left[\| \Gamma_h\ast D\shif z^{\delta, T}_{t+h} \|_\infty^2 \right] &+ C \E \left[ \left( \int_t^{t+h} \| D\Gamma_{s-t}\ast D\shif z^{\delta, T}_s\|_\infty ds \right)^2 \right]\\
& \qquad + \int_t^{t+h}\E\left[ \left\|  \Gamma_{s-t} \ast D \left( \delta \shif z^{\delta,T}_s +\lg \frac{ \delta  \shif f_s}{\delta m}, \boldsymbol{\rho}^{\delta,T}_s\rg \right) \right\|_\infty^2\right] ds \ . 
\end{align*}
We again estimate each term separately. Note first that 
\begin{align*}
\| \Gamma_h\ast D\shif z^{\delta, T}_{t+h} \|_\infty  \leq \|\Gamma_h\|_{L^2_x}\|D\shif z^{\delta, T}_{t+h} \|_{L^2_x}\leq Ch^{-d/4} \|D\shif z^{\delta, T}_{t+h} \|_{L^2_x}, 
\end{align*}
so that, by \eqref{azelkjrsdnfg2} and  \eqref{qousildjfmkcvBIS}, 
\begin{align*}
\E\left[ \| \Gamma_h\ast D\shif z^{\delta, T}_{t+h} \|_\infty^2 \right]   \leq Ch^{-d/2} e^{\delta (t+h)}  \kappa^2.
\end{align*}
Second, exactly as above, 
\begin{align*}
\E \left[ \left( \int_t^{t+h} \| D\Gamma_{s-t}\ast D\shif z^{\delta, T}_s\|_\infty ds \right)^2 \right] \leq \int_t^{t+h} C(s-t)^{-1/2} \E \left[ \| D\shif z^{\delta, T}_s\|_\infty^2 \right] ds.
\end{align*}
Third, again by the regularity of $f$ and \eqref{azelkjrsdnfg},  
{\small
\begin{align*}
\int_t^{t+h}\E\left[ \left\|  \Gamma_{s-t} \ast D \left( \delta \shif z^{\delta,T}_s +\lg \frac{ \delta  \shif f_s}{\delta m}, \boldsymbol{\rho}^{\delta,T}_s\rg \right) \right\|_\infty^2 \right] ds
\leq \int_t^{t+h} \delta^2 \E\left[\| D \shif z^{\delta,T}_s \|_\infty^2\right] + \E\left[\left\| D\lg \frac{ \delta  \shif f_s}{\delta m}, \boldsymbol{\rho}^{\delta,T}_s\rg \right\|_\infty^2 \right] ds \\
\leq \int_t^{t+h} \delta^2 \E\left[\| D \shif z^{\delta,T}_s \|_\infty^2\right] ds + C \int_t^{t+h}\E\left[\|\boldsymbol{\rho}^{\delta,T}_s \|_{C^{2+\beta})'}^2\right] ds \leq \int_t^{t+h} \delta^2 \E\left[\| D \shif z^{\delta,T}_s \|_\infty^2 \right] ds + C e^{\delta(t+h)} \kappa^2. 
\end{align*}
}
Fix $K$ large enough to be chosen later and let 
$$
t_0=\sup\left\{t \in [0,T], \; \E\left[ \|D\shif z^{\delta, T}_t\|_\infty^2 \right]> K e^{\delta t}\kappa^2 \right\}.
$$
If the condition in the right-hand side is never met we set $t_0=0$. By \eqref{qousildjfmkcvBIS}, we can choose  $K$  large enough (independently of $\delta$ and $\kappa$) such that $t_0\leq T-1$. Assume for a while that $t_0>0$. Then, collecting the estimates above, we have  
\begin{align*} 
&\E\left[ \|D\shif z^{\delta, T}_{t_0}\|_\infty^2 \right]= K e^{\delta t_0}\kappa^2 \\
&\qquad \leq 
C h^{-d/2} e^{\delta(t_0+h)} \kappa^2 + \int_{t_0}^{t_0+h} \left( C(s-t_0)^{-1/2} + \delta^2 \right) \E \left[ \| D\shif z^{\delta, T}_s\|_\infty^2 \right] ds
+ C e^{\delta (t_0+h)}  \kappa^2 \\ 
&\qquad  \leq C h^{-d/2} e^{\delta (t_0+h)} \kappa^2 + K \kappa^2 \int_{t_0}^{t_0+h} \left( C(s-t_0)^{-1/2} + \delta^2 \right)  e^{\delta s} ds
+ C e^{\delta(t_0+h)} \kappa^2. 
\end{align*}
We simplify the expression to find: 
\begin{align*} 
K & \leq 
C h^{-d/2} e^{\delta h} + K \int_{0}^{h} \left( C s^{-1/2} + \delta^2 \right)  e^{\delta s} ds
+ C e^{\delta h} . 
\end{align*}
We first choose $h>0$ small enough such that $ \int_{0}^{h} ( C s^{-1/2} + \delta^2)  e^{\delta s} ds\leq 1/2$, which is independent of $\delta \in (0, 1]$ and $K$. Then we choose 
$K> 2(C h^{-d/2} e^{\delta h} + C e^{\delta h})$ to find a contradiction. Therefore $t_0=0$, which  proves that 
\be\label{bazelkjrsdnfg22}
\E\left[ \|D\shif z^{\delta, T}_t\|_\infty^2 \right] \leq C e^{\delta t} \kappa^2 \qquad \forall t\in [0,T].
\ee

\textbf{Step 3:} In this step we now improve inequality \eqref{bazelkjrsdnfg22} into and estimates which hold locally uniformly in time. Let us write again the representation formula for $D \shif z^{\delta, T}$: for $t \in [\tau, T]$ and $s \in [t - \tau, t]$, with $\tau$ positive and small enough, we have
\begin{align*}
D\shif z^{\delta, T}_s(x) = \E\bigg[ \Gamma_{ t-s} \ast D\shif z^{\delta, T}_{ t } (x) +\int_s^{t} & D\Gamma_{r-s}\ast \left(D\shif u^\delta_r \cdot D\shif z^{\delta, T}_r \right)(x) dr \\
& + \int_s^{t} \Gamma_{r-s} \ast D \left( \delta \shif z^{\delta,T}_r +\lg \frac{ \delta  \shif f_r}{\delta m}, \boldsymbol{\rho}^{\delta,T}_r\rg \right) (x) dr \ \bigg|\ \mathcal F_s \bigg].
\end{align*}
Thus, by Young's inequality, the regularity of $\shif u^\delta$, properties of the heat kernel $\Gamma$, and regularity of $f$, there holds:
\begin{align*}
\|D\shif z^{\delta, T}_s\|_\infty &\leq \E \left[ \left\| \Gamma_{ t-s} \ast D\shif z^{\delta, T}_{ t } \right\|_{\infty} \bigg| \mathcal{F}_s \right]\\
& \quad + \E \left[ \int_s^{t} \left\| D\Gamma_{r-s}\ast \left( D \shif u^\delta_r \cdot D\shif z^{\delta, T}_r \right) \right\|_\infty + \left\|  \Gamma_{r-s} \ast D \left( \delta \shif z^{\delta,T}_r +\lg \frac{ \delta  \shif f_r}{\delta m}, \boldsymbol{\rho}^{\delta,T}_r\rg \right) \right\|_\infty dr \bigg| \mathcal{F}_s \right]\\
& \leq C \, \E \bigg[ \|D\shif z^{\delta, T}_{t} \|_{\infty} + \sup_{t-\tau \leq \tilde s \leq t} \int_{\tilde s}^{t} \left( \,( r - \tilde s \, )^{-\frac 12} + \delta \right)  \| D\shif z^{\delta, T}_r\|_\infty +  \|\boldsymbol{\rho}^{\delta,T}_r \|_{(C^{2+\beta})'} dr \bigg| \mathcal{F}_s \bigg] \\
& \leq C \, \E \bigg[ \|D\shif z^{\delta, T}_{t} \|_{\infty} + \sqrt{\tau} \sup_{t-\tau \leq r \leq t}  \| D\shif z^{\delta, T}_r\|_\infty + \tau \sup_{t-\tau \leq r \leq t} \|\boldsymbol{\rho}^{\delta,T}_r \|_{(C^{2+\beta})'} \bigg| \mathcal{F}_s \bigg],
\end{align*}
where the last inequality holds since,
\begin{align*}
\int_{\tilde s}^{t} \left( \,( r - \tilde s \, )^{-\frac 12} + \delta \right) & \| D\shif z^{\delta, T}_r\|_\infty dr \leq \int_{\tilde s}^{t} \left( \,( r - \tilde s \, )^{-\frac 12} + \delta \right) dr \cdot \sup_{\tilde s \leq r \leq t} \| D\shif z^{\delta, T}_r\|_\infty \\
& \leq \left( (t-\tilde s)^{\frac 12} + \delta(t - \tilde s) \right) \sup_{t-\tau \leq r \leq t} \| D\shif z^{\delta, T}_r\|_\infty \leq C \sqrt{\tau} \sup_{t-\tau \leq r \leq t} \| D\shif z^{\delta, T}_r\|_\infty.
\end{align*}
Hence, by Doob's inequality, inequalities \eqref{azelkjrsdnfg} and  \eqref{bazelkjrsdnfg22}, and the assumption that $\tau$ is small enough, we derive:
\begin{align*}
\E \bigg[ & \sup_{t-\tau \leq s \leq t} \|D\shif z^{\delta, T}_s\|_\infty^2 \bigg] \\
&\leq C \E \Bigg[ \sup_{t-\tau \leq s \leq t} \E \bigg[ \|D\shif z^{\delta, T}_{t} \|_{\infty} + \sqrt{\tau} \sup_{t-\tau \leq r \leq t}  \| D\shif z^{\delta, T}_r\|_\infty + \tau \sup_{t-\tau \leq r \leq t} \|\boldsymbol{\rho}^{\delta,T}_r \|_{(C^{2+\beta})'} \bigg| \mathcal{F}_s \bigg]^2 \Bigg]\\
& \leq C \EP{ \left(  \|D\shif z^{\delta, T}_{t} \|_{\infty} + \sqrt{\tau} \sup_{t-\tau \leq s \leq t}  \| D\shif z^{\delta, T}_s\|_\infty + \tau \sup_{t-\tau \leq s \leq t} \|\boldsymbol{\rho}^{\delta,T}_s \|_{(C^{2+\beta})'} \right)^2} \\
& \leq C \tau \EP{ \sup_{t-\tau \leq s \leq t} \| D\shif z^{\delta, T}_s\|_\infty^2 } + C \EP{ \|D\shif z^{\delta, T}_{t} \|_{\infty}^2 } +  C \tau^2 \EP{ \sup_{t-\tau \leq s \leq t} \|\boldsymbol{\rho}^{\delta,T}_s \|_{(C^{2+\beta})'}^2 } \\
& \leq \frac 12 \E \bigg[ \sup_{t-\tau \leq s \leq t} \|D\shif z^{\delta, T}_s\|_\infty^2 \bigg] + C e^{\delta t} \kappa^2.
\end{align*}
Therefore,
\be\label{azelkjrsdnfg22}
\sup_{\tau \leq t \leq T} \E \bigg[ e^{-\delta t} \sup_{t-\tau \leq s \leq t} \|D\shif z^{\delta, T}_s\|_\infty^2 \bigg] \leq C \kappa^2.
\ee

\textbf{Step 4:} We finally bootstrap the above proof to obtain a higher order estimate on $D\shif z^{\delta, T}$. Let us consider the equation satisfied by $D\shif z^{\delta,T}\cdot e$ for some direction $e\in\R^d$,  $|e|\leq 1$: it is an equation of the form \eqref{eq.backward_stochastic},  stated on $[0,T]\times \T^d$,  with
$$
A=D^2 \shif u^\delta e \cdot D\shif z^{\delta,T} +  \lg \frac{ \delta D_e \shif f_s}{\delta m}, \boldsymbol{\rho}^{\delta,T}_s\rg.
$$
By \eqref{azelkjrsdnfg} and \eqref{azelkjrsdnfg22}, we have 
$$
\sup_{\tau \leq t \leq T} \E \bigg[ e^{-\delta t} \sup_{t-\tau \leq s \leq t} \|A_s\|_\infty^2 \bigg] \leq C \kappa^2.
$$
Estimating  $\|D(D\shif z^{\delta,T}\cdot e)\|_{L^2_{\omega,x}}$ as we did for \eqref{azelkjrsdnfg2}, and  $\E[\|D(D\shif z^{\delta,T}\cdot e)\|_{\infty}]$, as we did for \eqref{qousildjfmkcvBIS}, and then arguing exactly as above, we obtain 
$$
\sup_{\tau \leq t \leq T} \E \bigg[ e^{-\delta t} \sup_{t-\tau \leq s \leq t} \|D^2\shif z^{\delta, T}_s\|_\infty^2 \bigg] \leq C \kappa^2.
$$
Taking a second time the derivative in space of the equation for $\shif z^{\delta,T}$   (recalling that $\shif u^\delta$ is bounded in $C^{4+\beta}$ for any $\beta\in (0, \alpha)$) we conclude in the same way that 
$$
\sup_{\tau \leq t \leq T} \E \bigg[ e^{-\delta t} \sup_{t-\tau \leq s \leq t} \|D^3\shif z^{\delta, T}_s\|_\infty^2 \bigg] \leq C \kappa^2.
$$
Finally, we estimate the increment $D^3\shif z^{\delta, T}_t(x+h)-D^3\shif z^{\delta, T}_t(x)$ in the same way to conclude that, for any $\beta'\in (0,\beta)$,  
\be\label{oijzkendflk}
\sup_{\tau \leq t \leq T} \E \bigg[ e^{-\delta t} \sup_{t-\tau \leq s \leq t} \|D^3\shif z^{\delta, T}_s\|_{C^{\beta'}}^2 \bigg] \leq C \kappa^2.
\ee
By the definition of $\kappa$ in \eqref{def.kappa}, this shows that, for any $\beta\in (0,\alpha)$ and $t\in[\tau,T]$,
\begin{align*}
& \E \bigg[ e^{-\delta t} \sup_{t-\tau \leq s \leq t} \|\tilde{\shif z}^{\delta, T}_t\|_{C^{3+\beta}}^2\bigg]^{\frac 12}  \\
& \qquad \leq C_\beta \Bigl(\E\left[\|\tilde{\shif z}^{\delta,T}_{0}\|_{C^{2+\beta}}^2\right]^{\frac 14}  \E\left[ \normU{\boldsymbol{\varrho}_{0}^{\delta,T}}{(C^{2+\beta})'}^2\right]^{\frac 14}+ \E\left[\|\tilde{\shif z}^{\delta,T}_{T}\|_{C^{3+\beta}}^2\right]^{\frac 12}+ \E\left[ \normU{\boldsymbol{\varrho}_{0}^{\delta,T}}{(C^{2+\beta})'}^2\right]^{\frac 12} \Bigr). \notag
\end{align*}
We choose $t=\tau$ and use Young's inequality to conclude that 
$$
\E\left[\|\tilde{\shif z}^{\delta,T}_0\|_{C^{2+\beta}}\right]\leq C (  \E\left[ \normU{\boldsymbol{\varrho}_{0}^{\delta,T}}{(C^{2+\beta})'}^2\right]^{\frac 12} +\E\left[\|\tilde{\shif z}^{\delta,T}_{T}\|_{C^{3+\beta}}^2\right]^{\frac 12}).
$$ 
This implies in turn that 
$$
\kappa\leq  C ( \E\left[ \normU{\boldsymbol{\varrho}_{0}^{\delta,T}}{(C^{2+\beta})'}^2\right]^{\frac 12} +\E\left[\|\tilde{\shif z}^{\delta,T}_{T}\|_{C^{3+\beta}}^2\right]^{\frac 12}).
$$
We then derive \eqref{kzjhejhezrbnldkj} from  \eqref{azelkjrsdnfg3}, \eqref{azelkjrsdnfg} and \eqref{oijzkendflk}. 
\end{proof}

Our next step is to obtain decay estimates of the solution of the approximate linearized system uniform in the parameter $\delta$. 

\begin{lem} \label{lem.esti.zDWT2}  There exists $\lambda'>0$ and $\delta_0>0$, depending only on $f$ and $d$, such that, if $\delta\in (0,\delta_0)$ and if   $(\zDWT, \rDWT, \NDWT )$ is the solution to \eqref{eq.linearized_MFG_shifted}-(i)-(ii) on the time interval $[t_0,T]$ with an arbitrary initial condition $\boldsymbol{\varrho}_{0}^{\delta,T}$ and a vanishing terminal condition $\tilde{\shif z}^{\delta,T}_{T}=0$, then, for any $t_1\in [0,T]$, 
\begin{align*}
&\sup_{t_1+\tau \leq t \leq T} \E\left[ e^{-\delta (t-t_0)} \sup_{t - \tau \leq s \leq t} \| \tilde{\shif z}^{\delta,T}_s\|_{C^{3+\beta}}^2\right]+ \E\left[ \sup_{t_1 \leq t \leq T} e^{- \delta (t-t_0)} \normUU{\rDWT}{(C^{2+\beta})'}^2\right] \\
& \qquad \qquad \leq C e^{-\lambda' (t_1-t_0)}  \E\left[\|\boldsymbol{\varrho}_{0}^{\delta,T}\|_{(C^{2+\beta})'}^2\right].
\end{align*}
\end{lem}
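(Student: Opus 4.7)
The plan is to reduce the assertion to an exponential decay for $\E\bigl[\|\boldsymbol{\varrho}^{\delta,T}_{t_1}\|_{(C^{2+\beta})'}^2\bigr]$ in $(t_1-t_0)$, and then to obtain this decay via a monotonicity energy estimate combined with a duality argument for the Fokker--Planck equation, closed by the discounted Gronwall-type machinery of the appendix.

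First, I would exploit the fact that the restriction of $(\shif{z}^{\delta,T}, \boldsymbol{\varrho}^{\delta,T}, \shif{N}^{\delta,T})$ to the subinterval $[t_1, T]$ is itself a solution of \eqref{eq.linearized_MFG_shifted} on $[t_1, T]$ with initial condition $\boldsymbol{\varrho}^{\delta,T}_{t_1}$ at time $t_1$ and the same vanishing terminal condition $\tilde{\shif{z}}^{\delta,T}_{T} = 0$. Applying Lemma \ref{lem.esti.zDWT1} with $t_1$ playing the role of the initial time, the term $\E\bigl[\|\tilde{\shif{z}}^{\delta,T}_T\|_{C^{3+\beta}}^2\bigr]$ drops out and, after reinstating the common factor $e^{-\delta(t_1-t_0)}$, the left-hand side of the present lemma is bounded by $C\,e^{-\delta(t_1-t_0)}\, \E\bigl[\|\boldsymbol{\varrho}^{\delta,T}_{t_1}\|_{(C^{2+\beta})'}^2\bigr]$. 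It will therefore suffice to prove
\begin{equation*}
\E\bigl[\|\boldsymbol{\varrho}^{\delta,T}_{t_1}\|_{(C^{2+\beta})'}^2\bigr] \;\leq\; C\,e^{-(\lambda'-\delta)(t_1-t_0)}\, \E\bigl[\|\boldsymbol{\varrho}^{\delta,T}_{0}\|_{(C^{2+\beta})'}^2\bigr],
\end{equation*}
with $\delta_0$ taken sufficiently small.

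Next, I would revisit the It\^o/Lasry--Lions argument from Step~1 of the proof of Lemma \ref{lem.esti.zDWT1}: applying It\^o's product formula to $e^{-\delta s} \int_{\T^d} \shif{z}^{\delta,T}_s \boldsymbol{\varrho}^{\delta,T}_s$ and using monotonicity of $f$ together with the vanishing terminal condition and $\langle \boldsymbol{\varrho}^{\delta,T}_s, 1\rangle = 0$ gives, for every $t \in [t_0, T]$,
\begin{equation*}
\int_t^T e^{-\delta s}\, \E\bigl[\|D\shif{z}^{\delta,T}_s\|_{L^2_x}^2\bigr]\,ds \;\leq\; C\,e^{-\delta t}\,\E\bigl[\|\boldsymbol{\varrho}^{\delta,T}_t\|_{(C^{2+\beta})'}^2\bigr],
\end{equation*}
where the constant $C$ is obtained from the lower bound on $\shif m^\delta$ in Lemma \ref{lem.boundsDisc} together with the $C^{3+\beta}$ control of $\tilde{\shif{z}}^{\delta,T}_t$ by $\boldsymbol{\varrho}^{\delta,T}_t$ supplied by Lemma \ref{lem.esti.zDWT1} at time $t$. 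The equation for $\boldsymbol{\varrho}^{\delta,T}$ is then viewed as a Fokker--Planck equation with drift $-D\shif u^\delta$ and source $-\dive(\shif m^\delta D\shif{z}^{\delta,T})$. Testing against pathwise backward solutions $\psi$ of the dual equation $-\partial_s \psi = \Delta \psi - D\shif u^\delta \cdot D\psi$ with terminal datum $\phi$ satisfying $\|\phi\|_{C^{2+\beta}} \leq 1$, and using the $C^{2+\beta}$-exponential decay of $\psi$ (of the type furnished by Lemma \ref{lemCLLP2.2}), I would derive the pathwise bound
\begin{equation*}
\|\boldsymbol{\varrho}^{\delta,T}_{t_1}\|_{(C^{2+\beta})'} \;\leq\; C\,e^{-\omega(t_1-t_0)}\,\|\boldsymbol{\varrho}^{\delta,T}_{0}\|_{(C^{2+\beta})'} + C\int_{t_0}^{t_1} e^{-\omega(t_1-s)}\,\|D\shif{z}^{\delta,T}_s\|_{L^2_x}\,ds.
\end{equation*}
Squaring, taking expectation, and combining Cauchy--Schwarz with the above energy estimate produces an integral inequality for $A(t) := \E\bigl[\|\boldsymbol{\varrho}^{\delta,T}_t\|_{(C^{2+\beta})'}^2\bigr]$ of the schematic form $A(t_1) \leq C e^{-2\omega(t_1-t_0)} A(t_0) + C\int_{t_0}^{t_1} e^{-2\omega(t_1-s)} A(s)\,ds$, which I would close using the discounted exponential decay lemma of the appendix (in the spirit of Lemma \ref{lem.expodecayDISC}).

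The main obstacle will be to bridge the $(C^{2+\beta})'$ norm of $\boldsymbol{\varrho}^{\delta,T}$ that we wish to contract with the $L^2$-based energy furnished by monotonicity; this forces the detour through the dual backward PDE and the sharp use of its $C^{2+\beta}$ decay. A secondary difficulty is bookkeeping: all constants must be uniform in $T$ and in $\delta \in (0,\delta_0)$, which forces $\delta_0$ to be small enough that the intrinsic Fokker--Planck decay rate $\omega$ dominates $\delta$ in the final Gronwall step, and this ultimately determines the admissible $\lambda' > 0$.
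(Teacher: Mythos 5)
Your reduction in the first paragraph is exactly right and mirrors the end of the paper's proof: restricting to $[t_1,T]$ and invoking Lemma~\ref{lem.esti.zDWT1} reduces everything to showing $\E[\|\boldsymbol{\varrho}^{\delta,T}_{t_1}\|_{(C^{2+\beta})'}^2]$ decays exponentially. The gap is in your second half, where you try to establish this decay directly for the $(C^{2+\beta})'$-norm via a single-function Gronwall inequality. The schematic inequality you propose,
\begin{equation*}
A(t_1)\ \leq\ C e^{-2\omega(t_1-t_0)}A(t_0) + C\int_{t_0}^{t_1} e^{-2\omega(t_1-s)}A(s)\,ds,
\end{equation*}
is not coercive: the constant function $A\equiv 1$ satisfies it as soon as $C(1+1/(2\omega))\geq 1$, so it cannot imply exponential decay of $A$, no matter how small $\delta$ is. Moreover the monotonicity energy estimate is a \emph{future} bound, $\int_t^T e^{-\delta s}\E[\|D\shif z^{\delta,T}_s\|_{L^2_x}^2]\,ds \leq C e^{-\delta t}A(t)$, whereas your Fokker--Planck duality brings in the \emph{past} integral $\int_{t_0}^{t_1}e^{-\omega(t_1-s)}\|D\shif z^{\delta,T}_s\|_{L^2_x}\,ds$. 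Inserting the former into the latter produces a factor $e^{\delta(t_1-t_0)}$ that grows in time, so the argument does not close.

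What you are missing is the full triple-function structure that Lemma~\ref{lem.expodecayDISC} is designed to exploit, and the fact that it must be run in $L^2$ rather than in $(C^{2+\beta})'$. The paper takes $\alpha(t)=\|\tilde{\shif z}^{\delta,T}_t\|_{L^2_{\omega,x}}^2$, $\beta(t)=\|D\shif z^{\delta,T}_t\|_{L^2_{\omega,x}}^2$, $\gamma(t)=\|\boldsymbol{\varrho}^{\delta,T}_t\|_{L^2_{\omega,x}}^2$ (extended by zero past $T$), and verifies the four hypotheses \eqref{hypexpo1DISC}, \eqref{hypexpo2}, \eqref{hypexpo4}, \eqref{hypexpo3DISC}. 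Two of these you have not accounted for at all: the Poincar\'e inequality $\alpha\leq C\beta$ (\eqref{hypexpo4}), and, crucially, the backward-HJ estimate from Lemma~\ref{lemCLLP2.2} that controls $\alpha(t_1)$ by future values of $\beta$ and $\gamma$ (\eqref{hypexpo3DISC}) — this is the mechanism that lets the future energy bound propagate to a pointwise-in-time decay. Also note that the monotonicity estimate naturally pairs $\tilde{\shif z}$ against $\boldsymbol{\varrho}$ in $L^2$, giving $\sqrt{\alpha(t_1)\gamma(t_1)}$ with both factors in $L^2$-norms; if you insist on measuring $\gamma$ in $(C^{2+\beta})'$ you break this pairing and cannot verify \eqref{hypexpo1DISC}. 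The correct order of operations is: establish the $L^2$ decay $\alpha(t)+\gamma(t)\leq Ce^{-\lambda'(t-1)}\gamma(2)$ via Lemma~\ref{lem.expodecayDISC}, trivially pass to $\|\boldsymbol{\varrho}^{\delta,T}_{t_1}\|_{(C^{2+\beta})'}\leq \|\boldsymbol{\varrho}^{\delta,T}_{t_1}\|_{L^2}$, and only then apply Lemma~\ref{lem.esti.zDWT1} on $[t_1,T]$ to recover the $C^{3+\beta}$ and $(C^{2+\beta})'$ statement.
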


\begin{proof} As before we do the proof when $t_0=0$. The key step of  the proof is the following decay estimate: there exist constants $\delta_0, \lambda' > 0$ and $C > 0$, independent of $T$, such that for any $\delta \in (0, \delta_0)$,
\be\label{estikeykey}
\|\tilde{\shif z}^{\delta,T}_t\|_{L^2_{\omega,x}}^2 + \|\boldsymbol{\varrho}^{\delta,T}_t\|_{L^2_{\omega,x}}^2 \leq C e^{-\lambda' (t-1)} \|\boldsymbol{\varrho}^{\delta,T}_2\|_{L^2_{\omega,x}}^2 \qquad \forall\, 2 \leq t < T.
\ee
To prove \eqref{estikeykey}, we aim to apply Lemma~\ref{lem.expodecayDISC}, with
\[
\left\{
\begin{array}{ll}
\alpha(t) = \|\tilde{\shif z}^{\delta,T}_t\|_{L^2_{\omega,x}}^2, \quad \beta(t) = \|D \shif z^{\delta,T}_t\|_{L^2_{\omega,x}}^2, \quad \gamma(t) = \|\boldsymbol{\varrho}^{\delta,T}_t\|_{L^2_{\omega,x}}^2 & \text{for } 2 \leq t < T, \\[1.2ex]
\alpha(t) = \beta(t) = \gamma(t) = 0 & \text{for } t \geq T,
\end{array}
\right.
\]
where $\alpha$, $\beta$, and $\gamma$ are c\`adl\`ag, as required in Appendix~\ref{sec:AppendExpo}. \\

The proof closely follows that of Lemma~\ref{lem.uniqueDISC}. Recall that on $[2,T]$, the estimates from Lemma~\ref{lem.boundsDisc} hold: $C^{-1} \leq \shif m^\delta \leq C$ and $|D\shif u^\delta| \leq C$. Using the estimate on $\shif m^\delta$ and the fact that $\int_{\mathbb{T}^d} \boldsymbol{\varrho}^{\delta,T}_t = 0$, we obtain by duality and for any $t_1 \geq 2$:
\begin{align*}
\mathbb{E} \left[ \int_{t_1}^{T} e^{-\delta t} \int_{\mathbb{T}^d} |D \shif z^{\delta,T}_t|^2 \, dx \, dt \right]
& \leq C \mathbb{E} \left[ \int_{t_1}^{T} e^{-\delta t} \int_{\mathbb{T}^d} \shif m^\delta |D \shif z^{\delta,T}_t|^2 \, dx \, dt \right] \\
& \leq e^{-\delta t_1} \mathbb{E} \left[ \int_{\mathbb{T}^d} \shif z^{\delta,T}_{t_1} \boldsymbol{\varrho}^{\delta,T}_{t_1} \, dx \right] \\
& = e^{-\delta t_1} \mathbb{E} \left[ \int_{\mathbb{T}^d} \tilde{\shif z}^{\delta,T}_{t_1} \boldsymbol{\varrho}^{\delta,T}_{t_1} \, dx \right] \\
& \leq e^{-\delta t_1} \left( \alpha(t_1) \gamma(t_1) \right)^{1/2}.
\end{align*}

This gives \eqref{hypexpo1DISC} since $\beta(t) = 0$ for $t \geq T$. Moreover, \eqref{hypexpo2} holds for all $1 \leq t_1 \leq t_2 < T$ by Lemma~\ref{lemCLLP2.1} applied to $\boldsymbol{\varrho}^{\delta,T}$, and for $t_2 \geq T$ since $\gamma(t) = 0$ for $t \geq T$. Estimate \eqref{hypexpo4} holds by the Poincar\'e inequality and the fact that $\alpha(t) = 0$ for $t \geq T$.\\

Finally, \eqref{hypexpo3DISC} is a direct consequence of Lemma~\ref{lemCLLP2.2} applied to $\shif z^{\delta,T}$, which implies that, for any $2 \leq t_1 \leq t_2 \leq T$,
\[
\alpha(t_1)^{1/2} \leq C e^{-\lambda(t_2 - t_1)} \alpha(t_2)^{1/2} + C \int_{t_1}^{t_2} e^{-\lambda(s - t_1)} \|\boldsymbol{\varrho}^{\delta,T}_s\|_{L^2_{\omega,x}} \, ds.
\]
Letting $t_2 = T$ yields:
\[
\alpha(t_1)^{1/2} \leq C \int_{t_1}^{T} e^{-\lambda(s - t_1)} \|\boldsymbol{\varrho}^{\delta,T}_s\|_{L^2_{\omega,x}} \, ds 
\leq \sup_{s \geq t_1} e^{-\lambda(s - t_1)/2} \|\boldsymbol{\varrho}^{\delta,T}_s\|_{L^2_{\omega,x}} \int_{t_1}^{\infty} e^{-\lambda(s - t_1)/2} \, ds.
\]
Squaring both sides gives:
\[
\alpha(t_1) \leq C \sup_{t \geq t_1} e^{-\lambda(t - t_1)} \gamma(t),
\]
which establishes \eqref{hypexpo3DISC}. Therefore, by Lemma~\ref{lem.expodecayDISC}, there exist constants $\delta_0 > 0$, $\lambda' > 0$, and $C > 0$ independent of $T$ such that, for all $\delta \in (0, \delta_0)$,
\[
\alpha(t) + \gamma(t) \leq C e^{-\lambda'(t - 1)} \gamma(2) \qquad \forall\, t \geq 2.
\]
This concludes the proof of \eqref{estikeykey}. \\ 

Combining inequality \eqref{kzjhejhezrbnldkj} and  \eqref{estikeykey} we infer that
\be\label{kzjhejhezrbnldkj3}
\|\tilde{\shif z}^{\delta,T}_t\|_{L^2_{\omega,x}}^2 + \|\boldsymbol{\varrho}^{\delta,T}_t\|_{L^2_{\omega,x}}^2 \leq C e^{-\lambda' t} \|\rho_0\|^2_{(C^{2+\beta})'} \qquad \forall\, 2 \leq t < T.
\ee
Furthermore, using estimate \eqref{zerldjkjnpimj2} from Lemma~\ref{lemCLLP2.2}, we obtain a similar control on $D\shif z^\delta$:
\be\label{kzjhejhezrbnldkj4}
\|D \shif z^{\delta,T}_t\|_{L^2_{\omega,x}}^2 \leq C e^{-\lambda' t} \|\rho_0\|^2_{(C^{2+\beta})'} \qquad \forall\, 2 \leq t \leq T - 1.
\ee
Hence we obtain 
\be\label{kzjhejhezrbnldkjBIS}
\E\left[\|D{\shif z}^{\delta,T}_t\|_{L^2_x}^2\right]+\E\left[  \normUU{\rDWT}{L^2_x}^2\right] \leq C e^{-\lambda' t}  \|\rho_{0}\|_{(C^{2+\beta})'}^2, \qquad \forall~2 \leq t \leq T-1.
\ee

Now note that, for any $t_1\in [2, T-2]$, $(\zDWT, \rDWT, \NDWT )$ is the solution to \eqref{eq.linearized_MFG_shifted} on the time interval $[t_1,T]$ with an initial condition $\boldsymbol{\varrho}_{t_1}^{\delta,T}$ and a vanishing terminal condition  $\tilde{\shif z}^{\delta,T}_T=0$. Thus, applying first Lemma \ref{lem.esti.zDWT1} and then \eqref{kzjhejhezrbnldkjBIS} we obtain:
\begin{align*}
&\sup_{t_1+\tau \leq t \leq T} \E\left[ e^{-\delta t} \sup_{t - \tau \leq s \leq t} \| \tilde{\shif z}^{\delta,T}_s\|_{C^{3+\beta}}^2\right]+ \E\left[ \sup_{t_1 \leq t \leq T} e^{- \delta t} \normUU{\rDWT}{(C^{2+\beta})'}^2\right]+ \E\left[ \sup_{t_1+2 \leq t \leq T} e^{-\delta t}\normtwox{\boldsymbol{\varrho}_t^{\delta,T}}^2\right] \notag \\
&\qquad \qquad \leq C_\beta  \E\left[\|\boldsymbol{\varrho}_{t_1}^{\delta,T}\|_{(C^{2+\beta})'}^2\right] \ \leq C_\beta  \left[\|\boldsymbol{\varrho}_{t_1}^{\delta,T}\|_{L^2_x}^2\right] \ \leq C e^{-\lambda' t_1}  \|\rho_{0}\|_{(C^{2+\beta})'}^2.
\end{align*}
As this estimate also holds for $t_1\in [0,2]$ in view of Lemma \ref{lem.esti.zDWT1}, the proof of the lemma is complete. 
\end{proof}

We now address the existence and the uniqueness of the solution to \eqref{eq.linearized_MFG_shifted}. Note that Theorem \ref{thm.good_bound_for_linearized_system_discounted} is included in the following proposition.

\begin{prop}\label{prop.sol_shif.lin} There exists a unique solution $(\zDW, \rDW, \NDW )$ to \eqref{eq.linearized_MFG_shifted} such that \eqref{cond.unicite.zDW} holds. In addition, \eqref{kjezhrbdfnl} holds. 
\end{prop}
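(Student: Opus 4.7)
The plan is to construct $(\zDW, \rDW, \NDW)$ as the limit, as $T \to \infty$, of the finite horizon approximations $(\zDWT, \rDWT, \NDWT)$ solving \eqref{eq.linearized_MFG_shiftedT} with initial condition $\rho_0$ at $t_0$ and zero terminal condition $\shif z^{\delta,T}_T = 0$; the well-posedness on each $[t_0,T]$ follows by a direct adaptation of \cite[Theorem 4.4.2]{CDLL}. Existence of the infinite horizon solution will follow from the Cauchy property of this family in a suitable norm; the decay \eqref{cond.unicite.zDW} will come from Lemma~\ref{lem.esti.zDWT2}; the estimate \eqref{kjezhrbdfnl} will be a direct consequence of Lemma~\ref{lem.esti.zDWT1} applied at time $t_0$ once the terminal contribution in the quantity $\kappa$ of \eqref{def.kappa} vanishes; and uniqueness will follow from applying the same estimate to the difference of two candidate solutions.

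To implement the construction, for $t_0 < T_1 < T_2$ restrict both $(\zDW^{\delta,T_1}, \rDW^{\delta,T_1})$ and $(\zDW^{\delta,T_2}, \rDW^{\delta,T_2})$ to $[t_0, T_1]$ and consider their difference $(z,\rho,N)$. It solves the shifted linearized system on $[t_0,T_1]$ with zero initial condition and terminal value $z_{T_1} = \shif z^{\delta,T_2}_{T_1}$ (since $\shif z^{\delta,T_1}_{T_1} = 0$). Applying Lemma~\ref{lem.esti.zDWT2} to $(\zDW^{\delta,T_2}, \rDW^{\delta,T_2})$ at $t_1 = T_1$ yields
\[
\E\bigl[\|\tilde{\shif z}^{\delta,T_2}_{T_1}\|_{C^{3+\beta}}^2\bigr] \leq C\, e^{(\delta-\lambda')(T_1-t_0)} \|\rho_0\|_{(C^{2+\beta})'}^2 ,
\]
which decays exponentially provided $\delta_0 < \lambda'$. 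Feeding this into Lemma~\ref{lem.esti.zDWT1} applied to $(z, \rho, N)$ gives Cauchyness of the tilde parts of $\zDW^{\delta,T}$ and of $\rDW^{\delta,T}$ in the norms appearing on the left-hand side of \eqref{kzjhejhezrbnldkj}, uniformly over compact subintervals of $[t_0, \infty)$. The mean $\hat z^{\delta,T}_t := \int_{\T^d} \zDWT$ is recovered by integrating equation \eqref{eq.linearized_MFG_shifted}-(i) in $x$, which produces a scalar backward random ODE whose driving terms are controlled by the already estimated quantities $\tilde{\shif z}^{\delta,T}$ and $\rDW^{\delta,T}$; Lemma~\ref{lem.representation} provides the martingale $\hat N$ in an adapted fashion, and a similar representation reconstructs $\NDW$ from the SPDE. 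The limiting triple $(\zDW, \rDW, \NDW)$ solves \eqref{eq.linearized_MFG_shifted} in the prescribed sense, and \eqref{cond.unicite.zDW} is inherited from the uniform bounds of Lemma~\ref{lem.esti.zDWT2}.

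For the bound \eqref{kjezhrbdfnl}, observe that in the quantity $\kappa$ of \eqref{def.kappa}, the term $C\,e^{-\delta T/2}\E[\|\tilde{\shif z}^{\delta,T}_T\|_{C^{3+\beta}}^2]^{1/2}$ is identically zero for the approximants, and the first mixed term can be absorbed by Young's inequality exactly as in Step~4 of the proof of Lemma~\ref{lem.esti.zDWT1}, leaving $\kappa \leq C \|\rho_0\|_{(C^{2+\beta})'}$. Choosing $t = \tau$ in \eqref{oijzkendflk} with the inner supremum running over $s \in [t_0, t_0+\tau]$ produces a $C^{3+\beta}$ bound at $s = t_0$; passing to the limit in $T$ yields \eqref{kjezhrbdfnl} for the tilde part, and the mean part follows by integrating the equation and using the already established decay. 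For uniqueness, the difference of two solutions satisfying \eqref{cond.unicite.zDW} solves the linearized system on $[t_0,T]$ with zero initial condition, and Lemma~\ref{lem.esti.zDWT1} bounds it by its terminal contribution, which vanishes as $T \to \infty$ thanks to \eqref{cond.unicite.zDW} and the factor $e^{-\delta T/2}$. The main technical obstacle will be handling the martingale component $\NDW$ when passing to the limit: one must check that the stochastic integrals $\int dN^{\delta,T}$ converge to $\int d \NDW$ in a way compatible with adaptedness and with the regularity class of the limit, which requires combining Lemma~\ref{lem.representation} with the uniform integrability furnished by Lemma~\ref{lem.esti.zDWT1}.
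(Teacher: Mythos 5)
The construction of the gradient part $D\shif z^\delta$ and of $\boldsymbol\varrho^\delta$ via the Cauchy argument (Lemma~\ref{lem.esti.zDWT1} applied to the difference $(\shif z^{\delta,T_2}-\shif z^{\delta,T_1},\ldots)$ on $[t_0,T_1]$, combined with the decay of the terminal value from Lemma~\ref{lem.esti.zDWT2}), as well as the derivation of \eqref{kjezhrbdfnl} from $\kappa$ with vanishing terminal data, match the paper's Step~1 and the closing estimates of the proof. These parts are correct.

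Two points, however, need attention.

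\textbf{Uniqueness: a genuine gap.} You propose to take the difference of two candidate solutions with the same $\rho_0$, restrict to $[t_0,T]$, and apply Lemma~\ref{lem.esti.zDWT1}, claiming that the terminal contribution $e^{-\delta T/2}\,\E[\|\tilde{\shif z}^\delta_T\|^2_{C^{3+\beta}}]^{1/2}$ in $\kappa$ vanishes as $T\to\infty$ ``thanks to \eqref{cond.unicite.zDW}.'' But \eqref{cond.unicite.zDW} only controls $e^{-\delta t}\E[\|D\shif z^\delta_t\|^2_{L^2_x}]$ and $e^{-\delta t}\E[\|\boldsymbol\varrho^\delta_t\|^2_{L^2_x}]$; it gives no $C^{3+\beta}$ control of $\tilde{\shif z}^\delta_T$, and the definition of solution only requires $\E[\sup_{[t_0,T]}\|\shif z^\delta_t\|_{C^{3+\beta}}]<\infty$ for each fixed $T$, with no uniform growth bound. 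Even using the $\kappa$-form with the $e^{-\delta T/2}$ factor, one would need $\E[\|\tilde{\shif z}^\delta_T\|^2_{C^{3+\beta}}]=o(e^{\delta T})$, which is not available a priori; attempting to bootstrap this regularity is circular, since the Lemma~\ref{lem.esti.zDWT1}-type estimates themselves require controlling terminal data. The paper instead proves uniqueness by the Lasry--Lions duality: integrating $\shif z^\delta$ against $\boldsymbol\varrho^\delta$ and using $\rho_0=0$ together with monotonicity gives $\E[\int_0^{t_2}e^{-\delta t}\int_{\T^d}\shif m^\delta|D\shif z^\delta_t|^2]\leq -e^{-\delta t_2}\E[\int_{\T^d}\tilde{\shif z}^\delta_{t_2}\boldsymbol\varrho^\delta_{t_2}]$, whose right-hand side is bounded by \eqref{cond.unicite.zDW} alone, and a further bootstrap shows the integral is actually zero. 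This monotonicity argument is precisely tailored to the weak uniqueness class \eqref{cond.unicite.zDW}, and your approach does not reproduce it.

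\textbf{Recovering $\shif z^\delta$ itself and $\NDW$.} You propose to recover the mean $\hat z^\delta_t$ by integrating the SPDE in $x$, viewing it as a scalar backward ODE with zero terminal data, and to ``reconstruct $\NDW$ from the SPDE'' via Lemma~\ref{lem.representation}. In outline this could work for the mean, since the driving terms have the decay from Lemma~\ref{lem.esti.zDWT2}, but the paper follows a structurally different and more systematic route: it defines the random field $\mathcal Z^\delta(t,x)$ by the Bochner integral \eqref{formulaZ} over $[t,\infty)$, proves directly that $\mathcal Z^\delta$ has a spatial derivative given by \eqref{formulaDZ} (this uses integrability of the singular heat-kernel factor against the exponential decay), and then invokes Lemma~\ref{lem.extension} to produce a version of $\shif z^\delta_t=\E[\mathcal Z^\delta(t,\cdot)\,|\,\mathcal F_t]$ with paths in $C^0([t_0,\infty),C^1(\T^d))$ whose derivative is the already-constructed $D\shif z^\delta$. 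The martingale $\NDW$ is then \emph{defined} by
\[
\NDW(x)=\shif z^\delta_t(x)-\shif z^\delta_{t_0}(x)+\int_{t_0}^{t}\!\Bigl(\Delta\shif z^\delta_s-\delta\shif z^\delta_s-D\shif u^\delta_s\cdot D\shif z^\delta_s+\bigl<\tfrac{\delta\shif f_s}{\delta m},\boldsymbol\varrho^\delta_s\bigr>\Bigr)ds,
\]
and its martingale property follows from the representation formula, not from Lemma~\ref{lem.representation}. You flag the martingale passage to the limit as the main technical obstacle but do not resolve it; the Bochner-integral construction plus Lemma~\ref{lem.extension} is exactly what the paper introduces to avoid that difficulty, and it is needed: in the random setting, showing directly that $\NDWT$ converges as a martingale is not straightforward.
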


\begin{proof} As before we do the proof when $t_0=0$. \\

\textbf{Step 1:}
We first prove the existence of $D \zDW$ and $\rDW$. Let us warn the reader that  $\shif z^\delta$ will only be built later, so that  $D \zDW$ is for the moment merely a notation. For $2< T_1< T_2$, let us set $(\shif z_t, \boldsymbol{\varrho}_t, \shif N_t)= (\shif z^{\delta, T_2}_t-\shif z^{\delta, T_1}_t, \boldsymbol{\varrho}^{\delta,T_2}_t-\boldsymbol{\varrho}^{\delta,T_1}_t, \shif N^{\delta, T_2}_t- \shif N^{\delta, T_1}_t)$. We note that  $(\shif z_t, \boldsymbol{\varrho}_t, \shif N_t)$ solves  \eqref{eq.linearized_MFG_shifted}-(i)-(ii) on $[0,T_1]$, that $\boldsymbol{\varrho}$ vanishes at $t=0$ and that $\shif z_{T_1}= \shif z^{\delta,T_2}_{T_1}$. Lemma \ref{lem.esti.zDWT1} implies that, for any horizon $\theta>0$, 
\begin{align*}
\sup_{\tau \leq t \leq \theta} \E\left[ e^{-\delta t} \sup_{t - \tau \leq s \leq t} \| \tilde{\shif z}_s\|_{C^{3+\beta}}^2\right]+ \E\left[ \sup_{0 \leq t \leq \theta} e^{- \delta t} \normUU{\boldsymbol{\varrho}}{(C^{2+\beta})'}^2\right]  \leq C_\beta \E\left[\|\tilde{\shif z}^{\delta,T_2}_{T_1}\|_{C^{3+\beta}}^2\right] .
\end{align*}
We can estimate the right-hand side by applying  Lemma \ref{lem.esti.zDWT1} 
to $(\shif z^{\delta, T_2}_t, \boldsymbol{\varrho}^{\delta,T_2}_t, \shif N^{\delta, T_2}_t)$:
$$
\E\left[ e^{-\delta T_1} \| \tilde{\shif z}^{\delta, T_2}_{T_1}\|_{C^{3+\beta}}^2\right] \leq C_\beta e^{-\lambda' T_1} \|\rho_0\|_{(C^{2+\beta})'}^2. 
$$
Hence, for any $\delta$ small enough, 
\begin{align*}
 \E\left[  \sup_{0\leq t \leq \theta}  \| \tilde{\shif z}_t\|_{C^{3+\beta}}^2\right]
 + \E\left[ \sup_{0 \leq t \leq \theta}  \normUU{\boldsymbol{\varrho}}{(C^{2+\beta})'}^2\right]  
 \leq C_{\beta,\theta,\delta} \ e^{-\lambda' T_1/2} \|\rho_0\|_{(C^{2+\beta})'}^2.
\end{align*}

This proves that $(D  \shif z^{\delta,T})$ is a Cauchy sequence in $L^2(\Omega, C^0([0,\theta], C^2(\T^d)))$ while $(\boldsymbol{\varrho}^{\delta, T})$ is a Cauchy sequence in $L^2(\Omega; C^0([0,\theta]; (C^{2+\beta})'))$ as $T\to \infty$. Let us denote by $D\shif z^\delta$ and $\boldsymbol{\varrho}^{\delta}$ their respective limits, which are defined on $\Omega\times (0,\infty)\times \T^d$ as $\theta$ is arbitrary. Recalling Lemma \ref{lem.esti.zDWT2}, $D  \shif z^{\delta,T}$ and $\boldsymbol{\varrho}^{\delta, T}$ satisfy the following decay estimate: for any $t_1\geq 0$, 
\be\label{aezolijrsdfkgjh}
\sup_{t_1+\tau \leq t } \E\left[ e^{-\delta t} \sup_{t - \tau \leq s \leq t} \| D\shif z^{\delta}_s\|_{C^{2+\beta}}^2\right]
+ \E\left[ \sup_{t_1 \leq t } e^{- \delta t} \normUU{\boldsymbol{\varrho}^{\delta}_t}{(C^{2+\beta})'}^2\right]
\leq C_\beta e^{-\lambda' t_1}  \|\rho_{0}\|_{(C^{2+\beta})'}^2.
\ee
Recall again that, above,  we temporarily just take $D \shif z^\delta$ as a notation. We will later prove that it is indeed the derivative of a map $\shif z^\delta$ to be defined.\\

\textbf{Step 2:}
In this step, we obtain a representation formula for $D \shif z^\delta$. Thanks to the representation formula for $D \shif z^{\delta,T}$, for any $0 \leq t_1 < t_2   \leq T$, we have:
$$
\begin{aligned}
D \shif z^{\delta,T}_{t_1}(x) = & \mathbb{E}\bigg[ e^{-\delta(t_2 - t_1)} \Gamma_{t_2 - t_1} \ast D \shif z^{\delta,T}_{t_2} + \int_{t_1}^{t_2} e^{-\delta(s - t_1)} \Gamma_{s - t_1} \ast D \left(D \shif u^\delta_s \cdot D \shif z^{\delta,T}_s\right)(x) \, ds \\
&\qquad \qquad \qquad - \int_{t_1}^{t_2} e^{-\delta(s - t_1)} \Gamma_{s - t_1} \ast D \left\langle \frac{\delta \shif f_s}{\delta m}(\cdot, \shif m^\delta_s), \boldsymbol{\varrho}^{\delta,T}_s \right\rangle \, ds \ \bigg| \ \mathcal{F}_{t_1} \bigg].
\end{aligned}
$$
In view of the convergence of $(D \shif z^{\delta,T})$ to $D \shif z^{\delta}$ in $L^2(\Omega, C^0([0,\theta], C^2(\T^d)))$ and of the convergence of $(\boldsymbol{\varrho}^{\delta, T})$ to $\boldsymbol{\varrho}^{\delta}$ in $L^2(\Omega; C^0([0,\theta]; (C^{2+\beta})'))$ for any $\theta\geq 0$, and recalling the regularity of $f$, we infer that, for any $0 \leq t_1 < t_2$,  
 $$
\begin{aligned}
D \shif z^{\delta}_{t_1}(x) & =  \mathbb{E}\bigg[ e^{-\delta(t_2 - t_1)} \Gamma_{t_2 - t_1} \ast D \shif z^{\delta}_{t_2} + \int_{t_1}^{t_2} e^{-\delta(s - t_1)} \Gamma_{s - t_1} \ast D \left(D \shif u^\delta_s \cdot D \shif z^{\delta}_s\right)(x) \, ds \\
&\qquad \qquad \qquad - \int_{t_1}^{t_2} e^{-\delta(s - t_1)} \Gamma_{s - t_1} \ast D \left\langle \frac{\delta \shif f_s}{\delta m}(\cdot, \shif m^\delta_s), \boldsymbol{\varrho}^{\delta}_s \right\rangle \, ds \ \bigg| \ \mathcal{F}_{t_1} \bigg] .
\end{aligned}
$$
By the decay estimate \eqref{aezolijrsdfkgjh} we can let $t_2\to \infty$ and obtain the following representation of $D \shif z^{\delta}_{t}$ for any $t \geq 0$:
\be\label{tmp.azer4}
\begin{aligned}
D \shif z^{\delta}_{t} = \E\bigg[ \int_{t}^{\infty} e^{-\delta (s-t)} D\Gamma_{s-t}\ast \left( D \shif u^\delta_s \cdot D  \shif z^{\delta}_s - \left<\frac{\delta \shif f_s}{\delta m}(\cdot, \shif m^\delta_s),\boldsymbol{\varrho}^{\delta}_s \right> \right) ds \bigg|\ \mathcal F_{t}\bigg].
\end{aligned}
\ee

\textbf{Step 3:}
We now prove that the ``$D$'' in $D \shif z^\delta$ represents an actual derivative and check that there exists $\NDW$ such that $(\zDW, \NDW)$  solves \eqref{eq.linearized_MFG_shifted}-(i). \\

Let us define the random field $\mathcal{Z}^{\delta}: \Omega \times [0,\infty)\times\T^d \to \R $ as:
\be\label{formulaZ}
\mathcal{Z}^{\delta} (t,x) = \int_t^\infty 
e^{-\delta (s-t)} \Gamma_{s-t}\ast \left( D  \shif u^\delta_s \cdot D  \shif z^{\delta}_s - \left<\frac{\delta \shif f_s}{\delta m}(\cdot, \shif m^\delta_s),\boldsymbol{\varrho}^{\delta}_s \right> \right)(x) ds,\qquad \forall~s \geq t.
\ee
We view this formula as a Bochner integral in the separable Banach space $L^1( \Omega, C^1(\T^d))$. 
We now prove that it is spatially differentiable with, a.s., 
\be\label{formulaDZ}
D \mathcal{Z}^{\delta} (t,x) = \int_t^\infty 
e^{-\delta (s-t)} D \Gamma_{s-t}\ast \left( D  \shif u^\delta_s \cdot D  \shif z^{\delta}_s - \left<\frac{\delta \shif f_s}{\delta m}(\cdot, \shif m^\delta_s),\boldsymbol{\varrho}^{\delta}_s \right> \right)(x) ds,
\ee
with estimate
\be\label{formulaDZesti}
\EP{\normU{D \mathcal{Z}^\delta}{{C}^0([0,T]\times\T^d)}} < \infty \qquad \forall T>0.
\ee
Indeed, we  note  that  
\begin{align*}
& \EP{ \sup_{t\in [0,T]} \left\| \int_t^\infty e^{-\delta (s-t)} D \Gamma_{s-t}\ast \left( D  \shif u^\delta_s \cdot D  \shif z^{\delta}_s - \left<\frac{\delta \shif f_s}{\delta m}(\cdot, \shif m^\delta_s),\boldsymbol{\varrho}^{\delta}_s \right> \right) ds \right\|_{C^0} } \\
& \leq \EP{ \sup_{t\in [0,T]}  \int_t^\infty e^{-\delta (s-t)}  (s-t)^{- 1/2}  \left( \| D  \shif z^{\delta}_s\|_{C^0} + \|\boldsymbol{\varrho}^{\delta}_s\|_{(C^{2+\beta})'} \right)ds } \\
& \leq \EP{ \int_0^\infty e^{-\delta s}  s^{- 1/2}  \left( \sup_{s\leq t\leq s+T}\| D  \shif z^{\delta}_{t}\|_{C^0} +\sup_{s\leq t\leq s+T} \|\boldsymbol{\varrho}^{\delta}_{t}\|_{(C^{2+\beta})'} \right)ds } \\
& \leq  \int_0^\infty e^{-\delta s}  s^{- 1/2}  \left( \EP{\sup_{s\leq t\leq s+T}\| D  \shif z^{\delta}_{t}\|_{C^0} }+\EP{\sup_{s\leq t\leq s+T} \|\boldsymbol{\varrho}^{\delta}_{t}\|_{(C^{2+\beta})'}} \right)ds .
\end{align*}
The decay estimate \eqref{aezolijrsdfkgjh}  easily implies that 
$$
 \EP{\sup_{s\leq t\leq s+T}\| D  \shif z^{\delta}_{t}\|_{C^0} }+\EP{\sup_{s\leq t\leq s+T} \|\boldsymbol{\varrho}^{\delta}_{t}\|_{(C^{2+\beta})'}} \leq C_{T,\tau,\beta} e^{-\lambda' s/2} \|\rho_{0}\|_{(C^{2+\beta})'}^2.
 $$
 This proves that 
 \begin{align*}
& \EP{ \sup_{t\in [0,T]} \left\| \int_t^\infty e^{-\delta (s-t)} D \Gamma_{s-t}\ast \left( D  \shif u^\delta_s \cdot D  \shif z^{\delta}_s - \left<\frac{\delta \shif f_s}{\delta m}(\cdot, \shif m^\delta_s),\boldsymbol{\varrho}^{\delta}_s \right> \right) ds \right\|_{C^0} } \leq C_{T,\tau,\beta} \|\rho_{0}\|_{(C^{2+\beta})'}^2,   
\end{align*}
so that $\mathcal{Z}^{\delta}$ is $C^1$ with a derivative given by \eqref{formulaDZ} and satisfying \eqref{formulaDZesti}. \\

Now Lemma \ref{lem.extension} given below states that there is a version of $\EPt{\mathcal{Z}^\delta(t,\cdot)}$ taking paths in $C^0([0,\infty),C^1(\T^d))$ such that its spatial derivative can be written as $\EPt{D\mathcal{Z}^\delta(t,\cdot)}$, which is equal to $D \zDW$ by equality \eqref{tmp.azer4}. This leads us to set 
\be\label{formulaz}
\zDW = \EPt{\mathcal{Z}^\delta(t,\cdot)}
\ee
and to conclude that $D \zDW$ is indeed the derivative of $\zDW$ and hence not just a notation. In view of the regularity of $D \zDW$, we also note that 
$(\zDW)$ has path in $C^0([0,\infty], C^{3}(\T^d))$. Using the representation formulas \eqref{formulaZ} and \eqref{formulaz} and the tower property, we derive that for any $0 \leq t_1 < t_2 < \infty$,
$$
\shif z_{t_1}^{\delta} = \E\left[e^{-\delta (t_2-t_1)} \Gamma_{t_2-t_1} \ast \shif z_{t_2}^{\delta} + \int_{t_1}^{t_2} e^{-\delta (s-t_1)} \Gamma_{s-t_1}\ast \left( D  \shif u^\delta_s \cdot D  \shif z^{\delta}_s - \left<\frac{\delta \shif f_s}{\delta m}(\cdot, \shif m^\delta_s),\boldsymbol{\varrho}^{\delta}_s \right> \right)ds \ \Bigl|\ \mathcal F_{t_1}\right].
$$
Let $\NDW$ be defined by
\[
\NDW(x) = \zDW(x) - \shif z^\delta_0 (x) + \int_0^t  \left( \Delta \shif z^{\delta}_s - \delta \shif z^{\delta}_s - D  \shif u^\delta_s \cdot D  \shif z^{\delta}_s + \left<\frac{\delta \shif f_s}{\delta m}(\cdot, \shif m^\delta_s),\boldsymbol{\varrho}^{\delta}_s \right> \right) ds.
\]
Arguing as in  the proof of \cite[Lemma 4.3.2]{CDLL}, we have that $(\shif z^\delta, \shif N^\delta)$ is a classical solution of the following equation on $[0, t_2]$ given the terminal condition $\shif z^\delta_{t_2}$,
\[
d \shif z^{\delta}_t = \Bigl(\delta \shif z^{\delta}_t -\Delta \shif z^{\delta}_t + D  \shif u^\delta_t \cdot D  \shif z^{\delta}_t - \left<\frac{\delta \shif f_t}{\delta m}(\cdot, \shif m^\delta_t),\boldsymbol{\varrho}^{\delta}_t \right> \Bigr)dt + d \shif N^{\delta}_t,
\]
Since $t_2$ is arbitrary, we conclude that $\shif z^\delta$ indeed solves the above equation on the time interval $\R^+$. In view of the  regularity of $ \shif z^{\delta}$,  $\shif N^{\delta}$ has path in $C^0([0,\infty), C^{1})$.
\\

\textbf{Step 4:} We finally show  that $(\zDW,\rDW, \NDW)$ is the unique solution of \eqref{eq.linearized_MFG_shifted}. Passing to the limit in the equation satisfied by $\boldsymbol{\varrho}^{\delta,T}$ shows that  $\boldsymbol{\varrho}^{\delta}$ solves  
$$
  d\boldsymbol{\varrho}^\delta_t = \left(\Delta \boldsymbol{\varrho}^\delta_t +{\rm div}(\boldsymbol{\varrho}^\delta_t D  \shif u^\delta_t + \shif m^\delta_t D  \shif z^\delta_t)\right)dt , \qquad 
    \boldsymbol \varrho^\delta_{0} = \rho_0.
$$
Thus $(\zDW, \rDW, \NDW )$ satisfies \eqref{eq.linearized_MFG_shifted}.  Letting $T\to \infty$ in \eqref{kzjhejhezrbnldkj} implies that 
$$
\sup_{t\in [0,\infty)} e^{-\delta t} \E\left[\|D{\shif z}^{\delta}_t\|_{L^2_x}^2\right]+\sup_{t\in [1,\infty)}e^{-\delta t} \E\left[  \normUU{\rDW}{L^2_x}^2\right] \leq C \|\rho_{0}\|_{(C^{2+\beta})'}^2.
$$
Hence  \eqref{cond.unicite.zDW} holds. Note that 
$$
\|\mathcal Z^\delta(0, \cdot)\|_{C^0} \leq \int_0^\infty e^{-\delta s}\left( \|D\shif z^\delta_s\|_{C^0}+ \|\boldsymbol{\varrho}^{\delta}_t \|_{(C^{2+\beta})'}\right) ds.
$$
Hence, by \eqref{aezolijrsdfkgjh},  
$$
\E\left[ \|\shif z^\delta_0\|_{C^0}\right] \leq C \|\rho_0\|_{(C^{2+\beta})'}.
$$
Using again \eqref{aezolijrsdfkgjh} to estimate the higher order derivatives in space of $\shif z^\delta_0$, we conclude that \eqref{kjezhrbdfnl} holds. \\


Let us finally check that condition \eqref{cond.unicite.zDW} suffices to ensure the uniqueness of the solution. As the system is linear, we just need to prove that, if  $(\zDW, \rDW, \NDW )$ is a solution  to \eqref{eq.linearized_MFG_shifted} with $\rho_0=0$ and is such that \eqref{cond.unicite.zDW} holds, then $(\zDW, \rDW, \NDW )=0$. Using the monotonicity argument as above, we have, since $\rho_0=0$, and for any $t_2\geq 1$: 
\begin{align*}
\EP{\int_{0}^{t_2} e^{-\delta t} \int_{\T^d} \mDW |D  \shif z^\delta_t|^2 dx dt }   
& \leq - e^{-\delta t_2}\EP{\int_{\T^d} \shif z^\delta_{t_2} \boldsymbol{\varrho}^\delta_{t_2} } \\
& \leq 
e^{-\delta t_2}(\EP{\|D\shif z^\delta_{t_2}\|_{L^2_x}})^{1/2}  (\EP{\|\boldsymbol{\varrho}^\delta_{t_2}\|_{L^2_x}})^{1/2} \leq C,
\end{align*}
where we used condition \eqref{cond.unicite.zDW} for the last inequality. Hence
$$
\EP{\int_{0}^{\infty} e^{-\delta t} \int_{\T^d} \mDW |D  \shif z^\delta_t|^2 dx dt }  \leq C. 
$$
We can bootstrap this inequality as we did several times above to show that, in fact, 
$$
\EP{\int_{0}^{\infty} e^{-\delta t} \int_{\T^d} \mDW |D  \shif z^\delta_t|^2 dx dt }  =0. 
$$
The conclusion then follows easily.
\end{proof}

In the proof we have used the following variation on  \cite[Lemma 4.3.4]{CDLL}. We include it for the sake of completeness. 

\begin{lem}\label{lem.extension}
Let $\mathcal{U} : \Omega\times  [t_0, T] \times \mathbb{T}^d \to \mathbb{R}$ be a random field with continuous paths (in the variable $(t,x) \in [t_0,T] \times \mathbb{T}^d$). If, for some $k \geq 1$, the paths of $\mathcal{U} $ are $k$-times differentiable in the space variable, the derivatives up to the order $k$ having $(t,x)$-jointly continuous paths and satisfying
\[
\EP{ \| D^k \mathcal{U} \|_{{C}^0([t_0,T]\times\T^d)} } < \infty,
\]
then we can find a version of the random field $[t_0,T] \times \mathbb{T}^d \ni (t,x) \mapsto \mathbb{E}[\mathcal{U}(t,x)|\mathcal{F}_t]$ that is progressively measurable and that has paths in $C^0([t_0,T], {C}^k(\mathbb{T}^d))$, the derivative of order $k$ written $[t_0,T] \times \mathbb{T}^d \ni (t,x) \mapsto \mathbb{E}[D^k \mathcal{U}(t,x)|\mathcal{F}_t]$.
\end{lem}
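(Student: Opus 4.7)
The proof follows the strategy of \cite[Lemma 4.3.4]{CDLL}, viewing $\mathcal{U}$ as a process taking values in the separable Banach space $C^k(\T^d)$. First, I observe that the joint continuity of $D^j\mathcal{U}$ in $(t,x)$ on the compact set $[t_0,T]\times\T^d$ for $0\leq j\leq k$ is in fact uniform, so the path $t\mapsto \mathcal{U}(t,\cdot)$ is continuous from $[t_0,T]$ to $C^k(\T^d)$. Combined with the assumed integrability of $\|D^k\mathcal{U}\|_{C^0([t_0,T]\times\T^d)}$ and a Poincar\'e/interpolation argument on the torus applied to the lower-order derivatives, this gives $\E[\|\mathcal{U}\|_{C^0([t_0,T],C^k(\T^d))}]<\infty$. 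It then makes sense to define, for each $t$, $V(t):=\E[\mathcal{U}(t,\cdot)\,|\,\mathcal{F}_t]$ as a Bochner conditional expectation valued in $C^k(\T^d)$. Since each spatial derivative $D^j:C^k(\T^d)\to C^{k-j}(\T^d)$ is a bounded linear operator for $0\leq j\leq k$, it commutes with the conditional expectation, which yields $D^jV(t)=\E[D^j\mathcal{U}(t,\cdot)\,|\,\mathcal{F}_t]$ a.s.\ in $C^{k-j}(\T^d)$; this already proves the claim about derivatives, once we upgrade $V$ to a continuous version.

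To construct a continuous version of $V$ in $C^k(\T^d)$, I would use the splitting
\[
V(t')-V(t)=\E\!\left[\mathcal{U}(t',\cdot)-\mathcal{U}(t,\cdot)\,\big|\,\mathcal{F}_{t'}\right]+\Big(\E[\mathcal{U}(t,\cdot)\,|\,\mathcal{F}_{t'}]-\E[\mathcal{U}(t,\cdot)\,|\,\mathcal{F}_t]\Big).
\]
The $C^k$-norm of the first term is bounded by $\E[\|\mathcal{U}(t',\cdot)-\mathcal{U}(t,\cdot)\|_{C^k}\,|\,\mathcal{F}_{t'}]$, which converges to $0$ in $L^1(\Omega)$ as $|t'-t|\to 0$ by dominated convergence and the uniform pathwise continuity of $\mathcal{U}$. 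The second term is the increment of the martingale $s\mapsto\E[\mathcal{U}(t,\cdot)\,|\,\mathcal{F}_s]$; since the underlying filtration is generated by the Brownian motion $W$ (via Lemma~\ref{lem.representation}), each scalar martingale $s\mapsto\E[D^j\mathcal{U}(t,x)\,|\,\mathcal{F}_s]$, for fixed $x\in\T^d$ and $0\leq j\leq k$, admits a continuous modification, and Doob's maximal inequality applied to $\|\cdot\|_{C^k}$ upgrades this to continuity at the Banach-space level.

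The delicate point, and the main obstacle, is passing from pointwise-in-$t$ continuity of martingale modifications to joint continuity of a single version of $V$. The plan is to fix a countable dense set $D\subset[t_0,T]$ and, on a single event of full measure, choose simultaneous continuous modifications of the martingales $s\mapsto\E[\mathcal{U}(t,\cdot)\,|\,\mathcal{F}_s]$ for every $t\in D$. For any sequence $D\ni t_n\to t$, the decomposition above shows that $\E[\mathcal{U}(t_n,\cdot)\,|\,\mathcal{F}_t]$ is Cauchy in $C^k(\T^d)$ a.s., yielding a limit $\widetilde V(t)$ that is independent of the approximating sequence, continuous in $t$ valued in $C^k(\T^d)$, adapted (and hence progressively measurable by continuity), and that coincides a.s.\ with $V(t)$ for each $t$. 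The commutation $D^j\widetilde V(t)=\E[D^j\mathcal{U}(t,\cdot)\,|\,\mathcal{F}_t]$ then persists, concluding the proof.
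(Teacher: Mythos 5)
Your plan follows essentially the same route as the paper's, which is a short reduction to \cite[Lemma 4.3.4]{CDLL}, but there is a genuine gap in your treatment of the first term of the decomposition. You bound it by $\E[\|\mathcal{U}(t',\cdot)-\mathcal{U}(t,\cdot)\|_{C^k}\,|\,\mathcal{F}_{t'}]$ and assert $L^1(\Omega)$-convergence to $0$ as $|t'-t|\to 0$. That is correct, but $L^1$-smallness at each fixed pair $(t,t')$ does not, by itself, yield a continuous version: your dense-set construction requires the conditional modulus of continuity to be small \emph{uniformly in the conditioning time}, that is, almost-sure smallness of $\sup_s \E[\,w(\delta)\,|\,\mathcal{F}_s]$ for $\delta$ small, where $w$ denotes the pathwise modulus of continuity of $D^k\mathcal{U}$ on $[t_0,T]\times\T^d$. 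This is precisely the single new ingredient the paper adds to pass from the $L^\infty$ hypothesis of \cite[Lemma 4.3.4]{CDLL} to the present $L^1$ hypothesis: by Doob's maximal inequality, $\P\bigl(\sup_s\E[\,w(1/p)\,|\,\mathcal{F}_s]\geq\varepsilon\bigr)\leq\varepsilon^{-1}\E[w(1/p)]\to 0$ as $p\to\infty$, using $w(1/p)\leq 2\|D^k\mathcal{U}\|_{C^0}$ and dominated convergence. You invoke Doob only on the martingale-increment term, where it plays a different and already standard role; the Doob bound on the modulus term is the step you are missing.

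A smaller issue: the claim that $\E[\|\mathcal{U}\|_{C^0([t_0,T],C^k(\T^d))}]<\infty$ follows from $\E[\|D^k\mathcal{U}\|_{C^0}]<\infty$ via Poincar\'e or interpolation does not hold as stated. Poincar\'e on the torus controls the oscillating part of each derivative, not its spatial mean: taking $\mathcal{U}(t,x)=Y$ with $Y$ a non-integrable $\mathcal{F}_T$-measurable scalar gives $D^k\mathcal{U}\equiv 0$ yet $\mathcal{U}\notin L^1(\Omega)$. The integrability of $\mathcal{U}$ itself is implicit in the statement (it is needed merely for $\E[\mathcal{U}(t,x)\,|\,\mathcal{F}_t]$ to be defined) and should be treated as part of the hypotheses rather than derived from the bound on $D^k\mathcal{U}$.
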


Let us recall that  \cite[Lemma 4.3.4]{CDLL} states the result under the condition that $\| D^k \mathcal{U} \|_{{C}^0([t_0,T]\times\T^d)}$ is bounded in $L^\infty(\Omega)$. 

\begin{proof}
Denote by $w$ the pathwise modulus of continuity of $D^k \mathcal{U}$ on the compact set $[t_0,T] \times \mathbb{T}^d$. Namely we define $\omega$ by $\omega$ that,
\[
w(\delta) = \sup_{x,y \in \mathbb{T}^d: |x - y| \leq \delta} \sup_{s,t \in [t_0,T]: |s - t| \leq \delta} | D^k \mathcal{U}(s,x) - D^k \mathcal{U}(t,y)|, \quad \delta > 0.
\]
Hence we have, for any $\delta > 0$ that,
\[
w(\delta) < 2 \|D^k \mathcal{U}\|_{{C}^0([t_0,T]\times\T^d)},
\]
where $\EP{ \|D^k \mathcal{U}\|_{{C}^0([t_0,T]\times\T^d)} } < \infty$. By Doob's inequality, we have that, for any $p \geq 1$,
\[
\forall \varepsilon > 0, \quad \mathbb{P} \left( \sup_{s \in [t_0,T]} \EP{ w \left( \frac 1p \right) \bigg| \mathcal{F}_s } \geq \varepsilon \right) \leq \varepsilon^{-1} \EP{w \left( \frac 1p \right)}
\]
the right-hand side converging to 0 as $p$ tends to $\infty$, thanks to Lebesgue's dominated convergence theorem. We can then conclude as in \cite[Lemma 4.3.4]{CDLL}.
\end{proof}

\section{The ergodic master equation} \label{sec.ergo}

In this section  we build a weak solution to the master cell problem:
\begin{align}\label{eq.ergomaster} 
& \lambda -(1+\cwn) \Delta \chi (x,m)+\frac12| D_x\chi(x,m)|^2 -(1+\cwn)\inte \dive_y(D_m\chi(x,m,y))m(dy) \notag\\ 
& \qquad \qquad + \inte D_m\chi(x,m,y)\cdot D_x\chi(y,m) m(dy)- 2\cwn \inte \dive_x(D_m\chi(x,m,y))m(dy)\notag \\ 
& \qquad \qquad  -\cwn \inte\inte {\rm Tr}(D^2_{mm} \chi(x,m,y,z))m(dy)m(dz) = f(x,m). 
\end{align}
The equation is stated in $\T^d\times \mathcal P(\T^d)$. Here the unknowns  are the constant $ \lambda$ and the map $\chi:\T^d\times \mathcal P\to \R$.  By a weak solution, we mean the following:

\begin{defn}\label{defweakmastercell}
We say that a pair \( (\lambda, \chi) \), where \( \lambda \in \mathbb{R} \) and \( \chi \colon \T^d \times \mathcal{P}(\T^d) \to \mathbb{R} \), is a \emph{weak solution} of the master cell problem~\eqref{eq.ergomaster} if the following conditions are satisfied:
\begin{itemize}
\item[(a)] The function \( \chi \) is \( C^3 \) in \( x \), and both \( \chi \) and \( D_x \chi \) are globally Lipschitz continuous in \( \T^d \times \mathcal{P}(\T^d) \), and monotone in the sense that
\[
\int_{\T^d} \left( \chi(x, m) - \chi(x, m') \right) \, d(m - m')(x) \geq 0 \qquad \text{for all } m, m' \in \mathcal{P}(\T^d);
\]
    
\item[(b)] For any initial condition \( \hat{m}_0 \in \mathcal{P}(\T^d) \) and any \( T > 0 \), let \( (\bar{\shif{u}}, \bar{\shif{m}}, \bar{\shif{M}}) \) denote the unique classical solution to the shifted MFG system:
\be\label{eq.MFGCellshif}
\left\{
\begin{array}{cl}
    (i) & d\bar{\shif{u}}_t = \left( \lambda - \Delta \bar{\shif{u}}_t + \tfrac{1}{2} |D \bar{\shif{u}}_t|^2 - \shif{f}_t(x, \bar{\shif{m}}_t) \right) dt + d\bar{\shif{M}}_t, \\ [1ex]
    (ii) & d\bar{\shif{m}}_t = \left( \Delta \bar{\shif{m}}_t + \operatorname{div}(\bar{\shif{m}}_t D \bar{\shif{u}}_t) \right) dt, \\ [1ex]
    (iii) & \bar{\shif{m}}_0 = \hat{m}_0, \qquad \bar{\shif{u}}_T(x) = \chi(x + \sqrt{2\cwn} W_T, \, (\operatorname{Id} + \sqrt{2\cwn} W_T)\sharp \bar{\shif{m}}_T).
\end{array}
\right.
\ee
Then the solution satisfies
\[
\bar{\shif{u}}_t(x) = \chi\left(x + \sqrt{2\cwn} W_t, \, (\operatorname{Id} + \sqrt{2\cwn} W_t)\sharp \bar{\shif{m}}_t\right) \quad \text{a.s., for all } (t,x) \in [0, T] \times \T^d.
\] 
\end{itemize}
\end{defn}

The existence and the uniqueness of the solution to \eqref{eq.MFGCellshif} is a consequence of \cite[Section 5.5]{CDLL}, see also the discussion in Subsection \ref{subsec.HJMFG}. 

\begin{rmk}\label{another_condition_weak_ergodic_master}
Recalling  Remark \ref{rem.prop.shifumVsumextended}, we can  replace condition (b) of Definition \ref{defweakmastercell} by the following condition (b'): 
\begin{itemize}
\item[(b')] For any initial condition \( \hat{m}_0 \in \mathcal{P}(\T^d) \) and any \( T > 0 \), let \( (\bar{{u}}, \bar{{m}}, \bar{{v}}) \) denote the unique weak solution to 
\begin{equation}\label{eq.MFGCell}
\left\{
\begin{array}{cl}
    (i) & d\bar{u}_t = \left( \lambda - (1+\cwn) \Delta \bar{u}_t + \tfrac{1}{2} |D \bar{u}_t|^2 - {f}_t(x, \bar{m}_t) - 2 \cwn \div(\bar v_t) \right) dt + \sqrt{2\cwn} \, \bar v_t \cdot d W_t, \\ [1ex]
    (ii) & d\bar{{m}}_t = \left( (1+\cwn) \Delta \bar{{m}}_t + \operatorname{div}(\bar{{m}}_t D \bar{{u}}_t) \right) dt - \sqrt{2\cwn} D \bar m_t \cdot d W_t, \\ [1ex]
    (iii) & \bar{{m}}_0 = \hat{m}_0, \qquad \bar{{u}}_T(x) = \chi(x , \, \bar{{m}}_T).
\end{array}
\right.
\end{equation}
Then this solution satisfies
\[
\bar{{u}}_t(x) = \chi\left(x, \, \bar{{m}}_t\right) \quad \text{a.s., for all } (t,x) \in [0, T] \times \T^d.
\] 
\end{itemize}
\end{rmk}

Here is the main result of the section: 

 \begin{thm}\label{lem.existence_weak_solution_cell_Master_Equation} Let $\bar \lambda$ be defined by \eqref{defbarlambda}. There is a map $\chi:  \T^d \times \mathcal{P}(\T^d) \to \mathbb{R}$ such that the pair $(\bar \lambda,\chi)$ is a weak solution to \eqref{eq.ergomaster}, in the sense of Definition \ref{defweakmastercell}. 
  \end{thm}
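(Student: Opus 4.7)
[Proof proposal for Theorem \ref{lem.existence_weak_solution_cell_Master_Equation}]

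The plan is to build $\chi$ by the vanishing discount method. For each $\delta\in(0,\delta_0)$, let $U^\delta:\T^d\times\mathcal P\to\R$ denote the solution of the discounted master equation associated with \eqref{eq.MFGCNdisc}; its existence and the connection $U^\delta(x,m^\delta_t)=u^\delta_t(x)$ with the solution of the discounted MFG system follow from a standard adaptation of the construction in \cite{CDLL}. The central object will be the ``centered'' map
\[
\chi^\delta(x,m)\;:=\;U^\delta(x,m)\;-\;\int_{\T^d}U^\delta(y,\bar m)\,dy,
\]
for some fixed reference measure $\bar m\in\mathcal P$. From Theorem \ref{thm.Cudelta} we will read off that $\delta U^\delta(x,m)\to\bar\lambda$ uniformly, which fixes the ergodic constant. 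The candidate corrector is then obtained as a limit of $\chi^\delta$ along a subsequence $\delta_n\to 0$.

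The first key step is to prove that $\chi^\delta$ is bounded in a strong enough topology, uniformly in $\delta$. Representing $\tfrac{\delta U^\delta}{\delta m}(x,m,\cdot)$ by the solution $\shif z^\delta_0$ of the shifted linearized discounted system \eqref{eq.linearized_MFG_shifted} started from a dipole $\rho_0$ at $t_0=0$, Theorem \ref{thm.good_bound_for_linearized_system_discounted} gives
\[
\Bigl\|\tfrac{\delta U^\delta}{\delta m}(x,m,\cdot)\Bigr\|_{(C^{2+\beta})'-\mathrm{Lip}}\;\le\;C,
\]
uniformly in $\delta$, and a similar bound on $D_x\tfrac{\delta U^\delta}{\delta m}$ by differentiating the representation once more (the regularity in $x$ comes ``for free'' from the parabolic smoothing of the HJ component in $\shif z^\delta$). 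Combined with the Lipschitz estimate \eqref{esti1111} on $Du^\delta$, this yields uniform-in-$\delta$ bounds
\[
\|\chi^\delta(\cdot,m)\|_{C^{3+\beta}}\;+\;\mathrm{Lip}_m\bigl(\chi^\delta,D_x\chi^\delta\bigr)\;\le\;C.
\]
An Arzel\`a–Ascoli argument on $\T^d\times\mathcal P$ (which is compact for ${\bf d}_1$) extracts a subsequence $\delta_n\to 0$ along which $\chi^{\delta_n}\to\chi$ in $C^0(\T^d\times\mathcal P)$ together with $D_x\chi^{\delta_n}\to D_x\chi$; the limit $\chi$ inherits the regularity requirement (a) of Definition \ref{defweakmastercell}, and the monotonicity passes to the limit because each $U^\delta$ is monotone by the standard Lasry–Lions duality applied to the discounted MFG system.

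Next, Theorem \ref{thm.Cudelta} provides $\E[\|\delta u^\delta_t-\bar\lambda\|_\infty]\le C\delta$ for the solution started from any $\hat m_0$, which via the identity $u^\delta_t(x)=U^\delta(x,m^\delta_t)$ at $t=t_0$ with $m^\delta_{t_0}=\hat m_0$ gives $\delta U^\delta(x,\hat m_0)\to \bar\lambda$ uniformly in $(x,\hat m_0)$. This fixes the pair $(\bar\lambda,\chi)$.

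The main obstacle—and the last step—is to verify property (b), i.e.\ the representation formula through the shifted MFG system \eqref{eq.MFGCellshif}. Fix $\hat m_0$ and $T>0$, and let $(\bar{\shif u},\bar{\shif m},\bar{\shif M})$ solve \eqref{eq.MFGCellshif} with terminal data $\bar{\shif u}_T(x)=\chi(x+\sqrt{2\cwn}W_T,(\mathrm{Id}+\sqrt{2\cwn}W_T)\sharp\bar{\shif m}_T)$. Equivalently, via the un-shifting in Proposition \ref{prop.shifumVsum} (and Remark \ref{rem.prop.shifumVsumextended}, since $\chi$ is only Lipschitz in $m$), this corresponds to a weak solution $(\bar u,\bar m,\bar v)$ of \eqref{eq.MFGCell} on $[0,T]$ with $\bar u_T=\chi(\cdot,\bar m_T)$. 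For each $\delta_n$, consider the analogous system \eqref{eq.MFGCell} but with terminal condition $\chi^{\delta_n}(\cdot,\bar m^{\delta_n}_T)$ and an additional discount term $\delta_n \bar u^{\delta_n}_t$; by construction of $U^{\delta_n}$ together with the fact that $U^{\delta_n}$ solves the discounted master equation, the identity $\bar u^{\delta_n}_t(x)=\chi^{\delta_n}(x,\bar m^{\delta_n}_t)+\int U^{\delta_n}(y,\bar m)dy$ holds (after noting that the constant shift just adds a deterministic exponential). One then passes to the limit $\delta_n\to 0$: uniform regularity of $\chi^{\delta_n}$ and of $D_x\chi^{\delta_n}$ ensures stability of the MFG system under terminal data and of the forward Fokker--Planck equation, so that $(\bar u^{\delta_n},\bar m^{\delta_n})\to(\bar u,\bar m)$ (in $L^2_{\omega,x}$, locally uniformly in time); the discount term $\delta_n \bar u^{\delta_n}$ is $O(\delta_n)$ by Theorem \ref{thm.Cudelta}; and uniform convergence of $\chi^{\delta_n}$ to $\chi$ gives the identity $\bar u_t(x)=\chi(x,\bar m_t)$ in the limit. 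Uniqueness of the weak solution of \eqref{eq.MFGCell} then closes the argument and, in particular, shows that the whole family $\chi^\delta$ (not just the subsequence) converges to $\chi$, so that $\chi$ is uniquely determined (up to an additive constant) by its defining property.

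The hardest point is precisely this last step: the limiting $\chi$ is only Lipschitz—not $C^1$—in $m$, so we cannot differentiate the master equation directly, and the identification of $\chi$ as a ``corrector'' must go through the MFG-system characterization (b) and the stability estimates of Sections \ref{sec.timedep}, \ref{sec.discount} and \ref{sec.linear} rather than via a PDE limit in the master equation itself.
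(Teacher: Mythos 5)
Your overall strategy matches the paper's: extract $\chi$ as a vanishing-discount limit of the (suitably centered) discounted master equation solutions $\chi^\delta$, using the uniform estimates from Theorem~\ref{thm.good_bound_for_linearized_system_discounted} (packaged in the paper as Lemma~\ref{em.estichidelta}) to get equicontinuity, and then verify property (b) of Definition~\ref{defweakmastercell}. The normalization you use ($\chi^\delta = U^\delta - \int U^\delta(\cdot,\bar m)$ rather than $\chi^\delta - \chi^\delta(0,\delta_0)$) is harmless. The monotonicity claim, the identification of $\bar\lambda$ via Theorem~\ref{thm.Cudelta}, and the extraction of a locally uniformly convergent subsequence are all sound.

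The genuine gap is in the verification of property (b), which you correctly identify as the hard step but do not actually close. You assert that \emph{``uniform regularity of $\chi^{\delta_n}$ and of $D_x\chi^{\delta_n}$ ensures stability of the MFG system under terminal data,''} so that $(\bar u^{\delta_n},\bar m^{\delta_n})\to(\bar u,\bar m)$. But the terminal condition for the $\delta_n$-system is $\chi^{\delta_n}(\cdot,\bar m^{\delta_n}_T)$, which depends on the endogenous $\bar m^{\delta_n}_T$, not only on the approximating map; uniform regularity of the maps alone does not give convergence of the coupled forward–backward system, and there is no compactness to invoke in the common-noise setting. What actually closes the argument in the paper is the Lasry–Lions duality applied directly to the pair $(\shif u^\delta,\shif m^\delta)$ (discounted shifted system) and $(\shif u,\shif m)$ (solution of \eqref{eq.MFGCellshif}): the terminal duality term $\E[\int(\shif u^\delta_T-\shif u_T)(\shif m^\delta_T-\shif m_T)]$ is bounded below by $-2\|\chi^\delta-\chi^\delta(0,\delta_0)-\chi\|_\infty$ \emph{using the monotonicity of $\chi$}, and the extra discount term $\E[\int_0^T\int(\delta\shif u^\delta-\bar\lambda)(\shif m^\delta-\shif m)]$ is $O(\delta)$ by \eqref{ineqcontracdelta3}. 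This yields $\E[\int_0^T\int|D\shif u^\delta-D\shif u|^2(\shif m^\delta+\shif m)]\to 0$, hence ${\bf d}_1(\shif m^\delta_t,\shif m_t)\to 0$, and then a comparison principle on the two HJ equations upgrades this to uniform convergence of $\shif u^\delta-\chi^\delta(0,\delta_0)$ to $\shif u$, from which property (b) follows by passing to the limit in $\shif u^\delta_t(x)=\chi^\delta(x+\sqrt{2\cwn}W_t,\ldots)$. Without this monotonicity-based duality step the passage to the limit does not go through. Finally, your closing remark that uniqueness of the MFG weak solution forces the whole family $\chi^\delta$ to converge (so that $\chi$ is uniquely determined up to a constant) is unsupported: uniqueness of $(\bar u,\bar m)$ for a \emph{given} terminal map $\chi$ does not a priori rule out two distinct limit points along different subsequences, each satisfying property (b). The paper defers uniqueness of the corrector to Theorem~\ref{thm.CVU}.
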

 
\begin{rmk} Let us note that the solution of \eqref{eq.ergomaster} is unique in the following sense: if $(\lambda', \chi')$ is another weak solution to \eqref{eq.ergomaster}, then $\lambda'=\bar \lambda$ and there is a constant $c$ such that $\chi'= \chi+c$. The equality $\lambda'=\bar \lambda$ is a simple consequence of Theorem \ref{thm.main} and the definition of weak solution of \eqref{eq.ergomaster}, while the uniqueness of $\chi$ (up to an additive constant) will be an outcome of Theorem \ref{thm.CVU} below. 
\end{rmk} 

In order to build a solution to the master cell problem, we  start with a solution to the discounted master equation (see \cite[Lemma 5.1.1]{CDLL} in the finite horizon case): 
\begin{align}\label{eq.discmaster} 
&\delta \chi^\delta(x,m)-(1+\cwn) \Delta \chi^\delta(x,m)+\frac12|D_x\chi^\delta(x,m)|^2 -(1+\cwn)\inte \dive_y(D_m\chi^\delta(x,m,y))m(dy) \notag\\ 
& \qquad \qquad + \inte D_m\chi^\delta(x,m,y)\cdot D_xU(t,y,m) m(dy)-\sqrt{2\cwn} \inte \dive_x(D_m\chi^\delta(x,m,y))m(dy)\notag \\ 
& \qquad \qquad  -\cwn \inte\inte {\rm Tr}(D^2_{mm} \chi^\delta(x,m,y,z))m(dy)m(dz) = f(x,m). 
\end{align}

The solution of this equation can be built as follows: Let $\hat m_0\in \mathcal P(\T^d)$ and let $(u^\delta, m^\delta, v^\delta)$ be the solution to \eqref{eq.MFGCNdisc} defined on the time interval $[0,\infty)$ and with initial condition $m^\delta_0=\hat m_0$. Let us set 
\be\label{chideltavsam}
\chi^\delta(x,\hat m_0) = u^\delta_0(x)
\ee
(Note that  $u^\delta_0$ is deterministic). Then following the construction of \cite{CDLL} (see also the discussion in \cite{CP19}), one can check that $\chi^\delta$ is a classical solution to \eqref{eq.discmaster}. We now discuss the regularity of $\chi^\delta$, uniform in $\delta$. 

\begin{lem}\label{em.estichidelta} We have, for any $\beta \in (0,1)$ and any $m\in \mathcal P(\T^d)$,
\be\label{kzljrkerkgfjn}
\normC*{\delta \chi^\delta(\,\cdot~,m)}+ 
\normDDH*{D_x \chi^\delta(\,\cdot~,m)}+
\normU{D_m\chi^\delta(\,\cdot~, m, ~\cdot\,)}{2+\beta,1+\beta}  \leq C
\ee
where $C$ depends on $f$ and $\beta$, but not on $m$ and $\delta$.
\end{lem}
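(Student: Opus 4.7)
The plan is to prove the three bounds in \eqref{kzljrkerkgfjn} separately, using the tools developed in this section together with Lemma \ref{lem.boundsDisc}. The first bound is immediate: since $\chi^\delta(\cdot, m) = u^\delta_0$, where $(u^\delta, m^\delta, v^\delta)$ solves \eqref{eq.MFGCNdisc} with $m^\delta_0 = m$, Lemma \ref{lem.boundsDisc} already gives $\|\delta u^\delta_t\|_\infty \leq C$ uniformly in $\delta$ and $t$, hence $\|\delta \chi^\delta(\cdot, m)\|_{C^0} \leq C$ uniformly in $m$.

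For the $C^{2+\beta}$-regularity of $D_x\chi^\delta$, I would adapt the bootstrap already carried out for $\bar{\mathcal V}$ in the proof of Lemma \ref{lem.Wmu}. Taking $t_0 = 0$, the shift in \eqref{defshifdelta} is trivial at $t = 0$, so that $\chi^\delta(\cdot, m) = \shif u^\delta_0$ and $\mathcal F_{0,0}$ is trivial. Writing the probabilistic representation
\[
\tilde{\shif u}^\delta_0(x) = \E\Bigl[ \Gamma_{1} \ast \tilde{\shif u}^\delta_1(x) + \int_0^1 \Gamma_s \ast \tilde h_s(x)\, ds \Bigr],
\]
with $h_s = \tfrac12 |D\shif u^\delta_s|^2 + \delta \shif u^\delta_s - \shif f_s(\cdot, \shif m^\delta_s)$, and using the uniform Lipschitz and $L^\infty$ bounds of Lemma \ref{lem.boundsDisc}, the Schauder estimates of \cite[Theorems 5.1.2 and 5.1.4]{Lun} applied pathwise to the backward equation satisfied by the integrand give a bootstrap $C^0 \to C^{1+\beta} \to C^{2+\beta'} \to C^{3+\beta}$ at $t = 0$, uniformly in $\delta$ and in $m$. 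Combined with the Lipschitz bound from Lemma \ref{lem.boundsDisc}, this yields the required $C^{2+\beta}$-control on $D_x \chi^\delta$.

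The third and central step is the $(C^{2+\beta}, C^{1+\beta})$-regularity of $D_m \chi^\delta$, which will follow from Theorem \ref{thm.good_bound_for_linearized_system_discounted}. For fixed $m \in \mathcal P$ and a signed measure $\rho_0$ with $\langle \rho_0, 1 \rangle = 0$, let $(\shif z^\delta, \boldsymbol{\varrho}^\delta, \shif N^\delta)$ denote the solution of \eqref{eq.linearized_MFG_shifted} with $\boldsymbol{\varrho}^\delta_0 = \rho_0$ along the shifted discounted MFG started from $\shif m^\delta_0 = m$. A standard linearization argument (adapted from \cite[Section 4.4]{CDLL}) gives
\[
\int_{\T^d} \frac{\delta \chi^\delta}{\delta m}(x, m, y)\, \rho_0(dy) = \shif z^\delta_0(x),
\]
and Theorem \ref{thm.good_bound_for_linearized_system_discounted} then provides $\|\shif z^\delta_0\|_{C^{3+\beta}} \leq C \|\rho_0\|_{(C^{2+\beta})'}$ uniformly in $\delta \in (0,\delta_0)$. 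Specializing $\rho_0$ successively to $\delta_y - \delta_{y_0}$, $\delta_{y+h} - \delta_y$, and $\delta_{y+2h} - 2\delta_{y+h} + \delta_y$ controls $\frac{\delta \chi^\delta}{\delta m}$, $D_y \frac{\delta \chi^\delta}{\delta m}$ and $D_y^2\frac{\delta \chi^\delta}{\delta m}$ in $C^{2+\beta}_x$, uniformly in $y$; and the first-order $y$-difference of a second-order finite difference yields an initial signed measure of $(C^{2+\beta})'$-norm $O(|h|^2|k|^\beta)$, which gives $\beta$-Hölder control of $D_y^2\frac{\delta \chi^\delta}{\delta m}$ in $y$ and hence the full $C^{1+\beta}$-regularity in $y$ of $D_m \chi^\delta = D_y \frac{\delta \chi^\delta}{\delta m}$.

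The main obstacle is the rigorous justification of the linearization identity: namely the $C^2$-differentiability in measure of the map $m \mapsto u^\delta_0$, together with the identification of its derivatives as solutions of \eqref{eq.linearized_MFG_shifted} in the infinite-horizon setting, with all constants uniform as $\delta \to 0$. This is not entirely routine in the random framework, but the uniform estimates of Proposition \ref{prop.sol_shif.lin} and Theorem \ref{thm.good_bound_for_linearized_system_discounted} allow one to approximate $\chi^\delta$ by finite-horizon quantities for which differentiability in measure is established in \cite{CDLL}, and then pass to the limit.
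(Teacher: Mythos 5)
Your proposal is correct and follows essentially the same route as the paper: the $L^\infty$ bound for $\delta\chi^\delta$ comes straight from Lemma~\ref{lem.boundsDisc}; the $C^{2+\beta}$ control of $D_x\chi^\delta$ is obtained by the same pathwise Schauder bootstrap of the shifted equation used in the proof of Lemma~\ref{lem.Wmu}; and the measure-derivative bound comes from the CDLL representation $\partial_x^l\partial_y^k\frac{\delta\chi^\delta}{\delta m}(x,m_0,y)=\partial_x^l\shif z^\delta_0(x;\partial^k\delta_y)$ combined with Theorem~\ref{thm.good_bound_for_linearized_system_discounted}. The paper uses $\rho_0=\partial^k\delta_y$ directly rather than your finite-difference quotients, but the two are equivalent since $\|\partial^k\delta_y-\partial^k\delta_{y'}\|_{(C^{2+\beta})'}\le C|y-y'|^\beta$ for $|k|\le 2$; the ``rigor gap'' you flag in the last paragraph is handled in the paper by Proposition~\ref{prop.sol_shif.lin} (well-posedness of the infinite-horizon linearized system) together with the finite-horizon differentiability results of \cite{CDLL}, exactly the approximation you describe.
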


\begin{proof}  Fix $m_0\in \mathcal P(\T^d)$ which is deterministic and let $(\shif u^\delta, \shif m^\delta, \shif M^\delta)$ solve \eqref{eq.MFGCNtildedelta}.
To stress the fact  that $\uDW$ depends on the initial condition $m_0$, we may also write $\uDW(x;m_0)$. Recalling \eqref{chideltavsam}, 
the first two estimates in \eqref{kzljrkerkgfjn} are equivalent to saying that
$$
\normC*{\delta \shif u^\delta_0(\,\cdot~; m_0 )} + 
\left\|D \shif u^\delta_0(\,\cdot~;m_0)\right\|_{C^{2+\beta}} \leq C.
$$
By \eqref{esti1111} in Lemma \ref{lem.boundsDisc}, we know that $\|\delta \shif u^\delta\|_\infty+ \|D \shif u^\delta\|_\infty \leq C$ a.s. Let us improve the second inequality. For $t\geq 0$, set $\tilde{\shif u}^\delta_t= {\shif u}^\delta_t-\inte {\shif u}^\delta_t$. Then $\tilde {\shif u}^\delta_t$ is given by 
$$
\tilde {\shif u}^\delta_t(x)= \E \left[a_t(x)\ |\ \mathcal F_{t}\right], 
$$
where, for $s\in [t,t+1]$,  
$$
a_s(x)= \Gamma_{t+1-s}\ast \tilde{\shif u}^\delta_{t+1}(x) + \int_s^{t+1} \Gamma_{u-t}\ast \tilde h_u(x)du.
$$
Here $\Gamma$ is the heat kernel and 
$$
h_s(x) = \delta \shif u^\delta_s(x)+ \frac12 |D\shif u^\delta_s(x)|^2-\shif f_s(x,\shif m^\delta_s(x)), \qquad \tilde h_s(x)= h_s(x)-\inte h_s(x).
$$
Note that $a$ solves the backward equation (with random coefficient, but solved $\omega$ by $\omega$ and not adapted) 
$$
- \partial_t a_s(x) -\Delta a_s(x) +h_s(x)=0\; \text{in}\; [t,t+1]\times \T^d, \qquad a_{t+1}(x)= \tilde{\shif u}^\delta_{t+1}(x). 
$$
Arguing as in the proof of Lemma \ref{lem.Wmu}, one can check that $a_t$ is bounded in $C^{3+\beta}(\T^d)$ uniformly in $t$ and $\delta$, for any $\beta\in (0,1)$. This in turn implies that $\tilde {\shif u}_t$ is uniformly bounded in $C^{3+\beta}(\T^d)$ for $t\geq 0$. This shows that $\|D\tilde {\shif u}_t\|_{C^{2+\beta}}\leq C$. \\

In order to show the last inequality in \eqref{kzljrkerkgfjn}, let us use the representation formula \cite[Proposition 5.2.4]{CDLL} in a closely related context: for any map $\rho_0 \in \left( C^{2+\beta} \right)'$, with $\lg \rho_0, {\bf 1}_{\T^d}\rg = 0$, we have 
\be\label{rep.deltaUdelta}
\inte \frac{\delta \chi^\delta}{\delta m}(x,m_0,y)\rho_0(dy) = \shif z^\delta_0(x) ,
\ee
where $(\shif z^\delta,\boldsymbol{\varrho}^\delta,\shif N^\delta)$ is the unique solution to the  linearized system 
\begin{align}\label{eq.MFGihLS}
& \ds d \zDW = \Bigl(\delta \zDW -\Delta \zDW + D  \uDW \cdot D  \zDW - \left<\frac{\delta \shif f_t}{\delta m}(\cdot, \mDW),\rDW \right> \Bigr)dt + d \NDW \qquad {\rm in}\; (0,+\infty)\times \T^d,\notag \\
&d\rDW=\left(\Delta \rDW +{\rm div}(\rDW D  \uDW + \mDW D  \zDW)\right)dt \qquad {\rm in}\; (0,+\infty)\times \T^d, \\
&  \varrho_0^\delta= \rho_0\; {\rm in}\;  \T^d,  \notag
\end{align}
satisfying condition \eqref{cond.unicite.zDW}. 
In order to stress that the solution of $\zDW$  depends on the initial value $\rho_0$, we again write $\zDW(x; \rho_0)$. As in \cite[Proposition 5.2.4]{CDLL}, we have that for $|l|, |k| \leq 2$,
\[
\partial_x^l \partial_y^k\frac{\delta \chi^\delta}{\delta m}(x,m_0,y) = \partial_x^l \shif z^\delta_0(x; \partial^k\delta_y) \ .
\]
Therefore the estimate of the third term in \eqref{kzljrkerkgfjn} is a consequence of the following estimate:
\be\label{es.linearized_MFG_delta_shift}
\normDDH{ \shif z^\delta_0(~\cdot~; \rho_0)} \leq C \normDDHd{\rho_0},
\ee
which itself is nothing but the statement of Theorem \ref{thm.good_bound_for_linearized_system_discounted}. 
\end{proof}

%
%
 
\begin{proof}[Proof of Theorem \ref{lem.existence_weak_solution_cell_Master_Equation}] In view of the estimates \eqref{kzljrkerkgfjn} of Lemma \ref{em.estichidelta}, we can extract a subsequence $\delta_n\to 0$ (still denoted $\delta$) such that $\chi^\delta-\chi^\delta(0, \delta_0)$ and $D_x\chi^\delta$  converges uniformly to some maps $\chi=\chi(x,m)$ and $D_x\chi=D_x\chi(x,m)$ which are respectively $C^3$ and $C^2$ in $x$ and both Lipschitz in $\T^d \times \mathcal P(\T^d)$. We now check that the pair $(\bar \lambda,\chi)$ is a weak solution to \eqref{eq.ergomaster}. 

One can check easily that the $\chi^\delta$, and thus also $\chi$, are monotone (as in the proof of \cite[Lemma 4.6]{CP19}). The regularity of $\chi$ is a consequence of the regularity of the $(\chi^\delta)$. Fix $\hat m_0\in \mathcal P(\T^d)$.  Now let $(\shif u^\delta, \shif m^\delta, \shif M^\delta)$ be the solution to the shifted discounted problem  \eqref{eq.MFGCNtildedelta} on the time interval $[0,\infty)$ and with initial condition $\hat m_0$ and let $(\shif u, \shif m, \shif M)$ be the solution to \eqref{eq.MFGCellshif} with initial condition $\hat m_0$  at time $0$. By the usual Lasry-Lions argument, we have 
\begin{align*}
& \E\left[ \int_{\T^d}(\shif u^\delta_{T}-\shif u_{T})(\shif m^\delta_{T}-\shif m_{T})\right]\\ 
& \qquad  \leq - \frac12 \E\left[ \int_{0}^{T} \int_{\T^d} |D  \shif u^\delta- D  \shif u|^2 (\shif m^\delta+\shif m)\right] +
\E\left[ \int_0^T\inte  (\delta \shif u^\delta -  \bar \lambda)(\shif m^\delta-\shif m)\right]. 
\end{align*} 
We note that, by \eqref{chideltavsam}, and denoting by $\tau_T$ the random map $\tau_T(x)= x+\sqrt{2\cwn}W_T$, 
\begin{align*}
&\E\left[ \int_{\T^d}(\shif u^\delta_{T}-\shif u_{T})(\shif m^\delta_{T}-\shif m_{T})\right]\\
= &\E\left[ \int_{\T^d}( \chi^\delta(x+\sqrt{2\cwn}W_T, \tau_T\sharp \shif m^\delta_T)-\chi(x+\sqrt{2\cwn} W_T, \tau_T\sharp \shif m_T))(\shif m^\delta_{T}-\shif m_{T})\right]\\ 
= &\E\left[ \int_{\T^d}( \chi^\delta(x, \tau_T\sharp \shif m^\delta_T)-\chi(x, \tau_T\sharp \shif m_T))(\tau_T\sharp\shif m^\delta_{T}-\tau_T\sharp\shif m_{T})\right]\\
\geq & \E\left[ \int_{\T^d}( \chi^\delta(x, \tau_T\sharp \shif m^\delta_T)-\chi(x, \tau_T\sharp \shif m^\delta_T))(\tau_T\sharp\shif m^\delta_{T}-\tau_T\sharp\shif m_{T})\right] \qquad \text{(by monotonicity of $\chi$)}\\ 
= &\E\left[ \int_{\T^d}( \chi^\delta(x, \tau_T\sharp \shif m^\delta_T)-\chi^\delta(0,\delta_0)-\chi(x, \tau_T\sharp \shif m^\delta_T))(\tau_T\sharp\shif m^\delta_{T}-\tau_T\sharp\shif m_{T})\right] \\ 
\geq &-2\| \chi^\delta-\chi^\delta(0,\delta_0)-\chi\|_\infty. 
\end{align*}
On the other hand, by \eqref{ineqcontracdelta3} in Theorem \ref{thm.Cudelta}, we have 
$$
\left| \E\left[ \int_0^T\inte  (\delta \shif u^\delta -  \bar \lambda)(\shif m^\delta-\shif m)\right] \right| \leq C\delta. 
$$
This implies that for $\delta_n \to 0$ such that $\chi^{\delta_n} - \chi^{\delta_n}(0,\delta_0) \to \chi$, we have,
$$
\lim_{n \to \infty}  \E\left[ \int_{0}^{T} \int_{\T^d} |D  \shif u^{\delta_n}- D  \shif u|^2 (\shif m^{\delta_n}+\shif m)\right] =0. 
$$
As, for any $t_0>0$, there exists $C_{t_0}>0$ such that $\shif m^{\delta_n}_t\geq C_{t_0}^{-1}$ and  $\shif m_{t_0}\geq C_{t_0}^{-1}$ for $t\in [t_0,T]$, we infer that 
$$
\lim_{n\to \infty} \E\left[ \int_{t_0}^{T} \int_{\T^d} |D\shif u^{\delta_n} -D\shif u|^2\right] =0 . 
$$
As a consequence, using the equations for $\shif m^{\delta_n}$ and $\shif m$ implies that
\be\label{kjsdfkngnkjn}
\lim_{n \to \infty} \E\left[{\bf d}_1 (\shif m^{\delta_n}_t, \shif m_t)\right]=0 \qquad \forall t\in  [0,T]. 
\ee
We now apply the comparison principle between $\shif u^\delta- \chi^\delta(0, \delta_0)$ and $\shif u$, which both satisfy a Hamilton-Jacobi equation on $[0,T]$ with a slightly different right-hand side and sightly different terminal condition. We have 
\begin{align*}
& \E\left[ \sup_{t,x}| \shif u^{\delta_n}_t(x) -  \chi^{\delta_n}(0, \delta_0)-\shif u_t(x)|\right]\\ 
& \qquad  \leq  \E\left[  \int_0^T \sup_x |-\shif f_t(x,\shif m^{\delta_n}_t)+\shif f_t(x,\shif m_t)+  \delta_n (\shif u^{\delta_n}(x)- \chi^{\delta_n}(0, \delta_0)) -\bar \lambda|\right] \\ 
& \qquad\qquad +  \E\left[  \sup_x | \chi^{\delta_n}(x, (Id+\sqrt{2\cwn}W_T)\sharp \shif m^\delta_T)- \chi^\delta(0, \delta_0)-\chi(x, (Id+\sqrt{2\cwn}W_T)\sharp  \shif m_T) | \right]
\end{align*}
which, in view of  \eqref{ineqcontracdelta3} in Theorem \ref{thm.Cudelta}, \eqref{kjsdfkngnkjn} and the uniform convergence of $\chi^\delta - \chi^\delta(0, \delta_0)$ to $\chi$, proves that the right-hand side of the above inequality tends to $0$. On the other hand, we know that 
$$
 \shif u^\delta_t(x)= \chi^\delta(x+\sqrt{2\cwn}W_t, (Id+\sqrt{2\cwn}W_t)\sharp m^\delta_t)),
 $$
 so that, letting $\delta\to 0$ shows that 
 $$
 \shif u_t(x) = 
 \chi(x+\sqrt{2\cwn}W_t, (Id+\sqrt{2\cwn}W_t)\sharp\shif m_t) \qquad \text{a.s.}. 
 $$
Thus  $\chi$ is a weak solution to \eqref{eq.ergomaster}. 
\end{proof}

We now explain  the relationship between $(\bar{\mathcal{V}},\bar \mu)$ and $\chi$.

\begin{thm}\label{prop.fullcorrector} Let $(\bar{\mathcal V}, \bar \mu)$ be the stationary process defined in Lemma \ref{lem.Wmu}. Then $\bar \mu_t$ solves
\be\label{eq.barmuchi}
d\bar \mu_t= ((1+\cwn) \Delta \bar \mu_t + \dive(\bar \mu_t D  \chi(x, \bar \mu_t))dt - \sqrt{2\cwn} D  \bar \mu_t\cdot dW_t, \qquad t\in \R.
\ee
Moreover, $D_x\chi(x, \bar \mu_t) = \bar{\mathcal V}_t(x)$. Finally, if we set $\bar u_t(x)= \chi(t, x, \bar \mu_t)$, we can find a process $\bar v$ with paths in $H^2(\T^d)$ such that the stationary process $(\bar u,\bar \mu, \bar v)$ solves:
\be\label{eq.MFGstable}
\left\{
\begin{aligned}
& (i)\; d\uS= \big( \bar \lambda - (1+\cwn)\Delta \uS +\frac12 |D \uS|^2 -f(x,\bar \mu_t) -2\cwn{\rm div}(\vS) \big)dt +\sqrt{2\cwn} \vS\cdot dW_t \\
& (ii)\; d\bar \mu_t=\left((1+\cwn)\Delta \bar \mu_t +{\rm div}(\bar \mu_t D  \uS)\right)dt -\sqrt{2\cwn} D \bar \mu_t \cdot dW_t
\end{aligned} 
\right.
\ee
where, a.s., the first equation holds in a strong sense and the second equation in the sense of distribution.
\end{thm}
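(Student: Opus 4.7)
The plan is to combine the attractor Lemma \ref{lem.Wmu} with the characterization of $\chi$ as a weak solution of the ergodic master equation (Definition \ref{defweakmastercell} together with Remark \ref{another_condition_weak_ergodic_master}), so that the two descriptions of the MFG dynamics in the stationary regime are forced to coincide.

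\textbf{Step 1 (identifying $D_x\chi(\cdot,\bar \mu)$ with $\bar{\mathcal V}$).} Fix $T>0$ and let $(\bar u^T,\bar m^T,\bar v^T)$ be the unique weak solution of the MFG system \eqref{eq.MFGCell} on $[-T,T]$ with initial measure $\bar m^T_{-T}=\bar \mu_{-T}$ and terminal condition $\bar u^T_T(x)=\chi(x,\bar m^T_T)$; existence and uniqueness follow from Proposition \ref{prop.shifumVsum} and Remark \ref{rem.prop.shifumVsumextended}, using that $\chi(\cdot,m)$ is $C^3$ in $x$ and Lipschitz and monotone in $m$ (Definition \ref{defweakmastercell}(a)). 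On the one hand, condition (b') of Remark \ref{another_condition_weak_ergodic_master} gives
\[
\bar u^T_t(x)=\chi(x,\bar m^T_t)\quad \text{a.s.,}\quad (t,x)\in[-T,T]\times\T^d.
\]
On the other hand, Lemma \ref{lem.Wmu} forces $\bar m^T_t=\bar \mu_t$ and $D\bar u^T_t=\bar{\mathcal V}_t$ on $[-T,T]$. Combining these identities yields $\bar u^T_t(x)=\chi(x,\bar \mu_t)$ and $D_x\chi(x,\bar \mu_t)=\bar{\mathcal V}_t(x)$ a.s.

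\textbf{Step 2 (McKean--Vlasov equation for $\bar \mu$).} Substituting $D\bar u^T=D_x\chi(\cdot,\bar \mu)$ into the stochastic Fokker--Planck equation \eqref{eq.MFGCell}(ii) satisfied by $\bar m^T=\bar \mu$ gives \eqref{eq.barmuchi} on $[-T,T]$. Since $T$ is arbitrary and the equation is local in time, \eqref{eq.barmuchi} extends to all of $\R$.

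\textbf{Step 3 (construction of $\bar v$).} Set $\bar u_t(x):=\chi(x,\bar \mu_t)$; by Step 1 we already know $\bar u_t=\bar u^T_t$ on $[-T,T]$ for each $T$. To produce a single $\bar v$, I compare the martingale coefficients for nested horizons. For $T_1<T_2$, both $(\bar u^{T_1},\bar v^{T_1})$ and $(\bar u^{T_2},\bar v^{T_2})$ are strong solutions (in the sense of Proposition \ref{prop.HJ}) of the backward SPDE \eqref{eq.HJ} on $[-T_1,T_1]$ with the same source $-f(\cdot,\bar \mu_t)$ and the \emph{same} terminal value $\chi(\cdot,\bar \mu_{T_1})$, the latter being $T$-independent by Step 1. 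Uniqueness of the strong solution (Proposition \ref{prop.HJ}), together with the uniqueness of the martingale representation (Lemma \ref{lem.representation}), forces $\bar v^{T_1}=\bar v^{T_2}$ a.e.\ on $[-T_1,T_1]$. Hence $\bar v_t:=\bar v^T_t$ is well defined for any $T\geq|t|$, is adapted to $(\mathcal F_t)$, and $(\bar u,\bar \mu,\bar v)$ solves \eqref{eq.MFGstable} on $\R$; the ergodic constant $\bar \lambda$ appears because $\chi$ solves \eqref{eq.ergomaster}. The required regularity $\bar v\in H^2(\T^d)$ follows from Definition \ref{def.weaksol}, since $\bar v^T\in[\mathbf H^{n+1}]^d$ with $n\geq 3$.

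\textbf{Step 4 (stationarity).} Stationarity of $\bar u$ is immediate from $\bar u_t=\chi(\cdot,\bar \mu_t)$ and the stationarity of $\bar \mu$. For $\bar v$, apply a time shift: for each $h\in\R$, the triple $(\bar u_{\cdot+h}(\cdot,\tau_h\omega),\bar \mu_{\cdot+h}(\cdot,\tau_h\omega),\bar v_{\cdot+h}(\cdot,\tau_h\omega))$ solves \eqref{eq.MFGstable} with the same coefficients, and by the uniqueness argument of Step 3 it must coincide with $(\bar u,\bar \mu,\bar v)$; together with the stationarity of $(\bar u,\bar \mu)$ this yields stationarity of $\bar v$.

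The main technical obstacle is the consistency argument in Step 3: the whole construction hinges on the identification, proved in Step 1, that the terminal trace $\bar u^T_{T_1}(\cdot)=\chi(\cdot,\bar \mu_{T_1})$ is independent of the outer horizon $T\geq T_1$. Without that, the martingale representatives $\bar v^T$ obtained on nested intervals would a priori be incompatible and no global $\bar v$ would exist.
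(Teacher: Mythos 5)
There is a genuine gap in Step~1, and it propagates to Steps~2--3. You invoke Lemma~\ref{lem.Wmu} to conclude that the weak solution $(\bar u^T,\bar m^T,\bar v^T)$ of~\eqref{eq.MFGCell} on $[-T,T]$ with initial measure $\bar\mu_{-T}$ and terminal condition $\chi(\cdot,\bar m^T_T)$ satisfies $\bar m^T_t=\bar\mu_t$ and $D\bar u^T_t=\bar{\mathcal V}_t$. But Lemma~\ref{lem.Wmu} asserts this \emph{only} for the weak solution whose terminal condition is the specific $\mathcal F_T$-measurable potential $\bar w_T$ of $\bar{\mathcal V}_T$ (normalized by $\bar w_T(0)=0$). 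Your terminal condition is the fixed-point datum $\chi(\cdot,\bar m^T_T)$, which is a priori unrelated to $\bar w_T$. Identifying the two would require knowing $D_x\chi(\cdot,\bar\mu_T)=\bar{\mathcal V}_T$ -- precisely the statement you are trying to derive in Step~1. The reasoning is therefore circular: the exact equality $\bar m^T=\bar\mu$ does hold, but only \emph{after} the identity $D_x\chi(\cdot,\bar\mu_T)=\bar{\mathcal V}_T$ has been established by other means, because then $\chi(\cdot,\bar\mu_T)$ differs from $\bar w_T$ by an $\mathcal F_T$-measurable constant, which changes $u$ only by a conditional-expectation shift and leaves $m$ untouched. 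Without that input, there is no reference solution with which to compare $\bar m^T$.

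The paper resolves this by working with the \emph{discounted} problem. The discounted MFG system~\eqref{eq.MFGCNdisc} carries no terminal condition at all, only a boundedness constraint, so its solution on $(T,\infty)$ with initial condition $\bar\mu^\delta_T$ is simultaneously the one characterized by Lemma~\ref{lem.WmuDISC} (giving $m^\delta=\bar\mu^\delta$, $Dw^\delta=\bar{\mathcal V}^\delta$) and the one generated by the discounted master solution $\chi^\delta$ (giving $w^\delta_t=\chi^\delta(\cdot,\bar\mu^\delta_t)$, cf.\ \cite[Lemma~5.1.1]{CDLL}). This forces $\bar{\mathcal V}^\delta_t=D_x\chi^\delta(\cdot,\bar\mu^\delta_t)$ without any comparison of terminal data, and the identity then passes to the limit $\delta\to 0$ by Theorem~\ref{thm.Cudelta} and the uniform convergence of $D_x\chi^\delta$. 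If you want to avoid the discounted detour, the weakest correct fix is to drop the claim of exact equality on $[-T,T]$: apply the turnpike Theorem~\ref{thm.main} to $(\bar u^T,\bar m^T,\bar v^T)$, which yields only the asymptotics $\E[\|\bar m^T_t-\bar\mu_t\|_{C^\beta}]+\E[\|D\bar u^T_t-\bar{\mathcal V}_t\|_{L^2_x}]\le C(e^{-\omega(t+T)}+e^{-\omega(T-t)})$; combined with $D\bar u^T_t=D_x\chi(\cdot,\bar m^T_t)$ from condition~(b') and the Lipschitz continuity of $D_x\chi$, letting $T\to\infty$ gives $D_x\chi(\cdot,\bar\mu_t)=\bar{\mathcal V}_t$. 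Only after that limiting identification can one justify the exact matchings you use in Steps~2 and~3 (and in Step~3 you also need this to know that on $[-T_1,T_1]$ the restriction of $\bar u^{T_2}$ has the source $-f(\cdot,\bar\mu_t)$ rather than $-f(\cdot,\bar m^{T_2}_t)$). Once Step~1 is repaired, your gluing of the $\bar v^T$ via uniqueness of the strong HJ solution and your stationarity argument in Step~4 are essentially the same as the paper's.
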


\begin{proof} Let us first prove that that $\bar \mu$ solves \eqref{eq.barmuchi}. By the definition of the stationary process $(\bar{\mathcal V}^\delta, \bar \mu^\delta)$, for any $T\in \R$, the unique solution $(w^\delta, m^\delta, z^\delta)$ to \eqref{eq.MFGCNdisc} on the time interval $(T, \infty)$ with initial condition $m_{T}= \bar \mu^\delta_T$ satisfies $Dw^\delta_t= \bar{\mathcal V}^\delta_t$ and $m^\delta_t= \bar \mu^\delta_t$ for any $t\in [T,\infty)$. As $\chi^\delta$ is a classical solution to the master equation \eqref{eq.discmaster} we know from a similar argument as in \cite[Lemma 5.1.1]{CDLL} for the time dependent master equation (see also \cite{CP19}) that, on $[T, \infty)\times \T^d$,  
$$
d\bar \mu^\delta_t= ((1+\cwn) \Delta \bar \mu^\delta_t + \dive(\bar \mu^\delta_t D_x  \chi^\delta(\cdot, \bar \mu^\delta_t))dt - \sqrt{2\cwn} D  \bar \mu^\delta_t\cdot dW_t
$$
and that $w^\delta_t(x) = \chi^\delta(x, \bar \mu^\delta_t),$
and thus 
$
Dw^\delta_t(x) = D_x\chi^\delta(x, \bar \mu^\delta_t).
$
We  now pass to the limit as $\delta\to 0$. As we know from Theorem \ref{thm.Cudelta} that $\bar \mu^\delta$ converges to $\bar \mu$ in $L^2([T,T'], L^2_{\omega, x})$ (for any $T'>T$) and as $D_x \chi^\delta$ converges uniformly to $D_x\chi$, we derive that $\bar \mu$ solves \eqref{eq.barmuchi} on the whole time interval $\R$ as $T$ is arbitrary.  Passing to the limit in the equality $\bar{\mathcal V}^\delta_t=Dw^\delta_t=D_x\chi^\delta(\cdot, \bar \mu^\delta_t)$, we also infer from Theorem \ref{thm.Cudelta} that  $\bar{\mathcal V}_t=D_x\chi(\cdot, \bar \mu_t)$ for any $t\in \R$. \\

Fix $T>0$ large and let $(v,z)$ be the unique strong solution on $(-T,T)$ of the HJ equation 
\be\label{eq.almost_stable2}
\left\{
\begin{aligned}
    d w_t & = \left( \bar \lambda - (1+\cwn) \Delta w_t +\frac12 |D w_t|^2 - f(\cdot, \bar{\mu}_t) - 2\cwn \div(z_t) \right)dt + \sqrt{2\cwn} z_t dW_t, \\
    w_T &  = \chi(\cdot, \bar{\mu}_T),
\end{aligned}
\right.
\ee
Our aim is to check that $w_t=\chi(\cdot, \bar \mu_t)$ for any $t\in [-T,T]$. In order to prove that claim, let $T_1>T+2$ and  \( (\bar{{u}}^{T_1}, \bar{{m}}^{T_1}, \bar{{v}}^{T_1}) \) denote the unique weak solution on $[-T_1,T_1]$ to the MFG system
$$
\left\{
\begin{array}{cl}
    (i) & d\bar{u}^{T_1}_t = \left( \lambda - (1+\cwn) \Delta \bar{u}^{T_1}_t + \tfrac{1}{2} |D \bar{u}^{T_1}_t|^2 - {f}_t(x, \bar{m}^{T_1}_t) - 2 \cwn \div(\bar v^{T_1}_t) \right) dt + \sqrt{2\cwn} \, \bar v^{T_1}_t \cdot d W_t, \\ [1ex]
    (ii) & d\bar{{m}}^{T_1}_t = \left( (1+\cwn) \Delta \bar{{m}}^{T_1}_t + \operatorname{div}(\bar{{m}}^{T_1}_t D \bar{{u}}^{T_1}_t) \right) dt - \sqrt{2\cwn} D \bar m^{T_1}_t \cdot d W_t, \\ [1ex]
    (iii) & \bar{{m}}^{T_1}_{-T_1} = \bar \mu_{-T}, \qquad \bar{{u}}^{T_1}_{T_1}(x) = \chi(x , \, \bar{{m}}_{T_1}).
\end{array}
\right.
$$
As $\chi$ is a weak solution to the ergodic master equation \eqref{eq.ergomaster}, we have
\[
\bar{{u}}^{T_1}_t(x) = \chi\left(x, \, \bar{{m}}^{T_1}_t\right) \quad \text{a.s., for all } (t,x) \in [-T_1, T_1] \times \T^d.
\] 
On the other hand,  Theorem \ref{thm.main} states that 
$$
\E\left[ \|\bar m^{T_1}_t-\bar \mu_t\|_{C^\beta}\right] \leq C( e^{\omega(t+T_1)}+e^{\omega(T_1-t)})\qquad \forall t\in [-T_1+2,T_1-2].
$$
Therefore we have, for any $t\in [-T,T]$,  
\begin{align*}
\E \bigg[  \| \chi\left(x, \, \bar \mu_t\right)-w_t\|_{C^0}  \bigg]  & \leq C \EP{ {\bf d}_1 (\bar \mu_t, \bar m^{T_1}_t) }+ 
\E \bigg[  \| \chi\left(x, \, \bar \mu^{T_1}_t\right)-w_t\|_{C^0}  \bigg]  \\
&\leq  C( e^{\omega(t+T_1)}+e^{\omega(T_1-t)})+ \E \bigg[  \| \bar u^{T_1}_t-w_t\|_{C^0}  \bigg] 
\end{align*}
where, by comparison on the solutions of HJ equation (see Proposition \ref{prop.HJ}),  
\begin{align*}
\E \bigg[  \| \bar u^{T_1}_t-w_t\|_{C^0}  \bigg] 
%
%
& \leq \EP{ \normC{\chi(\cdot,\bar \mu_T) - \chi(\cdot, \bar m^{T_1}_T) } } + \EP{ \int_{t_0}^T \normC{ f(\cdot,\bar \mu_s) - f(\cdot, \bar m^{T_1}_s) } ds }\\
& \leq C \, \EP{ {\bf d}_1 (\bar \mu_T, \bar m^{T_1}_T) } + C \, \EP{ \int_{t_0}^T {\bf d}_1 (\bar \mu_s, \bar m^{T_1}_s) \, ds } \\
& \leq C(e^{-\omega (T+T_1)}+e^{-\omega (T_1-T)}) +C \int_t^T (e^{-\omega (s+T_1)}+ e^{-\omega (T_1-s)})ds\notag \\
& \leq C e^{-\omega(T_1-T)} .
\end{align*}
Letting $T_1\to\infty$ proves that $w_t(x)= \chi\left(x, \, \bar \mu_t\right)$ for any $(t,x)\in [-T,T]\times \T^d$ a.s.. \\ 

Let now $T'>T>0$ and $(v^T,z^T)$ and $(v^{T'},z^{T'})$ be respectively the unique strong solution of the HJ equation \eqref{eq.almost_stable2} on the time intervals $[-T,T]$ and $[-T',T']$ respectively, and with terminal condition $w^T_T = \chi(\cdot, \bar{\mu}_T)$ and $w^{T'}_{T'}  = \chi(\cdot, \bar{\mu}_{T'})$ respectively. We have just proved that $v^{T'}_T=\chi(\cdot, \bar{\mu}_T)$. Thus $(v^T,z^T)$ and $(v^{T'},z^{T'})$ solves the same HJ on $[-T,T]$ and therefore coincide on that interval. This shows the existence of a process $(\bar u,\bar v)$ defined on the whole time interval $\R$ such that $(\bar u,\bar v)=(u^T,v^T)$ on $[-T,T]$. 

Note that $\bar u_t= \chi(\cdot, \bar{\mu}_t)$ and therefore $\bar u$ is a stationary process since $\bar \mu$ is stationary. Using the uniqueness of $z^T$ also implies that $\bar v$ is stationary (see the argument in Lemma \ref{lem.Wmu} for instance). Finally, as $D\bar u_t= D_x\chi (\cdot, \bar{\mu}_t)=\bar{\mathcal V}_t$, we infer that the triple $(\bar u, \bar \mu, \bar z)$ is an entire solution to \eqref{eq.MFGstable}. 

\end{proof}


\section{The long-time behavior of the master equation}\label{sec.longtimemaster}

We finally consider the master equation on $\R^-\times\T^d\times\mathcal{P}(\T^d)$,
{\small
\begin{align}\label{eq.finitemaster} 
&- \partial_t U(t,x,m)-(1+\cwn) \Delta U(t,x,m)+\frac12|D  U(t,x,m)|^2 - f(x,m) \notag \\
& -(1+\cwn)\inte \dive_y(D_m U(t,x,m,y))m(dy) + \inte D_m U(t,x,m,y) \cdot D  U(t,y,m) m(dy)\\ 
&  - 2\cwn \inte \dive_x(D_m U(t,x,m,y))m(dy) -\cwn \inte\inte {\rm Tr}(D^2_{mm} U(t,x,m,y,z))m(dy)m(dz) = 0 \notag,
\end{align}}
with terminal condition,
\be\label{termcondmasterT}
U(0,x,m) = g(x,m).
\ee

A solution (or classical solution) of \eqref{eq.finitemaster}-\eqref{termcondmasterT} is a continuous and bounded map $U:(-\infty, 0]\times \T^d\times\mathcal{P}(\T^d)$, such that all derivatives involved in equation \eqref{eq.finitemaster} exist, are continuous and bounded, which satisfies the equation and the terminal condition for any $(t,x,m)\in  (-\infty, 0]\times \T^d\times\mathcal{P}(\T^d)$. Recall that the existence and the uniqueness of this solution is proved in \cite{CDLL}. 

The main result of this part is the following: 

\begin{thm}\label{thm.CVU} Under our standing assumption, and if $\chi$ is a weak solution to the ergodic master equation \eqref{eq.ergomaster}, then there exists a constant $c\in \R$ such that,
\be\label{cv.Uinfty}
\lim_{T\to \infty} U(-T, x,m)  -\bar \lambda T = \chi(x,m) +c,
\ee
uniformly with respect to $(x,m)\in \T^d\times \mathcal P(\T^d)$. Moreover, for any $t_0>0$, $D_xU(t-T, x,m)$ converges to $D_x\chi(x,m)$ uniformly with respect to $(t,x,m)\in[0,t_0]\times\T^d\times \mathcal{P}(\T^d)$ as $T\to \infty$.
\end{thm}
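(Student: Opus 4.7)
The plan is to combine the long-time behavior of the MFG system from Section~\ref{sec.timedep} with the uniform-in-horizon regularity estimates from Section~\ref{sec.linear}. The starting point is the master-equation representation $U(-T,x,m) = u_{-T}(x)$, where $(u, \mu, v)$ is the unique solution of the MFG system~\eqref{eq.MFGCN} on $[-T,0]$ with initial condition $\mu_{-T}=m$ and terminal condition $u_{0} = g(\cdot, \mu_{0})$. For deterministic $m$ the quantity $u_{-T}(x)$ is itself deterministic, since it is $\mathcal F_{-T,-T}$-measurable, which is trivial by Blumenthal's $0$--$1$ law. Setting $V^T(x,m) := U(-T,x,m) - \bar\lambda T - \chi(x,m)$, the goal is to show that $V^T \to c$ uniformly on $\T^d \times \mathcal P(\T^d)$ for some deterministic constant $c$.

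First I would establish uniform-in-$T$ regularity of $V^T$. Via the standard representation of $\delta U/\delta m$ by the shifted linearized MFG system (as in \cite[Proposition~5.2.4]{CDLL}), Theorem~\ref{thm.good_bound_for_linearized_system_finite} yields
\[
\Bigl\|\int_{\T^d} \tfrac{\delta U}{\delta m}(-T,x,m,y)\,\rho_0(dy)\Bigr\|_{C^{3+\beta}(\T^d)} \leq C\,\|\rho_0\|_{(C^{2+\beta})'}
\]
for any centered $\rho_0 \in (C^{2+\beta})'$, with $C$ independent of $T$ and $m$. Since $\chi$ admits the same type of bound by Theorem~\ref{lem.existence_weak_solution_cell_Master_Equation}, the family $\{V^T\}_{T>0}$ is uniformly bounded on $\T^d\times\mathcal P(\T^d)$, uniformly $C^{3+\beta}$ in $x$, and uniformly Lipschitz in $m$ for the $(C^{2+\beta})'$ norm.

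Next I would anchor the limit by applying Lemma~\ref{lem.keyCV} at the random initial measure $m=\bar\mu_{-T}(\omega)$: it produces a constant $\bar c$ such that $u_{-T}(\cdot) - \bar u_{-T}(\cdot) \to \bar c$ in $L^1(\Omega)$ uniformly in $x$, where $\bar u$ is the process built in Lemma~\ref{lem.keyCV}. Invoking Theorem~\ref{prop.fullcorrector}, which identifies $D_x\chi(\cdot,\bar\mu_t)=\bar{\mathcal V}_t=D\bar u_t$, and comparing the backward SPDE satisfied by $\bar u$ (without the $\bar\lambda$ drift, by Lemma~\ref{lem.keyCV}) with the SPDE satisfied by $\chi(\cdot,\bar\mu_{\cdot})$ (which contains the $\bar\lambda$ drift, by \eqref{eq.MFGstable}), a direct integration combined with the stationarity of $\bar\mu$ yields a deterministic constant $c$ such that
\[
\E\bigl[\,\|V^T(\cdot,\bar\mu_{-T}) - c\|_{\infty}\,\bigr] \xrightarrow[T\to\infty]{} 0.
\]

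To extend the convergence to arbitrary $m$, any sequence $T_n\to\infty$ admits, by the uniform estimates above and Arzel\`a--Ascoli, a subsequence along which $V^{T_n}$ converges uniformly on $\T^d\times\mathcal P(\T^d)$ to some limit $V^\infty$. Combined with the previous display and the stationarity of $\bar\mu$, this forces $V^\infty(x,\bar\mu_0(\omega))=c$ almost surely. The central step is then to identify $(\bar\lambda,\chi+V^\infty)$ as a weak solution of the ergodic master equation in the sense of Definition~\ref{defweakmastercell}: regularity is inherited from the uniform $C^{3+\beta}$-in-$x$ and Lipschitz-in-$m$ bounds, and monotonicity from the Lasry--Lions monotonicity of $U(-T_n,\cdot,\cdot)$, preserved under uniform limits; condition~(b') of Remark~\ref{another_condition_weak_ergodic_master} is obtained by passing to the limit $T_n\to\infty$ in the MFG system satisfied by $\bigl(u_{-T_n+\cdot} - \bar\lambda(T_n-\cdot), \mu_{-T_n+\cdot}, v_{-T_n+\cdot}\bigr)$ viewed on a fixed window $[0,T]$, whose HJ equation precisely acquires the $\bar\lambda$ drift upon this subtraction, using the uniform regularity to control the terminal condition and Theorem~\ref{thm.main} to control the measure component. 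Uniqueness of the weak solution of the ergodic master equation up to an additive constant then forces $V^\infty$ to be a constant, necessarily equal to $c$; since the subsequential limit is uniquely determined, the full family $V^T$ converges to $c$ uniformly. Finally, the convergence of $D_x U$ follows by interpolating the $C^0$-convergence of $V^T$ with the uniform $C^{3+\beta}$-bound in $x$, the version on $[0,t_0]$ being obtained by the same argument with the horizon shifted by $t$. The main obstacle will be the identification of $\chi+V^\infty$ as a weak solution of the ergodic master equation, since this requires propagating the convergence through the stochastic MFG characterization and replaces the compactness-based PDE arguments available in the deterministic setting of~\cite{CP19}.
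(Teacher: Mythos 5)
Your overall strategy---uniform regularity of $V^T$, Arzel\`a--Ascoli, identification of the limit as a solution of a master equation, and anchoring via the stationary process $\bar\mu$---matches the paper's approach, but two steps contain genuine gaps.

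\textbf{Missing time equicontinuity.} You define $V^T(x,m) := U(-T,x,m) - \bar\lambda T - \chi(x,m)$ as a function of $(x,m)$ only and extract a convergent subsequence $T_n$ at the single time $-T_n$. But to verify condition~(b') for the putative weak solution $\chi + V^\infty$, you must pass to the limit in the MFG system on a fixed window $[0,\tau]$, where the terminal condition at time $\tau$ involves $V^{T_n - \tau}$ (not $V^{T_n}$). There is nothing in your argument that forces $V^{T_n-\tau}$ to converge to the same limit $V^\infty$ along the chosen subsequence. The paper handles this by introducing the two-variable map $V^T(t,x,m) = U(t-T,x,m)+\bar\lambda(t-T)$ and proving a uniform $\tfrac12$-H\"older estimate in time (Lemma~\ref{lem.uniform_time_regularity_finitemaster}), which makes Arzel\`a--Ascoli apply simultaneously across all times and guarantees that $V^{T_n - \tau}$ converges to $V^\infty(\tau,\cdot,\cdot)$ along the same subsequence. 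Without this time regularity, the identification of the limit as a weak solution (ergodic or time-dependent) does not close.

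\textbf{Circular use of uniqueness.} Your final step invokes ``uniqueness of the weak solution of the ergodic master equation up to an additive constant'' to force $V^\infty$ to be constant. But as the paper states explicitly (in the remark following Theorem~\ref{lem.existence_weak_solution_cell_Master_Equation}), this uniqueness is an \emph{outcome} of the very theorem you are proving, so citing it is circular. The paper instead shows $V-\chi$ is constant by a direct Lasry--Lions duality argument: it compares the two McKean--Vlasov flows driven by $D_xV$ and $D_x\chi$ with the same initial measure, uses Theorem~\ref{thm.main} to show both converge to $\bar\mu_t$, deduces $D_xV=D_x\chi$ along these flows, and then integrates the two backward SPDEs in $x$ and expectation to identify the constant. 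You would need to spell out this (or an equivalent) direct argument rather than invoke a not-yet-available uniqueness statement; once done, your outline essentially reduces to the paper's proof.
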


\begin{cor} \label{cor.uT-lambda}
Let $c$ be the constant given in Theorem \ref{thm.CVU}. For $T>0$ and $m_0 \in \mathcal{P}(\T^d)$, let $(u^T, m^T, v^T)$ be the solution of \eqref{eq.MFGCN}-\eqref{eq.MFGCNbc} with $t_0=0$. Then for any $t_1 \geq 0$, the following convergence holds:  
$$
\Big[ \uT(x) -\bar \lambda (T-t) \Big] - \Big[ \chi( x, \bar m_t) + c \Big] \xrightarrow[T\to\infty]{} 0 \qquad \text{a.s.},
$$
uniformly in $(t,x)\in[0,t_1]\times\T^d$ for any $t_1>0$, where $\bar m_t$ solves,
$$
d\mS = \left((1+\cwn)\Delta \mS +{\rm div}(\mS D_x \chi( x, \mS))\right)dt -\sqrt{2\cwn} D \mS \cdot dW_t,\qquad \bar m_0= m_0.
$$
Moreover, for any $\theta\in(0,1)$,
\[
\Big[ u^T_{\theta T}(x) - (1-\theta) \bar \lambda T \Big] - \Big[ \chi( x, \bar \mu_{\theta T}) + c \Big] \xrightarrow[T\to\infty]{} 0 \qquad \text{in $L^2_\omega$},
\]
uniformly for $x\in\T^d$, where $\bar \mu$ is the stationary solution in Lemma \ref{lem.Wmu}.
\end{cor}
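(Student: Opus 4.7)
The starting point is the master equation characterization of the MFG system. Let $U:(-\infty,0]\times\T^d\times\mathcal{P}\to\R$ denote the classical solution of \eqref{eq.finitemaster}--\eqref{termcondmasterT} built in \cite{CDLL}. Applied with horizon $T$, one has, a.s.\ on $[0,T]\times\T^d$, the identities $u^T_t(x)=U(t-T,x,m^T_t)$ and $Du^T_t(x)=D_xU(t-T,x,m^T_t)$. Since both convergences stated in Theorem~\ref{thm.CVU} are uniform in $(x,m)\in\T^d\times\mathcal P$, for every fixed $t_1>0$ there exists a deterministic sequence $\varepsilon_T\downarrow 0$ such that
\[
\sup_{(t,x,m)\in[0,t_1]\times\T^d\times\mathcal P}\!\Bigl(\bigl|U(t-T,x,m)-\bar\lambda(T-t)-\chi(x,m)-c\bigr|+\bigl|D_xU(t-T,x,m)-D_x\chi(x,m)\bigr|\Bigr)\le\varepsilon_T.
\]

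The main technical step is then to prove $\sup_{t\in[0,t_1]}{\bf d}_1(m^T_t,\bar m_t)\to 0$ almost surely. Since $m^T$ and $\bar m$ share the same initial condition $m_0$ and are driven by the same common noise, the shifted processes $\shif m^T_t=(\mathrm{Id}-\sqrt{2\cwn}W_t)\sharp m^T_t$ and $\shif{\bar m}_t=(\mathrm{Id}-\sqrt{2\cwn}W_t)\sharp\bar m_t$ solve, pathwise in $\omega$, deterministic parabolic equations with random, Lipschitz-in-measure drifts equal to the pullbacks of $D_xU(t-T,\cdot,m^T_t)$ and $D_x\chi(\cdot,\bar m_t)$ respectively. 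The difference of these drifts splits into a uniform $O(\varepsilon_T)$ contribution---coming from the convergence of $D_xU$ to $D_x\chi$ displayed above---plus a term Lipschitz in $\shif m^T_t-\shif{\bar m}_t$, thanks to the global Lipschitz regularity of $D_x\chi$ in $m$ granted by Definition~\ref{defweakmastercell}(a). A standard Gronwall estimate in a negative Sobolev norm such as $(C^{1+\beta})'$ then yields $\sup_{t\in[0,t_1]}\|m^T_t-\bar m_t\|_{(C^{1+\beta})'}\le C(t_1)\,\varepsilon_T$ a.s., which immediately implies $\sup_{t\in[0,t_1]}{\bf d}_1(m^T_t,\bar m_t)\to 0$ almost surely.

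Combining these two facts with the Lipschitz dependence of $\chi$ in $m$ one obtains, uniformly in $(t,x)\in[0,t_1]\times\T^d$ and a.s.,
\begin{align*}
\bigl|u^T_t(x)-\bar\lambda(T-t)-\chi(x,\bar m_t)-c\bigr|&\le\bigl|U(t-T,x,m^T_t)-\bar\lambda(T-t)-\chi(x,m^T_t)-c\bigr|+\bigl|\chi(x,m^T_t)-\chi(x,\bar m_t)\bigr|\\
&\le\varepsilon_T+C\,{\bf d}_1(m^T_t,\bar m_t)\;\longrightarrow\;0,
\end{align*}
which is the first claim. For the second one, the master equation identity evaluated at $t=\theta T$ reads $u^T_{\theta T}(x)-(1-\theta)\bar\lambda T=U(-(1-\theta)T,x,m^T_{\theta T})-\bar\lambda(1-\theta)T$, so Theorem~\ref{thm.CVU} directly provides a deterministic bound $\sup_x|u^T_{\theta T}(x)-(1-\theta)\bar\lambda T-\chi(x,m^T_{\theta T})-c|\le\varepsilon'_T\to 0$. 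Finally, using Lipschitzness of $\chi$ in $m$, the boundedness of ${\bf d}_1$ on $\mathcal P(\T^d)$, and the $L^1$ turnpike estimate \eqref{ineq.expo} of Theorem~\ref{thm.main},
\[
\sup_x\bigl\|\chi(x,m^T_{\theta T})-\chi(x,\bar\mu_{\theta T})\bigr\|_{L^2_\omega}\le C\,\bigl\|{\bf d}_1(m^T_{\theta T},\bar\mu_{\theta T})\bigr\|_{L^2_\omega}\le C\bigl(e^{-\omega\theta T/2}+e^{-\omega(1-\theta)T/2}\bigr)\to 0,
\]
which completes the proof. The only genuinely delicate point is the stability estimate for $m^T-\bar m$; after the time-shift reduction, however, it becomes a classical Gronwall argument for a deterministic parabolic PDE with random, Lipschitz-in-measure drift.
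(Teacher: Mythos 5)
Your overall strategy---use the representation $u^T_t(x)=U(t-T,x,m^T_t)$, apply Theorem~\ref{thm.CVU} to control the deterministic map, then reduce the a.s.\ convergence to $\sup_{t\le t_1}{\bf d}_1(m^T_t,\bar m_t)\to 0$ pathwise after the Itô--Wentzell time shift---is the same as the paper's, and your treatment of the $\theta T$ case via the turnpike bound \eqref{ineq.expo} (together with the boundedness of ${\bf d}_1$ on $\mathcal P(\T^d)$ to pass from $L^1_\omega$ to $L^2_\omega$) is correct. The two arguments diverge at the key stability step for $m^T\to\bar m$.

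The paper handles this step by an $\omega$-by-$\omega$ compactness argument: from the uniform $\tfrac12$-H\"older bound on $\shif m^T$ it extracts (Arzel\`a--Ascoli) a locally uniform limit, shows that the limit is a weak solution of the shifted McKean--Vlasov equation \eqref{tmp_MV_equation}, and then invokes uniqueness of that equation. You instead propose a quantitative Gronwall estimate. That is a legitimate alternative, and in fact it would simultaneously deliver the uniqueness that the paper invokes; however, the norm you chose does not close the loop.

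Here is the gap. After the shift, the difference $\mu_t=\shif m^T_t-\shif{\bar m}_t$ solves a Fokker--Planck equation $\partial_t\mu=\Delta\mu+\dive(\mu V_t)+\dive(B_t)$ with $\mu_0=0$, where $B_t=\shif{\bar m}_t\big[D_xU(t-T,\tau_t,\tau_t\sharp\shif m^T_t)-D_x\chi(\tau_t,\tau_t\sharp\shif{\bar m}_t)\big]$. Your own decomposition of the drift gives, for any dual norm you wish to use on the left,
\[
\text{(source at time $s$)}\;\lesssim\;\varepsilon_T + {\bf d}_1(\shif m^T_s,\shif{\bar m}_s).
\]
To close a Gronwall inequality you therefore need ${\bf d}_1$ to be \emph{dominated} by the norm in which you propagate $\mu_t$. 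But $(C^{1+\beta})'$ is a \emph{weaker} norm than ${\bf d}_1\simeq\|\cdot\|_{(C^{0,1})'}$: one has $\|\mu\|_{(C^{1+\beta})'}\le C\|\mu\|_{(C^{0,1})'}$ and not the reverse. Consequently the inequality $\|\mu_t\|_{(C^{1+\beta})'}\lesssim\int_0^t(\varepsilon_T+{\bf d}_1(\mu_s))\,ds$ leaves ${\bf d}_1(\mu_s)$ uncontrolled and the Gronwall does not apply. The final ``immediately implies $\sup_t{\bf d}_1\to 0$'' suffers from the same issue: a small $(C^{1+\beta})'$-norm does not give a quantitative ${\bf d}_1$ bound without an extra interpolation/compactness step (it is enough for the \emph{qualitative} statement, but you would first need the Gronwall to close, which it doesn't in this norm).

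The fix is to run the stability estimate directly in the Kantorovich--Rubinstein dual norm $(C^{0,1})'$ (equivalently ${\bf d}_1$ on centered differences of probability measures), testing the equation for $\mu_t$ against the pathwise backward flow with Lipschitz terminal data. Since the drift $V_t=D_xU(t-T,\cdot,\cdot)$ is bounded in $C^1$ uniformly (Lemma~\ref{lem.uniform_bound_finitemaster}), the backward flow preserves the Lipschitz class up to an $e^{Ct}$ factor, which is all you need over the fixed window $[0,t_1]$. You then obtain ${\bf d}_1(\shif m^T_t,\shif{\bar m}_t)\le C(t_1)\varepsilon_T$ and the argument goes through. With that correction your route is sound, arguably more self-contained than the paper's (you no longer need to quote the uniqueness of \eqref{tmp_MV_equation} as a black box, since it follows from the same estimate), at the mild cost of having to verify the ${\bf d}_1$-stability lemma for the pathwise Fokker--Planck flow rather than quoting equicontinuity.
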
 

\begin{rmk}
In particular, Corollary \ref{cor.uT-lambda} implies that,
\[
\lim_{T\to\infty}u^T_0(x) -\bar \lambda T \to \chi( x, m_0) + c.
\]

Note also that, for any fixed $t>0$ and $ x \in \T^d$, the long-time average of the solution holds a.s.: 
\[
\lim_{T\to\infty} \frac{\uT(x)}{T} = \bar \lambda \qquad \text{a.s.}.
\]
\end{rmk}

The proof of Corollary \ref{cor.uT-lambda} is given at the end of the section.  \\

We now start the proof of Theorem \ref{thm.CVU}. We will make a systematic use of the following representation formula \cite[Lemma 5.1.1 \& Proposition 5.2.4]{CDLL}:
\be\label{repformulaU}
U(t-T,x + \sqrt{2\cwn} W_t , \mT) = U(t-T,x + \sqrt{2\cwn} W_t , (\operatorname{Id}+\sqrt{2\cwn}W_t)\sharp \mTW) = \uTW(x),
\ee
and
\be
\partial_x^l \partial_y^k \frac{\delta U}{\delta m}(-T,x,m_0,y) = \partial_x^l \shif z^T_0(x; \partial^k\delta_y) ,
\ee
where $(\uTW,\mTW,\MTW)$ is the solution of the  shifted MFG system,
\be\label{eq.shifshif}
\left\{
\begin{array}{ll}
    d \uTW = \left(-\Delta \uTW +\frac12 |D  \uTW|^2 -\shif f_t(x,\mTW) \right)dt +d\MTW & {\rm in}\; (0,T)\times \T^d, \\[2ex]
    d \mTW = \left(\Delta \mTW +{\rm div}(\mTW D  \uTW)\right)dt & {\rm in}\; (0,T)\times \T^d, \\[2ex]
    \shif m_{0}^T= m_0, \qquad \shif u^T_{T}(x)=\shif g_T(x,\shif m^T_T) & {\rm in}\;  \T^d,
\end{array}
\right.
\ee
(where as usual $\shif f$ and $\shif g$ are defined in \eqref{def.shiffg}) and $(\zTW,\rTW,\NTW)$ is the solution of the following linearized MFG system,
{\small
\be
\left\{
\begin{array}{ll}
    \ds d \zTW = \Bigl( -\Delta \zTW + D  \uTW \cdot D  \zTW - \left<\frac{\delta \shif f_t}{\delta m}(\cdot, \mTW),\rTW \right> \Bigr)dt + d \NTW & {\rm in}\; (0,T)\times \T^d, \\[2ex]
    d\rTW=\left(\Delta \rTW +{\rm div}(\rTW D  \uTW + \mTW D  \zTW)\right)dt & {\rm in}\; (0,T)\times \T^d, \\[2ex]
    \boldsymbol \varrho_0^T= \partial^k\delta_y, \qquad \shif z^T_T(x) = \left<\frac{\delta \shif g_T}{\delta m}(\cdot, \shif m_T^T),\boldsymbol{\varrho}^T_T \right> & {\rm in}\;  \T^d.
\end{array}
\right.
\ee}
 We also give the definition of the weak solution of the master problem \eqref{eq.finitemaster} on the whole time interval. 

\begin{defn}\label{defweakfinitemaster}
We say that a function \( V \colon \R \times \T^d \times \mathcal{P}(\T^d) \to \mathbb{R} \) is a \emph{weak solution} of the master equation~\eqref{eq.finitemaster} if the following conditions are satisfied:
\begin{itemize}
    \item[(i)] The function \( V \) is \( C^3 \) in \( x \), and both \( V \) and \( D_x V \) are globally Lipschitz continuous in \( \T^d \times \mathcal{P}(\T^d) \), uniformly in \( t \in \mathbb{R} \). Moreover, \( V \) is monotone in the sense that
    \[
    \int_{\T^d} \left( V(t, x, m) - V(t, x, m') \right) \, d(m - m')(x) \geq 0 \qquad \text{for all } m, m' \in \mathcal{P}(\T^d), \text{ and all } t \in\R;
    \]

    \item[(ii)] For any $t_0<T$ and initial condition \( \hat{m}_0 \in \mathcal{P}(\T^d) \) at time $t_0$, let \( (\bar{\shif{u}}, \bar{\shif{m}}, \bar{\shif{M}}) \) denote the unique classical solution to the shifted system~\eqref{eq.shifshif} with initial condition $\hat m_0$ at time $t_0$ and terminal condition
    \[
    \bar{\shif{u}}_T(x) = V\left(T,\, x + \sqrt{2\cwn} (W_T-W_{t_0}),\, (\operatorname{Id} + \sqrt{2\cwn} (W_T-W_{t_0})) \sharp \bar{\shif{m}}_T \right).
    \]
   Then the solution satisfies a.s., for all $(t,x) \in [t_0, T] \times \T^d$, 
    \[
    \bar{\shif{u}}_t(x) = V\left(t,\, x + \sqrt{2\cwn} (W_t-W_{t_0}),\, (\operatorname{Id} + \sqrt{2\cwn} (W_t-W_{t_0})) \sharp \bar{\shif{m}}_t \right) .
    \]
\end{itemize}
\end{defn}

\begin{rmk} As in Remark \ref{another_condition_weak_ergodic_master}, we can equivalently replace condition (ii) by a symmetric condition on the original (i.e., non shifted) MFG system. 
\end{rmk}

The proof of the long-time behavior of $U(-T, \cdot, \cdot)$ as $T\to-\infty$ requires several steps. The first one is a  uniform regularity result in space and measure for $U$: 
\begin{lem}\label{lem.uniform_bound_finitemaster}
Let $U$ be the classical solution to the master equation \eqref{eq.finitemaster} with terminal condition \eqref{termcondmasterT}. Then  there exists a constant $C>0$ independent of time such that,
\be\label{es.uniform_bound_finitemaster_x_m}
\sup_{t \leq 0, m \in \mathcal{P}(\T^d) } \|D_xU(t,\cdot, m)\|_{C^{2+\beta}} + \normU{D_m U (t, \cdot, m, \cdot)}{C^{2+\beta,1+\beta}} \leq C.
\ee
\end{lem}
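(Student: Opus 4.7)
The plan is to combine the representation formulas for $U$ and its measure derivatives in terms of the shifted MFG system \eqref{eq.shifshif} and its linearization, with the uniform regularity estimates already at our disposal: the Lipschitz bound of Lemma~\ref{lem.regularity} together with the Schauder-type bootstrap carried out in the proof of Lemma~\ref{lem.Wmu}, and the uniform control on the shifted linearized system provided by Theorem~\ref{thm.good_bound_for_linearized_system_finite}. Throughout, I fix $T>0$ and a deterministic $m_0\in\mathcal P(\T^d)$, working with $t_0=0$ so that $W_0=0$; since $\mathcal F_{0,0}$ is trivial by Blumenthal's zero-one law, both $\shif{u}^T_0$ and the linearized solution $\shif{z}^T_0$ are deterministic.

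For the estimate on $D_x U(-T,\cdot,m_0)$, the representation formula \eqref{repformulaU} evaluated at $t=t_0=0$ gives $U(-T,x,m_0)=\shif{u}^T_0(x)$. I would then reproduce the iterative Schauder argument from the second step in the proof of Lemma~\ref{lem.Wmu}: the uniform Lipschitz bound of Lemma~\ref{lem.regularity} (whose constant is controlled by $\|g\|_{C^2}$ via the terminal condition) provides a uniform $C^0$ bound on $\tilde{\shif{u}}^T$; the conditional representation $\tilde{\shif{u}}^T_t(x)=\E[a_t(x)\mid \mathcal F_{0,t}]$ combined with the parabolic Schauder estimates from \cite{Lun} applied to the backward equation for $a_s$ then lifts this bound successively to $C^{1+\beta}$, $C^{2+\beta}$ and finally $C^{3+\beta}$, uniformly in $T$ and $m_0$. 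Evaluating at $t=0$ gives $\|D_x U(-T,\cdot,m_0)\|_{C^{2+\beta}}\leq C$.

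For the estimate on $D_m U$, I would invoke the linearization formula stated just after \eqref{repformulaU}: for multi-indices $l$ and $k$,
\[
\partial_x^l\partial_y^k\tfrac{\delta U}{\delta m}(-T,x,m_0,y)=\partial_x^l\shif{z}^T_0(x;\partial^k\delta_y),
\]
where $\shif{z}^T_0(\,\cdot\,;\rho_0)$ denotes the solution of the shifted linearized system \eqref{eq.linearized_MFG_shifted} at $t=0$ with initial datum $\rho_0$. For $|k|\leq 2$ one has $\|\partial^k\delta_y\|_{(C^{2+\beta})'}\leq 1$ uniformly in $y$ by duality with $C^{2+\beta}$; since $\shif{z}^T_0$ is deterministic, Theorem~\ref{thm.good_bound_for_linearized_system_finite} yields $\|\shif{z}^T_0(\cdot;\partial^k\delta_y)\|_{C^{3+\beta}}\leq C$. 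To upgrade this to Hölder continuity in $y$, I would apply the same estimate to the scaled difference $\rho_{y,y'}:=(\partial^k\delta_y-\partial^k\delta_{y'})/|y-y'|^\beta$, whose $(C^{2+\beta})'$-norm is uniformly bounded since $|\langle\rho_{y,y'},\phi\rangle|=|\partial^k\phi(y)-\partial^k\phi(y')|/|y-y'|^\beta\leq\|\phi\|_{C^{2+\beta}}$ for every $\phi\in C^{2+\beta}$ (as long as $|k|\leq 2$). Linearity of $\rho\mapsto\shif{z}^T_0(\cdot;\rho)$ then transfers these bounds to the desired $C^{3+\beta,2+\beta}$ regularity of $\frac{\delta U}{\delta m}$, which in particular yields the claimed $C^{2+\beta,1+\beta}$ bound on $D_m U=D_y\tfrac{\delta U}{\delta m}$.

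The main obstacle is ensuring that the Schauder bootstrap for $\shif{u}^T$ can be pushed all the way to the initial time $t=0$: since $m_0$ is allowed to be arbitrary in $\mathcal P(\T^d)$, including singular measures, the lower bound $\shif{m}^T\geq C^{-1}$ of Lemma~\ref{lem.regularity} is unavailable near $t=0$. This is however not actually needed for the bootstrap on $\shif{u}^T$, which only uses the uniform Lipschitz bound on $D\shif{u}^T$ and Hölder regularity in $(s,x)$ of $\shif{f}_s(x,\shif{m}^T_s)$; the latter follows from the smoothness of $f$ in $(x,m)$ under assumption \textbf{(Hf)} together with the Hölder continuity of $s\mapsto\shif{m}^T_s$ in the Wasserstein metric, itself a direct consequence of the uniform $L^\infty$ bound on $D\shif{u}^T$ and the forward Fokker--Planck equation.
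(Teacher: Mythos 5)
Your proof is correct and takes essentially the same route as the paper: the $D_xU$ bound via the representation formula \eqref{repformulaU}, the uniform Lipschitz estimate of Lemma~\ref{lem.regularity}, and the Schauder bootstrap from Lemma~\ref{lem.Wmu}; and the $D_mU$ bound via the linearization formula and Theorem~\ref{thm.good_bound_for_linearized_system_finite}, exactly mirroring the argument of Lemma~\ref{em.estichidelta}. Your explicit treatment of the H\"older-in-$y$ regularity through scaled difference quotients $\rho_{y,y'}$, and your observation that the bootstrap near $t=0$ only requires the Lipschitz bound on $D\shif u^T$ and Wasserstein-H\"older continuity of $\shif m^T$ rather than a lower bound on $\shif m^T$, are correct clarifications of points the paper leaves implicit.
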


\begin{proof} The  regularity in space of $D_xU$ can be obtained by the representation formula \eqref{repformulaU}, Lemma \ref{lem.regularity} (for the $L^\infty$ bound)  and the argument in the proof of Lemma \ref{lem.Wmu} (for higher regularity). This allows to bound  the first term in \eqref{es.uniform_bound_finitemaster_x_m} (note that, at this stage, we do not control the growth of $U$). We now turn to the regularity of $U$ in the measure variable. The argument is the same as in Lemma \ref{em.estichidelta}, except that we use Theorem \ref{thm.good_bound_for_linearized_system_finite} instead of Theorem \ref{thm.good_bound_for_linearized_system_discounted}. Thus the second term in \eqref{es.uniform_bound_finitemaster_x_m}  is bounded as well.
\end{proof}

The next step is a crucial bound on the difference between $U$ and $\bar \lambda t$: 
\begin{lem}\label{lem.7.6} Under the assumption of Lemma \ref{lem.uniform_bound_finitemaster}, there is a constant $c\in \R$ such that  
\be\label{limintU}
\lim_{t\to -\infty} \E\left[ \inte U(t,x, \bar \mu_0) dx\right] + \bar \lambda t = c. 
\ee
In particular, there is a constant $C>0$ such that, for any $t\leq0$, 
$$
\sup_{(x,m)\in (\T^d\times \mathcal P(\T^d))}\left| U(t,x,m)+\bar \lambda t\right| \leq C.
$$
\end{lem}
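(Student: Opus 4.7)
The argument combines the stationarity of $(\bar{\mathcal V}, \bar \mu)$, the representation of $U$ as the initial value of the MFG system, and the convergence established in Lemma~\ref{lem.keyCV}. Since $m \mapsto \inte U(t, x, m)\,dx$ is a deterministic function, $\E[\inte U(t, x, \bar \mu_0)\,dx]$ depends only on the law of $\bar \mu_0$, which by stationarity equals the law of $\bar \mu_t$; hence
\[
\E\left[\inte U(t, x, \bar \mu_0)\,dx\right] = \E\left[\inte U(t, x, \bar \mu_t)\,dx\right] .
\]
Let $(u^t, m^t, v^t)$ be the solution of \eqref{eq.MFGCN} on $[t, 0]$ with initial condition $m^t_t = \bar \mu_t$ and terminal condition $g(\cdot, m^t_0)$, i.e., the solution considered in Lemma~\ref{lem.keyCV}. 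The correspondence between classical solutions of the master equation and MFG systems (cf.~\eqref{repformulaU} applied after a time shift) gives $u^t_t(x) = U(t, x, \bar \mu_t)$ a.s., so Lemma~\ref{lem.keyCV} yields
\[
\E\left[\inte u^t_t(x)\,dx\right] - \E\left[\inte \bar u_t(x)\,dx\right] \xrightarrow[t \to -\infty]{} \bar c ,
\]
and it remains to analyze $\E[\inte \bar u_t\,dx]$.

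\textbf{Key computation.} We show that $t \mapsto \E[\inte \bar u_t\,dx] + \bar \lambda t$ is constant. The process $\bar u$ from Lemma~\ref{lem.keyCV} is the strong solution on $(-\infty, 0]$ of the backward HJ equation, with $\bar v \in [{\bf H}^{n+1}]^d$ on every bounded time interval. Applying It\^o to $\inte \bar u_t\,dx$, the Laplacian and divergence terms vanish by periodicity of $\T^d$, and the stochastic integral is a true martingale by the integrability of $\bar v$; taking expectation then gives
\[
\frac{d}{dt}\E\left[\inte \bar u_t\,dx\right] = \E\left[\inte \left(\tfrac12 |D\bar u_t|^2 - f(x,\bar \mu_t)\right)dx\right] = \E\left[\inte \left(\tfrac12 |\bar{\mathcal V}_t|^2 - f(x,\bar \mu_t)\right)dx\right] = -\bar \lambda ,
\]
where the second equality uses the identity $D\bar u = \bar{\mathcal V}$ recorded in the remarks after Lemma~\ref{lem.keyCV}, and the last equality combines the stationarity of $(\bar{\mathcal V}, \bar \mu)$ with the definition \eqref{defbarlambda} of $\bar \lambda$. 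Hence $\E[\inte \bar u_t\,dx] + \bar \lambda t \equiv c_1 := \E[\inte \bar u_0\,dx]$, and combining with the previous display yields \eqref{limintU} with $c = \bar c + c_1$.

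\textbf{Uniform bound.} By Lemma~\ref{lem.uniform_bound_finitemaster}, $U(t, \cdot, \cdot)$ is $C$-Lipschitz in $(x, m)$ for the distance $|x - y| + {\bf d}_1(m, m')$, uniformly in $t \leq 0$. Since ${\bf d}_1(m, \bar \mu_0(\omega)) \leq \diam(\T^d)$ for every $\omega$, this gives, pathwise,
\[
\left| U(t, x, m) - \inte U(t, y, \bar \mu_0(\omega))\,dy \right| \leq C .
\]
Taking expectation (recalling that $U$ is deterministic) and combining with \eqref{limintU} yields $\sup_{(x, m)} |U(t, x, m) + \bar \lambda t| \leq C$.

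\textbf{Main obstacle.} The computational crux is identifying the drift of $\E[\inte \bar u_t\,dx]$ as exactly $-\bar \lambda$; this rests on the gradient identity $D\bar u = \bar{\mathcal V}$, the stationarity of $(\bar{\mathcal V}, \bar \mu)$, and the very definition of $\bar \lambda$. The technical point to check carefully is that the stochastic integral produced by It\^o is a true martingale (so that its expectation vanishes), which follows from the ${\bf H}^{n+1}$ regularity of $\bar v$ on compact time intervals.
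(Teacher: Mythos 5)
Your proof is correct and follows essentially the same route as the paper's: invoke Lemma~\ref{lem.keyCV} together with the representation formula to get $U(t,\cdot,\bar\mu_t)-\bar u_t\to\bar c$, integrate the HJ equation satisfied by $\bar u$ in space and expectation, use $D\bar u=\bar{\mathcal V}$ with stationarity and \eqref{defbarlambda} to identify the drift as $-\bar\lambda$, use stationarity of $\bar\mu$ to replace $\bar\mu_t$ by $\bar\mu_0$ inside the deterministic map $U$, and finish the uniform bound via the Lipschitz estimate from Lemma~\ref{lem.uniform_bound_finitemaster}. The only cosmetic difference is that you phrase the key step as a derivative identity $\frac{d}{dt}\E[\inte\bar u_t]=-\bar\lambda$ while the paper writes the equivalent integrated identity $\E[\inte\bar u_0-\inte\bar u_t]=\bar\lambda t$, and you make the martingale-vanishing step explicit where the paper leaves it implicit.
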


\begin{proof}
Let us come back to Lemma \ref{lem.keyCV}, which, combined with the representation formula \eqref{repformulaU}, states that there is a real constant $\bar c$ such that 
\be\label{kdjfnvlkjn}
\lim_{t\to -\infty} U(t,\cdot, \bar \mu_t) - \bar u_t(\cdot) = \bar c\qquad \text{in}\; L^1(\Omega), 
\ee
the convergence being uniformly in $x$. Here  $(\bar u, \bar \mu, \bar v)$ is the solution to \eqref{eq.MFGCN} on the time interval $(-\infty, 0)$ with terminal condition at time $t=0$ given by $\bar u_0$, where $\bar u_0$ is the $\mathcal F_0-$random variable such that $D\bar u_0=\bar{\mathcal V}_0$, $\bar u_0(0)=0$ (recall that the stationary solution $(\bar{\mathcal V}, \bar \mu)$ is defined in Lemma \ref{lem.Wmu}). 

By the equation satisfied by $\bar u$, we have, for any $t<0$,  
\begin{align*}
\E\left[ \inte \bar u_0-\inte \bar u_t\right]  = \int_t^0 \E\left[ \inte (\frac12 | \bar{\mathcal V}_s|^2-f(x, \bar \mu_s))\right] ds  =   \bar \lambda t, 
\end{align*}
as $(\bar{\mathcal V}, \bar \mu)$ is stationary and $\bar \lambda$ is given by \eqref{defbarlambda}. We now use that  $U$ is a deterministic map, that $\bar \mu$ is stationary and that \eqref{kdjfnvlkjn} holds to get 
\begin{align*}
\lim_{t\to -\infty} \E\left[ \inte U(t, x, \bar \mu_0)dx\right] +\bar \lambda t & = \lim_{t\to -\infty} \E\left[ \inte U(t, x, \bar \mu_t)dx\right] +\bar \lambda t\\ 
&= \lim_{t\to -\infty} \E\left[ \inte \bar u_t(x)dx\right]+ \bar c+\bar \lambda t = \E\left[ \inte \bar u_0(x)dx \right] + \bar c . 
\end{align*}
Hence  \eqref{limintU} holds with $c= \E\left[ \inte \bar u_0(x)dx \right] + \bar c$. As $U$ is globally Lipschitz on $\T^d \times \mathcal P(\T^d)$ uniformly in $t$, we infer that 
\begin{align*}
 \limsup_{t\to -\infty}\sup_{(x,m) \in \T^d \times \mathcal P(\T^d)} \left|   U(t,x, m) + \bar \lambda t\right| 
& \leq \limsup_{t\to -\infty} \left|   \E\left[ \inte U(t, x, \bar \mu_0)dx\right] +\bar \lambda t \right| + C \\
 &  \leq  \left| \E\left[ \inte \bar u_0(x)dx \right] + \bar c\right| + C. 
\end{align*}
This implies that $ U(t,\cdot, \cdot)  + \bar \lambda t$ is globally bounded.
\end{proof}

Finally we show a uniform regularity in time of $U$: 
\begin{lem}\label{lem.uniform_time_regularity_finitemaster} Under the assumption of Lemma \ref{lem.uniform_bound_finitemaster}, there is a constant $C>0$ such that, for any $h\in (0,1)$ and $t< t+h\leq0$,
\be\label{es.uniform_bound_finitemaster_t}
\sup_{m\in\mathcal{P}(\T^d)} \normD{U(t+h,\cdot,m) - U(t,\cdot,m)} \leq Ch^{\frac{1}{2}}
\ee
\end{lem}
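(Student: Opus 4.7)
The plan is to exploit the representation of $U$ through the shifted MFG system~\eqref{eq.MFGCNtilde}--\eqref{eq.MFGCNtildebc}. Fix $t<t+h\leq 0$ and $m\in\mathcal P(\T^d)$, and let $(\shif u,\shif m,\shif M)$ denote the unique classical solution of \eqref{eq.MFGCNtilde}--\eqref{eq.MFGCNtildebc} on $[t,0]$ with $\shif m_t=m$ (and the standard terminal condition coming from $g$). The representation in~\eqref{repformulaU} then gives, for every $s\in[t,0]$, almost surely,
\[
\shif u_s(x) = U\bigl(s,\,x+\sqrt{2\cwn}(W_s-W_t),\,(\mathrm{Id}+\sqrt{2\cwn}(W_s-W_t))\sharp \shif m_s\bigr),
\]
so that $\shif u_t(x)=U(t,x,m)$, while $\shif u_{t+h}(x)=U(t+h,\,x+\Delta W,\,(\mathrm{Id}+\Delta W)\sharp\shif m_{t+h})$ with $\Delta W := \sqrt{2\cwn}(W_{t+h}-W_t)$.

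Integrating equation~\eqref{eq.MFGCNtilde}-(i) between $t$ and $t+h$ and taking expectation---using that $\shif M_t(x)=0$ and that $s\mapsto\shif M_s(x)$ is a martingale for each $x$---yields
\[
\E[\shif u_{t+h}(x)]-U(t,x,m) = \int_t^{t+h}\E\Bigl[{-}\Delta\shif u_s(x)+\tfrac{1}{2}|D\shif u_s(x)|^2-\shif f_s(x,\shif m_s)\Bigr]\,ds.
\]
The uniform $C^{2+\beta}$ estimate for $\shif u_s$ supplied by Lemma~\ref{lem.uniform_bound_finitemaster}, combined with the boundedness of $f$, makes the integrand uniformly bounded, so the right-hand side is $O(h)$. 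The uniform Lipschitz bound on $U(t+h,\cdot,\cdot)$ from the same lemma then gives
\[
\bigl|\E[\shif u_{t+h}(x)]-U(t+h,x,m)\bigr|\leq C\,\E[|\Delta W|]+C\,\E\bigl[{\bf d}_1(m,\shif m_{t+h})\bigr].
\]

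It therefore remains to show $\E[|\Delta W|]+\E[{\bf d}_1(m,\shif m_{t+h})]=O(h^{1/2})$. The first term is immediate from Brownian scaling. For the second, I would observe that, $\omega$ by $\omega$, $\shif m_s$ is the law of the SDE $dX_s=-D\shif u_s(X_s)\,ds+\sqrt{2}\,dB_s$ with $X_t\sim m$ independent of $B$; since $D\shif u$ is uniformly bounded by Lemma~\ref{lem.uniform_bound_finitemaster}, the coupling $(X_t,X_{t+h})$ yields ${\bf d}_1(m,\shif m_{t+h})\leq Ch^{1/2}$ almost surely. Combining these estimates produces the bound on $\|U(t+h,\cdot,m)-U(t,\cdot,m)\|_\infty$.

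For the $L^\infty$ bound on $D_xU(t+h,\cdot,m)-D_xU(t,\cdot,m)$, I differentiate the representation in $x$---the measure argument of $U$ being $x$-independent---to get $D\shif u_s(x)=D_xU(s,x+\sqrt{2\cwn}(W_s-W_t),\cdot)$, and run the same scheme on the linear equation $d(D\shif u_s)=(-\Delta D\shif u_s+D^2\shif u_s\,D\shif u_s-D_x\shif f_s(\cdot,\shif m_s))\,ds+d(D\shif M_s)$, whose coefficients and source are uniformly bounded by Lemma~\ref{lem.uniform_bound_finitemaster}. The residual now invokes the Lipschitz constant of $D_xU$ in $m$, which is controlled via $D_mD_xU=D_xD_mU$ and the $C^{2+\beta,1+\beta}$ bound on $D_mU$ from the same lemma. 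The main obstacle I expect is making the Wasserstein estimate ${\bf d}_1(m,\shif m_{t+h})\leq Ch^{1/2}$ uniform in $m$ and $t$: this is what forces the $h^{1/2}$ rate (rather than $h$) in~\eqref{es.uniform_bound_finitemaster_t}, and it hinges on the uniform-in-$(t,m)$ bound on the drift $D\shif u_s$ of the shifted Fokker--Planck equation, itself supplied by Lemma~\ref{lem.uniform_bound_finitemaster}.
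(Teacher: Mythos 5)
Your proposal is correct and follows essentially the same route as the paper: represent $U$ via the shifted MFG system, split the increment into (a) the spatial/Wasserstein perturbation of the arguments of $U$ at time $t+h$ and (b) the time evolution of $\shif u$ between $t$ and $t+h$, then observe that both contributions are $O(h^{1/2})$, the rate being dictated by the Wasserstein term $\E[{\bf d}_1(m,\shif m_{t+h})]$. The one technical difference is minor: you control $|\E[\shif u_{t+h}]-\shif u_t|$ by integrating the shifted Hamilton--Jacobi equation directly (exploiting the uniform $C^{3+\beta}$-in-$x$ bound on $U$ from Lemma \ref{lem.uniform_bound_finitemaster} so that $\Delta\shif u_s$ is bounded, giving $O(h)$), whereas the paper uses the heat-kernel/Duhamel representation together with the smoothing estimate $\|(\Gamma_h-\delta_0)\ast\varphi\|_{C^0}\leq Ch^{1/2}\|\tilde\varphi\|_{C^1}$, which only needs the $C^1$ bound and gives $O(h^{1/2})$; since the uniform $C^{3+\beta}$ regularity is available anyway, both arguments close.
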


\begin{proof} We recall that, by Section~5.4.2 of \cite{CDLL}, we have, for each $T>0$, $h>0$, $x\in \T^d$ and $m_0\in \mathcal P(\T^d)$,  
$$
U(-T+h,x+\sqrt{2\cwn}W_h,m_h^T) = U(-T+h,x+\sqrt{2\cwn}W_h,(\operatorname{Id}+\sqrt{2\cwn}W_h) \sharp \shif m_h^T) = \shif u^T_h(x), 
$$
where $(\uTW,\mTW,\MTW)$ is the solution of the MFG system \eqref{eq.shifshif}. Let us note that a.s. we have,
\[
\shif u_0^T(x) = \EP{ \Gamma_h \ast \shif u^T_h(x) + \int_0^h \Gamma_{s} \ast ( \frac{1}{2} |D  \shif u_s^T|^2 - \shif f_s )(x) ds },
\]
where $\Gamma$ is the heat kernel. 
Hence by Lemma \ref{lem.uniform_bound_finitemaster}
\begin{align*}
& | U (-T+h,x,m_0) - U(-T,x,m_0) | \\
& \leq \bigg|  U(-T+h,x,m_0) - \EP{ U(-T+h,x+\sqrt{2\cwn}W_h,m^T_{h})} \bigg| + \bigg|  \EP{   \shif u^T_h(x)} - \shif u^T_0(x) \bigg| \\
& \leq C\EP{ |\sqrt{2\cwn}W_h | + {\bf d}_1( m^T_h, m_0)} + \EP{ \bigg|  ( \Gamma_h -\delta_0)\ast \shif u^T_h(x) + \int_0^h \Gamma_{s} \ast ( \frac{1}{2} |D  \shif u_s^T|^2 - \shif f_s )(x) ds   \bigg| } \\
& \leq C\Bigl( \EP{\mathbf{d}_1(\shif m_h^T, m_0)} + \sqrt{2 \cwn h} + \sqrt{h} C \sup_{t \leq 0} \EP{\normD{\tilde{\shif u}^T_t}} + h \cdot C  ( \|D  \shif u^T\|^2_\infty+ \|f\|_\infty)
  \Bigr) \leq Ch^{1/2}. 
\end{align*}
where the third inequality is true by the fact that for any $0 \leq \alpha' < \alpha \leq 1$, $0<h\leq1$ and $\varphi \in C^{n+\alpha}(\T^d)$, there exists a constant $C$ depending only on $\alpha'$, $\alpha$, and $d$ such that,
\[
\normU{(\Gamma_h - \delta_0) \ast \varphi }{C^{n+\alpha'}} = \normU{(\Gamma_h - \delta_0) \ast \tilde\varphi }{C^{n+\alpha'}} \leq C h^{\frac{\alpha-\alpha'}{2}} \normU{\tilde\varphi}{C^{n+\alpha}}.
\]
Thanks to the same estimate for $D_x U$, we conclude the proof.
\end{proof}

\begin{proof}[Proof of Theorem \ref{thm.CVU}] 
Let us define $V^T: \mathbb{R} \times \T^d \times \mathcal{P}(\T^d) \to \mathbb{R}$ by
\[
V^T(t,x,m)= 
\left\{ 
\begin{array}{lc}
    U( t - T , x , m ) + \bar \lambda (t - T) &, \text{ for $t \leq T$,}  \\
    g(x,m) &, \text{ for $t > T$,}
\end{array}
\right.
\]
Combining Lemmata \ref{lem.uniform_bound_finitemaster}, \ref{lem.7.6} and \ref{lem.uniform_time_regularity_finitemaster}, we know that $\{V^T\}_{T \geq 0}$ is uniformly bounded and equi-continuous on $\mathbb{R} \times \T^d \times \mathcal{P}(\T^d)$. More precisely, we have in fact  that for any $\beta\in(0,1)$, there exists $C>0$ such that,
\[
\begin{aligned}
\sup_{\substack{t \in \R \\ |h| \leq 1}} h^{-\frac{1}{2}} & \normU{V^T(t+h,\cdot,\cdot) - V^T(t,\cdot,\cdot)}{C^1} \\
+ \sup_{\substack{t \in \R \\ m \in \mathcal{P}(\T^d)}} & \|D_x V^T(t,\cdot, m)\|_{C^{2+\beta}} 
+ \normU{D_m V^T(t,\cdot,m,\cdot)}{2+\beta,1+\beta} \leq C.
\end{aligned}
\]
This shows that the family $\{V^T\}$ is uniformly $\frac{1}{2}$-Holder continuous in time and Lipschitz in space and measure. By the Arzela-Ascoli Theorem, there exists a sequence $T_n \to +\infty$ such that: (a) $V^{T_n}$ converges locally uniformly to a limit $V=V(t,x,m)$; (b) the first to third space derivatives of $V^{T_n}$ converge locally uniformly to the corresponding derivatives of $V$. Here, local uniform convergence means that for any $\tau > 0$, the convergence is uniform on compact sets $[-\tau, \tau] \times \T^d \times \mathcal{P}(\T^d)$. \\

Let us first note that $V-\bar\lambda t$ is indeed a weak solution to the master equation \eqref{eq.finitemaster} on $\R\times \T^d\times \mathcal P(\T^d)$ as defined in Definition \ref{defweakfinitemaster}. To prove this, let us fix $\hat m_0\in \mathcal P$ (for simplicity of notation, we do the proof for $t_0=0$) and let us compare $(\shif u^n_t,\shif m^n_t , \shif M^n_t)$ and $(\shif u^\infty_t, \shif m^\infty_t, \shif M^\infty_t)$ on $[0, T]$, which are respectively the solutions of equations \eqref{eq.MFGCellshif} with initial condition $\hat m_0$ and terminal conditions,
\[
\shif u^n_T(x) = V^{T_n}(T,x + \sqrt{2\cwn} W_T, (\operatorname{Id} + \sqrt{2\cwn} W_T )\sharp \shif m^n_T),
\]
and
\[
\shif u^\infty_T (x) = V(T,x + \sqrt{2\cwn} W_T, (\operatorname{Id} + \sqrt{2\cwn} W_T )\sharp \shif m^\infty_T).
\]
Thanks to the It\^{o}'s formula (see \cite[Lemma 4.3.11]{CDLL}) and the monotonicity of $f$, we have that,
\[
\EP{\int_{\T^d} (\shif u^n_T - \shif u^\infty_T ) (\shif m^n_T - \shif m^\infty_T ) } \leq - \frac{1}{2} \EP{\int_0^T\int_{\T^d} |D \shif u^n_t - D \shif u^\infty_t |^2 (\shif m^n_t + \shif m^\infty_t) }.
\]
Note that for the left hand side of the above inequality, we have that,
\[
\begin{aligned}
  & \EP{\int_{\T^d} (\shif u^n_T - \shif u^\infty_T ) (\shif m^n_T - \shif m^\infty_T ) } \\
= & \EP{\int_{\T^d} \Big( U(T-T_n, \tau^T, \tau^T\sharp \shif m^n_T) - V(T,\tau^T, \tau^T\sharp \shif m^\infty_T) \Big) (\shif m^n_T - \shif m^\infty_T ) }    \\
= & \EP{\int_{\T^d} \Big( U(T-T_n, x, \tau^T\sharp \shif m^n_T) - V(T, x, \tau^T\sharp \shif m^\infty_T) \Big) d(\tau^T\sharp\shif m^n_T - \tau^T\sharp\shif m^\infty_T )(x) }    \\
\geq & \EP{\int_{\T^d} \Big( U(T-T_n, x, \tau^T\sharp \shif m^n_T) - V(T, x, \tau^T\sharp \shif m^n_T) \Big) d(\tau^T\sharp\shif m^n_T - \tau^T\sharp\shif m^\infty_T )(x)} \\
\geq & - 2 \normU{U(T-T_n,\cdot~,\cdot) - V(T,\cdot~,\cdot~)}{\infty},
\end{aligned}
\]
where $\tau^T:\T^d\times\Omega\to\T^d$ is defined as $\tau^T(x,\omega) = x +W_T(\omega)$ and we used the monotonicity of $V$ in the first inequality. Hence, by combining the above inequalities with Lemma \ref{lem.regularity}, we obtain for any $0 < t_0 < T$ that
\[
\lim_{n\to\infty} \EP{\int_{t_0}^T \int_{\T^d} |D \shif u^n_t - D \shif u^\infty_t |^2 } = 0 \ , 
\]
thus deriving that,
\[
\lim_{n\to\infty} \EP{{\bf d}_1 (\shif m^n_t, \shif m^\infty_t)} = 0, \qquad \forall~ t \in [0,T].
\]
We now apply the comparison principle between $\shif u^n$ and $\shif u^\infty$, which both satisfy a Hamilton-Jacobi equation on $[0,T]$ with a slightly different right-hand side and slightly different terminal condition. We have 
\begin{align*}
\E\left[ \sup_{t,x}| \shif u^n_t(x) -\shif u^\infty_t(x)|\right] \leq  & \E\left[  \int_0^T \sup_x |-\shif f_t(x,\shif m^n_t)+\shif f_t(x,\shif m^\infty_t)|\right]  \\ 
& \qquad\qquad +  \E\left[  \sup_x | U(T-T_n, x, \tau^T\sharp \shif m^n_T)-V(T,x, \tau^T\sharp  \shif m^\infty_T) | \right].
\end{align*}
Since $f$, $U$, and $V$ are Lipschitz continuous and that $U(T-T_n,\cdot~,\cdot~)$ converges to $V(T,\cdot~,\cdot~)$, the right hand side of the above equation tend to 0. On the other hand,
\[
\shif u^n_t (x) = V^{T_n} (t, x+\sqrt{2\cwn}W_t, (\operatorname{Id}+ \sqrt{2\cwn}W_t)\sharp\shif m^n_t),
\]
so that, letting $n\to \infty$ shows that,
 $$
 \shif u^\infty_t(x) = 
 V(t, x+\sqrt{2\cwn}W_t, (\operatorname{Id}+ \sqrt{2\cwn}W_t)\sharp\shif m^\infty_t) \qquad \text{a.s.}. 
 $$
Thus  $V-\bar\lambda t$ is indeed a weak solution to \eqref{eq.ergomaster}. \\

Next we show that \eqref{cv.Uinfty} holds. Let $c$ be the constant satisfying \eqref{limintU}.  Lemma \ref{lem.7.6} and the convergence of $V^{T_n}$ imply  that 
\be\label{uhjgkzedrfv}
\E\left[\inte V(t,x , \bar \mu_0)dx\right] = c\qquad \forall t\in \R.
\ee
Our goal is to show that $V-  \chi$ is constant, namely
\be\label{V-chi=cte}
V(t,x,m)-\chi(x,m) = c- \E\left[\inte \chi(y , \bar \mu_0)dy\right] . 
\ee

Fix $\hat m_0 \in \mathcal P(\T^d)$. Let $m$ and $\mu$ be respectively the solution to 
$$
dm_t = ((1+\cwn)\Delta m_t -{\rm div}(m_t D_x V(t,x, m_t)))dt -\sqrt{2\cwn} Dm_t \cdot dW_t, \qquad m_{0}= \hat m_0
$$
and 
$$
d\mu_t = ((1+\cwn)\Delta \mu_t -{\rm div}(\mu_t D_x \chi(x, \mu_t)))dt -\sqrt{2\cwn} D\mu_t \cdot dW_t, \qquad \mu_{0}= \hat m_0
$$
We also set 
$$
u_t(x)= V(t,x, m_t)\qquad {\rm and}\qquad w_t(x)= \chi(x, \mu_t).
$$
 Recall that by the definition of weak solution, for any $T>0$ there exists $v^T$ and $z^T$ such that 
$(u,m,v^T)$ and $(w,\mu,z^T)$ solve \eqref{eq.MFGCN} on the time interval $[0,T]$. 
Thus, by Theorem  \ref{thm.main} and the space regularity of $V$ and $\chi$, 
$$
\E\left[ \|m_t - \bar \mu_t\|^2_{L^2_x}\right]+ \E\left[ \| \mu_t - \bar \mu_t\|^2_{L^2_x}\right] \leq C (e^{-\omega t}+ e^{-\omega(T-t)})\qquad \forall t\in (2,T-2). 
$$
Note that both $m_t$ and $\mu_t$ do not depend on $T$.  By letting $T\to\infty$, we find that both $m_t$ and $\mu_t$ converge to $\bar \mu_t$ as $t\to\infty$. \\

Moreover, after the usual time shift \eqref{shifumVsum}, $(\shif u^T,\shif m, \shif M^T)$  and $(\shif w^T, \boldsymbol{\mu},\shif N^T)$ (for some martingale processes $\shif M^T$ and $\shif N^T$) solve the MFG system \eqref{eq.MFGCellshif}  with the same initial condition $m_0$ and with the  terminal conditions $V(T, \cdot+\sqrt{2\cwn} (W_T-W_{t_0}),(\operatorname{Id} +\sqrt{2\cwn} (W_T-W_{t_0})\sharp \shif m_T)$ and $\chi(\cdot+\sqrt{2\cwn} (W_T-W_{t_0}),(\operatorname{Id} +\sqrt{2\cwn} (W_T-W_{t_0}))\sharp \shif m_T),$ respectively.\\

Now by Lasry-Lions monotonicity argument, the fact that $m_0=\mu_0=\hat m_0$ and Poincarr\'e's inequality, we have that,
\begin{align*}
\EP{\int_0^T\int_{\T^d} |D \shif u^T_t - D \shif w^T_t |^2 (\shif m_t + \boldsymbol{\mu}_t) } & \leq - 2 \EP{\int_{\T^d} (\shif u^T_T - \shif w^T_T ) (\shif m_T - \boldsymbol{\mu}_T ) } \\
& \leq C \|D\shif u^T_T - D\shif w^T_T\|_{L^2_{\omega,x}}   \|\shif m_T - \boldsymbol{\mu}_T \|_{L^2_{\omega,x}}.
\end{align*}
This can equivalently be written as 
\begin{align*}
\EP{\int_0^T\int_{\T^d} |D u_t - D w_t |^2 (m_t + \mu_t) } & \leq C \|D u_T - D w_T\|_{L^2_{\omega,x}}   \| m_T - \mu_T \|_{L^2_{\omega,x}}.
\end{align*}
Since $ u_t$ and $ w_t$ are uniformly Lipschitz, we derive from the convergence of $m_t$ and $\mu_t$ in $L^2_{x,\omega}$ to $\bar \mu_t$ as $t\to\infty$ that,
\[
\lim_{T\to\infty} \EP{\int_0^T\int_{\T^d} |D  u_t - D  w_t |^2 ( m_t + \mu_t) } = 0.
\]
Then the bound below on $ m_t$ and  $ \mu_t$ given in Lemma \ref{lem.regularity} implies that, $D  u_t (x) = D  w_t (x)$ a.s., which in turn entails that $ m_t = \mu_t$ a.s.. 
Hence there is a real constant $\kappa$, which still may depend on the choice of $\hat m_0$ at this point, such that $u_0-w_0= \kappa$. To complete the proof we  need to show that $\kappa=c- \E\left[\inte \chi(y , \bar \mu_0)dy\right]$, which will imply \eqref{V-chi=cte}. \\

For this, let us  integrate the equations satisfied by $u$ and $w$ on $[t_0,T]\times \T^d$: we get, since  $(Du_t,m_t)=(Dw_t,\mu_t)$, 
\begin{align*}
\kappa & = \inte (u_0-w_0)dx\\ 
 & = \E\left[ -\bar \lambda (T-t_0)-\int_{t_0}^T \inte (\frac12 | Du_t(x)|^2- f(x, m_t))d)dt + \inte V(T, x, m_T)dx\right] \\
& \qquad - \E\left[  -\bar \lambda (T-t_0)-\int_{t_0}^T \inte  (\frac12 | Dw_t(x)|^2+ f(x, \mu_t))dxdt + \inte \chi(x, \mu_T)dx\right] \\
& = \E\left[ \inte (V(T, x, m_T)- \chi (x, \mu_T)) dx\right]. 
\end{align*}
We know that 
$$
\lim_{T\to\infty} \E\left[ \| m_T-\bar \mu_T\|^2_{L_x^2} + \| \mu_T-\bar \mu_T\|^2_{L_x^2} \right]=0
$$
and that $V$ and $\chi$ are uniformly Lipschitz continuous in the space and measure arguments. Hence, by stationary of $\bar \mu$ and   \eqref{uhjgkzedrfv}, 
\begin{align*}
\kappa & = \lim_{T\to\infty} \E\left[ \inte (V(T, x, \bar \mu_T)- \chi (x, \bar \mu_T)) dx\right] \\ 
& = \lim_{T\to\infty} \E\left[ \inte (V(T, x, \bar \mu_0)- \chi (x, \bar \mu_0)) dx\right]  = c- \E\left[ \inte \chi (x, \bar \mu_0) dx\right]. 
\end{align*}
This shows \eqref{V-chi=cte} and completes the proof of the theorem. 
\end{proof} 

\begin{proof}[Proof of Corollary \ref{cor.uT-lambda}]
As $(u^T,m^T,v^T)$ is the solution of \eqref{eq.MFGCN}-\eqref{eq.MFGCNbc}, we know that $\uT(x) = U(t - T, x, \mT)$ and that $\mT$ solves the McKean-Vlasov equation
\[
d \mT = \left((1+\cwn)\Delta \mT +{\rm div}(\mT D_x U(t-T, \cdot~, \mT))\right)dt -\sqrt{2\cwn} D  \mT \cdot dW_t,\qquad m^T_0= m_0.
\]
Note that, by It\^o-Wenzell formula for Schwartz distribution-valued processes (see \cite[Theorem 1.1]{K11}), $\mTW=(\operatorname{Id}-\sqrt{2\cwn}W_t)\sharp \mT$ is the unique solution of the following equation,
\[
d \mTW = \left( \Delta \mTW +{\rm div}(\mTW D_x U(t-T, \tau_t, \tau_t \sharp \mTW))\right)dt, \qquad {\shif m}^T_0= m_0,
\]
which is a McKean-Vlasov equation solved $\omega$ by $\omega$, where $\tau_t:\Omega\times\T^d\to\T^d$, $\tau_t(x,\omega):=x+\sqrt{2\cwn}W_t(\omega)$. In addition, the unique solution of this equation satisfies the following time regularity,
\[
\esssup \sup_{s \neq t} \frac{{\mathbf d}_1(\mTW,\shif m_s^T)}{|t-s|^{\frac{1}{2}}} \leq C (1+\sup_{-T\leq t\leq 0} \normU{D_x U(t,\cdot~,\cdot~)}{\infty}).
\]
Since the map \( x \mapsto D_x U(t, \cdot, m) \) is uniformly bounded in \( C^1(\T^d) \) with respect to \( t \) and \( m \), as shown in Lemma~\ref{lem.uniform_bound_finitemaster}, it follows that, for any \( t_1 > 0 \), the family \( \{ \shif m^T \}_{T \geq 0} \), when restricted to the interval \( [0, t_1] \), almost surely consists of functions that are uniformly \( \tfrac{1}{2} \)-H\"{o}lder continuous from \( [0, t_1] \) into \( \mathcal{P}(\T^d) \), where \( \mathcal{P}(\T^d) \) is endowed with the Wasserstein-1 metric \( \mathbf{d}_1 \) and is compact.\\

We now temporarily fix \( \omega \). We claim that, for this fixed $\omega$ and as $T\to\infty$, \( \{ \shif m^T \}_{T \geq 0} \) converges locally uniformly  in time to the solution  \( \shif m^\infty \colon [0, \infty)\to \mathcal{P}(\T^d) \) of the following McKean-Vlasov equation:
\be\label{tmp_MV_equation}
d \shif m^\infty_t = \left( \Delta \shif m^\infty_t + \operatorname{div}\left(\shif m^\infty_t D_x \chi(\tau_t, \tau_t \sharp \shif m^\infty_t)\right)\right) dt, \qquad \shif m^\infty_0 = m_0.
\ee
Indeed, by the Arzel\`{a}-Ascoli Theorem, for any subsequence of \( \{ \shif m^T \}_{T \geq 0} \), there exists a further subsubsequence (still denoted by \( \shif m^{T_n} \)) that converges locally uniformly, as \( n \to \infty \), to a limit function \( \shif m^\infty \colon [0, \infty) \to \mathcal{P}(\T^d) \), which is also \( \tfrac{1}{2} \)-H\"{o}der continuous. Namely,
\[
\sup_{0 \leq t \leq t_1} \mathbf{d}_1(m_t^{T_n}, m_t^\infty) \xrightarrow[n \to \infty]{} 0,\qquad \forall t_1>0.
\]
Let us check that \( \shif m^\infty \) is the weak solution of \eqref{tmp_MV_equation}. Indeed, for the fixed \( \omega \), and for every \( \varphi \in C_c^\infty([0,\R) \times \T^d) \), we have the following identity for \( \shif m^{T_n} \):
\[
\begin{aligned}
- \int_0^{\infty} \int_{\T^d} & \shif m_t^{T_n}(x) \partial_t \varphi(t,x)\, dx dt 
- \int_{\T^d} \varphi(0,x)\, m_0(dx)
= \\
& \int_0^{\infty} \int_{\T^d} \left( \Delta \varphi(t,x) - D \varphi(t,x) \cdot D_x U(t - T_n, \tau^t(x), \tau^t \sharp \shif m^{T_n}_t) \right) \shif m^{T_n}_t(x)\, dx dt.
\end{aligned}
\]
Note that, by Theorem \ref{thm.CVU}, the function \( D_x U(t - T_n, x, m) \) converges to $D_x\chi(x,m)$ uniformly with respect to $(t,x,m)\in[0,t_1]\times\T^d\times \mathcal{P}(\T^d)$ for any $t_1>0$. Since \( D_x \chi \) is Lipschitz continuous in \( (x, m) \), as shown in Theorem \ref{lem.existence_weak_solution_cell_Master_Equation}, we deduce that uniformly for $(t,x)$,
\[
D_x U(t - T_n, \tau_t(x), \tau_t \sharp \shif m_t^{T_n}) \xrightarrow[n \to \infty]{} D_x \chi(\tau_t(x), \tau_t \sharp \shif m_t^\infty).
\]
Moreover, the uniform convergence of \( \shif m^{T_n} \) to \( \shif m^\infty \) in the Wasserstein-1 distance implies, in particular, the weak-* convergence of \( \shif m^{T_n} \) to \( \shif m^\infty \) when viewed as measures on \( [0, \infty) \times \T^d \). Combining these convergences, we may pass to the limit, obtaining:
\[
\begin{aligned}
- \int_0^{\infty} \int_{\T^d} & \shif m_t^\infty(x) \, \partial_t \varphi(t,x)\, dx dt 
- \int_{\T^d} \varphi(0,x)\, m_0(dx) \\
&= \int_0^{\infty} \int_{\T^d} \left( \Delta \varphi(t,x) - D \varphi(t,x) \cdot D_x \chi(\tau_t(x), \tau_t \sharp \shif m_t^\infty) \right) \shif m_t^\infty(x)\, dx dt,
\end{aligned}
\]
which shows that \( \shif m^\infty \) is a weak solution of the McKean-Vlasov equation \eqref{tmp_MV_equation}. Thus a.s. and as $T\to\infty$, \( \{ \shif m^T \}_{T \geq 0} \) converges locally uniformly  in time to the solution  \( \shif m^\infty \colon [0, \infty)\to \mathcal{P}(\T^d) \) of the McKean-Vlasov equation \eqref{tmp_MV_equation}.

On the other hand, equation \eqref{tmp_MV_equation} admits a unique solution \( \bar{\shif m}_t := (\operatorname{Id}-\sqrt{2\cwn}W_t)\sharp \mS \) by It\^o-Wenzell formula. Hence, for the fixed $\omega$, we have $\shif m^\infty = \bar{\shif m}$. Since the above reasoning holds almost surely true,  we deduce that \( \mT \) converges almost surely in \( C^0([0,t_1]; \mathcal{P}(\T^d)) \) to \( \mS \), for any \( t_1 \ge 0 \). Then, applying Theorem \ref{thm.CVU} once again, we obtain a.s. that:
\begin{align*}
\lim_{T \to +\infty} \Big[ \uT(x) -\bar \lambda (T-t) \Big] -& \Big[ \chi( x, \bar m_t) + c \Big] \\
&= \lim_{T \to +\infty} \Big[ U(t - T, x, \mT) + \bar{\lambda}(t - T) \Big] - \Big[ \chi( x, \bar m_t) + c \Big]
= 0,
\end{align*}
uniformly for \( (t,x) \in [0, t_1] \times \T^d  \) for any $t_1>0$. Finally, fixing \( \theta \in (0,1) \), we have by Theorem \ref{lem.keyuniqueCT} combined with to \cite[Theorem 6.15]{V08} that \( {\mathbf d}_1 ( \bar{m}_{\theta T}, \bar{\mu}_{\theta T} ) \) converges to 0 in \( L^2_{\omega} \). Hence, combining this with the argument above, we deduce that
\[
\Big[ u^T_{\theta T}(x) - (1-\theta)\, \bar{\lambda}\, T \Big] - \Big[ \chi\left( x, \bar{\mu}_{\theta T} \right) + c \Big] \xrightarrow[T \to \infty]{} 0 \qquad \text{in } L^2_\omega,
\]
uniformly for \( x \in \T^d \).
\end{proof}

\appendix
\section{Estimates for some linear systems}\label{sec:AppendEsti}

This part is largely borrowed from \cite{CLLP2,CP19} where similar estimates on the long-time behavior of some linear equations are stated. We work here in the probabilistic setting explained in Section \ref{Sec:NotHyp}. In all the results, the filtration can be either $(\mathcal F_{t_0,t})_{t\in [t_0, T]}$, or $(\mathcal F_t)_{t\in \R}$. The first statement is about  linear equations in divergence form and is borrowed from \cite[Lemma 7.1, Lemma 7.6]{CLLP2}, see also  \cite{Por24}), except that the statement there is deterministic. Solving the equation $\omega$ by $\omega$ and then checking the progressive measurability, one derives the following lemma.

\begin{lem} \label{lemCLLP2.1} Let $V$ be a bounded progressively measurable random vector field on $[t_0,T]\times \T^d$. We consider the following equation,
\be\label{eq.kolmo}
\begin{cases}
\partial_t \mu-\Delta \mu +\dive (\mu V)= \dive( B)& \qquad {\rm in} \; (t_0,T)\times \T^d, 
\\
\mu(t_0)=\mu_0 & \qquad {\rm in} \;   \T^d.
\end{cases}
\ee 
Then we have 
\begin{enumerate}
    \item If $B \equiv 0$ and $\mu_0\in L^2(\Omega;L^1(\T^d))$ is $\mathcal{F}_{t_0}$ measurable s.t. $\inte \mu_0=0 ~a.s.$, then there exists a unique solution $\mu\in L^2( \Omega ; C ( [t_0,T]; L^1(\T^d)))$ progressively measurable which solves the equation in the sense of distributions. Moreover, there exist constants $\lambda>0$ and $C>0$, depending only on $\esssup\|V\|_{L^\infty_{t,x}}$, such that 
    \be\label{es.a-priori_L2L1_FPDE}
    \|\mu(t)\|_{L^2_\omega(L^1_x)}\leq Ce^{-\lambda t} \|\mu_0\|_{L^2_\omega(L^1_x)};
    \ee
    \item If $B\in L^2([t_0,T] \times \T^d \times \Omega)$ is a progressively random field and $\mu_0\in L^2(\T^d\times\Omega)$ is $\mathcal{F}_{t_0}$ measurable s.t. $\inte \mu_0=0 ~a.s.$, then there exists a unique solution $\mu\in L^2(  \Omega ; C ( [t_0,T]; L^2(\T^d)))$ which is a progressively measurable solution of Equation \eqref{eq.kolmo} in the sense of distributions. Moreover, there exist constants $\lambda>0$ and $C>0$, depending only on $\esssup\|V\|_{L^\infty_{t,x}}$, such that 
    \be\label{es.a-priori_L2_FPDE}
    \normtwo{\mu(t)} \leq Ce^{-\lambda t} \normtwo{\mu_0} +C\left[\int_0^t \normtwo{B(s)}^2 ds\right]^{1/2}.
    \ee
\end{enumerate}
\end{lem}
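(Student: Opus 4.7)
\textbf{Proof plan for Lemma \ref{lemCLLP2.1}.} The key observation is that \eqref{eq.kolmo} contains no stochastic integral: the randomness enters only through the coefficient $V$, the source $B$, and the initial datum $\mu_0$. Accordingly, the strategy is to fix $\omega$, solve the equation as a purely deterministic linear parabolic PDE in divergence form with bounded measurable drift, apply the deterministic decay estimates of \cite[Lemma 7.1 and Lemma 7.6]{CLLP2} pathwise, and then integrate in $\omega$. Let me describe each ingredient in turn.

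For (1), fix $\omega$ outside a $\P$-null set on which $V(\omega,\cdot,\cdot) \in L^\infty$ and $\mu_0(\omega)\in L^1(\T^d)$ with $\int_{\T^d}\mu_0(\omega)=0$. Standard parabolic theory for the linear Kolmogorov equation with bounded measurable drift (see, e.g., \cite{BKR}) gives a unique distributional solution $\mu(\cdot,\omega) \in C^0([t_0,T];L^1(\T^d))$, which remains mean-zero by testing against $1$. The exponential $L^1$ decay is then the content of \cite[Lemma 7.1]{CLLP2}: one writes the dual semigroup acting on $L^\infty$, whose generator $\Delta + V\cdot D$ is uniformly elliptic with bounded drift, so a Harnack/Doeblin argument yields a spectral gap on mean-zero functions, with $\lambda$ depending only on $\|V\|_{L^\infty}$. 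Squaring and taking expectations yields \eqref{es.a-priori_L2L1_FPDE}. For (2), fix $\omega$ with $V(\omega,\cdot,\cdot)\in L^\infty$, $B(\omega,\cdot,\cdot)\in L^2$, and $\mu_0(\omega)\in L^2$. Existence and uniqueness of $\mu(\cdot,\omega) \in C^0([t_0,T];L^2(\T^d))$ follow from variational parabolic theory; the deterministic bound \eqref{es.a-priori_L2_FPDE} is the content of \cite[Lemma 7.6]{CLLP2} and is obtained via the standard energy estimate
\[
\frac{d}{dt}\|\mu\|_{L^2}^2 + \|D\mu\|_{L^2}^2 \leq C(\|V\|_\infty^2\|\mu\|_{L^2}^2 + \|B\|_{L^2}^2),
\]
combined with Poincar\'e's inequality on mean-zero functions to absorb the $\|V\|_\infty^2\|\mu\|^2$ term into the $\|D\mu\|^2$ dissipation and produce an exponential $L^2$-contraction in the homogeneous part. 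Squaring \eqref{es.a-priori_L2_FPDE} and integrating against $\P$ then gives the claimed estimates in $L^2_{\omega,x}$.

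The remaining issue, which is the only genuinely probabilistic point, is the progressive measurability of the solution and its integrability. I would obtain this by a standard approximation: mollify $V, B, \mu_0$ in $(t,x)$ to get smooth random data, for which classical parabolic theory produces a smooth solution $\mu^\varepsilon$ depending continuously on the data, hence progressively measurable; the a priori estimates above are stable under such approximation and pass to the limit in $L^2_\omega(C([t_0,T];L^1))$ respectively $L^2_\omega(C([t_0,T];L^2))$ by the linearity of the equation. I expect the main (very mild) technical step to be the verification that the deterministic solution map is continuous from $(V,B,\mu_0)$ to $\mu$ in the appropriate norms so that progressive measurability is preserved in the limit; this is routine once the a priori estimates are in hand.
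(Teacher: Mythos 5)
Your proposal follows exactly the route the paper indicates: solve the Fokker--Planck equation pathwise ($\omega$ by $\omega$) as a deterministic linear parabolic PDE with bounded random coefficients, invoke the deterministic decay estimates of \cite[Lemmas 7.1 and 7.6]{CLLP2}, square and integrate in $\omega$, and verify progressive measurability by an approximation/continuity argument on the solution map. This is precisely the one-sentence proof sketch the paper gives immediately before stating the lemma, so your plan is structurally the same.

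One parenthetical claim should be corrected, since it is the kind of shortcut that fails silently. For the deterministic $L^2$-decay underlying item (2) you suggest it follows from the energy inequality
\[
\frac{d}{dt}\|\mu\|_{L^2}^2 + \|D\mu\|_{L^2}^2 \leq C\bigl(\|V\|_\infty^2\|\mu\|_{L^2}^2 + \|B\|_{L^2}^2\bigr)
\]
combined with Poincar\'e on mean-zero functions. However, after Poincar\'e the linear coefficient is $-c_P + C\|V\|_\infty^2$, and this is not negative for a general bounded drift; the energy method alone yields exponential decay only in the small-drift regime and cannot produce a rate $\lambda>0$ depending solely on $\esssup\|V\|_{L^\infty}$. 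The actual mechanism in \cite{CLLP2} is the one you cite for the $L^1$-case: a Doeblin/Harnack spectral-gap argument for the Fokker--Planck semigroup on mean-zero measures, then lifted from $L^1$ to $L^2$ using parabolic regularization (e.g.\ interpolation through an $L^\infty$ bound), and the inhomogeneous source is handled by Duhamel. Since you ultimately quote \cite[Lemma 7.6]{CLLP2} as a black box this does not break the overall proof, but the heuristic as written is incorrect. Also, one small caveat on the mollification step: regularizing $V$ and $B$ with a symmetric time kernel would destroy adaptedness; use a one-sided kernel in time or mollify in space only. Alternatively, you can avoid approximation entirely by observing that the pathwise solution operator is causal and Borel-continuous from the data restricted to $[t_0,t]$ to $\mu(t)$, which together with path continuity already gives progressive measurability.
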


The next lemma provides estimates on the solution of backward stochastic linear equations. It is the generalization to our random framework of \cite[Lemma 7.4, Lemma 7.5]{CLLP2}. 

\begin{lem} \label{lemCLLP2.2} Let $\delta \geq 0$ and $V\in L^\infty( [0,T] \times \Omega ; L^\infty(\T^d;\R^d ) )$ and $A\in L^2( [0,T] \times \Omega ; L^2(\T^d) )$ be progressively measurable processes. Assume that $(v,M)$ is a  classical solution to the backward equation
\be\label{eq.backward_stochastic}
d v_t=(-\Delta v_t+\delta v_t +V_t\cdot Dv_t+ A_t)dt + dM_t \qquad {\rm in} \; (0,T)\times \T^d. 
\ee
Set $\tilde v(t,x)= v(t,x)-\inte v(t)$. There exist constants $\lambda>0$ and $C>0$, depending only on $\esssup\|V\|_{L^\infty_{t,x}}$, such that, for any $0\leq t_0\leq t \leq T$,  
\be\label{zerldjkjnpimj}
\normtwo{\tilde v(t_0)}\leq Ce^{-\lambda (t-t_0)} \normtwo{\tilde v(t)} +C\int_{t_0}^t e^{-\lambda(s-t_0)}\normtwo{A(s)}ds. 
\ee
Moreover, 
\be\label{zerldjkjnpimj2}
(t-t_0) \normtwo{D  v(t_0)}   \leq C(t-t_0+1) \left( \normtwo{\tilde v(t)} + \int_{t_0}^t (\normtwo{\tilde v(s)}+ \normtwo{A(s)})ds \right) .
\ee
\end{lem}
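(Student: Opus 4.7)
The two estimates are the probabilistic analogues of \cite[Lemmas 7.4, 7.5]{CLLP2}: \eqref{zerldjkjnpimj} will come from a duality argument coupling $\tilde v$ with the forward Kolmogorov equation of Lemma \ref{lemCLLP2.1}, and \eqref{zerldjkjnpimj2} from a weighted-in-time energy identity combined with a Duhamel-type representation. The novelty compared to the deterministic case is the handling of the martingale $M$: stochastic integrals against $dM$ paired with an adapted test process vanish in expectation, while the quadratic variation terms arising from It\^{o}'s formula always carry the sign that permits their disposal on the appropriate side of the inequality. As a preliminary, integrating the equation for $v$ over $\T^d$ and subtracting yields, for $\tilde v_t := v_t - \hat v_t$,
\[
d\tilde v_t = \Bigl(-\Delta \tilde v_t + \delta \tilde v_t + V_t\cdot D\tilde v_t - \int_{\T^d} V_t\cdot D\tilde v_t\, dx + \tilde A_t\Bigr)\, dt + d\tilde M_t,
\]
where $\tilde A = A - \hat A$, $\tilde M = M - \hat M$, and $\int_{\T^d}\tilde v_t = 0$ for all $t$.

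\textbf{Proof of \eqref{zerldjkjnpimj}.} The change of variables $w_t := e^{-\delta t}v_t$ satisfies the same backward equation with $\delta = 0$ and source $e^{-\delta t}A_t$, and since $e^{-\delta(t-t_0)}, e^{-\delta(s-t_0)} \leq 1$ for $\delta \geq 0$, it is enough to treat the case $\delta = 0$. I then introduce $\mu$, the unique progressively measurable solution on $[t_0,t]$ of $\partial_s\mu = \Delta\mu + \div(\mu V)$ with initial datum $\mu_{t_0} = \tilde v_{t_0}$ (which is $\mathcal F_{t_0}$-measurable and mean-zero), for which Lemma \ref{lemCLLP2.1} furnishes $\|\mu_s\|_{L^2_{\omega,x}} \leq C e^{-\lambda(s-t_0)}\|\tilde v_{t_0}\|_{L^2_{\omega,x}}$. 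Applying It\^{o}'s formula to $\int_{\T^d}\mu_s v_s\, dx$, the four integrations by parts in space cancel exactly thanks to the formal adjointness of the two equations, and taking expectation kills the stochastic integral $\int_{\T^d}\mu_s\, dM_s$ since $\mu$ is adapted. Using $\int_{\T^d}\mu_s = 0$ to pass from $v, A$ to $\tilde v, \tilde A$, one obtains
\[
\E\Bigl[\int_{\T^d}\tilde v_{t_0}^2\, dx\Bigr] = \E\Bigl[\int_{\T^d}\mu_t\tilde v_t\, dx\Bigr] - \int_{t_0}^t \E\Bigl[\int_{\T^d}\mu_s A_s\, dx\Bigr] ds,
\]
and Cauchy--Schwarz together with the decay estimate for $\mu$, followed by division by $\|\tilde v_{t_0}\|_{L^2_{\omega,x}}$, yields \eqref{zerldjkjnpimj}.

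\textbf{Proof of \eqref{zerldjkjnpimj2} and the main obstacle.} It\^{o}'s formula applied to $\int \tilde v_s^2\, dx$ produces a $+2\int|D\tilde v|^2$ contribution from the Laplacian; the transport term is absorbed by Young's inequality (using $V\in L^\infty$), the mean-correction vanishes because $\int\tilde v = 0$, the stochastic integral against $d\tilde M$ dies in expectation, and the quadratic variation $\int d\langle\tilde v\rangle\geq 0$ is thrown away on the correct side. This gives the $L^2_s H^1_x$-energy estimate
\[
\int_{t_0}^t \E\bigl[\|D\tilde v_s\|_{L^2_x}^2\bigr] ds \;\leq\; C\,\E\bigl[\|\tilde v_t\|_{L^2_x}^2\bigr] + C\int_{t_0}^t \E\bigl[\|\tilde v_s\|_{L^2_x}^2 + \|A_s\|_{L^2_x}^2\bigr]\, ds.
\]
To upgrade this integral bound into the pointwise bound \eqref{zerldjkjnpimj2}, I use the Duhamel representation for $\tilde v_{t_0}$ based on the backward propagator $(P_h)_{h\geq 0}$ associated with the operator $-\Delta + V\cdot D$, which enjoys the gradient-smoothing bound $\|D P_h\|_{L^2\to L^2}\leq C h^{-1/2}$ with $C$ depending only on $\|V\|_\infty$. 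Choosing $h = (t-t_0)\wedge 1$, applying \eqref{zerldjkjnpimj} to eliminate the contribution of $\|\tilde v_{t_0+h}\|$, and finally multiplying by $(t-t_0)$, delivers \eqref{zerldjkjnpimj2}; the factor $(t-t_0+1)$ appears when gluing the regimes $h = t-t_0\leq 1$ and $h = 1$. The only obstacle specific to the stochastic setting is the stochastic convolution $\int_{t_0}^{t_0+h}P_{s-t_0}\, d\tilde M_s$ appearing in the Duhamel formula; this term has zero mean and its $L^2_\omega$-norm is controlled by It\^{o}'s isometry together with the smoothing of the semigroup, so it adds no new difficulty beyond careful bookkeeping.
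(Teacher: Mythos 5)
Your argument for \eqref{zerldjkjnpimj} is correct and is essentially the paper's: the paper pairs $v$ with the forward Kolmogorov solution started from a generic $\mathcal F_{t_0}$-measurable datum $\xi$ and then takes a supremum over unit $\xi$, while you choose the optimal dual datum $\mu_{t_0}=\tilde v_{t_0}$ directly, which is a cosmetic variation. Your reduction to $\delta=0$ via $w_t=e^{-\delta t}v_t$ is also fine.

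For \eqref{zerldjkjnpimj2} the $L^2_sH^1_x$ energy estimate you obtain from It\^{o}'s formula on $\int_{\T^d}\tilde v_s^2\,dx$ is correct and is exactly the paper's intermediate inequality \eqref{azelirsjdfkn}. The gap is in the upgrade to a pointwise gradient bound. You propose a Duhamel representation through the propagator $P$ of $-\Delta+V\cdot D$ with the smoothing $\|DP_h\|_{L^2\to L^2}\leq C h^{-1/2}$. This route does not close, for two reasons. First, the source term inherits the singular weight: the estimate you can get is $\|Dv_{t_0}\|\lesssim h^{-1/2}\|\tilde v_{t_0+h}\| + \int_{t_0}^{t_0+h}(s-t_0)^{-1/2}(\|\tilde v_s\|+\|A_s\|)\,ds$, and multiplying by $(t-t_0)$ does not dominate $\int(s-t_0)^{-1/2}\|A_s\|\,ds$ by $C(t-t_0+1)\int\|A_s\|\,ds$; take $\|A_s\|\equiv 1$ on $[t_0,t_0+\varepsilon]$ and zero afterwards, so the singular integral is of order $\varepsilon^{1/2}$ while the target is of order $\varepsilon$, and the ratio blows up. Second, your handling of the martingale is not right: if the Duhamel formula is written under $\E[\,\cdot\mid\mathcal F_{t_0}]$ (as the paper does for similar bounds, cf.\ Step~2 of the proof of Lemma~\ref{lem.esti.zDWT1}), the martingale has zero conditional increment and never appears, so there is no stochastic convolution to estimate; while if you insist on a pathwise Duhamel formula, the term $\int_{t_0}^{t_0+h}DP_{\cdot}\,dM_s$ has $L^2_\omega$-norm governed by $\int(s-t_0)^{-1}\,ds$, which diverges. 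So the obstacle you identify is either non-existent or fatal, depending on which form of the formula you actually use. (You also assert the smoothing $\|DP_h\|\lesssim h^{-1/2}$ for $V$ merely bounded and time-dependent without proof; it holds via Duhamel plus singular Gronwall, but it is not free, and it is a two-parameter random propagator rather than a semigroup.) The paper's device for this step is different and closes directly: apply It\^{o}'s formula to the weighted quantity $(t-s)\int_{\T^d}(v_{x_i}(s))^2\,dx$ for each direction $i$. The weight $(t-s)$ yields the $(t-t_0)$ factor on the left at $s=t_0$, the transport and source contributions come out with no singular time weight, and the quadratic variation lands on the side of the inequality where it can be discarded. That weighted It\^{o} identity is the missing ingredient in your argument.
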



By a classical solution, we mean the following: $v$ is an adapted process  with paths in $C^0([0,T], C^2(\T^d))$ and such that 
$$
\sup_{t\in [0,T]} \|v_t(\cdot)\|_{C^2} <\infty.
$$
The process $(M_t)$ is adapted process with paths in the space $C^0([0,T], C^1(\T^d))$, such that, for any
$x \in \T^d$, $t\to M_t(x)$ is a martingale. It is also required to satisfy
$$
\sup_{t\in [0,T]} \|M_t(\cdot)\|_{C^1} <\infty.
$$
The estimates above could be stated with weaker notions of solutions; however, as all the solutions we manipulate are classical, we won't need it and the fact that the solution is classical simplifies the proof. 

\begin{proof} Fix $t\in [0,T)$ and let $\xi \in L^2(\T^d\times \Omega)$ be $\mathcal F_t-$measurable. Let $\rho$ be a random solution to the forward equation 
$$
\begin{cases}
\partial_s \rho - \Delta \rho -{\rm div} (\rho \,V)=0 & \hbox{in $(t,T)\times \T^d$,}
\\
\rho(t)=\xi -\inte \xi \,. &
\end{cases}
$$
By approximation (regularizing $V$ and $\xi$), one easily checks that such a solution $\rho$ exists. It can be built in such a way that it is adapted and satisfies $\inte \rho(s)=0$ for any $s\in [t,T]$. By duality (which can also be checked by regularization) we have
$$
e^{-\delta T}\inte \rho(T)v(T)-e^{-\delta t}\inte (\xi -\inte \xi )v(t) = \int_t^T \inte e^{-\delta s}A\rho +\int_t^T \inte e^{-\delta s}\rho dM . 
$$
Recalling that $\rho(s)$ has zero average for any $s$, taking expectation and using the estimate \eqref{es.a-priori_L2_FPDE} for $\rho$ in Lemma \ref{lemCLLP2.1} we infer that 
\begin{align*}
& e^{-\delta t} \E\left[ \inte \xi (v(t)-\inte v(t))\right] = e^{-\delta t} \E\left[ \inte (\xi -\inte \xi )v(t)\right] \\ 
& \qquad  \leq \E\left[ \inte e^{-\delta T} \rho(T)(v(T)-\inte v(T))\right] -\E\left[\int_t^T \inte e^{-\delta s}A\rho \right]  \\ 
& \qquad\leq Ce^{-\lambda (T-t)-\delta T}\left(\E\left[\|\xi-\inte \xi \|^2_{L^2}\right]\right)^{1/2} \left(\E\left[\|v(T)-\inte v(T)\|^2_{L^2}\right]\right)^{1/2}\\
& \qquad\qquad + C  \int_t^T  e^{-\lambda (s-t)-\delta s} \left(\E\left[ \|A(s)\|_{L^2}^2\right]\right)^{1/2} ds \left(\E\left[\|\xi-\inte \xi \|^2_{L^2}\right]\right)^{1/2} ,
\end{align*}
where $C$ and $\lambda$ depend on $\|V\|_\infty$ only. 
Taking the supremum over $\xi\in L^2_{\omega,x}$ with $\|\xi\|_{L^2_{\omega,x}}\leq 1$ gives \eqref{zerldjkjnpimj}.
\\

We now estimate $Dv$.  Let us set $\tilde v(t,x)= v(t,x)-\inte v(t)$. Note that $\tilde v$ solves 
$$
d\tilde v = (-\Delta \tilde v + \delta \tilde v+ V\cdot D\tilde v+A-h(t)) dt +d\tilde M
$$
where 
$$
h(t)= -\inte (\delta v+V\cdot D v+A) , \qquad \tilde M(t,x)= M(t,x)-\inte M(t).
$$
By It\^{o}'s formula, we have 
\begin{align*}
\frac12 \E\left[\inte (\tilde v(t))^2\right]  & = \frac12 
\E\left[\inte (\tilde v(t_0))^2\right] +\E\left[ \int_{t_0}^t  \inte ( |D \tilde v|^2 +\delta \tilde v^2+ (V\cdot D\tilde v+A-h(t))\tilde v\right] \\ 
& \qquad + \frac12 \E\left[ \inte \lg \tilde M\rg (t)-\inte \lg \tilde M\rg (t_0)\right]. 
\end{align*}
Therefore, as $V$ is bounded and by the definition of $h$,  
\begin{align*}
\E\left[ \int_{t_0}^t  \inte  |D  v|^2 \right] & \leq \E\left[\inte (\tilde v(t))^2\right]  + \E\left[ \int_{t_0}^t  \inte (|\tilde v| |V| |D \tilde v| + (|A|+|h|)|\tilde v|)\right] \\
& \leq  \E\left[\inte (\tilde v(t))^2\right]+ \frac12 \E\left[ \int_{t_0}^t  \inte  |D  v|^2 \right]  + C \E\left[ \int_{t_0}^t  \inte (|\tilde v|^2+ |A|^2 )\right].
\end{align*}
Hence
\be\label{azelirsjdfkn}
\E\left[ \int_{t_0}^t \inte   |D  v|^2 \right]  \leq C \E\left[\inte (\tilde v(t))^2\right]+ C \E\left[ \int_{t_0}^t  \inte (|\tilde v|^2+ |A|^2 )\right].
\ee
We now look at the equation satisfied by $ (t-s) (v_{x_i}(s,x))^2$. Note that $ v_{x_i}$ is a  solution to 
$$
d v_{x_i} = (-\Delta  v_{x_i} + \delta v_{x_i}+ (V\cdot D v)_{x_i}+A_{x_i}) dt +d M_{x_i}.
$$
By It\^{o}'s formula, 
\begin{align*}
0 & = 
(t-t_0) \E\left[\inte (v_{x_i}(t_0))^2\right] +\E\left[ \int_{t_0}^t  \inte ( -(v_{x_i})^2 + (t-s)( |D  v_{x_i}|^2 +\delta v_{x_i}^2- V\cdot D v v_{x_ix_i} - A v_{x_ix_i}))\right] \\ 
& \qquad + \E\left[ \int_{t_0}^t \inte (t-s)  \lg M_{x_i}\rg(ds) \right]. 
\end{align*}
Thus 
\begin{align*}
& (t-t_0) \E\left[\inte (v_{x_i}(t_0))^2\right] +\E\left[ \int_{t_0}^t  \inte  (t-s)|D  v_{x_i}|^2\right]  \\
& \qquad \leq  \E\left[ \int_{t_0}^t  \inte ( (v_{x_i})^2 + (t-s)( C |D v|^2 + C|A|^2 +\frac12 |v_{x_ix_i}|^2))\right] 
\end{align*}
By \eqref{azelirsjdfkn} we get 
\begin{align*}
 (t-t_0)\E\left[\inte (v_{x_i}(t))^2\right]  &   \leq C(t-t_0+1) \left(  \E\left[\inte (\tilde v(t))^2\right] 
+ C \E\left[ \int_{t_0}^t \inte (|\tilde v|^2+ |A|^2)\right] \right), 
\end{align*}
which gives \eqref{zerldjkjnpimj2}. 
\end{proof}

\section{Exponential decays}\label{sec:AppendExpo}

Let   $\lambda, C_0>0$ be fixed constants and  $\mathcal E(T)=\mathcal E(T, \lambda, C_0)$ be the set of c\`{a}dl\`{a}g maps $\alpha, \beta, \gamma \in L^1([0,T], \R_+)$ satisfying the following four inequalities: for any $t_1,t_2$ with  $0\leq t_1\leq t_2\leq T$:
\be\label{hypexpo1}
\int_{t_1}^{t_2} \beta(t)dt \leq C_0 (\sqrt{\alpha(t_1)\gamma(t_1)}+\sqrt{\alpha(t_2)\gamma(t_2)}) ,
\ee
\be\label{hypexpo2}
\gamma(t_2) \leq C_0 e^{-\lambda(t_2-t_1)} \gamma(t_1)+C_0 \int_{t_1}^{t_2} \beta(t)dt, 
\ee
\be\label{hypexpo3}
\alpha(t_1) \leq C_0 e^{-\lambda(t_2-t_1)} \alpha(t_2)+C_0 \int_{t_1}^{t_2} \beta(t)dt+C_0\sup_{t_1\leq t\leq t_2} \gamma(t).
\ee
\be\label{hypexpo4}
\alpha(t_1) \leq C_0\beta(t_1). 
\ee 

\begin{lem}\label{lem.expodecay} There exists constant $C,\lambda'>0$, depending on $C_0$ and $\lambda$ only, such that, for any $T\geq 1$ and $(\alpha,\beta, \gamma)\in \mathcal E(T)$, and for any $t\in [0,T]$, 
$$
\alpha(t)+\gamma(t)\leq C(e^{-\lambda' t}+e^{-\lambda'(T-t)}) (\alpha(T)+\gamma(0)).
$$
\end{lem}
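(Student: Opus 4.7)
\textbf{Plan of proof of Lemma~\ref{lem.expodecay}.}

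The strategy is a standard turnpike scheme: first obtain a uniform-in-$t$ bound of the form $\sup_t(\alpha+\gamma)\le C K$, where $K:=\alpha(T)+\gamma(0)$, then prove a \emph{contraction} on any sufficiently long subinterval, and finally iterate this contraction along the time axis.

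\emph{Step 1 (uniform $L^\infty$ bound).} Denote $M:=\sup_{t\in[0,T]}(\alpha(t)+\gamma(t))$. Applying \eqref{hypexpo1} on $[0,T]$ and using $\sqrt{xy}\le(x+y)/2$ gives $\int_0^T\beta\le C_0(M+K)$. Inserting this into \eqref{hypexpo2} with $t_1=0$ and into \eqref{hypexpo3} with $t_2=T$, then taking the supremum in $t$ and using Young's inequality $\sqrt{MK}\le \varepsilon M+\varepsilon^{-1}K$ with $\varepsilon$ small enough, I absorb the $M$-terms on the left to obtain $M\le CK$ for a constant $C=C(C_0,\lambda)$.

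\emph{Step 2 (contraction on long subintervals).} Assumptions \eqref{hypexpo1}--\eqref{hypexpo4} are invariant under restriction to any subinterval $[a,b]\subset[0,T]$, so Step~1 applied to $[a,b]$ in place of $[0,T]$ gives
\[
\sup_{t\in[a,b]}\bigl(\alpha(t)+\gamma(t)\bigr)\ \le\ C\bigl(\alpha(b)+\gamma(a)\bigr).
\]
Fix now $L$ large (to be chosen) and take $[a,b]$ of length $L$. Applying \eqref{hypexpo2} with $t_1=a,\,t_2=b$ and bounding $\int_a^b\beta$ by \eqref{hypexpo1} and the local uniform bound above yields
\[
\gamma(b)\ \le\ C_0e^{-\lambda L}\gamma(a)+C_0'\bigl(\sqrt{\alpha(a)\gamma(a)}+\sqrt{\alpha(b)\gamma(b)}\bigr).
\]
Splitting the square roots by Young's inequality and using \eqref{hypexpo4} to dominate the leftover $\alpha(a)$-term by $\beta(a)$ (which is in turn reabsorbed using the bound on $\int\beta$), I get
\[
\gamma(b)\ \le\ 2C_0e^{-\lambda L}\gamma(a)+C_2\alpha(b).
\]
A symmetric manipulation of \eqref{hypexpo3} produces $\alpha(a)\ \le\ 2C_0e^{-\lambda L}\alpha(b)+C_2\gamma(a)$. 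Choosing $L$ so large that $2C_0e^{-\lambda L}\le 1/2$ gives a genuine contraction on each block of length $L$.

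\emph{Step 3 (iteration).} Partition $[0,T]$ into consecutive blocks of length $L$ and iterate the two contractions above: forward for $\gamma$ starting from $\gamma(0)$, backward for $\alpha$ starting from $\alpha(T)$, the cross-terms being fed by the value at the \emph{other} endpoint of each block. A straightforward induction on the number of blocks yields
\[
\gamma(t)\le C\bigl(e^{-\lambda' t}\gamma(0)+e^{-\lambda'(T-t)}\alpha(T)\bigr),\qquad \alpha(t)\le C\bigl(e^{-\lambda' t}\gamma(0)+e^{-\lambda'(T-t)}\alpha(T)\bigr),
\]
with $\lambda':=(\log 2)/(2L)$, which is the claim (for intermediate $t$ one interpolates using the Step~1 bound).

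\emph{Main obstacle.} The delicate step is Step~2: \eqref{hypexpo1} bounds $\int\beta$ only by a geometric mean of $\alpha\gamma$ at the \emph{endpoints}, so the source terms in \eqref{hypexpo2}--\eqref{hypexpo3} couple $\alpha$ and $\gamma$ at both ends of every subinterval. The pointwise Poincar\'e-type bound \eqref{hypexpo4} is what breaks this circular coupling, because it lets us trade an $\alpha$ factor for a $\beta$ factor and fold it back into the already-controlled $\int\beta$. The choice of the Young weights and of the block length $L$ must be made simultaneously so that the resulting self-coupling coefficient stays strictly below $1$; once this single-block contraction is secured, Steps~1 and~3 are routine.
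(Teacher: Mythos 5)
There is a genuine gap in Step~2, and it is precisely at the point you flag as "delicate." When you plug \eqref{hypexpo1} on $[a,b]$ into \eqref{hypexpo2}, you get
\[
\gamma(b)\ \le\ C_0 e^{-\lambda L}\gamma(a)\ +\ C_0^2\sqrt{\alpha(a)\gamma(a)}\ +\ C_0^2\sqrt{\alpha(b)\gamma(b)}.
\]
The last term absorbs into $\gamma(b)$ by Young, producing a harmless $\alpha(b)$. But the middle term is the problem: Young gives $C_0^2\sqrt{\alpha(a)\gamma(a)}\le\delta\gamma(a)+C_0^4\delta^{-1}\alpha(a)$, so to make the $\gamma(a)$-coefficient small you must pay with a \emph{large} coefficient $\delta^{-1}$ on $\alpha(a)$, and that coefficient does not improve when $L$ grows. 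Your proposed fix — use \eqref{hypexpo4} to replace $\alpha(a)$ by $C_0\beta(a)$ and "reabsorb into $\int\beta$" — does not work: $\beta(a)$ is a pointwise value, not dominated by $\int_a^b\beta$. The only way to exploit \eqref{hypexpo4} here is to \emph{choose} $a$ as a good point where $\beta$ is small (by averaging the integral bound), but then $a$ depends on the particular function $\beta$ and you can no longer run the fixed-block iteration of Step~3. The constants $C_0$, $C_2$ are not assumed to be $<1$, so the resulting coupled recursion for $(\alpha(kL),\gamma(kL))$ has an iteration matrix with $O(1)$ off-diagonal entries and there is no reason its spectral radius is below $1$.

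The paper's proof avoids this per-block contraction entirely. It introduces the extremal quantity
\[
e(\tau)=\sup\Bigl\{\alpha(t)+\gamma(t):\ (\alpha,\beta,\gamma)\in\mathcal E(T),\ T\ge 2\tau,\ t\in[\tau,T-\tau],\ \alpha(0)+\gamma(0)+\alpha(T)+\gamma(T)\le 1\Bigr\},
\]
first shows $e(\tau)\to 0$ by the averaging trick (picking good points $t_1,t_2$ with $\beta(t_i)\le C_0\ep$, then using \eqref{hypexpo4}), and then proves the multiplicative self-improvement $e(2\tau)\le 2e(\tau)^2$ by translating time by $\tau$ and dividing by $2e(\tau)$, using the translation-invariance and positive homogeneity of the defining inequalities. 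Once $e(\tau)<1/4$ this gives $e(2\tau)\le\frac12 e(\tau)$, hence exponential decay, without ever needing a block-wise contraction with absolute constants. Your Step~1 is fine (and indeed close in spirit to the paper's finiteness argument for $e(\tau)$ and its last estimate bounding $\alpha(0)+\gamma(T)$ by $\alpha(T)+\gamma(0)$), but Steps~2--3 as written cannot be made to close.
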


\begin{proof} 
Throughout the proof, $C$ denotes a constant which depends on $\lambda$ and $C_0$ only and might change from line to line. For $\tau \geq 1$, let 
$$
e(\tau)  = \sup\left\{\alpha(t)+\gamma(t), \begin{array}{l}\; (\alpha,\beta, \gamma)\in \mathcal E(T), \; T\geq 2\tau, \; t\in [\tau, T-\tau]\\
\alpha(0)+\gamma(0)+\alpha(T)+\gamma(T)\leq 1\end{array}\right\}. 
$$
Let us note that $\tau\to e(\tau)$ is nonincreasing. We now check that $e(\tau)$ is finite and 
$$
\lim_{\tau\to \infty} e(\tau)=0.
$$
Indeed, fix $\ep>0$ and $T\geq 1/(4\ep)$. Let $(\alpha, \beta, \gamma)\in \mathcal E(T)$ be such that $\alpha(0)+\gamma(0)+\alpha(T)+\gamma(T)\leq 1$. By \eqref{hypexpo1} and Cauchy-Schwarz inequality, 
\be\label{auekzhrjsdfng}
\int_0^T \beta(t)dt \leq C_0(\alpha(0)+\gamma(0)+\alpha(T)+\gamma(T))\leq C_0.
\ee
Thus \eqref{hypexpo2} and \eqref{hypexpo3} imply that $\gamma$ and $\alpha$ are bounded by some constant $C$ on $[0,T]$ and, as as a consequence, $e(\tau)$ is finite for any $\tau\geq 1$. On the other hand, \eqref{auekzhrjsdfng} implies the existence of $t_1\in (0,1/\ep)$ and $t_2\in (T-1/\ep)$ such that $\beta(t_1), \beta(t_2)\leq C_0\ep$.  Hence, by \eqref{hypexpo4}, $\alpha(t_1), \alpha(t_2)\leq C\ep$.  Coming back to \eqref{hypexpo1}  we obtain that 
$$
\int_{1/\ep}^{T-1/\ep} \beta(t)dt \leq \int_{t_1}^{t_2} \beta(t)dt \leq C_0 \|\gamma\|_\infty^{1/2} (\sqrt{\alpha(t_1)}+\sqrt{\alpha(t_2)}) \leq C\ep^{1/2}. 
$$
We  then infer from \eqref{hypexpo2} that, for any $t\in [2/\ep,T-1/\ep]$, 
$$
\gamma(t) \leq C_0 e^{-\lambda(t-1/\ep)}\gamma(1/\ep) +C_0 \int_{1/\ep}^t \beta(s)ds \leq C e^{-\lambda/\ep} + C\ep^{1/2}.
$$
Then, by \eqref{hypexpo3}, we obtain that, for any $t\in [2/\ep, T-2/\ep]$, 
\begin{align*}
\alpha(t) & \leq C_0 e^{-\lambda(T-1/\ep-t)} \alpha(T-1/\ep)+C_0 \int_{t}^{T-1/\ep} \beta(s)ds+C_0\sup_{t\leq s\leq T-1/\ep} \gamma(s)  \\
&\leq C e^{-\lambda/\ep} + C\ep^{1/2}+ C_0(C e^{-\lambda/\ep} + C\ep^{1/2}).
\end{align*}
Hence, for any $t\in [2/\ep, T-2/\ep]$, 
$$
\alpha(t)+\gamma(t) \leq C (e^{-\lambda/\ep} +\ep^{1/2}).
$$
As $(\alpha, \beta, \gamma)$ and $T$ are arbitrary, this proves that, for any $\tau \geq 2/\ep$, 
$$
e(\tau) \leq C (e^{-\lambda/\ep} +\ep^{1/2}).
$$
Thus $e(\tau)\to 0$ as $\tau\to \infty$. 

We now prove that, for any $\tau$ large enough,  
\be\label{azeioeufsdilmjok}
e(2\tau) \leq \frac12 e(\tau). 
\ee
Let $\tau$ large,  $T\geq 4\tau$ and $(\alpha, \beta, \gamma)\in \mathcal E(T)$ with $\alpha(0)+\gamma(0)+\alpha(T)+\gamma(T)\leq 1$. Then, by the definition of $e(\tau)$,  for any $t\in [\tau, T-\tau]$, 
$$
\alpha(t)+\gamma(t)\leq e(\tau).
$$
Note that, the map 
$$
(\tilde \alpha, \tilde \beta, \tilde \gamma)(t)= (\alpha, \beta, \gamma)(t+\tau)/(2e(\tau))\qquad \forall t\in [0,T-2\tau]
$$
belongs to $\mathcal E(T-2\tau)$ and satisfies $\tilde \alpha(0)+\tilde \gamma(0)+\tilde \alpha(T-2\tau)+\tilde \gamma(T-2\tau)\leq 1$: indeed, the constraints \eqref{hypexpo1}, \eqref{hypexpo2}, \eqref{hypexpo3} and \eqref{hypexpo4} are invariant by translation in time and are positively homogeneous. As $T-2\tau\geq 2\tau$, we infer from the definition of $e(\tau)$ that 
$$
\tilde \alpha(t)+\tilde \gamma(t) \leq e(\tau) \qquad \forall t\in [\tau, T-3\tau].
$$
Thus, for $\tau$ large enough,  
$$
\alpha(t)+\gamma(t) \leq 2(e(\tau))^2\leq \frac12 e(\tau) \qquad \forall t\in [2\tau, T-2\tau].
$$
This implies \eqref{azeioeufsdilmjok}. From \eqref{azeioeufsdilmjok} and the fact that $\tau\to e(\tau)$ is nonincreasing, we deduce that there exists $\lambda'>0$ and $C>0$ such that 
$$
e(\tau) \leq C e^{-\lambda' \tau} \qquad \forall \tau \geq 1. 
$$
Fix now $T\geq 3$ and let $(\alpha, \beta,\gamma)\in \mathcal E(T)$. Given $t\in [0,T]$, set $\tau = \min\{t, T-t\}$ and note that $2\tau  \leq T$. As $t\in [\tau, T-\tau]$, we then have, by the definition of $e(\tau)$, 
$$
\alpha(t)+\gamma(t) \leq e(\tau) \leq C e^{-\lambda' \tau} \leq C(e^{-\lambda' t}+e^{-\lambda' (T-t)}) .
$$
This proves that, for any $T\geq 1$ and $(\alpha,\beta, \gamma)\in \mathcal E(T)$, and for any $t\in [0,T]$,  
$$
\alpha(t)+\gamma(t)\leq C(e^{-\lambda' t}+e^{-\lambda'(T-t)}) (\alpha(0)+\gamma(0)+\alpha(T)+\gamma(T)).
$$
To complete the statement we show next that $\alpha(0)+\gamma(T)$ can be estimated by $\alpha(T)+\gamma(0)$. Indeed, using \eqref{hypexpo3},  \eqref{hypexpo2}  and \eqref{hypexpo1} respectively, we have
\begin{align*}
\alpha(0)+\gamma(T) & \leq Ce^{-\lambda T} \alpha(T)+ C\int_0^T \beta(s)ds + C\sup_{s} \gamma(s) + \gamma(T)  \\
& \leq Ce^{-\lambda T} \alpha(T)+ C\int_0^T \beta(s)ds + C \sup_s (e^{-\lambda s} \gamma(0) +\int_0^s \beta(u)du)\\ 
& \leq Ce^{-\lambda T} \alpha(T)+ C \gamma(0) + C\int_0^T \beta(s)ds\\ 
& \leq Ce^{-\lambda T} \alpha(T)+C\gamma(0)+ C(\sqrt{\alpha(0)\gamma(0)}+\sqrt{\alpha(T)\gamma(T)}) \\ 
& \leq C (\alpha(T)+\gamma(0)) + \frac12 (\alpha(0)+\gamma(T)). 
\end{align*}
Therefore 
\begin{align*}
\alpha(0)+\gamma(T) \leq C (\alpha(T)+\gamma(0)), 
\end{align*}
which completes the proof. 
\end{proof}

We now investigate the counterpart of the previous lemma in the discounted case. 
Let   $\lambda, C_0>0$ be fixed constants and, for $\delta>0$,   $\mathcal E(\delta)=\mathcal E(\delta, \lambda, C_0)$ be the set of c\`{a}dl\`{a}g maps $\alpha, \beta, \gamma \in L^\infty([0,T], \R_+)$ satisfying, for any $t_1,t_2$ with  $0\leq t_1\leq t_2$, \eqref{hypexpo2}, \eqref{hypexpo4}, and 
\be\label{hypexpo1DISC}
\int_{t_1}^{\infty} e^{-\delta t} \beta(t)dt \leq C_0 e^{-\delta t_1} \sqrt{\alpha(t_1)\gamma(t_1)} 
\ee
and
\be\label{hypexpo3DISC}
\alpha(t_1)\leq C_0 \int_{t_1}^{\infty} e^{-\lambda (t-t_1)}\beta(t) dt + C_0 \sup_{t\geq t_1} e^{-\lambda(t-t_1)}\gamma(t). 
\ee

\begin{lem}\label{lem.expodecayDISC} There exists constant $C,\lambda', \delta_0>0$, depending on $C_0$ and $\lambda$ only, such that, for any $\delta\in (0, \delta_0)$ and $(\alpha,\beta, \gamma)\in \mathcal E(\delta)$, 
$$
\alpha(t)+\gamma(t)\leq Ce^{-\lambda' t} \gamma(0).
$$
\end{lem}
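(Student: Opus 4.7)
The plan is to mirror the structure of Lemma~\ref{lem.expodecay}, but adapted to the infinite-horizon discounted setting where \eqref{hypexpo1DISC} carries an exponential weight and there is no terminal-time ingredient.

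The first step is to establish the pointwise comparison $\alpha(t)\leq C_1\gamma(t)$ for every $t\geq 0$, with $C_1 = C_1(C_0,\lambda)$. Since $\delta_0 < \lambda$, \eqref{hypexpo1DISC} implies that $J(t_1):=\int_{t_1}^\infty e^{-\lambda(s-t_1)}\beta(s)\,ds$ satisfies $J(t_1)\leq C_0\sqrt{\alpha(t_1)\gamma(t_1)}$. Iterating \eqref{hypexpo2} and using the elementary bound $e^{-\lambda(t-t_1)}\int_{t_1}^t\beta\leq \int_{t_1}^\infty e^{-\lambda(s-t_1)}\beta(s)\,ds$ gives $G(t_1):=\sup_{t\geq t_1}e^{-\lambda(t-t_1)}\gamma(t)\leq C_0\gamma(t_1)+C_0 J(t_1)$. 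Inserting both inequalities into \eqref{hypexpo3DISC} and applying Young's inequality to absorb $\sqrt{\alpha\gamma}$ into the left-hand side yields $\alpha(t_1)\leq C_1\gamma(t_1)$ as claimed. As a consequence $J(t_1)\leq C_2\gamma(t_1)$ with $C_2=C_0\sqrt{C_1}$, so that for any $T>0$,
\[
\int_{t_1}^{t_1+T}\beta(s)\,ds \leq e^{\delta T}\int_{t_1}^{\infty}e^{-\delta(s-t_1)}\beta(s)\,ds \leq C_2 e^{\delta T}\gamma(t_1),
\]
which combined with \eqref{hypexpo2} produces the one-step iterate $\gamma(t_1+T)\leq\bigl(C_0 e^{-\lambda T}+C_0 C_2 e^{\delta T}\bigr)\gamma(t_1)$.

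The main obstacle is that the constant $C_0 C_2$ may exceed $1$, so the one-step iterate alone does not deliver a contraction factor strictly below $1$, no matter how $T$ and $\delta_0$ are tuned. To sidestep this, I would introduce, in parallel with the proof of Lemma~\ref{lem.expodecay}, the shift-invariant quantity
\[
e(\tau):=\sup\Bigl\{\gamma(t)+\alpha(t):(\alpha,\beta,\gamma)\in\mathcal{E}(\delta),\ \delta\in(0,\delta_0),\ \gamma(0)\leq 1,\ t\geq\tau\Bigr\}.
\]
The one-step iterate, applied on consecutive windows of length $1/\delta$, shows $e(0)<\infty$, and translation invariance of the class $\mathcal{E}(\delta)$ makes $e$ submultiplicative. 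The heart of the proof is to show $e(\tau)\downarrow 0$ as $\tau\to\infty$: if this were to fail, one extracts an almost-extremal sequence $(\alpha_n,\beta_n,\gamma_n,\delta_n)$ with $\gamma_n(0)\leq 1$ and $\sup_{t\geq\tau_n}\gamma_n(t)\geq c>0$ along some $\tau_n\to\infty$. The uniform estimates of the previous paragraph provide compactness after shifting time by $\tau_n$, and a diagonal extraction yields a limit triple $(\alpha_\infty,\beta_\infty,\gamma_\infty)$ which, by sending $\delta_n\to 0$ (or letting $\delta_n$ stabilize and then truncating at large but finite horizons), satisfies the hypotheses of Lemma~\ref{lem.expodecay} on arbitrarily long bounded intervals; this forces $\gamma_\infty\equiv 0$, contradicting the non-degeneracy of the sequence.

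Once $e(\tau)\downarrow 0$ is known, the self-improvement $e(2\tau)\leq\tfrac12 e(\tau)$ for $\tau$ sufficiently large follows by applying the one-step iterate to the normalized shift $(\alpha(\cdot+\tau),\beta(\cdot+\tau),\gamma(\cdot+\tau))/e(\tau)$, exactly as at the end of the proof of Lemma~\ref{lem.expodecay}. Iterating gives $e(\tau)\leq C e^{-\lambda'\tau}$ for some $\lambda'=\lambda'(C_0,\lambda)>0$, and combining with $\alpha\leq C_1\gamma$ yields the claimed exponential decay of $\alpha+\gamma$ in terms of $\gamma(0)$.
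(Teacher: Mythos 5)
Your preliminary step, establishing the pointwise bound $\alpha(t)\le C_1\gamma(t)$ and $J(t_1)\le C_2\gamma(t_1)$ via Young's inequality, is correct and actually slightly stronger than what the paper records (the paper only establishes $\alpha(0)\le C\gamma(0)$, as a separate step at the very end). However, the core of your argument has two gaps.

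First, the quantity
\[
e(\tau):=\sup\bigl\{\gamma(t)+\alpha(t):(\alpha,\beta,\gamma)\in\mathcal{E}(\delta),\ \delta\in(0,\delta_0),\ \gamma(0)\le 1,\ t\ge\tau\bigr\}
\]
is not known to be finite, and your own one-step iterate shows why: it only yields $\gamma(t_1+T)\le(C_0e^{-\lambda T}+C_0C_2e^{\delta T})\gamma(t_1)$, whose factor exceeds $1$ in general, so the only a priori control one can extract is $\gamma(t)\lesssim e^{\delta t}\gamma(0)$, not a uniform bound. This is precisely why the paper's definition carries the discount weight $e^{-\delta t}$ inside the supremum; with that weight the estimate $\int_0^\infty e^{-\delta t}\beta\le C_0$ (from \eqref{hypexpo1DISC}) together with \eqref{hypexpo2} and \eqref{hypexpo3DISC} does yield a $\delta$-uniform bound on $e^{-\delta t}(\alpha(t)+\gamma(t))$, making $e^\delta(\tau)$ finite. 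Without the weight the supremum can be $+\infty$ and the rest of the argument cannot even get started.

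Second, the compactness/contradiction route for $e(\tau)\downarrow 0$ does not go through, and the paper deliberately avoids it. The class $\mathcal{E}(\delta)$ consists of c\`adl\`ag $L^\infty$ functions with no modulus-of-continuity control, so a uniform bound gives no useful compactness (Arzel\`a--Ascoli is unavailable, and weak-$\ast$ $L^\infty$ convergence does not preserve the pointwise conditions \eqref{hypexpo4}, \eqref{hypexpo2}, \eqref{hypexpo3DISC}). Moreover, even if one could pass to a limit and show it lies in $\mathcal{E}(T)$ for all $T$, Lemma~\ref{lem.expodecay} only gives $\gamma(t)\le C(e^{-\lambda' t}+e^{-\lambda'(T-t)})(\alpha(T)+\gamma(0))$, which is small away from the endpoints but does not force $\gamma\equiv 0$ on $[0,\infty)$; and since the extraction converges only on compact time windows, the non-degeneracy $\sup_{t\ge 0}\tilde\gamma_n(t)\ge c$ (the $\sup$ being over an unbounded interval) need not survive in the limit either. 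The paper instead proves smallness of $e^\delta(\tau)$ directly: by \eqref{hypexpo1DISC} there exists $t_1\in(0,1/\varepsilon)$ with $e^{-\delta t_1}\beta(t_1)\le C_0\varepsilon$, hence $e^{-\delta t_1}\alpha(t_1)\le C\varepsilon$ by \eqref{hypexpo4}, and Cauchy--Schwarz in \eqref{hypexpo1DISC} then gives $\int_{1/\varepsilon}^\infty e^{-\delta t}\beta\le C\varepsilon^{1/2}$; feeding this back through \eqref{hypexpo2} and \eqref{hypexpo3DISC} shows $e^\delta(\tau)\le C(\varepsilon^{1/2}+e^{-\lambda/\varepsilon})$ for $\tau\ge 2/\varepsilon$, uniformly in $\delta$. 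Finally, in the self-improvement step $e^\delta(2\tau)\le\tfrac12 e^\delta(\tau)$ the rescaling by $e^\delta(\tau)$ after shifting by $\tau$ introduces an extra factor $e^{\delta\tau}$, and it is in absorbing this factor that the constraint $\delta<\delta_0$ actually enters; this detail is hidden in your sketch by the phrase ``exactly as at the end of the proof of Lemma~\ref{lem.expodecay}.''
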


\begin{proof} The proof is a variant of the proof of Lemma \ref{lem.expodecay}. 
Throughout the proof, $C$ denotes a constant which depends on $\lambda$ and $C_0$ only (and thus {\it not} on $\delta$) and might change from line to line. For  $\delta\in (0,1)$ and $\tau \geq 1$, let 
$$
e^\delta(\tau)  = \sup\left\{e^{-\delta t} (\alpha(t)+\gamma(t)), \begin{array}{l}\; (\alpha,\beta, \gamma)\in \mathcal E(\delta), \; t\geq \tau\\
\alpha(0)+\gamma(0)\leq 1\end{array}\right\}. 
$$
Let us note that $\tau\to e^\delta(\tau)$ is nonincreasing. We first check that $e^\delta(\tau)$ is finite and tends to $0$ uniformly with respect to $\delta$ as $\tau\to\infty$. 

Fix $\delta\in (0,\lambda]$ and $\ep>0$. Let $(\alpha, \beta, \gamma)\in \mathcal E(\delta)$ be such that $\alpha(0)+\gamma(0)\leq 1$. By \eqref{hypexpo1DISC} and Cauchy-Schwarz inequality, 
\be\label{auekzhrjsdfngDISC}
\int_0^\infty e^{-\delta t} \beta(t)dt \leq C_0(\alpha(0)+\gamma(0))\leq C_0.
\ee
Thus by \eqref{hypexpo2}, for any $t\geq 0$, 
$$
e^{-\delta t}\gamma(t) \leq C_0 e^{-\lambda t} \gamma(0)+C_0 \int_{0}^{t}e^{-\delta s} \beta(s)dt \leq C.
$$
In addition,  by \eqref{hypexpo3DISC} and since $\delta \leq \lambda$,
\begin{align*}
e^{-\delta t} \alpha(t) & \leq C_0 \int_{t}^{\infty} e^{-\lambda (s-t)-\delta t}\beta(s) ds + C_0 \sup_{s\geq t} e^{-\lambda(s-t)-\delta t}\gamma(s)\\
& \leq C_0 \int_{t}^{\infty} e^{-\delta s}\beta(s) ds + C_0 \sup_{s\geq t} e^{-\delta s}\gamma(s) \leq C. 
\end{align*}
Thus $e^{-\delta t}\gamma(t)$ and $e^{-\delta t}\alpha(t)$ are bounded by some constant $C$ independent of $\delta$ on $[0,\infty)$. As  a consequence, $e^\delta(\tau)$ is bounded by a constant independent of $\delta$ for any $\tau\geq 1$. On the other hand, \eqref{auekzhrjsdfngDISC} implies the existence of $t_1\in (0,1/\ep)$ (possibly depending on $\delta$) such that $e^{-\delta t_1}\beta(t_1)\leq C_0\ep$.  Hence, by \eqref{hypexpo4}, $e^{-\delta t_1}\alpha(t_1)\leq C\ep$.  Coming back to \eqref{hypexpo1DISC}  we obtain that
$$
\int_{1/\ep}^{\infty} e^{-\delta t}\beta(t)dt \leq \int_{t_1}^{\infty} e^{-\delta t}\beta(t)dt \leq C_0e^{-\delta t_1/2}\sqrt{\alpha(t_1)}  \sup_{t\geq 0} (e^{-\delta t}\gamma(t))^{1/2} \leq C\ep^{1/2}. 
$$
We  then infer from \eqref{hypexpo2} that, for any $t\geq 2/\ep$ and any $\delta\in (0,1)$, 
$$
e^{-\delta t}\gamma(t) \leq C_0 e^{-\lambda(t-1/\ep)}e^{-\delta/\ep}\gamma(1/\ep) +C_0 \int_{1/\ep}^t e^{-\delta s} \beta(s)ds \leq C e^{-\lambda/\ep} + C\ep^{1/2}.
$$
Then, by \eqref{hypexpo3DISC}, we obtain that, for any $t\geq 2/\ep$ and any $\delta\in (0,\lambda]$,  
\begin{align*}
e^{-\delta t} \alpha(t) & \leq C_0 \int_{t}^{\infty} e^{-\lambda (s-t)-\delta t} \beta(s)ds+C_0\sup_{s\geq t} e^{-\lambda(s-t)-\delta t} \gamma(s)  \\
& \leq C_0 \int_{t}^{\infty} e^{-\delta s} \beta(s)ds+C_0\sup_{s\geq t} e^{-\delta s} \gamma(s)  \leq C\ep^{1/2}+ C( e^{-\lambda/\ep} + \ep^{1/2}).  
\end{align*}
We infer that, for any $t\geq 2/\ep$, 
$$
e^{-\delta t}(\alpha(t)+\gamma(t)) \leq C(\ep^{1/2}+ e^{-\lambda/\ep}). 
$$
As $(\alpha, \beta, \gamma)$ is arbitrary, this proves that, for any $\tau \geq 2/\ep$ and any $\delta\in (0,\lambda]$, 
$$
e^\delta (\tau) \leq C(\ep^{1/2}+ e^{-\lambda/\ep}). 
$$
From now on we fix $\ep>0$ such that $e^\delta (2/\ep)\leq 1/4$ and choose $\bar \delta_0\in(0, \lambda]$ such that $e^{4\bar \delta_0/\ep}\leq 2$. 

We now prove that, for any $\delta \in (0, \bar \delta_0]$ and $\tau= 2/\ep$,  
\be\label{azeioeufsdilmjokDISC}
e^\delta(2\tau) \leq \frac12 e^\delta(\tau). 
\ee
Let $\tau\geq 2/\ep$ and $(\alpha, \beta, \gamma)\in \mathcal E(\delta)$ with $\alpha(0)+\gamma(0)\leq 1$. Then, by the definition of $e^\delta(\tau)$,  for any $t\geq \tau$, 
$$
e^{-\delta t} (\alpha(t)+\gamma(t))\leq e^\delta(\tau).
$$
Note that, the map 
$$
(\tilde \alpha, \tilde \beta, \tilde \gamma)(t)= (\alpha, \beta, \gamma)(t+\tau)/(e^{\delta \tau} e^\delta (\tau))\qquad \forall t\geq 0
$$
belongs to $\mathcal E(\delta)$ and satisfies $\tilde \alpha(0)+\tilde \gamma(0)\leq 1$. Hence 
$$
e^{-\delta t}(\tilde \alpha(t)+\tilde \gamma(t)) \leq e^\delta(\tau) \qquad \forall t\geq \tau,
$$
which means that, for any $s\geq 2\tau$, 
$$
e^{-\delta (s-\tau)}( \alpha(s)+ \gamma(s)) \leq (e^\delta(\tau) )^2 e^{\delta \tau}.
$$
By the choice of $\ep$ and $\bar \delta_0$, $e^{2\delta \tau} e^\delta(\tau)\leq e^{4\bar \delta_0/\ep} e^\delta (2/\ep)\leq 1/2$, we conclude that 
$$
e^{-\delta s}( \alpha(s)+ \gamma(s)) \leq \frac12 e^\delta(\tau) \qquad \forall s\geq 2\tau, \; \forall (\alpha, \beta, \gamma)\in \mathcal E(\delta).
$$
Thus \eqref{azeioeufsdilmjokDISC} holds. We conclude as in the proof of Lemma \ref{lem.expodecay} that there exists $C>0$ and $\bar \lambda'>0$ such that 
$$
e^{\delta}(t)\leq Ce^{-\bar \lambda' t}, \qquad \forall t \geq 0. 
$$ 
Choosing $\lambda'=\bar \lambda'/2$ and $\delta_0= \bar \delta_0\vee (\lambda'/2)$ gives inequality 
$$
\alpha(t)+\gamma(t)\leq Ce^{-\lambda' t} (\alpha(0)+\gamma(0)).
$$

We now check that $\alpha(0)\leq C\gamma(0)$. Indeed, we have 
\begin{align*} 
\alpha(0)  & \leq C_0  \int_{0}^{\infty} e^{-\lambda t}\beta(t) dt + C_0 \sup_{t\geq 0} e^{-\lambda(t-t_1)}\gamma(t) \\ 
& \leq C_0^2(\alpha(0)\gamma(0))^{1/2} + C_0\sup_{t\geq 0}  e^{-\lambda t} \gamma(t) , 
\end{align*}
where, by \eqref{hypexpo2}, 
\begin{align*}
e^{-\lambda t} \gamma(t) &  \leq C_0 e^{-(\lambda+\lambda') t} \gamma(0)+C_0 \int_{0}^{t} e^{-\lambda s}\beta(s)ds\\ 
& \leq C_0 \gamma(0)+ C_0 \int_{0}^{\infty} e^{-\delta s}\beta(s)ds \leq C_0\gamma(0)+ C_0 (\alpha(0)\gamma(0))^{1/2}.
\end{align*}
Hence 
\begin{align*} 
\alpha(0)  & \leq C (\gamma(0)+ (\alpha(0)\gamma(0))^{1/2})
\end{align*}
which implies that $\alpha(0)\leq C\gamma(0)$. 
\end{proof}



\begin{thebibliography}{abc99xyz}

\bibitem{ABLLM} Achdou, Y., Buera, F. J., Lasry, J. M., Lions, P. L., and Moll, B. (2014). Partial differential equation models in macroeconomics. Philosophical Transactions of the Royal Society A: Mathematical, Physical and Engineering Sciences, 372(2028), 20130397.

\bibitem{AHLLM} Achdou, Y., Han, J., Lasry, J. M., Lions, P. L., and Moll, B. (2022). Income and wealth distribution in macroeconomics: A continuous-time approach. The review of economic studies, 89(1), 45-86.

\bibitem{AcLa20} Achdou, Y., and Lauri\`{e}re, M. (2020). Mean field games and applications: Numerical aspects. Mean Field Games: Cetraro, Italy 2019, 249-307.

\bibitem{Ahu16} Ahuja, S. (2016). Wellposedness of mean field games with common noise under a weak monotonicity condition. SIAM Journal on Control and Optimization, 54(1), 30-48.


\bibitem{BCGL} Backhoff, J., Conforti, G., Gentil, I., and L\'eonard, C. (2020). The mean field Schr\"{o}dinger problem: ergodic behavior, entropy estimates and functional inequalities. Probability Theory and Related Fields, 178, 475-530.

\bibitem{BaKo} Bardi, M., and Kouhkouh, H. (2024). Long-time behavior of deterministic mean field games with nonmonotone interactions. SIAM Journal on Mathematical Analysis, 56(4), 5079-5098.


\bibitem{BaSo00} Barles, G., and Souganidis, P. E. (2000). On the large time behavior of solutions of Hamilton--Jacobi equations. SIAM Journal on Mathematical Analysis, 31(4), 925-939.


\bibitem{Be23} Bertucci, C. (2023). Monotone solutions for mean field games master equations: continuous state space and common noise. Communications in Partial Differential Equations, 48(10-12), 1245-1285.


\bibitem{BKR} Bogachev, V. I., Krylov, N. V., and R\"{o}ckner, M. (2009).  Elliptic and parabolic equations for measures. Russian Mathematical Surveys, 64(6), 973.

\bibitem{CCMW} Cannarsa, P., Cheng, W., Mendico, C., and Wang, K. (2021). Weak KAM approach to first-order mean field games with state constraints. Journal of Dynamics and Differential Equations, 1-32.

\bibitem{CCP22} Cardaliaguet, P., Cirant, M., and Porretta, A. (2022). Splitting methods and short time existence for the master equations in mean field games. Journal of the European Mathematical Society, 25(5), 1823-1918.


\bibitem{CDLL} Cardaliaguet, P., Delarue, F., Lasry, J. M. and Lions, P.-L. (2019). {\it The Master Equation and the Convergence Problem in Mean Field Games} (AMS-201) (Vol. 381). Princeton University Press.

\bibitem{CLLP2} P. Cardaliaguet, J.-M. Lasry, P.-L. Lions, A. Porretta,  long-time average of mean field games, with a nonlocal coupling.
SIAM J. Control Optim. Vol. 51, No. 5 (2013), pp. 3558--3591.

 \bibitem{CM20} Cardaliaguet, P., and Masoero, M. (2020). Weak KAM theory for potential MFG. Journal of Differential Equations, 268(7), 3255-3298.

\bibitem{CaMe21} Cardaliaguet, P., and Mendico, C. (2021). Ergodic behavior of control and mean field games problems depending on acceleration. Nonlinear Analysis, 203, 112185.

\bibitem{CP19} Cardaliaguet, P., and Porretta, A. (2019). long-time behavior of the master equation in mean field game theory. Analysis \& PDE, 12(6), 1397-1453.

\bibitem{CSS22} Cardaliaguet, P., Seeger, B., and Souganidis, P. (2022). Mean field games with common noise and degenerate idiosyncratic noise. arXiv preprint arXiv:2207.10209.

 \bibitem{CaSo22} Cardaliaguet, P., and Souganidis, P. (2022). Monotone solutions of the master equation for mean field games with idiosyncratic noise. SIAM Journal on Mathematical Analysis, 54(4), 4198-4237.


\bibitem{CDbook} Carmona, R., and Delarue, F. (2018). {\it Probabilistic theory of mean field games with applications I-II.} Berlin: Springer Nature.

\bibitem{CaDeLa16} Carmona, R., Delarue, F., and Lacker, D. (2016). Mean field games with common noise. Annals of probability: An official journal of the Institute of Mathematical Statistics, 44(6), 3740-3803.


\bibitem{CaZe24} Carmona, R. A., and Zeng, C. (2024). Leveraging the turnpike effect for mean field games numerics. IEEE Open Journal of Control Systems, 3, 389-404.

\bibitem{CCDE24} Cecchin, A., Conforti, G., Durmus, A., and Eichinger, K. (2024). The exponential turnpike phenomenon for mean field game systems: weakly monotone drifts and small interactions. arXiv preprint arXiv:2409.09193.

\bibitem{CeDe22} Cecchin, A., and Delarue, F. (2022). Selection by vanishing common noise for potential finite state mean field games. Communications in Partial Differential Equations, 47(1), 89-168.


\bibitem{CeCi21} Cesaroni, A., and Cirant, M. (2021). Brake orbits and heteroclinic connections for first order mean field games. Transactions of the American Mathematical Society, 374(7), 5037-5070.

\bibitem{CeCi24} Cesaroni, A., and Cirant, M. (2024). Stationary equilibria and their stability in a Kuramoto MFG with strong interaction. Communications in Partial Differential Equations, 49(1-2), 121-147.


\bibitem{CiMez24} Cirant, M., and M\'esz\'{a}ros, A. R. (2024). long-time Behavior and Stabilization for Displacement Monotone Mean Field Games. arXiv preprint arXiv:2412.14903.

\bibitem{CiPo21} Cirant, M., and Porretta, A. (2021). long-time behavior and turnpike solutions in mildly non-monotone mean field games. ESAIM: Control, Optimisation and Calculus of Variations, 27, 86.

\bibitem{Dia25} 
Dianetti, J. (2025). Strong solutions to submodular mean field games with common noise and related McKean-Vlasov FBSDEs. The Annals of Applied Probability, 35(3), 1622-1667.


\bibitem{De19} Delarue, F. (2019). Restoring uniqueness to mean-field games by randomizing the equilibria. Stochastics and Partial Differential Equations: Analysis and Computations, 7(4), 598-678.


\bibitem{DMT24} Delarue, F., Maillet, R. and Tanr\'e, E. (2024). Ergodicity of some stochastic Fokker-Planck equations with additive common noise. arXiv preprint arXiv:2405.09950.

\bibitem{DM25} Delarue, F., and Mou, C. (2025). Major-minor mean field games: common noise helps. arXiv preprint arXiv:2501.02627.

\bibitem{DeTc20} Delarue, F., and Tchuendom, R. F. (2020). Selection of equilibria in a linear quadratic mean-field game. Stochastic Processes and their Applications, 130(2), 1000-1040.

\bibitem{DeVa25} Delarue, F., and  Vasileiadis, A. (2025). Exploration noise for learning linear-quadratic mean field games. Mathematics of Operations Research, 50(3), 1762-1831.

\bibitem{Dje23} Djete, M. F. (2023). Large population games with interactions through controls and common noise: convergence results and equivalence between open-loop and closed-loop controls. ESAIM: Control, Optimisation and Calculus of Variations, 29, 39.


\bibitem{DuMen10} Du, K., and Meng, Q. (2010). A revisit to $W^{2n}$-theory of super-parabolic backward stochastic partial differential equations in $\R^d$. Stochastic Processes and their Applications, 120(10), 1996-2015.

\bibitem{DuTa12} Du, K., and Tang, S. (2012). Strong solution of backward stochastic partial differential equations in $C^2$ domains. Probability Theory and Related Fields, 154(1), 255-285.

\bibitem{DuMe13} Du, K., and Meng, Q. (2013). A maximum principle for optimal control of stochastic evolution equations. SIAM Journal on Control and Optimization, 51(6), 4343-4362.

\bibitem{DuChen14} Du, K., and Chen, S. (2014). Backward stochastic partial differential equations with quadratic growth. Journal of Mathematical Analysis and Applications, 419(1), 447-468.

\bibitem{K90} Kunita, H. (1990). Stochastic flows and stochastic differential equations (Vol. 24). Cambridge university press.

\bibitem{EJP25} Ersland, O., Jakobsen, E. R., and Porretta, A. (2025). long-time behaviour of Mean Field Games with fractional diffusion. arXiv preprint arXiv:2505.06183.

 \bibitem{Fathi1} Fathi A.  Th\'eor\`{e}me KAM faible et th\'eorie de Mather sur les syst\`{e}mes lagrangiens.
C. R. Acad. Sci. Paris S\'er. I Math., 324(9):1043--1046, 1997.

\bibitem{Fathi2} Fathi A. Solutions KAM faibles conjugu\'ees et barri\`{e}res de Peierls. 
C. R. Acad. Sci. Paris S\'er. I Math., 325(6):649--652, 1997.

\bibitem{GMMZ22} Gangbo, W., M\'esz\'{a}ros, A. R., Mou, C., and Zhang, J. (2022). Mean field games master equations with nonseparable Hamiltonians and displacement monotonicity. The Annals of Probability, 50(6), 2178-2217.


\bibitem{GMP23} Ghattassi, M., Masmoudi, N., and Pacherie, E. (2023). Stability analysis of a non-separable mean-field games for pedestrian flow in large corridors. arXiv preprint arXiv:2310.06449.


\bibitem{GMS}
   D. A. Gomes, J. Mohr and R. Souza. (2010) Discrete time, finite state space mean field games.
J. Math. Pures Appl. (9), 93, 308--328.

\bibitem{GuMa09} Guatteri, G., and Masiero, F. (2009). Infinite horizon and ergodic optimal quadratic control for an affine equation with stochastic coefficients. SIAM journal on control and optimization, 48(3), 1600-1631.
%

\bibitem{Hen} Henry, D. (2006). {\it Geometric theory of semilinear parabolic equations} (Vol. 840). Springer.

\bibitem{HMC} Huang, M., Malham\'e, R. P., and Caines, P. E. (2006). Large population stochastic dynamic games: closed-loop McKean-Vlasov systems and the Nash certainty equivalence principle. Commun. Inf. Syst. 6, no. 3, 221-251.

\bibitem{HP91} Hu, Y., Peng, S. (1991). Adapted solution of a backward semilinear stochastic evolution equation. Stochastic Analysis and Applications, 9:4, 445-459.

\bibitem{HMY02} Hu, Y., Ma, J., and Yong, J. (2002). On semi-linear degenerate backward stochastic partial differential equations. Probability Theory and Related Fields, 123(3), 381-411.

\bibitem{Ki12} Kifer, Y. (2012). {\it Ergodic theory of random transformations} (Vol. 10). Springer Science \& Business Media.

\bibitem{KMR24} Kobeissi, Z., Mazari-Fouquer, I., and Ruiz-Balet, D. (2024). The tragedy of the commons: A Mean-Field Game approach to the reversal of travelling waves. Nonlinearity, 37(11), 115010.

\bibitem{KMR25} Kobeissi, Z., Mazari-Fouquer, I., and Ruiz-Balet, D. (2025). Mean-field games for harvesting problems: Uniqueness, long-time behaviour and weak KAM theory. Journal of Differential Equations, 448, 113667.

\bibitem{KoTr} Kolokoltsov, V. N., and Troeva, M. (2019). On mean field games with common noise and McKean-Vlasov SPDEs. Stochastic Analysis and Applications, 37(4), 522-549.


\bibitem{K11} Krylov, N. V. (2011). On the It\^o-Wentzell formula for distribution-valued processes and related topics. Probability theory and related fields, 150(1), 295-319.

\bibitem{jacod2013limit} Jacod, J. and Shiryaev, A.,  {\it Limit theorems for stochastic processes} (2013), Springer Science \& Business Media.

\bibitem{LaLF23} Lacker, D., and Le Flem, L. (2023). Closed-loop convergence for mean field games with common noise. The Annals of Applied Probability, 33(4), 2681-2733.


\bibitem{LSU} Ladyzhenskaia, O. A., Solonnikov, V. A., and Ural'tseva, N. N. (1968). {\it Linear and quasi-linear equations of parabolic type} (Vol. 23). American Mathematical Soc..

\bibitem{LL07mf} Lasry, J.-M., Lions, P.-L.  Mean field games.  Jpn. J. Math.  2  (2007),  no. 1, 229--260.

\bibitem{LLperso} Lions, P.L. In Cours au Coll\`{e}ge de France. 
www.college-de-france.fr.


\bibitem{G16} Le Gall, J. F. (2016). {\it Brownian motion, martingales, and stochastic calculus}. Springer International Publishing Switzerland.


\bibitem{Lun} Lunardi, A. (2012). {\it Analytic semigroups and optimal regularity in parabolic problems}. Springer Science \& Business Media.

\bibitem{MaYo97} Ma, J., and Yong, J. (1997). Adapted solution of a degenerate backward SPDE, with applications. Stochastic processes and their applications, 70(1), 59-84.

\bibitem{Ma23} Maillet, R. (2023). A note on the Long-Time behaviour of Stochastic McKean-Vlasov Equations with common noise. arXiv preprint arXiv:2306.16130.

\bibitem{Ma19} Masoero, M. (2019). On the long-time convergence of potential MFG. Nonlinear Differential Equations and Applications NoDEA, 26, 1-45.

\bibitem{MiMu24} Mimikos-Stamatopoulos, N., and Munoz, S. (2024). Regularity and long-time behavior of one-dimensional first-order mean field games and the planning problem. SIAM Journal on Mathematical Analysis, 56(1), 43-78.

 \bibitem{MoZh24} Mou, C., and Zhang, J. (2025). Mean field game master equations with anti-monotonicity conditions. J. Eur. Math. Soc. 27, no. 11, pp. 4469-4499.


\bibitem{Mu24} Munoz, S. (2024). Self-similar intermediate asymptotics for first-order mean field games. arXiv preprint arXiv:2404.02623.

\bibitem{Pe92} Peng, S. (1992). Stochastic Hamilton-Jacobi-Bellman equations. SIAM Journal on Control and Optimization, 30(2), 284-304.

\bibitem{Por18} Porretta, A. (2018). On the turnpike property for mean field games. Minimax Theory and its Applications, 3(2), 285-312.

\bibitem{Por24} Porretta, A. (2024). Decay rates of convergence for Fokker-Planck equations with confining drift. Advances in Mathematics, 436, 109393.


\bibitem{PoRo22} Porretta, A., and Rossi, L. (2022). Traveling waves for a nonlocal KPP equation and mean-field game models of knowledge diffusion. Annales de l'Institut Henri Poincar\'e C, 39(4), 947-1007.

\bibitem{PZ} Porretta, A., Zuazua, E. (2013). long-time versus steady state optimal control. SIAM Journal on Control and Optimization, 51(6), 4242-4273. 



\bibitem{V08} Villani, C. (2008). {\it Optimal transport: old and new} (Vol. 338, pp. 129-131). Berlin: springer.

\end{thebibliography}
\end{document}